\newtheorem{thm}{Theorem}[section]
\newtheorem{prop}[thm]{Proposition}
\newtheorem{lemma}[thm]{Lemma}
\newtheorem{cor}[thm]{Corollary}
\newtheorem{fact}[thm]{Fact}
\numberwithin{equation}{section}
\newcommand{\N}{\mathbb{N}}
\newcommand{\Z}{\mathbb{Z}}
\providecommand{\root}{}
\renewcommand{\root}{\ensuremath{\varnothing}}
\newcommand{\bst}[1]{T\langle#1\rangle}
\newcommand{\p}{\ensuremath{\mathbb{P}}}
\newcommand{\mt}{\ensuremath{\mathrm{MT}}}
\newcommand{\R}{\ensuremath{\mathbb{R}}}
\DeclareRobustCommand{\SkipTocEntry}[5]{} 
\title{The Height of Mallows Trees}
\author{Louigi Addario-Berry \& Benoît Corsini}
\address{Department of Mathematics and Statistics, McGill University, Montr\'eal, Canada}
\email{louigi.addario@mcgill.ca,benoit.corsini@mail.mcgill.ca}
\date{July 27, 2020}
\subjclass[2010]{Primary: 60B15, 60C05. Secondary: 05A05, 60F05, 60F15, 60K35, 82B23, 82B26}
\keywords{Mallows trees, Mallows permutations, binary search trees, random trees, random permutations, heights of trees}
\begin{document}
\maketitle

\begin{abstract}
    Random binary search trees are obtained by recursively inserting the elements $\sigma(1),\sigma(2),\ldots,\sigma(n)$ of a uniformly random permutation $\sigma$ of $[n]=\{1,\dots,n\}$ into a binary search tree data structure. Devroye (1986) proved that the height of such trees is asymptotically of order $c^*\log n$, where $c^*=4.311\ldots$ is the unique solution of $c \log((2e)/c)=1$ with $c \geq 2$. In this paper, we study the structure of binary search trees $T_{n,q}$ built from Mallows permutations. A $\textrm{Mallows}(q)$ permutation is a random permutation of $[n]=\{1,\ldots,n\}$ whose probability is proportional to $q^{\textrm{Inv}(\sigma)}$, where $\textrm{Inv}(\sigma) = \#\{i < j: \sigma(i) > \sigma(j)\}$. This model generalizes random binary search trees, since $\textrm{Mallows}(q)$ permutations with $q=1$ are uniformly distributed. The laws of $T_{n,q}$ and $T_{n,q^{-1}}$ are related by a simple symmetry (switching the roles of the left and right children), so it suffices to restrict our attention to $q\leq1$.
    
    We show that, for $q\in[0,1]$, the height of $T_{n,q}$ is asymptotically $(1+o(1))(c^* \log n + n(1-q))$ in probability. This yields three regimes of behaviour for the height of $T_{n,q}$, depending on whether $n(1-q)/\log n$ tends to zero, tends to infinity, or remains bounded away from zero and infinity. In particular, when $n(1-q)/\log n$ tends to zero, the height of $T_{n,q}$ is asymptotically of order $c^*\log n$, like it is for random binary search trees. Finally, when $n(1-q)/\log n$ tends to infinity, we prove stronger tail bounds and distributional limit theorems for the height of $T_{n,q}$.
\end{abstract}
\tableofcontents

\section{Introduction}\label{sec:Intro}

Let $T_\infty = \{\root\} \cup \bigcup_{k \ge 0} \{\overline{0},\overline{1}\}^k$ be the complete infinite rooted binary tree, with nodes at depth $n\ge 1$ indexed by strings  $u=u_1,\ldots,u_k \in \{\overline{0},\overline{1}\}^k$, so $u$ has parent $u_1,\ldots,u_{k-1}$ and children $u\overline{0}$ and $u\overline{1}$. For a set $V \subset  T_\infty$ and node $u \in T_\infty$, we write $uV = \{uv,v \in V\}$. 

For $u \in \{\overline{0},\overline{1}\}^k$, we write $|u|=k$ and say that $u$ has depth $k$. A subtree of $T_\infty$ (or just ``a tree'', for short) is a set $T \subset T_\infty$ which is connected when viewed as a subgraph of $T_\infty$. For any subtree $T$ of $T_\infty$, the {\em root} of $T$ is defined to be the unique element of $T$ of minimum depth. 
For a tree $T$ and a node $u \in T_{\infty}$, we write $T(u) = (uT_\infty) \cap T$ for the subtree of $T$ rooted at $u$; when $\root\in T$, then $T(u)=\emptyset$ if and only if $u\notin T$. 
Finally, for $T \subset T_\infty$, we write $h(T) = \sup(|u|,u \in T)-\inf(|u|,u \in T)$; if $T$ is a tree then $h(T)$ is the greatest distance of any node of $T$ from the root of $T$. 

For $n \ge 1$ we write $[n]=\{1,2,\ldots,n\}$. Given an injective function $f:[n] \to \Z_+:=\{1,2,\ldots\}$, the {\em binary search tree} $\bst{f}$ is the subtree of $T_\infty$ defined inductively as follows (see Figure~\ref{fig:BST} for an example). If $n=0$, then $\bst{f}:=\emptyset$ is the empty tree. 
Otherwise, view $f$ as a sequence of distinct integers $f=\big(f(1),f(2),...,f(n)\big)$, and write $f^-$ (respectively $f^+$) for the subsequence of $f$ consisting of terms $f(i)$ such that $f(i)<f(1)$ (respectively $f(i)>f(1)$), listed in the same order as in $f$.
Then set $\bst{f}:=\{\root\}\cup\big(\overline{0}\bst{f^-}\big)\cup\big(\overline{1}\bst{f^+}\big)$. 

We label the nodes of $\bst{f}$ by the elements of $\{f(1),f(2),...,f(n)\}$ as follows.  Set $\tau\langle f\rangle(\root)=f(1)$; then, inductively, for nodes $u\in \bst{f}$ with $|u| \ge 1$, set 
\begin{align*}
    \tau\langle f\rangle(u)&=
    \begin{cases}
        \tau\langle f^-\rangle(v) & \textrm{if $u=\overline{0}v$} \\
        \tau\langle f^+\rangle(v) & \textrm{if $u=\overline{1}v$}
    \end{cases}
\end{align*}
The definitions of $\bst{f}$ and $\tau\langle f\rangle$ easily extend to injective functions $f:\Z^+\mapsto\Z^+$, by considering the sequence $\big(f(i),i \ge 1\big)$. 

\begin{figure}[ht]
    \centering
    \begin{tikzpicture}[scale=0.6]
        \draw[very thin, gray!50!white] (0,0) grid (13,9);
        \node at (-0.3,9.3){$0$};
        \node at (-0.5,0){$i$};
        \node at (13,9.5){$f_i$};
        \draw[->] (0,9)--(13.5,9);
        \draw[->] (0,9)--(0,-0.5);
        \node(1) at (4,8){};
        \node(2) at (1,7){};
        \node(3) at (9,6){};
        \node(4) at (7,5){};
        \node(5) at (2,4){};
        \node(6) at (6,3){};
        \node(7) at (12,2){};
        \node(8) at (8,1){};
        \draw[ultra thick] (1.center)--(2.center);
        \draw[ultra thick] (1.center)--(3.center);
        \draw[ultra thick,blue!70!black] (2.center)--(5.center);
        \draw[ultra thick,red!70!black] (3.center)--(4.center);
        \draw[ultra thick,red!70!black] (3.center)--(7.center);
        \draw[ultra thick,red!70!black] (4.center)--(6.center);
        \draw[ultra thick,red!70!black] (4.center)--(8.center);
        \node(a) at (-0.2,8){};
        \node(b) at (-0.2,7){};
        \node(c) at (-0.2,6){};
        \node(d) at (-0.2,5){};
        \node(e) at (-0.2,4){};
        \node(f) at (-0.2,3){};
        \node(g) at (-0.2,2){};
        \node(h) at (-0.2,1){};
        \node[scale=0.8] at (-0.4,8){$4$};
        \node[scale=0.8,blue!70!black] at (-0.4,7){$1$};
        \node[scale=0.8,red!70!black] at (-0.4,6){$9$};
        \node[scale=0.8,red!70!black] at (-0.4,5){$7$};
        \node[scale=0.8,blue!70!black] at (-0.4,4){$2$};
        \node[scale=0.8,red!70!black] at (-0.4,3){$6$};
        \node[scale=0.8,red!70!black] at (-0.4,2){$12$};
        \node[scale=0.8,red!70!black] at (-0.4,1){$8$};
        \draw[dashed,thick] (a.east)--(1.center);
        \draw[dashed,blue!70!black,thick] (b.east)--(2.center);
        \draw[dashed,red!70!black,thick] (c.east)--(3.center);
        \draw[dashed,red!70!black,thick] (d.east)--(4.center);
        \draw[dashed,blue!70!black,thick] (e.east)--(5.center);
        \draw[dashed,red!70!black,thick] (f.east)--(6.center);
        \draw[dashed,red!70!black,thick] (g.east)--(7.center);
        \draw[dashed,red!70!black,thick] (h.east)--(8.center);
        \node[draw,circle,ultra thick,scale=1.5,fill=white] at (4,8){};
        \node[draw,circle,ultra thick,scale=1.5,blue!70!black,fill=white] at (1,7){};
        \node[draw,circle,ultra thick,scale=1.5,red!70!black,fill=white] at (9,6){};
        \node[draw,circle,ultra thick,scale=1.5,red!70!black,fill=white] at (7,5){};
        \node[draw,circle,ultra thick,scale=1.5,blue!70!black,fill=white] at (2,4){};
        \node[draw,circle,ultra thick,scale=1.5,red!70!black,fill=white] at (6,3){};
        \node[draw,circle,ultra thick,scale=1.5,red!70!black,fill=white] at (12,2){};
        \node[draw,circle,ultra thick,scale=1.5,red!70!black,fill=white] at (8,1){};
        \node at (4,8){$\mathbf{4}$};
        \node at (1,7){$\mathbf{1}$};
        \node at (9,6){$\mathbf{9}$};
        \node at (7,5){$\mathbf{7}$};
        \node at (2,4){$\mathbf{2}$};
        \node at (6,3){$\mathbf{6}$};
        \node at (12,2){$\mathbf{12}$};
        \node at (8,1){$\mathbf{8}$};
        \node[scale=1.5] at (11,8){$\boldsymbol{T_f}$};
        \node[scale=1.2,red!70!black] at (9,3){$\boldsymbol{T_{f^+}}$};
        \node[scale=1.2,blue!70!black] at (2.5,5.5){$\boldsymbol{T_{f^-}}$};
    \end{tikzpicture}
    \caption{The labelled tree $\big(\bst{f},\tau\langle f\rangle\big)$ for $f=(4,\textcolor{blue!70!black}{1},\textcolor{red!70!black}{9},\textcolor{red!70!black}{7},\textcolor{blue!70!black}{2},\textcolor{red!70!black}{6},\textcolor{red!70!black}{12},\textcolor{red!70!black}{8})$. The sequences $f^-$ and $f^+$ are $(\textcolor{blue!70!black}{1},\textcolor{blue!70!black}{2})$ and $(\textcolor{red!70!black}{9},\textcolor{red!70!black}{7},\textcolor{red!70!black}{6},\textcolor{red!70!black}{12},\textcolor{red!70!black}{8})$ respectively. The subtree in blue corresponds to $\bst{f^-}$ and the one in red corresponds to $\bst{f^+}$. The corresponding labels given by $\tau\langle f\rangle$ are written on the nodes; so, for example, $\tau\langle f\rangle(\overline{0}\overline{1})=2$ and $\tau\langle f\rangle(\overline{1}\overline{1})=12$.}
\end{figure}\label{fig:BST}

In this article, we study the heights of binary search trees built from random, Mallows-distributed permutations. 
For $n\geq0$ and $q\in[0,\infty)$, the Mallows distribution with parameters $n$ and $q$ (introduced in \cite{mallows1957non}) is the probability measure $\pi_{n,q}$ on the symmetric group $\mathcal{S}_n$ given by
\begin{align*}
    \pi_{n,q}(\sigma)&:=Z_{n,q}^{-1}\cdot q^{\textrm{Inv}(\sigma)}\, .
\end{align*}
Here $\textrm{Inv}(\sigma):=\big|\big\{1\leq i<j\leq n:\sigma(i)>\sigma(j)\big\}\big|$ is the number of inversions of $\sigma$ and $Z_{n,q}:=\sum_{\sigma\in\mathcal{S}_n}q^{\textrm{Inv}(\sigma)}$ is a normalizing constant. 

For a permutation $\sigma=(\sigma(1),\ldots,\sigma(n))$, the reversed permutation $\sigma'=(n+1-\sigma(1),\ldots,n+1-\sigma(n))$ has 
$\textrm{Inv}(\sigma') = \binom{n}{2}-\textrm{Inv}(\sigma)$. This implies that, if $\sigma$ is a $\pi_{n,q}$-distributed random permutation, then its reversal is $\pi_{n,\frac{1}{q}}$-distributed. The effect of this reversal on the associated binary search trees is also easy to understand: $T_{\sigma'}$ is obtained from $T_\sigma$ by swapping all left and right subtrees.
Since the map $q\mapsto\frac{1}{q}$ bijectively sends $(1,\infty)$ to $(0,1)$, it follows from these observations that we may as well restrict our attention to $q\in[0,1]$. Note that when $q=0$, $\pi_{n,q}$ assigns weight $1$ to the identity permutation and when $q=1$, $\pi_{n,q}$ is the uniform distribution on $\mathcal{S}_n$.

We prove the following results. In what follows, we write $T_{n,q}$ for a random tree with the distribution of $\bst{\sigma}$ for $\sigma$ a $\pi_{n,q}$-distributed random permutation, and we write $\mt(n,q)$ for the law of such a tree; we call $T_{n,q}$ a {\em Mallows tree} (with parameters $n$ and $q$). Also, we let $c^*=4.311\ldots$ be the unique solution of $c\log\left(\frac{2e}{c}\right)=1$ with $c\geq2$. 
\begin{thm}\label{thm:Prob}
    For any $[0,1]$-valued sequence $(q_n)_{n\geq0}$, 
    \begin{align*}
        \frac{h(T_{n,q_n})}{n(1-q_n)+c^*\log n}\to 1
    \end{align*}
   in probability and in $L_p$ for any $p > 0$. 
\end{thm}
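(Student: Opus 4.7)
My plan is to prove matching upper and lower bounds on $h(T_{n,q_n})$, which combined give the in-probability statement. Since $h(T_{n,q})\le n-1$ deterministically and the normalizer $n(1-q_n)+c^*\log n$ is at least $c^*\log n\to\infty$, the ratio $h(T_{n,q_n})/(n(1-q_n)+c^*\log n)$ is uniformly bounded, so $L_p$ convergence follows automatically from convergence in probability and bounded convergence. The substance of the proof is the two one-sided bounds.

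\textbf{Upper bound.} I would proceed by a first-moment / branching argument in the spirit of Devroye's analysis of uniform binary search trees. The recursive structure $T_{n,q}=\{\root\}\cup(\overline{0}T_L)\cup(\overline{1}T_R)$ splits at the root label $\sigma(1)$, which in a Mallows$(q)$ permutation of $[n]$ has law close to $1+G$ with $G\sim\mathrm{Geom}(1-q)$ truncated to $\{0,\ldots,n-1\}$. Conditioning on $\sigma(1)=k+1$, the left subtree is a Mallows tree on $k$ elements and the right subtree is a Mallows tree on $n-k-1$ elements, independently; this decomposition is the key combinatorial input. Iterating down the tree, the depth of any fixed node is stochastically dominated by a sum of $O(\log n)$ independent random "steps," each of which decomposes as a Devroye-style branching step plus an additional $(1-q)$-sized drift coming from the geometric bias of $\sigma(1)$ toward small values. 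Summing the drifts over the $n$ levels of the recursion produces an $n(1-q)$ shift, while the Devroye branching analysis produces the $c^*\log n$ term, and then a union bound over all possible nodes at depth $d>(1+\varepsilon)(n(1-q)+c^*\log n)$ shows the expected number of such nodes tends to zero.

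\textbf{Lower bound.} I would exhibit a long root-to-leaf path in two contributions. First, Mallows$(q)$ permutations admit a well-known regenerative decomposition: the positions $i$ at which $\{\sigma(1),\ldots,\sigma(i)\}=[i]$ ("cut points") occur with density $\Theta(1-q)$, so with high probability there are $(1-o(1))n(1-q)$ of them when $n(1-q)\to\infty$. Each cut point forces a node on the right spine of $T_{n,q}$ whose subtree contains all the subsequent elements, contributing an additive $(1-o(1))n(1-q)$ to the height. Second, between any two consecutive cut points the induced permutation is an independent Mallows permutation on a smaller interval; the largest such block has size $\Omega(n/(1+n(1-q)))$, so applying (recursively, or directly) the Devroye-style lower bound to this block supplies an additive $(1-o(1))c^*\log n$ inside the subtree hanging off one spine node. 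The delicate point is combining these two paths into a single root-to-leaf path, which requires showing that the long internal path inside a block can be appended to the spine up to that block; this follows from the fact that the block's subtree does hang off the spine.

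\textbf{Main obstacle.} The genuinely hard regime is $n(1-q_n)=\Theta(\log n)$, where both contributions are of the same order and neither can be absorbed into the other as a lower-order correction. In this regime, the upper-bound recursion must track the geometric drifts and the Devroye-type branching simultaneously with tight enough constants that the sum of the two rates produces exactly $c^*$ and $1$ as the respective leading coefficients. Similarly, the lower bound must realize both contributions along the same path, not merely along two different paths. Controlling the interaction between the geometric shifts at each recursion level and the branching factor $c^*$ — in particular showing that no cross-term arises at leading order — is where the bulk of the technical work will lie.
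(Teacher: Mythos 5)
Several of your steps have genuine gaps; I'll focus on the most serious ones.

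\textbf{The $L_p$ reduction fails.} Deterministically $h(T_{n,q})\le n-1$, and the normalizer is only guaranteed to be $\ge c^*\log n$, so the ratio is bounded only by $n/(c^*\log n)$, which diverges. There is no uniform deterministic bound on the ratio across $n$, so bounded convergence does not apply. You do need a genuine uniform integrability argument; the paper establishes $L_p$-boundedness via a Chernoff estimate on the right depth $R^B_n$ (the right-spine length), which dominates the probability that any fixed node of depth $d$ lies in $T_{n,q}$ (Proposition~\ref{prop:StochBounds}), combined with a union bound. \textbf{The naive union bound fails for the upper bound when $n(1-q_n)\gg \log n$.} A first-moment bound over all $2^d$ nodes at depth $d$ gives, via $\mathbb{P}(h\ge d)\le 2^d\,\mathbb{P}(R^B_n\ge d)$ and a Chernoff bound with $\mu\approx\mathbb{E}[R^B_n]$, that $h\le c^*\mu(1+o(1))$; this is exactly right when $\mu\sim\log n$ (so $c^*\mu\sim c^*\log n$), but when $n(1-q_n)\to\infty$ faster than $\log n$ one has $\mu\sim n(1-q_n)$ and the union bound overestimates by the extraneous factor $c^*\approx 4.31$. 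The paper gets around this by never paying a $2^d$ factor for the spine: it decomposes $T^B_n$ as the right spine together with all left subtrees hanging off it, and bounds the height contribution of each hanging subtree (whose sizes are i.i.d.\ $\textsc{Geometric}(1-q)$) separately via Proposition~\ref{prop:boundsLeftSubtree}. Your ``track the geometric drift separately'' instinct points in this direction, but it requires this spine/branches decomposition, not a blanket union bound.

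\textbf{The lower-bound decomposition has two problems.} First, the ``cut points'' you describe --- positions $i$ with $\{\sigma(1),\dots,\sigma(i)\}=[i]$ --- have asymptotic density $\prod_{k\ge1}(1-q^k)$, which decays like $\exp\!\bigl(-\pi^2/(6(1-q))\bigr)$ as $q\to1$, not like $\Theta(1-q)$; the quantity with density $\approx 1-q$ is the number of \emph{records} (new running maxima), and these are what really label the right spine ($R^B_n$ in the paper). Second, even after substituting records, the ``delicate point'' you identify is not the real obstruction. It is automatic that a left subtree hanging from the spine at depth $k$ contributes depth $k+1+h(\text{block})$; the genuine difficulty is \emph{locating a large block near depth $(1-o(1))R^B_n$}, since the largest block over a random position would only give roughly $R^B_n/2 + c^*\log n$. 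The paper avoids this by taking the fixed spine depth $d=\lfloor n(1-q_n)\rfloor$ and then studying the entire subtree $T^B_n(\overline{1}^d)$ (not a single block) hanging below it, showing via the threshold process $N^B$ that $\log|T^B_n(\overline{1}^d)|=(1+o_\mathbb{P}(1))\log n$ and $(1-q_n)|T^B_n(\overline{1}^d)|=o_\mathbb{P}(\log n)$ so that Proposition~\ref{prop:convProbSmall} applies; this in turn requires the coupling argument of Proposition~\ref{prop:couplingMallowsRBST} to transfer Devroye-/Reed-type lower-tail estimates from uniform BSTs to Mallows trees, a step your sketch also omits. You do correctly identify $n(1-q_n)/\log n=\Theta(1)$ as the hard regime, and the spine-plus-hanging-subtree picture you have in mind is the right one; the gaps above are in how each side of that decomposition is quantified.
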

When $q_n=1$ for all $n$, the trees $T_{n,q_n}$ are {\em random binary search trees} - the binary search trees corresponding to uniformly random permutations. This case of Theorem~\ref{thm:Prob} implies that $\frac{h(T_{n,1})}{c^*\log n}\rightarrow1$ in probability, which is a well-known result of Devroye~\cite{devroye1986note}.  

On the other hand, when $q_n=q\in[0,1)$ for all $n$, Theorem~\ref{thm:Prob} implies that $h(T_{n,q_n})=\big(1-q+o_\mathbb{P}(1)\big)n$. In this case, $T_{n,q}$ consists of a ``rightward'' path of length $\big(1-q+o_\mathbb{P}(1)\big)n$, with left subtrees of height $O_\mathbb{P}\big(\log\frac{1}{1-q}\big)$ hanging from each of its nodes. (The notation $O_{\mathbb{P}}$ and $o_{\mathbb{P}}$ is defined in Section~\ref{sec:notation}, below.)

When $(q_n)_{n\geq0}$ is small enough that the first term in the denominator overwhelms the second, we are able to strengthen the above result, obtaining strong bounds on the rate of convergence.
\begin{thm}\label{thm:AS}
    Fix any $[0,1]$-valued sequence $(q_n)_{n\geq0}$ such that $n(1-q_n)/\log n \to \infty$. Then for any $\varepsilon>0$ and $\lambda>0$,
    \begin{align*}
        \mathbb{P}\left(\left|\frac{h(T_{n,q_n})}{n(1-q_n)}-1\right|>\varepsilon\right)=O\left(\frac{1}{n^\lambda}\right).
    \end{align*}
\end{thm}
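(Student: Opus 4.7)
Since $n(1-q_n)/\log n \to \infty$ implies $c^*\log n = o(n(1-q_n))$, Theorem~\ref{thm:Prob} already yields $h(T_{n,q_n})/n(1-q_n)\to 1$ in probability, so the task is only to upgrade this convergence to polynomial tail decay. I would prove the upper and lower bounds on $h(T_{n,q_n})$ separately, both built around the right spine of $T_{n,q_n}$.

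The right spine has length $R$ equal to the number of left-to-right maxima of $\sigma$. Using the sequential construction of Mallows permutations via independent truncated-geometric insertion variables $X_1,\ldots,X_n$, together with the prefix self-similarity of Mallows (any prefix of a Mallows$(n,q_n)$ permutation, relabeled by ranks, is Mallows$(i,q_n)$), one obtains
\[
\mathbb{P}\bigl(\sigma(i)\text{ is a left-to-right maximum}\bigr) \;=\; \frac{1-q_n}{1-q_n^i},
\]
and hence $\mathbb{E}[R] = \sum_{i=1}^n(1-q_n)/(1-q_n^i) = n(1-q_n)+O(\log(1/(1-q_n))) = (1+o(1))n(1-q_n)$ under the hypothesis. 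A Chernoff/Bernstein-type concentration inequality applied to $R$ as a functional of the independent $X_j$'s should then give
\[
\mathbb{P}\bigl(|R - n(1-q_n)| \geq \varepsilon n(1-q_n)\bigr) = O(n^{-\lambda})
\]
for any $\lambda>0$. Since $h(T_{n,q_n})\geq R-1$, this handles the lower-bound half.

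For the upper bound, decompose $T_{n,q_n}$ into its right spine and the left subtrees $L_1,\ldots,L_R$ hanging off of it, so that $h(T_{n,q_n})\leq R + \max_k h(L_k)$. The concentration above controls $R$. Each $L_k$ is a binary search tree built from those $\sigma(j)$, $j > i_k$, whose values lie in $(\sigma(i_{k-1}),\sigma(i_k))$; in particular $|L_k| \leq \sigma(i_k)-\sigma(i_{k-1})-1$, and one can show that the gaps between consecutive left-to-right-max values have truncated-geometric tails, giving $\max_k |L_k| = O(\log n/(1-q_n))$ with polynomial probability. Feeding these sizes into polynomial-tail BST height estimates (the uniform case of Theorem~\ref{thm:Prob}, for example) then yields $\max_k h(L_k) = O(\log n) = o(n(1-q_n))$ with polynomial probability, completing the upper bound.

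The main obstacle is the concentration of $R$ at polynomial rate: a naive bounded-differences estimate is too crude because a single $X_j$ can in principle flip the left-to-right-max status of many positions. A sharper argument, exploiting the fact that in the Mallows model each $X_j$ typically affects only an $O(1/(1-q_n))$-window of positions, should deliver sub-exponential decay of the form $\exp(-c\varepsilon^2 n(1-q_n))$, which the hypothesis converts into the required $O(n^{-\lambda})$. A secondary challenge is uniform polynomial control of $h(L_k)$ across all $k$, which must avoid circular use of Theorem~\ref{thm:AS} itself; this is addressed by observing that each $L_k$ is substantially smaller than $n$, so the cruder bounds underlying Theorem~\ref{thm:Prob} are enough.
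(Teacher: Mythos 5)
There is a real gap, and it sits exactly where you flag your ``main obstacle'': the concentration of the right-spine length $R$. Your argument needs a bound of the form $\mathbb{P}(|R-n(1-q_n)|\geq \varepsilon n(1-q_n)) \leq \exp(-c\varepsilon^2 n(1-q_n))$, but you only establish the marginal record probabilities $\frac{1-q_n}{1-q_n^i}$ and hence $\mathbb{E}[R]=\mu_1(n,q_n)$; marginals give no tail control. The repair you sketch (a refined bounded-differences argument exploiting that each insertion variable $X_j$ ``typically affects only an $O(1/(1-q_n))$-window'') is not carried out, and it is far from clear it can be: changing one $X_j$ can in principle alter the left-to-right-maximum status of an unbounded number of later positions, and ``typically local'' influence is not by itself a concentration inequality. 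The paper sidesteps this entirely: Proposition~\ref{prop:BivariateGenFun} gives the exact generating function of $R^B_n$, which factorizes as $\prod_{1\leq k\leq n}\big(1+(x-1)\tfrac{1-q}{1-q^k}\big)$, i.e.\ $R^B_n$ is distributed as a sum of \emph{independent} Bernoulli$\big(\tfrac{1-q}{1-q^k}\big)$ variables; Lemma~\ref{lem:ConcBoundRn} is then a one-line Chernoff bound with exponent of order $\mu_1(n,q_n)\sim n(1-q_n)$, which under $n(1-q_n)=\omega(\log n)$ beats every polynomial. Since both your lower bound (directly) and your upper bound (through the term $R$ in $h\leq R+\max_k h(L_k)$) rest on this unproven concentration, the proposal as written does not prove the theorem; the missing ingredient is precisely the product structure of the law of $R$.

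Setting that aside, your upper-bound skeleton is essentially the paper's: decompose along the right spine, note the left subtrees have i.i.d.\ Geometric$(1-q_n)$ sizes (Lemma~\ref{lem:LeftSubtreesDistribution}), and control their heights uniformly, which is the content of the stochastic inequality (\ref{eq:htnq_upper}) together with Proposition~\ref{prop:boundsLeftSubtree}. Two smaller inaccuracies there: the hanging subtrees are Mallows$(q_n)$ trees, not uniform binary search trees, so ``the uniform case of Theorem~\ref{thm:Prob}'' is not the right tool (and Theorem~\ref{thm:Prob} in any case only gives convergence in probability, not polynomial tails); what is needed, and what the paper supplies via a union bound over nodes at a given depth (Proposition~\ref{prop:StochBounds}) plus the same Chernoff bound on records, is a crude exponential-tail height estimate for a Mallows tree of size $O(\log n/(1-q_n))$. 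Also, to reach error $O(n^{-\lambda})$ for \emph{every} $\lambda$, the constants in your ``$\max_k|L_k|=O(\log n/(1-q_n))$'' and ``$\max_k h(L_k)=O(\log n)$'' must be allowed to grow with $\lambda$; harmless, but it should be stated.
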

When the first term in the denominator is dominant and also $nq_n\rightarrow\infty$, we prove a central limit theorem for the height.

\begin{thm}\label{thm:CLT}
    Fix any $[0,1]$-valued sequence $(q_n)_{n\geq0}$ such that $n(1-q_n)/\log n \to \infty$ and $nq_n\to \infty$. Then
    \begin{align*}
        \frac{h(T_{n,q_n})-n(1-q_n)-c^*\log\big((1-q_n)^{-1}\big)}{\sqrt{n(1-q_n)q_n}}\overset{\textrm{d}}{\longrightarrow}\textsc{Normal}(0,1)\, .
    \end{align*}
\end{thm}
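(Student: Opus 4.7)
The plan is to decompose $h(T_{n,q_n})$ into a ``spine'' contribution (the length of the rightward path from the root) and a ``subtree'' contribution (the maximum depth of a left subtree hanging off the spine). The rightward spine of a binary search tree is in bijection with the left-to-right maxima of the underlying sequence, so writing $L_n$ for the number of left-to-right maxima of $\sigma\sim\pi_{n,q_n}$, the deepest spine node sits at depth $L_n - 1$. Writing $H_k$ for the height of the left subtree rooted at the left child of the $k$-th spine node (with $H_k = -\infty$ if that subtree is empty), we have
\[
    h(T_{n,q_n}) = \max\bigl(L_n - 1,\ \max_k(k+1+H_k)\bigr).
\]
The Gaussian fluctuations will come from $L_n$; the max-subtree term will contribute a deterministic $c^*\log((1-q_n)^{-1})$ up to fluctuations of order $o_{\mathbb{P}}\bigl(\sqrt{n(1-q_n)q_n}\bigr)$.

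For the CLT on $L_n$, I would use the key Mallows-specific fact that the left-inversion statistics $c_i(\sigma) = |\{j<i : \sigma(j) > \sigma(i)\}|$ are mutually independent with $\mathbb{P}(c_i = k) = q_n^k/[i]_{q_n}$ for $k \in \{0,\ldots,i-1\}$, where $[i]_{q_n} = (1-q_n^i)/(1-q_n)$. Since $\sigma(i)$ is a left-to-right maximum iff $c_i = 0$, we have $L_n = \sum_{i=1}^n \mathbf{1}\{c_i = 0\}$, a sum of independent Bernoullis with parameters $p_i = (1-q_n)/(1-q_n^i)$. A routine computation yields $\mathrm{Var}(L_n) \sim n(1-q_n)q_n$ and $\mathbb{E}[L_n] = n(1-q_n) + O(\log((1-q_n)^{-1}) + 1)$; under the theorem's hypotheses the variance diverges, and the Lindeberg--Feller CLT yields
\[
    \frac{L_n - \mathbb{E}[L_n]}{\sqrt{n(1-q_n)q_n}} \overset{d}{\longrightarrow} \mathcal{N}(0,1).
\]

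For the subtree contribution, I would invoke a projection-type property of Mallows permutations: conditionally on the values $v_0 < v_1 < \cdots$ of the left-to-right maxima, the sub-permutation of $\sigma$ restricted to values in each block $(v_{i-1},v_i)$ is (approximately) an independent Mallows permutation of parameter $q_n$. Each such block has typical size $\sim (1-q_n)^{-1}$, so by Theorem~\ref{thm:Prob} applied recursively each subtree has height $c^*\log((1-q_n)^{-1}) + o_{\mathbb{P}}(\log((1-q_n)^{-1}))$. To control the maximum over the $\Theta(n(1-q_n))$ subtrees one needs a sharp upper-tail bound on the height of a Mallows tree---something like $\mathbb{P}(h(T_{m,q_n}) \geq c^*\log m + t) \leq C e^{-\alpha t}$---which should be extractable from the first-moment method underlying the proofs of Theorems~\ref{thm:Prob} and~\ref{thm:AS}. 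A union bound then yields $\max_k(k+1+H_k) - (L_n-1) = c^*\log((1-q_n)^{-1}) + o_{\mathbb{P}}(\sqrt{n(1-q_n)q_n})$, and combining with the spine CLT via Slutsky's theorem produces the stated result.

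The main obstacle is the concentration of the maximum subtree contribution. This needs both a quantitative upper-tail bound on individual Mallows tree heights (going beyond the in-probability statement of Theorem~\ref{thm:Prob}) and sufficient approximate independence across subtrees to prevent extreme-value fluctuations from exceeding the spine CLT's scale of $\sqrt{n(1-q_n)q_n}$. The projection property provides conditional independence only after fixing spine data, and because the blocks $(v_{i-1},v_i)$ are themselves determined by $\sigma$, a careful conditioning/coupling argument will be needed. A secondary subtlety is reconciling the lower-order correction to $\mathbb{E}[L_n]$ (of order $\log((1-q_n)^{-1})$) with the centering $n(1-q_n) + c^*\log((1-q_n)^{-1})$ stated in the theorem; this reconciliation requires the aforementioned subtree concentration to be accurate to additive error $o_{\mathbb{P}}(\sqrt{n(1-q_n)q_n})$ and explains the role of the hypothesis $nq_n\to\infty$ (which, together with $n(1-q_n)/\log n\to\infty$, ensures the CLT scale is large enough to absorb such corrections).
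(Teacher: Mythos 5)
Your overall architecture is the paper's: your $L_n$ is exactly the right depth $R^B_n$, your sum-of-independent-Bernoulli representation of it (via the left-inversion counts) is equivalent to the product form of the generating function in Proposition~\ref{prop:BivariateGenFun}, your Lindeberg--Feller step reproduces Proposition~\ref{prop:CLTRD}, and the exponential upper-tail bound you ask for on subtree heights is exactly Proposition~\ref{prop:boundsLeftSubtree} (via Proposition~\ref{prop:StochBounds} and a union bound), with the left subtrees of the infinite spine being genuinely i.i.d.\ (Lemma~\ref{lem:LeftSubtreesDistribution}), so no ``approximate independence'' coupling is needed. In the regime $n(1-q_n)=\omega\big(\log^2 n\big)$ your plan closes, and it is how the paper argues there: $0\le h(T_{n,q_n})-R^B_n\le 1+\sup_k\{h(T^B(\overline{1}^k\overline{0}))-k\}=O_{\mathbb{P}}\big(\log((1-q_n)^{-1})\big)=o_{\mathbb{P}}\big(\sqrt{n(1-q_n)q_n}\big)$, and all logarithmic corrections to the centering wash out.

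The genuine gap is the complementary regime $\log n\ll n(1-q_n)=O\big(\log^2 n\big)$, where $\log\big((1-q_n)^{-1}\big)\sim\log n$ while $\sqrt{n(1-q_n)q_n}=O(\log n)$ and is only guaranteed to be $\omega\big(\sqrt{\log n}\big)$. There, both of your key claims fail. First, $\max_k(k+1+H_k)-(L_n-1)$ is \emph{not} $c^*\log\big((1-q_n)^{-1}\big)+o_{\mathbb{P}}\big(\sqrt{n(1-q_n)q_n}\big)$: the left subtrees hanging near the bottom of the spine of the \emph{finite} tree are not yet filled. Quantitatively, the keys exceeding the running maximum after time $m=n-(1+o(1))\log n/(1-q_n)$ number only $\Theta\big((1-q_n)^{-1}\big)$ (this is the content of Proposition~\ref{prop:BoundsOnN} via the threshold process), so the height gain below depth $R^B_m$ is $c^*\log n+O_{\mathbb{P}}\big(\sqrt{\log n}\big)$ measured from $R^B_m$, not from $R^B_n$; since $R^B_n-R^B_m=(1+o(1))\log n$, your claimed increment overshoots by $(1+o(1))\log n$, which is not $o\big(\sqrt{n(1-q_n)q_n}\big)$ here. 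Second, your proposed ``absorption'' of the $\log\big((1-q_n)^{-1}\big)$ correction in $\mathbb{E}[L_n]$ is impossible for the same reason: $\log\big((1-q_n)^{-1}\big)/\sqrt{n(1-q_n)q_n}$ need not tend to $0$ under the stated hypotheses. These two errors are of the same size and opposite sign, and their cancellation is exactly why the theorem's centering is $n(1-q_n)+c^*\log\big((1-q_n)^{-1}\big)$; but making the cancellation rigorous is the hard part of the proof, and your sketch does not contain the needed idea. The paper does it by cutting the spine at the random depth $R^B_m+1$, applying the CLT to $R^B_m$ (Proposition~\ref{prop:CLTRD}, noting $m(1-q_n)+\log\big((1-q_n)^{-1}\big)=n(1-q_n)+o\big(\sqrt{n(1-q_n)q_n}\big)$), and proving the tightness statement of Proposition~\ref{prop:convRenSubtree} for the height of $T^B_n\big(\overline{1}^{R^B_m+1}\big)$, which rests on the technical size estimates of Section~\ref{sec:IntValuesThreshold}; some substitute for this analysis of the unfilled bottom portion is indispensable.
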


Finally when $nq_n=O(1)$, we prove a Poisson limit theorem for the height (after re-centering but without re-scaling).

\begin{thm}\label{thm:Poisson}
    Let $(q_n)_{n\geq0}$ be any $[0,1]$-valued sequence such that $nq_n\rightarrow\lambda\in[0,\infty)$. Then
    \begin{align*}
        n-1-h(T_{n,q_n})\overset{d}{\longrightarrow}\textsc{Poisson}(\lambda)\,.
    \end{align*}
\end{thm}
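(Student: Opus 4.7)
The plan is to use the classical product-form representation of the Mallows distribution: under $\pi_{n,q}$, the permutation $\sigma$ can be generated by independent truncated geometric random variables $(X_1, \ldots, X_n)$, where $X_i$ takes values in $\{0, \ldots, n-i\}$ with $\mathbb{P}(X_i = k) \propto q^k$, and one sets $\sigma(i)$ to be the $(X_i+1)$-th smallest element of $\{1, \ldots, n\} \setminus \{\sigma(1), \ldots, \sigma(i-1)\}$; equivalently, $X_i = \#\{j > i : \sigma(j) < \sigma(i)\}$ and $\textrm{Inv}(\sigma) = \sum_i X_i$. When $nq_n \to \lambda$, one has $q_n \to 0$ and $\mathbb{P}(X_i \geq 1) = q_n(1 - q_n^{n-i})/(1 - q_n^{n-i+1})$ is of order $q_n$ uniformly in $i$, with $\sum_{i=1}^n \mathbb{P}(X_i \geq 1) \to \lambda$. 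Hence standard Poisson approximation for sums of independent indicators gives $K_n := \#\{i \leq n : X_i \geq 1\} \xrightarrow{d} \textsc{Poisson}(\lambda)$.

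Next, I would introduce a good event $G_n$ on which (a) $X_i \in \{0, 1\}$ for every $i$, (b) there is no $i$ with $X_i = X_{i+1} = 1$, and (c) $X_i = 0$ for every $i > n - \omega_n$, where $\omega_n \to \infty$ is chosen to satisfy $\omega_n q_n \to 0$ (for instance $\omega_n = \log n$). The three failure probabilities are bounded by $n q_n^2$, $n q_n^2$, and $\omega_n q_n$ respectively, each of which tends to zero, so $\mathbb{P}(G_n) \to 1$. On $G_n$, a short induction on $i$ shows that $\sigma$ coincides with the identity except at $K_n$ pairwise-disjoint adjacent transpositions, whose left endpoints $I_1 < \ldots < I_{K_n}$ all satisfy $I_{K_n} \leq n - \omega_n$; in particular, the non-left-to-right maxima of $\sigma$ are exactly the values $I_1, \ldots, I_{K_n}$, each appearing at position $I_j + 1$ in $\sigma$.

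The key combinatorial step is then to read off $h(T_{n,q_n})$ directly from this picture. The right spine of $T_{n,q_n}$ traces the left-to-right maxima, so it has length $R_n = n - 1 - K_n$ and terminates at the node with value $n$; each non-maximum $I_j$ becomes a singleton left child of the node with value $I_j + 1$, whose depth is easily computed to be $I_j - j + 1$. On $G_n \cap \{K_n < \omega_n\}$, whose probability also tends to $1$ since $K_n = O_\mathbb{P}(1)$ while $\omega_n \to \infty$, the inequalities $I_j - j + 1 \leq I_j \leq n - \omega_n \leq n - 1 - K_n = R_n$ hold simultaneously for every $j$, so no root-to-leaf path through a left child is longer than the right spine itself. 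Consequently $h(T_{n,q_n}) = R_n = n - 1 - K_n$ on this event, whence $n - 1 - h(T_{n,q_n}) = K_n$.

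Combining the previous two paragraphs with $K_n \xrightarrow{d} \textsc{Poisson}(\lambda)$ yields $\mathbb{P}(n - 1 - h(T_{n,q_n}) = k) \to e^{-\lambda} \lambda^k / k!$ for each fixed $k \geq 0$, which is the desired convergence in distribution. The main obstacle is the structural step of the third paragraph: a rare ``late'' event $X_i \geq 1$ with $i$ very close to $n$ would place the corresponding non-maximum as a left descendant of the node with value $n$, causing the longest root-to-leaf path to pick up an additional edge beyond the right spine and $h$ to exceed $R_n$. Condition (c) in the definition of $G_n$ is designed precisely to exclude this scenario.
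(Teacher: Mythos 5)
Your argument is correct, and it takes a genuinely different route from the paper. The paper works in the infinite $b$-model: it defines the event $E_n=\{h(T^B_n)>R^B_n\}$, bounds $\mathbb{P}(E_n)\leq 2q_n+(n-2)q_n^2=o(1)$ using the fact (Lemma~\ref{lem:LeftSubtreesDistribution}) that the left subtrees hanging off the rightmost path have independent $\textsc{Geometric}(1-q_n)$ sizes, and then proves $n-1-R^B_n\overset{d}{\to}\textsc{Poisson}(\lambda)$ by a direct computation with the characteristic function of $R^B_n$ coming from the product formula of Proposition~\ref{prop:BivariateGenFun}. You instead use the Lehmer-code (inversion-table) product representation of the Mallows measure, identify $n-1-h$ with the count $K_n$ of non-left-to-right maxima on a high-probability event, and get the Poisson limit for $K_n$ from elementary Poisson approximation for independent indicators with $\sum_i\mathbb{P}(X_i\geq 1)\to\lambda$ and $\max_i\mathbb{P}(X_i\geq 1)\to 0$. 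Your good event $G_n$ (no $X_i\geq 2$, no adjacent pair of $1$'s, no late inversion within $\omega_n$ of the end, intersected with $\{K_n<\omega_n\}$) plays the role of the paper's $E_n^c$, and the combinatorial description of $\sigma$ as a product of disjoint adjacent transpositions, with each displaced value $I_j$ sitting as a singleton left child at depth $I_j-j+1\leq n-\omega_n\leq R_n$, correctly rules out the only way the height could exceed the right depth — exactly the scenario the paper excludes via the geometric left-subtree sizes. What your approach buys is self-containedness and elementarity: it needs neither the $b$-model construction nor the bivariate generating function, only the classical truncated-geometric factorization and Le Cam-type approximation. What the paper's approach buys is reuse of machinery already developed for the other theorems (the characteristic-function formula and the spine decomposition), which also yields quantitative information about $R^B_n$ used elsewhere; your identification of the tree structure on $G_n$, by contrast, is specific to the $nq_n=O(1)$ regime.
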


The next subsection briefly discusses related literature on random trees and Mallows permutations. Section~\ref{sec:notation} then introduces some notation we need. The remainder of Section~\ref{sec:Intro} describes some of the key tools used in proving Theorem~\ref{thm:Prob}~-~\ref{thm:Poisson}, and, while doing so, provides an overview of our approach to their proofs. Theorem~\ref{thm:Prob} is proved in three parts, depending on whether $n(1-q_n)$ is much smaller than, much larger than, or of the same order as $\log n$. The arguments for these cases are sketched in Sections~\ref{sec:MTandRBST}~-~\ref{sec:intValues}, respectively. Since the proof of Theorem~\ref{thm:AS} essentially consists in extracting quantitative estimates from the proof of Theorem~\ref{thm:Prob}, we do not spend much space on it in the introduction. Finally, Section~\ref{sec:DistrLimits} describes our arguments for our distributional limit results, Theorem~\ref{thm:CLT} and \ref{thm:Poisson}.

\subsection{Related work}
 
The Mallows permutation model was first introduced by C.L.~Mallows~\cite{mallows1957non} 
in the context of ranking theory. The study of its probabilistic properties has taken off in the past decade; 
previously studied properties of Mallows permutations include the length of the longest increasing subsequence~\cite{basu2017limit,bhatnagar2015lengths,mueller2013length}, the cycle structure~\cite{crane2018probability,gladkich2018cycle,he2020central,mukherjee2016fixed,pinsky2019permutations}, relations to exchangeability~\cite{gnedin2012two,gnedin2010q} and to random matchings~\cite{angel2018mallows}, random dynamics with Mallows permutations as stationary distribution~\cite{benjamini2005mixing,diaconis2004analysis}, and thermodynamic properties of Mallows measures~\cite{starr2009thermodynamic,starr2018phase}.

Since our work is focused on random trees built from Mallows permutations, it is also natural to situate it in the context of the literature on random trees. This is a vast literature and we only discuss a smattering of it. As mentioned above, Devroye~\cite{devroye1986note} proved that the height $h_n$ of a random binary search tree of size $n$ is asymptotically $\big(c^*+o_\mathbb{P}(1)\big)\log n$; this built on previous work of Pittel~\cite{pittel1984growing}, who proved that $h_n/\log n \to \alpha \in (0,\infty)$ almost surely, but did not identify the constant $\alpha$. Random binary search trees lie within the more general {\em increasing tree model}, for which the first order behaviour of the height has been well-characterized~\cite{broutin2008height,drmota2009height}. Building on Devroye's results, Reed~\cite{reed2003height} and Drmota~\cite{drmota2003analytic} found two conceptually different proofs that the variance of $h_n$ is bounded in $n$. 

The study of random binary trees, random increasing trees, and their ilk, is intimately connected to the properties of {\em branching random walk}; results on the height of random trees are often extracted (at varying levels of difficulty) from results on the maximal displacement of a corresponding branching random walk. For example, the results of~\cite{broutin2008height,chauvin2005martingales,devroye1986note} rely on the Hammersley-Kingman-Biggins theorem~\cite{biggins1976first,hammersley1974postulates, kingman1975first}, which provides a law of large numbers for the maximum of branching random walks; and the arguments of Reed~\cite{reed2003height} proceeds by relating the height of binary search trees to the minimal position in a binary branching random walk with exponential step distribution. Further related results on minima in branching random walks can be found in \cite{addario2009minima,aidekon2013convergence}. The lecture notes~\cite{shi2015branching} provide an excellent introduction to the theory of branching random walks.

Finally, Mallows trees were introduced by S.N.~Evans, R.~Gr\"ubel, and A.~Wakolbinger~\cite{evans2012trickle}, who studied properties of the tree and generating processes. They showed, among other results, that Mallows trees are a specific case of \textit{trickle down process}; that is to say, they can be generated in a sequential manner, by adding one leaf at a time. 

\subsection{Notation}\label{sec:notation}
For functions $f:\R \to \R$, $g:\R \to \R$ or $f:\N \to \R$, $g:\N \to \R$, 
we write $f=O(g)$ to mean $f(n)=O(g(n))$ as $n \to \infty$ unless a different limit is specified. If $f=O(g)$ then we also write $g=\Omega(f)$. We also use the notation $f=o(g)$ and its synonym $g=\omega(f)$. If $f=o(g)$ then we will also write $f \ll g$ and $g \gg f$. 
We write $f\sim g$ to mean that $f(n)=(1+o(1))g(n)$ as $n \to \infty$. 

For sequences of random variables $(X_n)_{n \ge 0}$ and $(Y_n)_{n \ge 0}$, we write $X_n = O_{\mathbb{P}}(Y_n)$ if, for all $\varepsilon > 0$, there exists $K > 0$ such that 
\[
\limsup_{n \ge 0} \mathbb{P}\big(|X_n| \ge K Y_n\big) < \varepsilon\,.
\]
We write $X_n=o_{\mathbb{P}}(Y_n)$ if, for all $\varepsilon > 0$, 
\[
\lim_{n \to \infty} \mathbb{P}\big(|X_n|> \varepsilon Y_n\big) = 0\,.
\]

For random variables $X$ and $Y$, we write $X \stackrel{\mathrm{d}}{=} Y$ if $X$ and $Y$ have the same distribution. We write $X\preceq Y$, or equivalently $Y\succeq X$, if for all $x\in\mathbb{R}$, we have
\begin{align*}
    \mathbb{P}\big(X\geq x\big)\leq\mathbb{P}\big(Y\geq x\big)\,; 
\end{align*}
in this case we say $X$ is stochastically smaller than $Y$.

\subsection{Mallows trees and random binary search trees}\label{sec:MTandRBST}

Many properties of random binary search trees can be extended to Mallows trees. Perhaps the most fundamental of these are the {\em branching property}, which means that disjoint subtrees of a Mallows tree are conditionally independent given their sizes, and the {\em projective consistency}, which is the fact that subtrees of Mallows trees are again Mallows trees. The following proposition, due to Evans, Gr\"ubel and Wakolbinger,~\cite{evans2012trickle}, formalizes these properties, and additionally describes the joint distribution of the sizes of the left and right subtrees of the root in a Mallows tree.

\begin{prop}[{\cite[Section 7]{evans2012trickle}}]\label{prop:ProjCons}
    For all $q\in[0,1]$ and $n\geq1$, for any $0\leq k\leq n-1$, we have
    \begin{align*}
        \mathbb{P}\Big(\big|T_{n,q}(\overline{0})\big|=k\Big)&=\mathbb{P}\Big(\big|T_{n,q}(\overline{1})\big|=n-1-k\Big)=\left\{\begin{array}{ll}
            \frac{(1-q)q^k}{1-q^n} & \textrm{if $q\in[0,1)$} \\
            \frac{1}{n} & \textrm{if $q=1$}
        \end{array}\right.\,.
    \end{align*}
    Moreover, $T_{n,q}(\overline{0})$ and $T_{n,q}(\overline{1})$ are conditionally independent Mallows trees given their sizes. That is, for every $0\leq k\leq n-1$, for any trees $t_0$ and $t_1$ rooted at $\varnothing$, of respective sizes $k$ and $n-1-k$, we have
    \begin{align*}
        \mathbb{P}\Big(T_{n,q}(\overline{0})=\overline{0}t_0,T_{n,q}(\overline{1})=\overline{1}t_1\,\Big|\,\big|T_{n,q}(\overline{0})\big|=k\Big)&=\mathbb{P}\big(T_{k,q}=t_0\big)\mathbb{P}\big(T_{n-1-k,q}=t_1\big).
    \end{align*}
    Conversely, these properties characterize Mallows trees.
\end{prop}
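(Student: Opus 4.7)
The entire statement is driven by one combinatorial decomposition. Given $\sigma\in\mathcal{S}_n$, let $m=\sigma(1)$, let $\sigma^-\in\mathcal{S}_{m-1}$ and $\sigma^+\in\mathcal{S}_{n-m}$ be the subsequences of $(\sigma(2),\ldots,\sigma(n))$ of values smaller and larger than $m$ (identified canonically with honest permutations), and let $b\in\{L,R\}^{n-1}$ record the interleaving pattern of the two subsequences inside positions $2,\ldots,n$. Inspecting the four possible shapes of a pair $(i,j)$ with $i<j$ yields the additive identity
\[
    \textrm{Inv}(\sigma)\;=\;(m-1)\;+\;\textrm{Inv}(\sigma^-)\;+\;\textrm{Inv}(\sigma^+)\;+\;\textrm{inv}(b),
\]
where $\textrm{inv}(b)=\#\{i<j:b_i=R,\,b_j=L\}$; the map $\sigma\mapsto(m,\sigma^-,\sigma^+,b)$ is a bijection onto the obvious image.

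The marginal law of $|T_{n,q}(\overline{0})|$ then falls out of the classical product formula $Z_{n,q}=\prod_{j=1}^n[j]_q$ with $[j]_q=(1-q^j)/(1-q)$: summing $q^{\textrm{Inv}(\sigma)}$ over all $\sigma$ with $\sigma(1)=m$ gives $q^{m-1}Z_{n-1,q}$, hence $\pi_{n,q}(\sigma(1)=m)=(1-q)q^{m-1}/(1-q^n)$ for $q\in[0,1)$ and $1/n$ for $q=1$. Since $\{\sigma(1)=k+1\}$ coincides with $\{|T_{n,q}(\overline{0})|=k\}$, this is exactly the claimed formula; the mirror statement for $|T_{n,q}(\overline{1})|$ is immediate from $|T_{n,q}(\overline{0})|+|T_{n,q}(\overline{1})|=n-1$.

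For the conditional independence, I would condition on $\sigma(1)=m$ and compute the joint law of $(\sigma^-,\sigma^+)$ by summing $q^{\textrm{inv}(b)}$ over the interleavings $b$ with $m-1$ $L$'s. The Gaussian binomial identity $\sum_b q^{\textrm{inv}(b)}=Z_{n-1,q}/(Z_{m-1,q}Z_{n-m,q})$ produces, after cancellation,
\[
    \mathbb{P}\bigl((\sigma^-,\sigma^+)=(\tau_0,\tau_1)\,\big|\,\sigma(1)=m\bigr)\;=\;\frac{q^{\textrm{Inv}(\tau_0)}}{Z_{m-1,q}}\cdot\frac{q^{\textrm{Inv}(\tau_1)}}{Z_{n-m,q}},
\]
which is precisely the product of the $\pi_{m-1,q}$ and $\pi_{n-m,q}$ densities. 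Pushing this forward along $\sigma\mapsto\bst{\sigma}$ gives the conditional independence of the two subtrees as Mallows trees of the appropriate sizes, since $T_{n,q}(\overline{0})=\overline{0}\bst{\sigma^-}$ and $T_{n,q}(\overline{1})=\overline{1}\bst{\sigma^+}$.

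Finally, the converse is a straightforward induction on $n$: the distribution of any random subtree of $T_\infty$ rooted at $\root$ with $n$ nodes is determined by the joint law of its root subtree sizes together with the conditional laws of the two subtrees, so the branching property and the prescribed size distribution force the law to coincide with $\mt(n,q)$, given the base cases $n\in\{0,1\}$. The only non-bookkeeping ingredient throughout is the $q$-binomial identity for $\sum_b q^{\textrm{inv}(b)}$, which I would cite as a standard fact rather than reprove; this is the step most likely to need care in writing out, but it is in no real sense the main obstacle.
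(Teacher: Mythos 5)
Your proposal is correct. Note first that the paper itself offers no proof of this proposition: it is quoted from Evans, Gr\"ubel and Wakolbinger \cite[Section 7]{evans2012trickle}, so there is no internal argument to compare against; the closest thing in the paper is the proof of Proposition~\ref{prop:IBM}, which works instead with the Bernoulli-matrix representation of Mallows permutations. Your route is the natural direct one and all the steps check out: the identification $\{|T_{n,q}(\overline{0})|=k\}=\{\sigma(1)=k+1\}$ is exactly right; the four-case inspection giving $\textrm{Inv}(\sigma)=(m-1)+\textrm{Inv}(\sigma^-)+\textrm{Inv}(\sigma^+)+\textrm{inv}(b)$ is correct, as is the bijectivity of $\sigma\mapsto(m,\sigma^-,\sigma^+,b)$; summing over $b$ with the standard $q$-binomial/inv generating function and using $Z_{n,q}=\prod_{j=1}^n(1-q^j)/(1-q)^n$ gives $\sum_{\sigma:\sigma(1)=m}q^{\textrm{Inv}(\sigma)}=q^{m-1}Z_{n-1,q}$, hence the claimed split law (with the $q=1$ case read as the limit, and $q=0$ handled by the usual $0^0=1$ convention), and the factorization of the conditional law of $(\sigma^-,\sigma^+)$ into a product of Mallows densities pushes forward to the trees because $\bst{\cdot}$ depends only on the relative order of the sequence, so $T_{n,q}(\overline{0})=\overline{0}\,\bst{\sigma^-}$ and $T_{n,q}(\overline{1})=\overline{1}\,\bst{\sigma^+}$. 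The induction for the converse is also fine, since a law on trees of size $n$ is determined by the split-size law together with the conditional law of the ordered pair of root subtrees. Citing the Gaussian-binomial identity $\sum_b q^{\textrm{inv}(b)}=Z_{n-1,q}/(Z_{m-1,q}Z_{n-m,q})$ as standard is acceptable; it is classical and is indeed the only non-bookkeeping ingredient.
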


From this proposition, one can see that the split between left and right subtree is not symmetric; the right subtree at any node is stochastically larger than its left subtree. This observation straightforwardly leads to the following proposition, stating that the rightmost path in $T_{n,q}$ is the stochastically longest path.

\begin{prop}\label{prop:StochBounds} 
    For all $q\in[0,1]$ and $n\geq1$, for all $v\in T_\infty$,
    \begin{align*}
        \mathbb{P}\big(v\in T_{n,q}\big)\leq\mathbb{P}\big(\overline{1}^{|v|}\in T_{n,q}\big)
    \end{align*}
\end{prop}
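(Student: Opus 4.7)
The plan is to proceed by induction on $k := |v|$, carried out jointly with the auxiliary monotonicity statement that $n \mapsto \mathbb{P}(\overline 1^k \in T_{n,q})$ is non-decreasing for each fixed $k$. The base case $k=0$ is immediate, since $\root$ belongs to every non-empty tree.

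For the inductive step, write $v = bw$ with $b \in \{\overline 0, \overline 1\}$ and $|w| = k-1$, and let $L_n := |T_{n,q}(\overline 0)|$, $R_n := |T_{n,q}(\overline 1)| = n-1-L_n$. Proposition \ref{prop:ProjCons} gives
\[
    \mathbb{P}(\overline 0 w \in T_{n,q}) = \mathbb{E}\bigl[\mathbb{P}(w \in T_{L_n,q})\bigr], \qquad \mathbb{P}(\overline 1 w \in T_{n,q}) = \mathbb{E}\bigl[\mathbb{P}(w \in T_{R_n,q})\bigr],
\]
and in particular $\mathbb{P}(\overline 1^k \in T_{n,q}) = \mathbb{E}\bigl[\mathbb{P}(\overline 1^{k-1} \in T_{R_n,q})\bigr]$, where in each expectation the inner Mallows tree is independent of the corresponding subtree size. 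When $b = \overline 1$, the main inductive hypothesis applied pointwise inside the expectation yields the desired inequality at once. When $b = \overline 0$, that hypothesis gives the upper bound $\mathbb{E}\bigl[\mathbb{P}(\overline 1^{k-1} \in T_{L_n,q})\bigr]$, and I then combine the auxiliary monotonicity at depth $k-1$ with the stochastic domination $L_n \preceq R_n$. The latter is a one-line calculation from Proposition \ref{prop:ProjCons}: for $q \in [0,1)$ and $t \in \{0,\ldots,n\}$,
\[
    \mathbb{P}(R_n \geq t) - \mathbb{P}(L_n \geq t) = \frac{(1-q^t)(1-q^{n-t})}{1-q^n} \geq 0,
\]
while $L_n \stackrel{d}{=} R_n$ when $q=1$. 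The standard fact that $\mathbb{E}[f(L_n)] \leq \mathbb{E}[f(R_n)]$ for any non-decreasing $f$ then closes the $b = \overline 0$ case.

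It remains to propagate the auxiliary monotonicity from depth $k-1$ to depth $k$. Since $\mathbb{P}(\overline 1^k \in T_{n,q}) = \mathbb{E}\bigl[\mathbb{P}(\overline 1^{k-1} \in T_{R_n,q})\bigr]$ and the inner function is non-decreasing by hypothesis, it suffices to show $R_n \preceq R_{n+1}$, which again follows from a short computation with the explicit distribution of $R_n$, or directly from the trickle-down coupling of Evans--Gr\"ubel--Wakolbinger under which $T_{n,q} \subseteq T_{n+1,q}$. The only real subtlety is keeping the two coupled inductive hypotheses aligned in parallel; every individual inequality ultimately reduces to the elementary observation $(1-q^a)(1-q^b) \geq 0$ for $a,b \geq 0$ and $q \in [0,1]$.
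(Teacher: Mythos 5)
Your proof is correct, and it takes a genuinely different (and slightly more self-contained) route than the paper's. The paper inducts on the size $n$ rather than on the depth $|v|$: for $v=\overline{1}v'$ it applies the induction hypothesis inside the right subtree (of size $<n$), and for $v=\overline{0}v'$ it combines the domination $\big|T_{n,q}(\overline{0})\big|\preceq\big|T_{n,q}(\overline{1})\big|$ with Corollary~\ref{cor:TnqIncreasing}, i.e.\ with monotonicity of $m\mapsto\mathbb{P}(u\in T_{m,q})$ for an \emph{arbitrary} node $u$, a fact imported from the infinite $b$-model coupling (Proposition~\ref{prop:IBM}). You instead induct on $|v|$ and only ever need monotonicity in $m$ of $\mathbb{P}\big(\overline{1}^{k}\in T_{m,q}\big)$, which you establish within the same induction from the explicit law of the right-subtree size; both of your one-line computations check out, namely $\mathbb{P}(R_n\geq t)-\mathbb{P}(L_n\geq t)=(1-q^t)(1-q^{n-t})/(1-q^n)\geq0$ and the fact that $\big|T_{n,q}(\overline{1})\big|\preceq\big|T_{n+1,q}(\overline{1})\big|$, and the joint induction (main claim at depth $k$ from main and auxiliary claims at depth $k-1$; auxiliary claim at depth $k$ from the auxiliary claim at depth $k-1$) is free of circularity. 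What your approach buys is a proof that needs only Proposition~\ref{prop:ProjCons}, with no appeal to the trickle-down construction (which you correctly flag only as an optional shortcut); what the paper's approach buys is brevity, since invoking the stronger Corollary~\ref{cor:TnqIncreasing} lets it dispense with your auxiliary statement. When writing yours up, just make the boundary parameters explicit — $q=1$, where $L_n\stackrel{\mathrm{d}}{=}R_n$ and the uniform right-subtree sizes are still stochastically increasing in $n$, and $q=0$, which is trivial since $T_{n,0}$ is a rightward path — since your displayed identity is stated for $q\in[0,1)$.
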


We prove Proposition~\ref{prop:StochBounds} in Section~\ref{sec:UpperBoundSmall}. With the result of this last proposition, we can bound the height of $T_{n,q_n}$ from above by using a union bound over all nodes at a given depth, as
\begin{align*}
    \mathbb{P}\big(h(T_{n,q_n})\geq h\big)&\leq\sum_{v\in T_\infty:|v|=h}\mathbb{P}\big(v\in T_{n,q_n}\big)\\
    &\leq2^h\mathbb{P}\big(\overline{1}^h\in T_{n,q_n}\big).
\end{align*}
When $n(1-q_n)/\log n\rightarrow0$, this bound is tight enough to prove the upper bound of Theorem~\ref{thm:Prob}. It will also be useful in proving that $\big(\big(\frac{h(T_{n,q_n})}{n(1-q_n)+c^*\log n}\big)^p\big)_{n\geq1}$ is uniformly integrable for any sequence $(q_n)_{n\geq0}$, thereby extending the convergence in probability to the $L_p$ convergence.

For $(q_n)_{n\geq0}$ such that $n(1-q_n)/\log n\rightarrow0$, we also use a comparison  argument, albeit a slightly more complicated one, to prove the lower bound. At the heart of the argument is the following computation. Let $U$ be $\textsc{Uniform}([0,1])$. Then, for any $q\in[0,1)$, $n\geq1$ and $0\leq k\leq n-1$,
\begin{align*}
    \mathbb{P}\left(\left\lfloor\frac{\log\left(1-U(1-q^n)\right)}{\log q}\right\rfloor=k\right)&=\mathbb{P}\left(k\leq\frac{\log\left(1-U(1-q^n)\right)}{\log q}<k+1\right)\\
    &=\mathbb{P}\left(\frac{1-q^k}{1-q^n}\leq U<\frac{1-q^{k+1}}{1-q^n}\right)\\
    &=\frac{(1-q)q^k}{1-q^n}
\end{align*}
This identity and Proposition~\ref{prop:ProjCons} together imply that we can generate a $\mt(n,q)$-distributed tree as follows. Let $(U_v)_{v\in T_\infty}$ be independent $\textsc{Uniform}([0,1])$ random variables indexed by the nodes of $T_\infty$. Set $S_{n,q}(\root)=n$. Then, for $v\in T_\infty$, inductively define
\begin{align*}
    S_{n,q}(v\overline{0})&=\left\{\begin{array}{ll}
        \left\lfloor\frac{\log\left(1-U_v(1-q^{S_{n,q}(v)})\right)}{\log q}\right\rfloor & \textrm{if $q\in(0,1)$} \\
        \lfloor S_{n,q}(v)U_v\rfloor & \textrm{if $q=1$} \\
        0 & \textrm{if $q=0$}
    \end{array}\right.
\end{align*}
and
\begin{align*}
    S_{n,q}(v\overline{1})=S_{n,q}(v)-1-S_{n,q}(v\overline{0})\,.
\end{align*}
Then the tree $T_{n,q}=\big\{v\in T_\infty:S_{n,q}(v)\geq1\big\}$ is $\mt(n,q)$-distributed, and $S_{n,q}$ corresponds to the size of the subtree at any given node: $S_{n,q}(u)=\big|T_{n,q}(u)\big|$.

This construction couples the trees $(T_{n,q})$ as both $q$ and $n$ vary. Using this coupling, we will be able to prove the following proposition.

\begin{prop}\label{prop:couplingMallowsRBST}
    For all $q\in[0,1)$ and $n\geq1$, for any $0\leq\ell\leq n$, with $m=\left\lfloor\frac{1-q^n}{1-q^{\ell+2}}\right\rfloor$, then
    \begin{align*}
        \mathbb{P}\big(h(T_{n,q})\leq\ell\big)\leq\mathbb{P}\big(h(T_{m,1})\leq\ell\big)\,.
    \end{align*}
\end{prop}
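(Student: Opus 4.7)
The plan is to construct a coupling of $T_{n,q}$ and $T_{m,1}$ under which $T_{m,1}$ is contained in $T_{n,q}$ at every depth up to $\ell+1$; such a containment gives $h(T_{m,1})\geq\ell+1\Rightarrow h(T_{n,q})\geq\ell+1$, and the claim follows by passing to complementary probabilities. The coupling uses the recursive ``trickle-down'' construction given just before the proposition, with a single family of independent uniforms $(U_v)_{v\in T_\infty}$ driving both trees simultaneously. Write $S(v):=S_{n,q}(v)$ (Mallows subtree-sizes, $S(\root)=n$) and $\tilde S(v):=S_{m,1}(v)$ (RBST subtree-sizes, $\tilde S(\root)=m$), obtained from the same $U_v$'s via $L^M_v:=\lfloor\log_q(1-U_v(1-q^{S(v)}))\rfloor$ and $\tilde L_v:=\lfloor U_v\tilde S(v)\rfloor$, together with $S(v\overline{1})=S(v)-1-L^M_v$ and $\tilde S(v\overline{1})=\tilde S(v)-1-\tilde L_v$.

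The core claim is the following invariant, proved by induction on $d=|v|$: for every $v$ with $|v|\leq\ell+1$,
\[
\tilde S(v)\bigl(1-q^{\ell+2-|v|}\bigr)\leq 1-q^{S(v)}.
\]
At $|v|=0$ this is exactly $m(1-q^{\ell+2})\leq 1-q^n$, which holds by the choice of $m$. Applied at $|v|=\ell+1$ the invariant reads $\tilde S(v)(1-q)\leq 1-q^{S(v)}$, which forces $\tilde S(v)=0$ as soon as $S(v)=0$; this is exactly the node-inclusion at depth $\ell+1$ needed to conclude. It remains to propagate the invariant from depth $d\leq\ell$ to depth $d+1$.

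For the left child $v\overline{0}$, the lower bound $U_v\geq\tilde L_v/\tilde S(v)$ combined with the invariant at $v$ yields $U_v(1-q^{S(v)})\geq\tilde L_v(1-q^{\ell+2-d})$, and a short rearrangement (with $\tilde L_v=0$ trivial) gives $\tilde L_v(1-q^{\ell+1-d})\leq 1-q^{L^M_v}$. The right child $v\overline{1}$ is the main obstacle and requires a case analysis on $L^M_v$. If $L^M_v=S(v)-1$, the bound $U_v\geq(1-q^{S(v)-1})/(1-q^{S(v)})$ combined with the invariant at $v$ implies $U_v\geq 1-1/\tilde S(v)$, forcing $\tilde L_v=\tilde S(v)-1$ and making both right-subtree sizes vanish. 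If $L^M_v=0$, the upper bound $U_v<(1-q)/(1-q^{S(v)})$ combined with the invariant forces $\tilde L_v=0$, so $\tilde S(v\overline{1})=\tilde S(v)-1$; substituting the invariant bound on $\tilde S(v)$ reduces the desired inequality to the manifest algebraic identity $(q-q^{\ell+2-d})^2+q(1-q)(1-q^{S(v)-1})\geq 0$. For the intermediate range $1\leq L^M_v\leq S(v)-2$, the bound $\tilde S(v)-1-\tilde L_v\leq\tilde S(v)(1-U_v)$ combined with $1-U_v\leq q^{L^M_v}(1-q^{R+1})/(1-q^{S(v)})$ (where $R=S(v)-1-L^M_v$) reduces the target to
\[
q^{L^M_v}(1-q^{\ell+1-d})(1-q^{R+1})\leq(1-q^{\ell+2-d})(1-q^R),
\]
which either holds by a direct algebraic argument when $R\geq\ell+1-d$, or else the invariant at $v$ already forces $\tilde S(v)\leq 1$, so the RBST right subtree is trivial and the invariant at $v\overline 1$ is vacuous. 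The precise exponent $\ell+2$ appearing in the denominator of the definition of $m$ is chosen exactly so that this final case analysis closes.
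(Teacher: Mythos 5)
Your strategy is genuinely different from the paper's: the paper compares both trees to the \emph{same} product-of-uniforms process $P_v$ via the deterministic bounds of Lemma~\ref{lem:LowBoundS}, and then simply observes that $\{P_v<\tfrac{1-q^{\ell+2}}{1-q^n}\ \forall |v|=\ell+1\}\subseteq\{P_v<\tfrac1m\ \forall |v|=\ell+1\}$, with no induction on a joint invariant. Your depth-by-depth invariant $\tilde S(v)(1-q^{\ell+2-|v|})\le 1-q^{S(v)}$ can be made to work, and most of the steps check out: the base case and conclusion are right, the left-child step is correct (though it is not a pure ``rearrangement'' --- it genuinely needs the integrality of $\tilde L_v$, which your parenthetical ``$\tilde L_v=0$ trivial'' implicitly supplies, since for $\tilde L_v\ge1$ one must have $L^M_v\ge \ell+1-d$), and the cases $L^M_v=S(v)-1$ and $L^M_v=0$ are fine, including the claimed algebraic identity.

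However, the justification of the last sub-case is wrong as stated. When $1\le L^M_v\le S(v)-2$ and $R=S(v)-1-L^M_v<\ell+1-d$, it is \emph{not} true that the invariant at $v$ forces $\tilde S(v)\le1$: take $d=0$, $\ell=1$ (so $\ell+2-d=3$), $n=S(\root)=9$ and $q$ close to $1$; then $m=\lfloor\frac{1-q^9}{1-q^3}\rfloor=\lfloor1+q^3+q^6\rfloor=2$, so $\tilde S(\root)=2$ satisfies the invariant, while $U_\root\in\big[\frac{1-q^7}{1-q^9},\frac{1-q^8}{1-q^9}\big)$ gives $L^M_\root=7$, $R=1<2$, and your displayed reduced inequality also fails there (at $q=0.99$ it reads $0.00037\le0.00030$). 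So as written this case of the induction is not closed. The conclusion you want --- that the RBST right subtree is empty --- is nevertheless true, but for a different reason: $\tilde S(v\overline{1})\ge1$ forces $U_v<1-1/\tilde S(v)$, while $L^M_v=L$ forces $U_v\ge\frac{1-q^{L}}{1-q^{S(v)}}$; combining these two with the invariant at $v$ yields $q^{L}\big(1-q^{R+1}\big)>1-q^{\ell+2-d}$, which is impossible when $R<\ell+1-d$ and $L\ge1$, since then $q^{L}(1-q^{R+1})<1-q^{R+1}\le1-q^{\ell+1-d}\le1-q^{\ell+2-d}$. Replacing your fallback by this argument (keeping your dichotomy: $R\ge\ell+1-d$ handled by the algebraic inequality, $R<\ell+1-d$ handled by emptiness of the right RBST subtree) closes the induction and yields a correct, if more laborious, alternative to the paper's proof.
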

The proof of Proposition~\ref{prop:couplingMallowsRBST} can be found in Section~\ref{sec:lowerBoundSmall}. When $(q_n)_{n\geq0}$ is such that $n(1-q_n)/\log n\rightarrow0$, this stochastic bound combined with results of Devroye~\cite{devroye1986note} will yield the desired lower bound.

\subsection{Right depth and height of Mallows trees}\label{sec:IBMandRD}

The results of the previous section, relating Mallows trees to random binary search trees, give tight bounds on the height only when $n(1-q_n)/\log n\rightarrow0$; in this case the two tree models show strong resemblance. When $(q_n)_{n\geq0}$ does not satisfy this condition, the rightmost path of a Mallows tree is playing a more important role in its height. We now study the properties of this path and its connection to the rest of the tree.

Note that if $f:[n] \to \Z_+$ is an injective function and $f'=f|_{[n-1]}$, then $\bst{f'}$ is the subtree of $\bst{f}$ consisting of the nodes with labels $f(1),\ldots,f(n-1)$. More precisely, $\bst{f'} = \{v \in T_f: \tau\langle f\rangle(v) \ne f(n)\}$, and $\tau\langle f'\rangle$ is the restriction of $\tau\langle f\rangle$ to $\bst{f'}$. For example, in Figure~\ref{fig:BST}, with $f'=(f(1),\ldots,f(7))=(4,1,9,7,2,6,12)$, the tree $\bst{f'}$ is obtained from the depicted tree $T(f)$ by removing the node with label $8$. We next use this fact to describe an explicit construction of a nested sequence of Mallows trees, which will be useful for our analysis. 

%

Given an infinite $\{0,1\}$-valued matrix $b=(b_{i,j})_{i,j\geq1}$ with infinitely many ones in every row, define an injective function $f^{b}=(f^{b}(i),i \ge 1)$ as follows. Let $f^{b}(1)=\inf\{j\in\mathbb{N}:b_{1,j}=1\}$. Having defined $(f^{b}(k),1 \le k < i)$, let $F^{b}_{i-1}=\{f^{b}(k),1 \le k < i\}$ and set $f^{b}(i)=\inf\{j\in\mathbb{N}\setminus F^{b}_{i-1}:b_{i,j}=1\}$.
We write $T^b_n$ as shorthand for $\bst{f^b_n}$, where $f^b_n=(f^b(i),1 \le i \le n)$. An example is shown in Figure~\ref{fig:bset}. For the matrix $b$ shown in that figure, we obtain $f^b_8=(4,1,9,7,2,6,12,8)$, so the tree $T^b_8$ is precisely the binary search tree shown in Figure~\ref{fig:BST}. 

\begin{figure}[ht]
    \centering
    \begin{tikzpicture}[scale=1.2]
        \draw[very thin,gray!50!black] (-0.35,0.85)--(-0.35,-4.5);
        \draw[very thin,gray!50!black] (-0.85,0.85)--(-0.85,-4.5);
        \draw[very thin,gray!50!black] (-0.85,0.35)--(7,0.35);
        \draw[very thin,gray!50!black] (-0.85,0.85)--(7,0.85);
        \node[scale=0.9,gray!50!black] at (-0.6,0.6){$b_{i,j}$};
        \node[scale=0.9,gray!50!black] at (-0.6,0){$1$};
        \node[scale=0.9,gray!50!black] at (-0.6,-0.5){$2$};
        \node[scale=0.9,gray!50!black] at (-0.6,-1){$3$};
        \node[scale=0.9,gray!50!black] at (-0.6,-1.5){$4$};
        \node[scale=0.9,gray!50!black] at (-0.6,-2){$5$};
        \node[scale=0.9,gray!50!black] at (-0.6,-2.5){$6$};
        \node[scale=0.9,gray!50!black] at (-0.6,-3){$7$};
        \node[scale=0.9,gray!50!black] at (-0.6,-3.5){$8$};
        \node[scale=0.9,gray!50!black] at (-0.6,-4){$\vdots$};
        \node[scale=0.9,gray!50!black] at (0,0.6){$1$};
        \node[scale=0.9,gray!50!black] at (0.5,0.6){$2$};
        \node[scale=0.9,gray!50!black] at (1,0.6){$3$};
        \node[scale=0.9,gray!50!black] at (1.5,0.6){$4$};
        \node[scale=0.9,gray!50!black] at (2,0.6){$5$};
        \node[scale=0.9,gray!50!black] at (2.5,0.6){$6$};
        \node[scale=0.9,gray!50!black] at (3,0.6){$7$};
        \node[scale=0.9,gray!50!black] at (3.5,0.6){$8$};
        \node[scale=0.9,gray!50!black] at (4,0.6){$9$};
        \node[scale=0.9,gray!50!black] at (4.5,0.6){$10$};
        \node[scale=0.9,gray!50!black] at (5,0.6){$11$};
        \node[scale=0.9,gray!50!black] at (5.5,0.6){$12$};
        \node[scale=0.9,gray!50!black] at (6,0.6){$13$};
        \node[scale=0.9,gray!50!black] at (6.5,0.6){$\cdots$};
        \node at (0.0,-0.0){$0$};
        \node at (0.5,-0.0){$0$};
        \node at (1.0,-0.0){$0$};
        \node[draw] at (1.5,-0.0){$1$};
        \node at (2.0,-0.0){$1$};
        \node at (2.5,-0.0){$0$};
        \node at (3.0,-0.0){$0$};
        \node at (3.5,-0.0){$1$};
        \node at (4.0,-0.0){$1$};
        \node at (4.5,-0.0){$0$};
        \node at (5.0,-0.0){$1$};
        \node at (5.5,-0.0){$0$};
        \node at (6.0,-0.0){$0$};
        \node at (6.5,-0.0){$\cdots$};
        \node[draw] at (0.0,-0.5){$1$};
        \node at (0.5,-0.5){$0$};
        \node at (1.0,-0.5){$1$};
        \node[red!70!black] at (1.5,-0.5){$0$};
        \node at (2.0,-0.5){$0$};
        \node at (2.5,-0.5){$1$};
        \node at (3.0,-0.5){$1$};
        \node at (3.5,-0.5){$0$};
        \node at (4.0,-0.5){$0$};
        \node at (4.5,-0.5){$1$};
        \node at (5.0,-0.5){$0$};
        \node at (5.5,-0.5){$0$};
        \node at (6.0,-0.5){$1$};
        \node[red!70!black] at (0.0,-1.0){$0$};
        \node at (0.5,-1.0){$0$};
        \node at (1.0,-1.0){$0$};
        \node[red!70!black] at (1.5,-1.0){$1$};
        \node at (2.0,-1.0){$0$};
        \node at (2.5,-1.0){$0$};
        \node at (3.0,-1.0){$0$};
        \node at (3.5,-1.0){$0$};
        \node[draw] at (4.0,-1.0){$1$};
        \node at (4.5,-1.0){$1$};
        \node at (5.0,-1.0){$1$};
        \node at (5.5,-1.0){$0$};
        \node at (6.0,-1.0){$0$};
        \node at (6.5,-1.0){$\cdots$};
        \node[red!70!black] at (0.0,-1.5){$0$};
        \node at (0.5,-1.5){$0$};
        \node at (1.0,-1.5){$0$};
        \node[red!70!black] at (1.5,-1.5){$0$};
        \node at (2.0,-1.5){$0$};
        \node at (2.5,-1.5){$0$};
        \node[draw] at (3.0,-1.5){$1$};
        \node at (3.5,-1.5){$0$};
        \node[red!70!black] at (4.0,-1.5){$1$};
        \node at (4.5,-1.5){$0$};
        \node at (5.0,-1.5){$0$};
        \node at (5.5,-1.5){$1$};
        \node at (6.0,-1.5){$1$};
        \node at (6.5,-1.5){$\cdots$};
        \node[red!70!black] at (0.0,-2.0){$0$};
        \node[draw] at (0.5,-2.0){$1$};
        \node at (1.0,-2.0){$1$};
        \node[red!70!black] at (1.5,-2.0){$0$};
        \node at (2.0,-2.0){$1$};
        \node at (2.5,-2.0){$0$};
        \node[red!70!black] at (3.0,-2.0){$1$};
        \node at (3.5,-2.0){$1$};
        \node[red!70!black] at (4.0,-2.0){$0$};
        \node at (4.5,-2.0){$0$};
        \node at (5.0,-2.0){$0$};
        \node at (5.5,-2.0){$0$};
        \node at (6.0,-2.0){$1$};
        \node at (6.5,-2.0){$\cdots$};
        \node[red!70!black] at (0.0,-2.5){$1$};
        \node[red!70!black] at (0.5,-2.5){$1$};
        \node at (1.0,-2.5){$0$};
        \node[red!70!black] at (1.5,-2.5){$0$};
        \node at (2.0,-2.5){$0$};
        \node[draw] at (2.5,-2.5){$1$};
        \node[red!70!black] at (3.0,-2.5){$0$};
        \node at (3.5,-2.5){$0$};
        \node[red!70!black] at (4.0,-2.5){$1$};
        \node at (4.5,-2.5){$0$};
        \node at (5.0,-2.5){$0$};
        \node at (5.5,-2.5){$1$};
        \node at (6.0,-2.5){$0$};
        \node at (6.5,-2.5){$\cdots$};
        \node[red!70!black] at (0.0,-3.0){$0$};
        \node[red!70!black] at (0.5,-3.0){$0$};
        \node at (1.0,-3.0){$0$};
        \node[red!70!black] at (1.5,-3.0){$1$};
        \node at (2.0,-3.0){$0$};
        \node[red!70!black] at (2.5,-3.0){$1$};
        \node[red!70!black] at (3.0,-3.0){$1$};
        \node at (3.5,-3.0){$0$};
        \node[red!70!black] at (4.0,-3.0){$0$};
        \node at (4.5,-3.0){$0$};
        \node at (5.0,-3.0){$0$};
        \node[draw] at (5.5,-3.0){$1$};
        \node at (6.0,-3.0){$0$};
        \node at (6.5,-3.0){$\cdots$};
        \node[red!70!black] at (0.0,-3.5){$1$};
        \node[red!70!black] at (0.5,-3.5){$0$};
        \node at (1.0,-3.5){$0$};
        \node[red!70!black] at (1.5,-3.5){$1$};
        \node at (2.0,-3.5){$0$};
        \node[red!70!black] at (2.5,-3.5){$0$};
        \node[red!70!black] at (3.0,-3.5){$1$};
        \node[draw] at (3.5,-3.5){$1$};
        \node[red!70!black] at (4.0,-3.5){$0$};
        \node at (4.5,-3.5){$1$};
        \node at (5.0,-3.5){$0$};
        \node[red!70!black] at (5.5,-3.5){$0$};
        \node at (6.0,-3.5){$0$};
        \node at (6.5,-3.5){$\cdots$};
        \node[red!70!black] at (0.0,-4.0){$\vdots$};
        \node[red!70!black] at (0.5,-4.0){$\vdots$};
        \node at (1.0,-4.0){$\vdots$};
        \node[red!70!black] at (1.5,-4.0){$\vdots$};
        \node at (2.0,-4.0){$\vdots$};
        \node[red!70!black] at (2.5,-4.0){$\vdots$};
        \node[red!70!black] at (3.0,-4.0){$\vdots$};
        \node[red!70!black] at (3.5,-4.0){$\vdots$};
        \node[red!70!black] at (4.0,-4.0){$\vdots$};
        \node at (4.5,-4.0){$\vdots$};
        \node at (5.0,-4.0){$\vdots$};
        \node[red!70!black] at (5.5,-4.0){$\vdots$};
        \node at (6.0,-4.0){$\vdots$};
        \node at (6.5,-4.0){$\ddots$};
        \node[scale=0.7,blue!70!black] at (1.75,-0.15){$4$};
        \node[scale=0.7,blue!70!black] at (0.25,-0.65){$1$};
        \node[scale=0.7,blue!70!black] at (4.25,-1.15){$9$};
        \node[scale=0.7,blue!70!black] at (3.25,-1.65){$7$};
        \node[scale=0.7,blue!70!black] at (0.75,-2.15){$2$};
        \node[scale=0.7,blue!70!black] at (2.75,-2.65){$6$};
        \node[scale=0.7,blue!70!black] at (5.8,-3.15){$12$};
        \node[scale=0.7,blue!70!black] at (3.75,-3.65){$8$};
    \end{tikzpicture}
    \caption{An example of the top-left corner of an infinite $\{0,1\}$-valued matrix $b=(b_{i,j})_{i,j\geq1}$. The boxed $1$'s are in position $(i,f^b(i))$ and their column numbers are indicated as blue subscripts. For a given column $i$, the numbers in red correspond to positions $(i,j)$ where $j\in F^b_{i-1}$ and can be found below the boxed $1$'s; the boxed $1$ in row $i$ is always the first non-red $1$ in row $i$.}
    \label{fig:bset}
\end{figure}

The \textit{infinite $b$-model} for trees is the sequence of (labelled) trees $\big(T^b_n\big)_{n\geq0}$, which by construction is increasing, in that $T^b_n$ is a subtree of $T^b_{n+1}$ for all $n\geq0$. The corresponding sequence of labelling functions is defined by $(\tau^b_n)_{n\geq0}:=\big(\tau\langle f^b_{n}\rangle\big)_{n\geq0}$. We sometimes omit the matrix $b$ when it is clear from context. 
The utility of this construction is explained by the following proposition.
\begin{prop}\label{prop:IBM}
    Fix $q\in[0,1)$ and let $B=\big(B_{i,j}\big)_{i,j\geq1}$ have independent $\textsc{Bernoulli}(1-q)$ entries, and for $n\geq1$, let $\sigma^B_n\in\mathcal{S}_n$ be the permutation of $[n]$ defined by $\sigma^B_n(i)=\textrm{rank}\big\{f^B_n(i), F^B_n\big\}$. Then $\sigma^B_n$ is $\pi_{n,q}$-distributed for all $n\geq0$.
\end{prop}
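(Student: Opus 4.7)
The plan is to compute $\mathbb{P}(\sigma^B_n = \sigma)$ explicitly for each $\sigma \in \mathcal{S}_n$ and verify that it equals $q^{\mathrm{Inv}(\sigma)}/Z_{n,q}$. The first observation is that the event $\{\sigma^B_n = \sigma\}$ together with the outcome $F^B_n = \{a_1 < a_2 < \ldots < a_n\}$ is equivalent to $\{f^B(i) = a_{\sigma(i)} \text{ for all } i \in [n]\}$, since by definition $\sigma^B_n(i)$ is the rank of $f^B(i)$ in $F^B_n$. I would condition on $F^B_{i-1}$ and use the independence across rows of $B$ to factor
\[
\mathbb{P}\big(\sigma^B_n = \sigma,\ F^B_n = \{a_1, \ldots, a_n\}\big) = \prod_{i=1}^n \mathbb{P}\big(f^B(i) = a_{\sigma(i)} \mid F^B_{i-1} = \{a_{\sigma(1)}, \ldots, a_{\sigma(i-1)}\}\big).
\]

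Next, I would identify each conditional factor. The event $f^B(i) = a_{\sigma(i)}$ requires $B_{i, a_{\sigma(i)}} = 1$ together with $B_{i,j} = 0$ for every $j < a_{\sigma(i)}$ with $j \notin F^B_{i-1}$. Since the $a_k$'s are listed in increasing order, $F^B_{i-1} \cap [1, a_{\sigma(i)} - 1] = \{a_{\sigma(k)} : k < i,\ \sigma(k) < \sigma(i)\}$, which has cardinality
\[
N_i(\sigma) := |\{k < i : \sigma(k) < \sigma(i)\}|.
\]
Hence $a_{\sigma(i)} - 1 - N_i(\sigma)$ entries in row $i$ are constrained to vanish, and the conditional probability equals $(1-q)\, q^{a_{\sigma(i)} - 1 - N_i(\sigma)}$ by independence of the entries in row $i$.

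Multiplying and using the identity $\sum_i N_i(\sigma) = \binom{n}{2} - \mathrm{Inv}(\sigma)$ (immediate from the fact that for each $i$, $N_i(\sigma)$ plus the number of $k < i$ with $\sigma(k) > \sigma(i)$ equals $i-1$), I obtain
\[
\mathbb{P}\big(\sigma^B_n = \sigma,\ F^B_n = \{a_1, \ldots, a_n\}\big) = (1 - q)^n\, q^{\sum_i a_i - n - \binom{n}{2} + \mathrm{Inv}(\sigma)}.
\]
Summing over all strictly increasing sequences $a_1 < \ldots < a_n$ with $a_i \geq 1$ via the substitution $b_i = a_i - i$ (which yields $0 \leq b_1 \leq \ldots \leq b_n$ and $\sum a_i = \sum b_i + n(n+1)/2$) and using the partition generating function $\sum_{0 \leq b_1 \leq \ldots \leq b_n} q^{\sum b_i} = \prod_{i=1}^n (1-q^i)^{-1}$, the powers of $q$ not involving $\mathrm{Inv}(\sigma)$ cancel (since $-n - \binom{n}{2} + n(n+1)/2 = 0$), leaving
\[
\mathbb{P}(\sigma^B_n = \sigma) = \frac{(1-q)^n\, q^{\mathrm{Inv}(\sigma)}}{\prod_{i=1}^n (1-q^i)} = \frac{q^{\mathrm{Inv}(\sigma)}}{Z_{n,q}},
\]
using the $q$-factorial formula $Z_{n,q} = \prod_{i=1}^n (1-q^i)/(1-q)$.

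The main obstacle is the careful bookkeeping of the zero/one constraints imposed on $B_{i,j}$ by the event $\{f^B(i) = a_{\sigma(i)}\}$; specifically, one must verify that the ``skipped'' positions in $[1, a_{\sigma(i)} - 1]$ correspond exactly to indices $k < i$ with $\sigma(k) < \sigma(i)$, so that the count $N_i(\sigma)$ depends only on $\sigma$ and not on the particular values $a_j$. Once this correspondence is in hand, the remainder is a routine manipulation combining the independence of the entries of $B$, the identity $\sum_i N_i(\sigma) = \binom{n}{2} - \mathrm{Inv}(\sigma)$, and the $q$-factorial formula for $Z_{n,q}$.
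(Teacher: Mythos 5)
Your proposal is correct and follows essentially the same route as the paper: condition row by row, compute $\mathbb{P}\big(f^B(i)=a_{\sigma(i)}\mid F^B_{i-1}\big)=(1-q)q^{a_{\sigma(i)}-1-N_i(\sigma)}$, and use $\sum_i N_i(\sigma)=\binom{n}{2}-\mathrm{Inv}(\sigma)$; summing over the possible value sets $\{a_1<\cdots<a_n\}$ is just a reparametrization of the paper's sum over $f\in\theta(\sigma)$. The only (harmless) difference is at the end: you evaluate the remaining sum exactly via the partition generating function and the $q$-factorial formula for $Z_{n,q}$, whereas the paper simply observes that this factor is independent of $\sigma$ and concludes by proportionality.
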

We prove Proposition~\ref{prop:IBM} in Section~\ref{sec:IBMGeneralResults}. It follows that for $B$ as in the proposition, $T^B_n$ is $\mt(n,q)$-distributed for all $n$. The random trees in the sequence $(T^B_n)_{n \ge 0}$ may be viewed as the successive states of a transient Markov chain taking values in the set of finite subtrees of $T_{\infty}$. This chain was already defined in \cite{evans2012trickle}, where aspects of its asymptotic behaviour were studied; however, the observation that its one-dimensional marginals are all Mallows-distributed appears to be new.

The next corollary is a direct consequence of Proposition~\ref{prop:IBM} and the fact that $(T^b_n)_{n\geq0}$ is increasing for all $b$.

\begin{cor}\label{cor:TnqIncreasing}
    Let $n\geq0$ and $q\in[0,1]$. Then, for all $u\in T_\infty$
    \begin{align*}
        \mathbb{P}\big(u\in T_{n,q}\big)\leq\mathbb{P}\big(u\in T_{n+1,q}\big)\,.
    \end{align*}
\end{cor}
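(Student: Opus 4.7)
The plan is to exhibit, for each $q \in [0,1]$, a coupling under which the trees $T_{n,q}$ are realised on a single probability space as a nondecreasing sequence of subsets of $T_\infty$. Once such a coupling is in place, the event $\{u \in T_{n,q}\}$ is contained in $\{u \in T_{n+1,q}\}$ for every $u \in T_\infty$, and taking probabilities immediately yields the claimed inequality.

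For $q \in [0,1)$, I would invoke Proposition~\ref{prop:IBM} directly. Let $B=(B_{i,j})_{i,j\geq 1}$ be a random matrix with iid $\textsc{Bernoulli}(1-q)$ entries. With probability one every row of $B$ contains infinitely many ones (since $1-q>0$), so $f^B$ is well-defined and the infinite $b$-model is constructed for $B$. By Proposition~\ref{prop:IBM}, the rank permutation $\sigma^B_n$ is $\pi_{n,q}$-distributed, and therefore $T^B_n = \bst{f^B_n}$ is $\mt(n,q)$-distributed for every $n$. On the other hand, by the very definition of the infinite $b$-model, $f^B_{n+1}$ is an extension of $f^B_n$ by one additional term, and it is standard (and already remarked upon in the paragraph preceding Proposition~\ref{prop:IBM}) that $\bst{f^B_n}$ is the subtree of $\bst{f^B_{n+1}}$ obtained by deleting the single node with label $f^B(n+1)$. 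Hence $T^B_n \subseteq T^B_{n+1}$ holds deterministically, and the inequality follows.

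For $q=1$, the Bernoulli construction degenerates, so I would instead couple using iid $\textsc{Uniform}([0,1])$ keys $U_1,U_2,\ldots$ and set $\widetilde{T}_n := \bst{(U_1,\ldots,U_n)}$. Since $\bst{\cdot}$ depends only on the relative order of its arguments, and the rank permutation of $U_1,\ldots,U_n$ is uniform on $\mathcal{S}_n$, the tree $\widetilde{T}_n$ is $\mt(n,1)$-distributed. The same deletion-is-leaf observation as above shows $\widetilde{T}_n\subseteq \widetilde{T}_{n+1}$ pointwise, and the inequality follows exactly as before.

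There is no real obstacle here: the whole content of the corollary is the monotonicity of the two couplings, which is immediate from how $\bst{f}$ interacts with extension of the defining sequence $f$. The only minor subtlety is that Proposition~\ref{prop:IBM} is stated for $q\in[0,1)$, so one must treat $q=1$ separately via the uniform-keys construction (which is essentially the classical sequential construction of random binary search trees).
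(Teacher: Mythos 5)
Your proposal is correct and follows essentially the same route as the paper, which obtains the corollary directly from Proposition~\ref{prop:IBM} together with the deterministic monotonicity $T^b_n\subseteq T^b_{n+1}$ of the infinite $b$-model. Your explicit uniform-keys coupling for $q=1$ is a reasonable addition (the paper leaves that boundary case to the classical sequential construction of random binary search trees) and is correct as stated, since $\bst{\cdot}$ depends only on the relative order of its arguments.
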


We write $T^B = \bigcup_{n \ge 0} T^B_n$ for the infinite tree which is the increasing limit of the sequence $(T^B_n)_{n\geq0}$, and $\tau^B$ for the corresponding labelling. It is immediate from the construction that $T^B = \bst{f^B}$.

For $B$ as in the proposition, the random function $f^B:\Z_+ \to \Z_+$ defined previously is a.s.\ a bijection, and its law is the so-called $\textsc{Mallows}(q)$ distribution on $S_{\infty}=\{\sigma:\Z_+ \to \Z_+:\sigma\mbox{ a permutation}\}$, introduced in \cite{gnedin2010q}; the fact that $f^B$ is $\textsc{Mallows}(q)$-distributed was proved in \cite{gladkich2018cycle}.

Let $M^B_0=0$ and for $n \ge 1$, let $M^B_n = \max\big(f^B(1),\ldots,f^B(n)\big)$. Then set $R^B_0=0$ and for $n \ge 1$, let $R^B_n = \#\{i \in [n]: M^B_i > M^B_{i-1}\}$ be the number of records in the sequence $\big(f^B(1),\ldots,f^B(n)\big)$. Note that $R^B_n$ is precisely the {\em right depth} of $T^B_n$, i.e., $R^B_n = \max\{d: \overline{1}^d \in T^B_n\}$. Also, for all $k \ge 1$, if $R^B_{n-1}=k-1$ and $R^B_n=k$ then $\tau^B(\overline{1}^k)=M^B_n$.

For any $k\geq0$ and any node $u\in T^B(\overline{1}^k\overline{0})$, we have $\tau^B(\overline{1}^{k-1})<\tau^B(u)<\tau^B(\overline{1}^k)$; here we write $\overline{1}^0=\root$ and for $k=0$, set $\tau^B(\overline{1}^{k-1})=0$. Since $f^B$ is a bijection, it follows that the subtree $T^B(\overline{1}^k\overline{0})$ contains exactly $\tau^B(\overline{1}^k)-\tau^B(\overline{1}^{k-1})-1$ nodes, and the labels assigned to these nodes by $\tau^B$ are precisely the elements of the set $\big\{\tau^B(\overline{1}^{k-1})+1,\ldots,\tau^B(\overline{1}^{k})-1\big\}$. Moreover, since any infinite sequence of positive integers contains infinitely many records, necessarily $T^B$ contains the infinite rightward path $P_R:=\{\overline{1}^k,k\geq0\}$, and the left subtrees $\big(T^B(\overline{1}^k\overline{0}),k \ge 0\big)$ hanging from $P_R$ have respective sizes $\big(\tau^B(\overline{1}^k)-\tau^B(\overline{1}^{k-1})-1,k \ge 1\big)$.

Much of our analysis will be based on the decompositions of $T^B$ and $T^B_n$ as
\begin{align*}
    T^B&=P_R\cup\bigcup_{k\geq0}T^B\big(\overline{1}^k\overline{0}\big)\, ,\\
    T^B_n&=\{\overline{1}^k,0 \le k \le R^B_n\}\cup\bigcup_{0 \le k \le R^B_n}T^B_n\big(\overline{1}^k\overline{0}\big)\, .
\end{align*}
From the second decomposition, it is immediate that
\begin{align}\label{eq:first_height_bd}
    h(T_{n,q})\stackrel{\mathrm{d}}{=} h(T^B_n)
     =\max_{0\leq k\leq R^B_n}\Big\{h\Big(T^B_n\big(\overline{1}^k\overline{0}\big)\Big)+k+1\Big\}
    \le 
    \max_{0\leq k\leq R^B_n}\Big\{h\Big(T^B\big(\overline{1}^k\overline{0}\big)\Big)+k+1\Big\}
\end{align}

In order to use (\ref{eq:first_height_bd}) to get useful information about the height, we need to understand the distributions of $R^B_n$ and of the subtrees $T^B_n(\overline{1}^k\overline{0})$ and $T^B(\overline{1}^k\overline{0})$. The last of these is the easiest to describe. We say a random variable $G$ is $\textsc{Geometric}(c)$-distributed if $\p(G=k)=(1-c)^kc$ for $k \in \N=\{0,1,2,...\}$.

\begin{lemma}\label{lem:LeftSubtreesDistribution} 
    Fix $q\in[0,1)$ and let $B=\big(B_{i,j}\big)_{i,j\geq1}$ have independent $\textsc{Bernoulli}(1-q)$ entries. Then the random trees $\big(T^B(\overline{1}^k\overline{0})\big)_{k\geq0}$ are independent and identically distributed with 
    \begin{align*} 
        T^B\big(\overline{1}^k\overline{0}\big)\overset{\textrm{d}}{=}\overline{1}^k\overline{0}T_{G(q),q},
    \end{align*}
    where $G(q)$ is $\textsc{Geometric}(1-q)$-distributed and is independent of the trees $\big(T_{n,q}\big)_{n\geq0}$. In other words, for all $k,n\geq0$ and any tree $t\subseteq T_\infty$ with $|t|=n$, we have
    \begin{align*}
        \mathbb{P}\Big(T^B\big(\overline{1}^k\overline{0}\big)=\overline{1}^k\overline{0}t\Big)&=q^{n}(1-q)\cdot\mathbb{P}\big(T_{n,q}=t\big).
    \end{align*}
\end{lemma}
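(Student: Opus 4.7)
The plan is to combine Proposition~\ref{prop:IBM} with Proposition~\ref{prop:ProjCons} and pass to the limit using the monotonicity of the sequence $(T^B_n)_{n\geq0}$. Fix $k \geq 0$ and finite trees $t_0, \ldots, t_k$ with $|t_j| = m_j$, and set
\begin{align*}
    A &:= \bigl\{T^B(\overline{1}^j\overline{0}) = \overline{1}^j\overline{0}t_j \text{ for all } 0 \leq j \leq k\bigr\}, \\
    A_n &:= \bigl\{T^B_n(\overline{1}^j\overline{0}) = \overline{1}^j\overline{0}t_j \text{ for all } 0 \leq j \leq k\bigr\}.
\end{align*}
By Proposition~\ref{prop:IBM}, $T^B_n$ is $\mt(n,q)$-distributed. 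For $n$ large enough that $n_j := n - j - \sum_{i<j} m_i \geq m_j + 1$ for every $j \leq k$, I would apply Proposition~\ref{prop:ProjCons} recursively along the rightward path: at the $j$-th step the current subtree has size $n_j$ and is $\mt(n_j, q)$-distributed; its left subtree has size $m_j$ with conditional probability $(1-q)q^{m_j}/(1-q^{n_j})$ and is then conditionally $\mt(m_j, q)$-distributed, independent of the right subtree (of size $n_{j+1}$, distributed as $\mt(n_{j+1}, q)$). Iterating yields
\begin{align*}
    \mathbb{P}(A_n) = \prod_{j=0}^{k} \frac{(1-q)q^{m_j}}{1-q^{n_j}} \, \mathbb{P}(T_{m_j, q} = t_j) \xrightarrow[n \to \infty]{} \prod_{j=0}^{k} (1-q)q^{m_j} \, \mathbb{P}(T_{m_j, q} = t_j) =: L.
\end{align*}

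To conclude $\mathbb{P}(A) = L$, I would show $\mathbbm{1}_{A_n} \to \mathbbm{1}_A$ almost surely, which by bounded convergence forces $\mathbb{P}(A) = \lim_n \mathbb{P}(A_n) = L$. Since $f^B$ is almost surely a bijection of $\Z_+$ (cf.\ \cite{gladkich2018cycle}), each record $\tau^B(\overline{1}^j)$ is a.s.\ finite, hence each left subtree $T^B(\overline{1}^j\overline{0})$ is a.s.\ finite; and by construction $T^B_n(\overline{1}^j\overline{0}) \uparrow T^B(\overline{1}^j\overline{0})$. The key monotonicity observation is that once some $T^B_n(\overline{1}^j\overline{0})$ strictly contains $\overline{1}^j\overline{0}t_j$, the event $A_m$ fails for every $m \geq n$. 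Combined with the fact that the increasing sequence of finite subtrees eventually stabilizes at its limit, this shows that the sequence $(\mathbbm{1}_{A_n})$ is eventually constant and its limit is $\mathbbm{1}_A$. The product form of $L$ then immediately yields both the claimed independence and the marginal law $T^B(\overline{1}^k\overline{0}) \overset{d}{=} \overline{1}^k\overline{0}T_{G(q),q}$.

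The main obstacle is matching $\mathbb{P}(A)$ with the limit $\lim_n \mathbb{P}(A_n)$: here the explicit monotone coupling coming from the $b$-model and the almost-sure finiteness of each left subtree hanging off $P_R$ do the essential work, while the iterated use of Proposition~\ref{prop:ProjCons} is purely computational bookkeeping.
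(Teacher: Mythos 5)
Your proof is correct, but it takes a genuinely different route from the paper's. The paper works directly in the infinite tree $T^B$: it identifies $\big|T^B(\overline{1}^k\overline{0})\big|$ with the gap $\tau^B(\overline{1}^k)-\tau^B(\overline{1}^{k-1})-1$ between consecutive record values of $f^B$, asserts that these gaps are independent $\textsc{Geometric}(1-q)$ random variables, and then invokes Proposition~\ref{prop:ProjCons} to say that, conditionally on its size, each left subtree is Mallows-distributed. You instead stay at the level of the finite trees $T^B_n$, which are $\mt(n,q)$-distributed by Proposition~\ref{prop:IBM}, iterate the root branching property of Proposition~\ref{prop:ProjCons} down the rightmost path to obtain the exact product formula $\mathbb{P}(A_n)=\prod_{j\le k}\frac{(1-q)q^{m_j}}{1-q^{n_j}}\mathbb{P}(T_{m_j,q}=t_j)$, and then pass to the limit using the monotone coupling $T^B_n\uparrow T^B$ together with the a.s.\ finiteness of each $T^B(\overline{1}^j\overline{0})$ (which guarantees that $\mathbbm{1}_{A_n}$ a.s.\ stabilizes at $\mathbbm{1}_A$, so dominated convergence applies). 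What your route buys is that the independence of the whole collection and the geometric size distribution both fall out of a single explicit finite-dimensional computation, rather than being asserted separately for the record gaps of the infinite Mallows permutation; the cost is the extra limit-interchange step, which you handle correctly, and the mild bookkeeping condition $n_j\ge m_j+1$ ensuring the recursion is licit for large $n$. Both arguments ultimately rest on the same two inputs, Propositions~\ref{prop:IBM} and~\ref{prop:ProjCons}, so yours is a legitimate, slightly more self-contained alternative.
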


We prove Lemma~\ref{lem:LeftSubtreesDistribution} in Section~\ref{sec:IBMGeneralResults}. Combined with (\ref{eq:first_height_bd}), this lemma yields a key distributional upper bound on $h(T_{n,q})$. We now have
\begin{align*}
    h(T_{n,q}) \stackrel{\mathrm{d}}{=}
\max_{0\leq k\leq R^B_n}\Big\{h\Big(T^B_n\big(\overline{1}^k\overline{0}\big)\Big)+k+1\Big\}
&     \le 
    \max_{0\leq k\leq R^B_n}\Big\{h\Big(T^B\big(\overline{1}^k\overline{0}\big)\Big)+k+1\Big\} 
\\
& = R^B_n+1 + 
\max_{0\leq k\leq R^B_n}\Big\{h\Big(T^B\big(\overline{1}^{R^B_n-k}\overline{0}\big)\Big)-k\Big\}\\
& \stackrel{\mathrm{d}}{=}
R^B_n+1 + 
\max_{0\leq k\leq R^B_n}\Big\{h\Big(T^B\big(\overline{1}^{k}\overline{0}\big)\Big)-k\Big\}\\
& \le R_n^B+1+\sup_{k \ge 0} \Big\{h\Big(T^B\big(\overline{1}^{k}\overline{0}\big)\Big)-k\Big\}\, .
\end{align*}
Considering that $R^B_n$ corresponds to the depth of the rightmost path in $T^B_n$, we also have $R^B_n\leq h(T^B_n)\overset{d}{=}h(T_{n,q})$. In combination with (\ref{eq:first_height_bd}), this yields that
\begin{align}\label{eq:htnq_upper}
    R^B_n\preceq h(T_{n,q})\preceq R_n^B+1+\sup_{k \ge 0} \Big\{h\Big(T^B\big(\overline{1}^{k}\overline{0}\big)\Big)-k\Big\}\,,
\end{align}
where we recall that $\preceq$ denotes stochastic inequality. Lemma~\ref{lem:LeftSubtreesDistribution} tells us the trees whose heights appear in the final supremum are independent and $T_{G(q),q}$-distributed. Combining this inequality with a union-bound and Proposition~\ref{prop:StochBounds}, we now have
\begin{align*}
    \mathbb{P}\big(h(T_{n,q})\geq h\big)&\leq\mathbb{P}\left(R_n^B+1+\sup_{k \ge 0} \Big\{h\Big(T^B\big(\overline{1}^{k}\overline{0}\big)\Big)-k\Big\}\geq h\right)\\
    &\leq\inf_{0\leq\ell\leq h}\bigg\{\mathbb{P}\Big(R^B_n+1\geq h-\ell\Big)+\sum_{k\geq0}\mathbb{P}\Big(h\big(T^B(\overline{1}^{k}\overline{0})\big)\geq\ell+k\Big)\bigg\}\\
    &=\inf_{0\leq\ell\leq h}\bigg\{\mathbb{P}\Big(R^B_n+1\geq h-\ell\Big)+\sum_{k\geq0}\mathbb{P}\Big(h\big(T_{G(q),q}\big)\geq\ell+k\Big)\bigg\}\\
    &\leq\inf_{0\leq\ell\leq h}\bigg\{\mathbb{P}\Big(R^B_n+1\geq h-\ell\Big)+\sum_{k\geq0}2^{k+\ell}\mathbb{P}\Big(R^B_{G(q)}\geq\ell+k\Big)\bigg\}
\end{align*}
where in the final line $G(q)$ should be understood to be independent of the random variables in $B$. The preceding argument gives us a way to derive upper tail bounds on $\big(h(T_{n,q})\big)$ exclusively by controlling the upper tails of the random variables $(R^B_n,n\geq1)$. The next proposition is our key tool for doing so. 

\begin{prop}\label{prop:BivariateGenFun}
    Fix $q\in[0,1)$ and let $B=\big(B_{i,j}\big)_{i,j\geq1}$ have independent $\textsc{Bernoulli}(1-q)$ entries.
Then the sequence $(R^B_n,M^B_n)_{n\geq0}$ is a Markov chain with transition probabilities given by
    \begin{align*}
        \mathbb{P}\Big(R^B_{n+1}=r+k,M^B_{n+1}=m+\ell\,\Big|\,R^B_{n}=r,M^B_{n}=m\Big)=\left\{\begin{array}{ll}
            q^{m+\ell-n-1}(1-q) & \textrm{if $k=1$ and $\ell\geq1$} \\
            1-q^{m-n} & \textrm{if $k=0$ and $\ell=0$} \\
            0 & \textrm{otherwise}
        \end{array}\right..
    \end{align*}
    Moreover, for $x,y\in\mathbb{C}$ such that $q|y|<1$, we have
    \begin{align*}
        \mathbb{E}\left[x^{R^B_n+1}y^{M^B_{n}}\right]=y^n\prod_{1\leq k\leq n}\frac{q+(1-q)x-q^k}{1-q^ky}.
    \end{align*}
\end{prop}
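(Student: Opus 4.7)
The plan is to derive both parts of the proposition from a single one-step analysis of the construction of $f^B$, exploiting the independence of distinct rows of $B$.

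For the Markov property and transitions, I would condition on the $\sigma$-algebra $\mathcal{F}_n$ generated by $B_{i,j}$ for $i\leq n$ and $j\geq 1$; this determines $F^B_n=\{f^B(1),\ldots,f^B(n)\}$ and in particular $(R^B_n, M^B_n)$, while leaving row $n+1$ as a fresh i.i.d.\ $\textsc{Bernoulli}(1-q)$ sequence. Since $f^B(n+1)$ is the least $j\notin F^B_n$ with $B_{n+1,j}=1$, and since $F^B_n\subseteq[M^B_n]$ has cardinality $n$, there are exactly $M^B_n-n$ admissible indices in $[M^B_n]$ and every index above $M^B_n$ is admissible. Two elementary computations with i.i.d.\ Bernoullis then yield: the probability that no admissible index in $[M^B_n]$ carries a $1$ is $q^{M^B_n-n}$; and on this event the excess $f^B(n+1)-M^B_n$ is a shifted $\textsc{Geometric}(1-q)$ variable. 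Using $R^B_{n+1}=R^B_n+\mathbf{1}\{f^B(n+1)>M^B_n\}$ and $M^B_{n+1}=\max(f^B(n+1),M^B_n)$ then gives exactly the stated transition kernel. Because the conditional law depends on $\mathcal{F}_n$ only through $(n,M^B_n)$, this simultaneously establishes the Markov property of $(R^B_n,M^B_n)_{n\geq 0}$.

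For the generating function, set $\Phi_n(x,y):=\mathbb{E}[x^{R^B_n+1}y^{M^B_n}]$ and $\alpha:=q+(1-q)x$. Conditioning on $(R^B_n,M^B_n)$ and summing the geometric series in $\ell$ (this is where the hypothesis $q|y|<1$ is used) gives
\begin{align*}
\mathbb{E}\big[x^{R^B_{n+1}+1}y^{M^B_{n+1}}\,\big|\,R^B_n=r,\,M^B_n=m\big]=x^{r+1}y^m\left(1-q^{m-n}\cdot\frac{1-\alpha y}{1-qy}\right).
\end{align*}
Taking expectations and using $q^{M^B_n-n}y^{M^B_n}=q^{-n}(qy)^{M^B_n}$ yields the functional recursion
\begin{align*}
\Phi_{n+1}(x,y)=\Phi_n(x,y)-\frac{1-\alpha y}{1-qy}\cdot q^{-n}\Phi_n(x,qy).
\end{align*}

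Finally I would verify by induction that the claimed product formula satisfies this recursion. The inductive step rests on a simple telescoping: for any function of the form $F_n(x,y)=C(x)\cdot y^n\prod_{k=1}^n(\alpha-q^k)/(1-q^k y)$ one computes
\begin{align*}
\frac{F_n(x,qy)}{F_n(x,y)}=q^n\prod_{k=1}^n\frac{1-q^ky}{1-q^{k+1}y}=\frac{q^n(1-qy)}{1-q^{n+1}y},
\end{align*}
and substituting into the recursion gives $F_{n+1}/F_n=y(\alpha-q^{n+1})/(1-q^{n+1}y)$, which is exactly the next factor in the product. The base case at $n=0$ is immediate. The main obstacle is really just bookkeeping: the non-trivial algebraic moves are the simplification of the conditional expectation into the compact form $1-q^{m-n}(1-\alpha y)/(1-qy)$ and the telescoping identity above. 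The conceptual content lies in recognizing that $\alpha=q+(1-q)x$ is the natural "twisted" variable that makes the product form fall out of the recursion.
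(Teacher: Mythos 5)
Your proposal is correct and follows essentially the same route as the paper: the kernel and Markov property come from the identical conditioning on the first $n$ rows of $B$ (with $M^B_n-n$ admissible columns below the running maximum), and the product formula is obtained from the same recursion $\mathbb{E}\big[x^{R^B_{n+1}+1}y^{M^B_{n+1}}\big]=\mathbb{E}\big[x^{R^B_{n}+1}y^{M^B_{n}}\big]-\frac{1-(q+(1-q)x)y}{1-qy}\,q^{-n}\,\mathbb{E}\big[x^{R^B_{n}+1}(qy)^{M^B_{n}}\big]$ verified inductively against the product, your telescoping of $F_n(x,qy)/F_n(x,y)$ being just a tidier packaging of the paper's factorization step. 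The one point to watch is the base case: under the literal convention $R^B_0=0$ the left-hand side at $n=0$ equals $x$ while the empty product equals $1$, so your ``$n=0$ is immediate'' only holds with the right-depth reading $R^B_0+1=0$ (the first element not counted as a record), which is the same off-by-one the paper itself glosses over by instead starting its induction at $n=1$ with the direct computation $\mathbb{E}\big[x^{R^B_1+1}y^{M^B_1}\big]=xy(1-q)/(1-qy)$.
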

We prove Proposition~\ref{prop:BivariateGenFun} in Section~\ref{sec:IBMGeneralResults}. We obtain the moment generating function of $R^B_n$ from Proposition~\ref{prop:BivariateGenFun} by taking $x=e^t$ and $y=1$. 
Using the moment generating function to control the behaviour of $R^B_n-\mathbb{E}[R^B_n]$ yields Chernoff-type bounds for both the upper and lower tail. The bounds are strong enough that they allow us to prove both the upper and lower bounds of Theorem~\ref{thm:AS}, implying the bounds of Theorem~\ref{thm:Prob} when $(q_n)_{n\geq0}$ is such that $n(1-q_n)/\log n\rightarrow\infty$. For the upper bound, the key consequence of Proposition~\ref{prop:BivariateGenFun} is the following proposition, which allows us to control the right hand side of (\ref{eq:htnq_upper}). It will also be used in the analysis for other ranges of $(q_n)_{n\geq 0}$.

\begin{prop}\label{prop:boundsLeftSubtree}
    There exist universal constants $M$, $C$ and $\lambda$ such that, for all $q\in[0,1)$ and $\xi\in\mathbb{R}$, we have
    \begin{align*}
        \mathbb{P}\left(\sup_{k\geq0}\Big\{h\Big(T^B\big(\overline{1}^{k}\overline{0}\big)\Big)-k\Big\}\geq c^*\log\left(\frac{1}{1-q}\right)+M\sqrt{\log\left(\frac{1}{1-q}\right)}+\xi\right)\leq Ce^{-\lambda\xi}.
    \end{align*}
\end{prop}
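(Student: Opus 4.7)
The plan is a two-step argument. First, I reduce the tail bound on the supremum to a tail bound on the height of a single left subtree. By Lemma~\ref{lem:LeftSubtreesDistribution}, the subtrees $(T^B(\overline{1}^k\overline{0}))_{k \geq 0}$ are iid, each with the law of $T_{G(q),q}$ for $G(q) \sim \textsc{Geometric}(1-q)$. Writing $L := \log(1/(1-q))$, a union bound gives
\[
    \mathbb{P}\Bigl(\sup_{k \geq 0}\{h(T^B(\overline{1}^k\overline{0})) - k\} \geq c^*L + M\sqrt{L} + \xi\Bigr)
    \leq \sum_{k \geq 0} \mathbb{P}\bigl(h(T_{G(q),q}) \geq c^*L + M\sqrt{L} + \xi + k\bigr).
\]
If I can establish universal constants $M, C_0, \lambda_0 > 0$ such that $\mathbb{P}(h(T_{G(q),q}) \geq c^* L + M\sqrt{L} + s) \leq C_0 e^{-\lambda_0 s}$ for every $s \geq 0$ and $q \in [0,1)$, then applying this bound with $s = \xi + k$ makes the resulting sum geometric and yields the target with $\lambda = \lambda_0$ and $C = \max(C_0/(1 - e^{-\lambda_0}),\, 1)$ (the $\max$ covers $\xi < 0$, where we use the trivial bound $\mathbb{P}(\cdot) \leq 1$).

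For the single-subtree tail, I would first apply Proposition~\ref{prop:StochBounds}: the $2^h$ nodes at depth $h$ in $T_{n,q}$ each have probability at most $\mathbb{P}(\overline{1}^h \in T_{n,q}) = \mathbb{P}(R^B_n \geq h)$ of being present, giving $\mathbb{P}(h(T_{n,q}) \geq h) \leq 2^h \mathbb{P}(R^B_n \geq h)$; averaging over $n = G(q)$ yields $\mathbb{P}(h(T_{G(q),q}) \geq h) \leq 2^h \mathbb{P}(R^B_{G(q)} \geq h)$. Setting $y = 1$ in Proposition~\ref{prop:BivariateGenFun} and summing in $n$ against the weights $(1-q)q^n$, via the $q$-binomial theorem, produces the closed form
\[
    \mathbb{E}\bigl[x^{R^B_{G(q)}}\bigr] = \frac{1}{x}\prod_{k \geq 1}\frac{1-q^k}{1-(q+(1-q)x)q^k},
\]
valid whenever $(q + (1-q)x)\,q < 1$. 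A Chernoff-type bound $\mathbb{P}(R^B_{G(q)} \geq h) \leq x^{-h}\,\mathbb{E}[x^{R^B_{G(q)}}]$, combined with the $2^h$ factor, gives $\mathbb{P}(h(T_{G(q),q}) \geq h) \leq (2/x)^h\, \mathbb{E}[x^{R^B_{G(q)}}]$, which I would optimise in $x$ near the singularity at $x = (1+q)/q$. The optimisation yields the threshold $c^* L$, with $c^*$ emerging from the same saddle-point equation $c\log(2e/c) = 1$ that appears in Devroye's random binary search tree argument, while the $M\sqrt{L}$ correction absorbs Gaussian-scale fluctuations of $R^B_{G(q)}$ (whose mean and variance are both of order $L$).

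The main obstacle is the sharp, uniform-in-$q$ asymptotic analysis of the generating function $\mathbb{E}[x^{R^B_{G(q)}}]$. For $q$ close to $1$ the singularity at $x = (1+q)/q$ approaches $2$ from above, leaving only a narrow window $\log 2 < \log x < \log((1+q)/q)$ in which the Chernoff estimate is simultaneously convergent and strong enough to defeat the $2^h$ factor; extracting an exponential rate in this regime requires controlling the blow-up of the infinite product $\prod_k (1-q^k)/(1-(q+(1-q)x)q^k)$ as $(q+(1-q)x)q \to 1$. Making the constants $M$, $C$, $\lambda$ truly universal will also involve treating the regime of $q$ bounded away from $1$ separately — there the generating function is uniformly controlled and the bound follows from elementary Chernoff estimates — from the regime $q \to 1$ where the delicate saddle-point analysis near the pole is required.
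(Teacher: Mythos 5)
Your opening reduction (union bound over $k$, iid left subtrees distributed as $T_{G(q),q}$) matches the paper, and your closed form $\mathbb{E}\big[x^{R^B_{G(q)}}\big]=\frac1x\prod_{j\geq1}\frac{1-q^j}{1-(q+(1-q)x)q^j}$ is correct, with radius of convergence $x<(1+q)/q$. The gap is in the single-subtree step, and it is not a matter of delicate saddle-point analysis near the pole: the chain $\mathbb{P}\big(h(T_{G(q),q})\geq h\big)\leq 2^h\,\mathbb{P}\big(R^B_{G(q)}\geq h\big)\leq (2/x)^h\,\mathbb{E}\big[x^{R^B_{G(q)}}\big]$ cannot yield a bound $C_0e^{-\lambda_0 s}$ at $h=c^*L+M\sqrt L+s$ (with $L=\log\frac1{1-q}$) when $q\to1$, because the middle quantity is itself exponentially large in $L$. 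Indeed, for $h\geq 2L$ the dominant contribution to $\mathbb{P}\big(R^B_{G(q)}\geq h\big)$ comes from $G(q)\approx h/(2(1-q))$ (geometric cost roughly $e^{-h/2}$) combined with a Poisson-type deviation of the record count from its conditional mean $\approx h/2+L$ up to $h$ (cost roughly $e^{-(h\log2-h/2-L)}$); optimizing over the size gives $\mathbb{P}\big(R^B_{G(q)}\geq h\big)\approx e^{L-h\log2}$ up to lower-order factors, so $2^h\,\mathbb{P}\big(R^B_{G(q)}\geq h\big)\approx e^{L}=\tfrac1{1-q}$ \emph{uniformly} in the relevant range of $h$, which blows up as $q\to1$. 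The Chernoff-side symptom is the same obstruction you flagged but underestimated: beating the entropy factor $2^h$ at the Devroye threshold requires a tilt $x\approx c^*=4.31\ldots$, which lies far outside the radius $(1+q)/q\to2$; the constrained optimum of $h\log(2/x)+\log\mathbb{E}\big[x^{R^B_{G(q)}}\big]$ over $x<(1+q)/q$ is at least $(1+o(1))L>0$, so no choice of $x$ "near the singularity" can make the bound small, let alone decay in $s$.

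The paper's proof avoids exactly this by truncating the geometric size \emph{before} taking the union bound over nodes: the event $G(q)\geq\frac{\lambda(k+\xi)+2-q}{1-q}$ is paid for directly by the geometric tail ($\leq e^{-\lambda(k+\xi)-1}$), and on the complement the monotonicity of Corollary~\ref{cor:TnqIncreasing} replaces $T_{G(q),q}$ by $T_{m,q}$ with the deterministic size $m=\big\lfloor\frac{\lambda(k+\xi)+2-q}{1-q}\big\rfloor$. Only then are Proposition~\ref{prop:StochBounds}, the $2^{\ell+k}$ union bound, and Chernoff applied — to $R^B_m$, whose moment generating function is a \emph{finite} product (entire in the tilt), so the tilt $t=\log c^*$ is legitimate; the mean is controlled by $\mu_1(m,q)\leq m(1-q)+L+\frac{M}{c^*}\sqrt L$ (this is also where $M\sqrt L$ comes from: it absorbs the deterministic $O(\sqrt{\log})$ error in $\mu_1$ from Proposition~\ref{prop:ConvOfMu}, not Gaussian fluctuations), and $\lambda=\frac{c^*-1}{(c^*)^2}$ is chosen small enough that the extra $m(1-q)\approx\lambda(k+\xi)$ term does not destroy the exponential decay in $\xi+k$. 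If you want to salvage your generating-function route, you would have to condition on (or truncate) $G(q)$ in a $(k+\xi)$-dependent way before invoking the $2^h$ union bound — which is precisely the paper's argument.
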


As above, $c^*$ is the unique solution of $c\log\left(\frac{2e}{c}\right)=1$ with $c\geq2$. The proof of this proposition can be found in Section~\ref{sec:bdsLeftSubtrees}.

\subsection{Intermediate values}\label{sec:intValues}

The most technical part of the proof of Theorem~\ref{thm:Prob} appears when $n(1-q_n)/\log n=\Theta(1)$. In this situation, the proof uses a combination of the techniques from the two previous cases, when $n(1-q_n)/\log n\rightarrow0$ and when $n(1-q_n)/\log n\rightarrow\infty$.

In order to obtain the identities and bounds in (\ref{eq:htnq_upper}), we decomposed $T^B_n$ into its rightmost path $\big(\overline{1}^k, 0\leq k\leq R^B_n\big)$, together with the left subtrees hanging from each of its nodes. Because $T^B_n\big(\overline{1}^{R^B_n+1}\big)=\emptyset$, this decomposition can be rewritten as
\begin{align*}
    T^B_n&=\{\overline{1}^k,0 \le k<R^B_n+1\}\cup\left(\bigcup_{0 \le k<R^B_n+1}T^B_n\big(\overline{1}^k\overline{0}\big)\right)\cup T^B_n\big(\overline{1}^{R^B_n+1}\big)\,.
\end{align*}
For any $0\leq d\leq R^B_n+1$, we may similarly decompose $T^B_n$ along the initial segment $\big(\overline{1}^k, 0\leq k<d\big)$ of the rightmost path to obtain
\begin{align*}
    T^B_n&=\{\overline{1}^k,0 \le k < d\}\cup\left(\bigcup_{0 \le k < d}T^B_n\big(\overline{1}^k\overline{0}\big)\right)\cup T^B_n\big(\overline{1}^d\big)\,.
\end{align*}
From this decomposition, it is immediate that, for all $d\geq0$,
\begin{align*}
    h(T_{n,q})\stackrel{\mathrm{d}}{=} h(T^B_n)
     &\leq\max\left\{\max_{0\leq k<d}\Big\{h\Big(T^B_n\big(\overline{1}^k\overline{0}\big)\Big)+k+1\Big\},d+h\Big(T^B_n\big(\overline{1}^d\big)\Big)\right\}\,,
\end{align*}
with equality whenever $d\leq R^B_n+1$. This inequality implies that, for all $d\geq0$, we have
\begin{align}\label{eq:second_height_bd}
    h(T_{n,q})&\preceq \max\left\{\max_{0\leq k<d}\Big\{h\Big(T^B\big(\overline{1}^k\overline{0}\big)\Big)+k+1\Big\},d+h\Big(T^B_n\big(\overline{1}^d\big)\Big)\right\}\notag\\
    &\stackrel{\mathrm{d}}{=}d+\max\left\{\max_{0\leq k<d}\Big\{h\Big(T^B\big(\overline{1}^k\overline{0}\big)\Big)-k\Big\},h\Big(T^B_n\big(\overline{1}^d\big)\Big)\right\}\notag\\
    &\le d+\max\left\{\sup_{k\geq0}\Big\{h\Big(T^B\big(\overline{1}^k\overline{0}\big)\Big)-k\Big\},h\Big(T^B_n\big(\overline{1}^d\big)\Big)\right\}\,;
\end{align}
The first term in the maximum is the same supremum as appears in (\ref{eq:htnq_upper}), and Proposition~\ref{prop:boundsLeftSubtree} gives an essentially sharp upper tail bound for this term of the form $c^*\log n+O_\mathbb{P}\left(\sqrt{\log n}\right)$. Because we aim to prove that $\frac{h(T_{n,q_n})}{n(1-q_n)+c^*\log n}\rightarrow1$ in probability, a natural choice for $d$ is then $n(1-q_n)$. This choice indeed gives the desired bound for $h\big(T^B_n(\overline{1}^d)\big)$, due to the following proposition.

\begin{prop}\label{prop:convRightSubtree}
    Let $(q_n)_{n\geq0}$ be such that $n(1-q_n)/\log n=\Theta(1)$. Then,
    \begin{align*}
        \frac{h\Big(T_{n,q_n}\big(\overline{1}^{\lfloor n(1-q_n)\rfloor}\big)\Big)}{c^*\log n}\rightarrow1
    \end{align*}
    in probability.
\end{prop}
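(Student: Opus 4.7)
By the IBM coupling (Proposition~\ref{prop:IBM}), $T_{n,q_n} \stackrel{\mathrm{d}}{=} T^B_n$ with $B$ having i.i.d.\ $\textsc{Bernoulli}(1-q_n)$ entries. Writing $d := \lfloor n(1-q_n)\rfloor$ and $N := |T^B_n(\overline{1}^d)|$, iterating Proposition~\ref{prop:ProjCons} shows that conditionally on $N$, the subtree $T^B_n(\overline{1}^d)$ is distributed as $\overline{1}^d T_{N,q_n}$; hence
\[
h(T_{n,q_n}(\overline{1}^d)) \mid N \stackrel{\mathrm{d}}{=} h(T_{N,q_n}).
\]
My plan is to show that $\log N/\log n \to 1$ and $N/n \to 0$ in probability. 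Since $n(1-q_n) = \Theta(\log n)$, these together imply $N(1-q_n)/\log N \to 0$, placing $T_{N,q_n}$ in the regime of the first case of Theorem~\ref{thm:Prob} proved in Section~\ref{sec:MTandRBST}. That case gives $h(T_{N,q_n})/(c^*\log N) \to 1$ in probability, and multiplying by $\log N/\log n$ then yields the proposition.

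To control $N$, I combine the identity
\[
N = n - d - \sum_{k=0}^{d-1}|T^B_n(\overline{1}^k\overline{0})|,
\]
which comes from the spine decomposition of $T^B_n$, with two structural inputs. First, by Lemma~\ref{lem:LeftSubtreesDistribution}, the infinite-tree sizes $|T^B(\overline{1}^k\overline{0})|$ are i.i.d.\ $\textsc{Geometric}(1-q_n)$; since $1-q_n = \Theta(\log n/n)$, these have mean of order $n/\log n$ and variance of order $n^2/\log^2 n$, so $\sum_{k=0}^{d-1}|T^B(\overline{1}^k\overline{0})|$ has mean $dq_n/(1-q_n) = nq_n \sim n$ and standard deviation of order $n/\sqrt{\log n}$, with Chernoff-type concentration. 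Second, from Proposition~\ref{prop:BivariateGenFun} one extracts $\mathbb{E}[R^B_n] = n(1-q_n) + \log(1/(1-q_n)) + O(1)$ together with sub-Gaussian fluctuations of order $\sqrt{\log n}$; in particular $R^B_n \ge d$ with high probability, so $N \ge 1$ with high probability.

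The heuristic picture is that $N$ behaves like the positive part of an approximately mean-zero random variable of scale $n/\sqrt{\log n}$: in that case $\mathbb{E}[N] = O(n/\sqrt{\log n}) = o(n)$, giving $N/n \to 0$ via Markov's inequality; and for every $\varepsilon > 0$, $\mathbb{P}(N < n^{1-\varepsilon}) = O(n^{1-\varepsilon} \cdot \sqrt{\log n}/n) = O(\sqrt{\log n}/n^\varepsilon) \to 0$ (a half-normal of scale $n/\sqrt{\log n}$ has density of order $\sqrt{\log n}/n$ near zero), yielding $\log N/\log n \to 1$. The hard part will be making this ``positive part'' picture rigorous: the truncation in $T^B_n$ means that $\sum_{k<d}|T^B_n(\overline{1}^k\overline{0})|$ is \emph{not} simply $\min(n-d, \sum_{k<d}|T^B(\overline{1}^k\overline{0})|)$, and one must instead show that, with high probability, $|T^B_n(\overline{1}^k\overline{0})| = |T^B(\overline{1}^k\overline{0})|$ for all but $o(d)$ of the indices $k<d$. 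I expect this to follow by noting that the labels in these subtrees all lie below $\tau^B(\overline{1}^{d-1})$, which concentrates around a value of order $n$, so under the Mallows law their $f^B$-positions concentrate below $n$; combining this with the $R^B_n$, $M^B_n$ concentration from Proposition~\ref{prop:BivariateGenFun} should close the argument.
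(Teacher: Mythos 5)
Your overall skeleton matches the paper's: reduce to the conditional Mallows tree of size $N=\vert T^B_n(\overline{1}^{d})\vert$, show $\log N=(1+o_\mathbb{P}(1))\log n$ together with $N(1-q_n)=o_\mathbb{P}(\log N)$, and invoke the small-regime result (Proposition~\ref{prop:convProbSmall}). The gap is in the step you yourself flag as the hard part, and the heuristic you offer for it is not just unproven but misleading. Write $V_d=\tau^B(\overline{1}^{d-1})$ for the $d$-th record value; by Lemma~\ref{lem:LeftSubtreesDistribution} it is $d$ plus a sum of $d$ i.i.d.\ $\textsc{Geometric}(1-q_n)$ variables, so it is centered within $O(1/(1-q_n))$ of $n$ with fluctuations of order $n/\sqrt{\log n}$, and $\mathbb{P}(V_d>n)\to 1/2$. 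On that event your untruncated sum $\sum_{k<d}\vert T^B(\overline{1}^k\overline{0})\vert=V_d-d$ exceeds the budget $n-d$, so the truncation is not a perturbation, and if $N$ really behaved like the positive part of a mean-zero variable of scale $n/\sqrt{\log n}$ then $\mathbb{P}(N\le n^{1-\varepsilon})$ would be close to $1/2$ (a positive part has an atom at $0$; the ``half-normal density near zero'' bound applies to $\vert X\vert$, not to $X_+$), and your route to $\log N\sim\log n$ would collapse. What actually saves the lower bound is a different mechanism: conditionally on the record value overshooting, $N=\#\{i\le n: f^B(i)>V_d\}$ decays only exponentially in $(1-q_n)(V_d-n)$, roughly like $e^{-(1-q_n)(V_d-n)}/(1-q_n)$, and since the typical overshoot satisfies $(1-q_n)(V_d-n)=O_\mathbb{P}(\sqrt{\log n})=o(\log n)$, one still gets $N\ge n^{1-o_\mathbb{P}(1)}$. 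Nothing in your toolkit (Chernoff bounds for the geometric sum, concentration of $R^B_n$ and $M^B_n$ from Proposition~\ref{prop:BivariateGenFun}) produces this conditional lower bound; it is an anti-concentration-type statement about how the $n$ insertions split around a high threshold, and your closing sketch about most left subtrees being filled addresses the coupling/upper-bound side, not this.

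This is precisely why the paper does not work at the deterministic depth $d$ directly: it cuts the spine at the random depth $R^B_m+1$ (with $m$ chosen so that $R^B_m\approx n(1-q_n)$), where the subtree is exactly the set of indices whose values exceed $M^B_m$, decouples via Lemma~\ref{lem:N'andN}, and controls the resulting threshold count through the exact law in Proposition~\ref{prop:GenFunN} and the two-sided tail bounds of Propositions~\ref{prop:UBN} and~\ref{prop:LBN}, combined with the concentration of $M^B_m$ (Proposition~\ref{prop:RangeOfM}); the passage back to depth $\lfloor n(1-q_n)\rfloor$ is then done with Lemmas~\ref{lem:convRBm} and~\ref{lem:SizeAtDandD'}. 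If you want to keep your deterministic-depth decomposition, you would need to supply an argument of comparable strength for the lower tail of $N$ on the overshoot event (e.g.\ a conditional second-moment or explicit-distribution computation for the number of late large values), which is essentially re-deriving the threshold-process estimates. A minor additional point: applying Proposition~\ref{prop:convProbSmall} to a tree of random size $N$ requires conditioning on $N$ lying in a deterministic range where the hypotheses hold uniformly, as the paper does in the proofs of Propositions~\ref{prop:convRightSubtree} and~\ref{prop:convRenSubtree}.
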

The proof of this proposition can be found in Section~\ref{sec:ThmProb}, and boils down to showing that $\big|T_{n,q_n}\big(\overline{1}^{\lfloor n(1-q_n)\rfloor}\big)\big|=o_\mathbb{P}(\log n/(1-q_n))$, so that
\begin{align*}
    (1-q_n)\cdot\big|T_{n,q_n}\big(\overline{1}^{\lfloor n(1-q_n)\rfloor}\big)\big|=o_\mathbb{P}\big(\log\big|T_{n,q_n}\big(\overline{1}^{\lfloor n(1-q_n)\rfloor}\big)\big|\big)\, .
\end{align*}
This proposition, combined with the upper bound on the supremum, proves the upper bound of Theorem~\ref{thm:Prob} when $n(1-q_n)=\Theta(1)$.

To prove the corresponding lower bound, we use the simple inequality
\begin{align*}
    h(T_{n,q})\stackrel{\mathrm{d}}{=} h(T^B_n)
     &\geq d+h\Big(T^B_n\big(\overline{1}^d\big)\Big)\,,
\end{align*}
which holds for any $0\leq d\leq R^B_n+1$. Again taking $d=\lfloor n(1-q_n)\rfloor$, which is at most $R^B_n$ with high-probability, Proposition~\ref{prop:convRightSubtree} then yields that $h(T_{n,q_n})\geq n(1-q_n)+\big(c^*+o_\mathbb{P}(1)\big)\log n$, which is the lower bound of Theorem~\ref{thm:Prob} when $n(1-q_n)=\Theta(1)$.

\subsection{Distributional limits}\label{sec:DistrLimits}

In Section~\ref{sec:IBMandRD}, we described the strong connection between $h(T_{n,q_n})$ and the right depth $R^B_n$ whenever $(q_n)_{n\geq0}$ is such that $n(1-q_n)/\log n\rightarrow\infty$; in this regime, we can transfer many results on the asymptotic behaviour of $(R^B_n)_{n\geq0}$ to the sequence $\big(h(T_{n,q_n})\big)_{n\geq0}$.

If not only $n(1-q_n)/\log n\rightarrow\infty$ but, more strongly, $nq_n\rightarrow\lambda\in[0,\infty)$, then it is straightforward to prove that $\mathbb{P}\big(h(T^B_n)=R^B_n\big)=1-o(1)$. In this case, by studying the characteristic function of $n-1-R^B_n$, it follows fairly easily that
\begin{align*}
    n-1-R^B_n\overset{d}{\longrightarrow}\textsc{Poisson}(\lambda)\,,
\end{align*}
from which Theorem~\ref{thm:Poisson} follows. The details of this argument appear in Section~\ref{sec:Poisson}.

If we assume now that $n(1-q_n)/\log n\rightarrow\infty$ and $nq_n\rightarrow\infty$, by analyzing the moment generating function of $R^B_n$ given in Proposition~\ref{prop:BivariateGenFun}, we can prove a central limit theorem for the right depth; this is stated in the following proposition.

\begin{prop}\label{prop:CLTRD}
    Let $(q_n)_{n\geq0}$ be such that $n(1-q_n)/\log n\rightarrow\infty$ and $nq_n\rightarrow\infty$. Then
    \begin{align*}
        \frac{R^B_n-n(1-q_n)-\log\big((1-q_n)^{-1}\big)}{\sqrt{n(1-q_n)q_n}}\overset{\textrm{d}}{\longrightarrow}\textsc{Normal}(0,1).
    \end{align*}
\end{prop}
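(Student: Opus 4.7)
The plan is to exploit the bivariate generating function identity in Proposition~\ref{prop:BivariateGenFun}. Setting $y = 1$ (which is valid since $q < 1$) yields
\[
\mathbb{E}\!\left[x^{R^B_n + 1}\right] = \prod_{k=1}^n \frac{q + (1-q) x - q^k}{1 - q^k},
\]
and a short algebraic rewrite shows that each factor equals $(1 - p_k) + p_k x$, where $p_k := \frac{1-q}{1-q^k}$; this is the probability generating function of a $\textsc{Bernoulli}(p_k)$ random variable. We thus obtain the distributional identity
\[
R^B_n + 1 \stackrel{\mathrm{d}}{=} X_1 + X_2 + \cdots + X_n,
\]
with $X_1,\ldots,X_n$ independent and $X_k\sim\textsc{Bernoulli}(p_k)$. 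Thus $R^B_n$ is a sum of independent indicators, and its limiting Gaussianity can be extracted from the classical Lindeberg--Lyapunov CLT.

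Write $\mu_n=\sum_{k=1}^n p_k$ and $\sigma_n^2=\sum_{k=1}^n p_k(1-p_k)$. Since $|X_k-p_k|\le 1$, Lyapunov's condition with exponent $3$ reduces to $\sigma_n\to\infty$, in which case $(R^B_n+1-\mu_n)/\sigma_n \to \textsc{Normal}(0,1)$ in distribution. To match this with the centering and scaling in the proposition via Slutsky's theorem, I would establish the asymptotic estimates
\[
\sigma_n^2\sim nq_n(1-q_n)\qquad\text{and}\qquad \mu_n-1 = n(1-q_n)+\log\!\Big(\tfrac{1}{1-q_n}\Big)+o\!\left(\sqrt{nq_n(1-q_n)}\right).
\]
Together the two hypotheses $nq_n\to\infty$ and $n(1-q_n)/\log n\to\infty$ force $nq_n(1-q_n)\to\infty$, so $\sigma_n\to\infty$. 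The variance asymptotic follows from the identity $p_k(1-p_k)=q(1-q)(1-q^{k-1})/(1-q^k)^2$: the $k$-dependent factor tends to $1$ as $k\to\infty$, and its deviation from $1$ is summable (uniformly in $q$) once $n(1-q)\to\infty$. For the mean, I would split $\mu_n=n(1-q)+(1-q)\sum_{k=1}^n q^k/(1-q^k)$ and compare the second sum term-by-term to $\sum_{k\ge 1}q^k/k=\log(1/(1-q))$. For $q$ bounded away from $1$ both series are bounded and the comparison is immediate; for $q_n\to 1$ I would use a Riemann sum approximation to $\int du/(e^u-1)$ via the substitution $u=k(1-q_n)$, which gives the leading term $\log(1/(1-q_n))$ with an error of order $O((1-q_n)\log(1/(1-q_n)))$.

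The main obstacle is carrying out these mean and variance asymptotics uniformly across the full allowed regime of $q_n$: one must control the error in the mean estimate at both boundaries $q_n\to 0$ (where the $p_k$ are nearly zero and $\log(1/(1-q_n))$ is small) and $q_n\to 1$ (where the Riemann sum has a logarithmic near-singularity at the origin and the continuum approximation must be tracked with sharp error control). Everything else — the factorization, the CLT, and the Slutsky argument — is routine once the mean and variance asymptotics are in hand.
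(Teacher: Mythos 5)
Your argument is correct, and it takes a genuinely different route from the paper. You read the product formula of Proposition~\ref{prop:BivariateGenFun} at $y=1$ as a factorization of the probability generating function, giving $R^B_n+1\stackrel{\mathrm{d}}{=}\sum_{k=1}^n X_k$ with independent $X_k\sim\textsc{Bernoulli}(p_k)$, $p_k=\frac{1-q}{1-q^k}$ (note $p_1=1$, which accounts for the $+1$), and then you invoke Lindeberg--Lyapunov plus Slutsky. The paper instead works directly with the characteristic function $\mathbb{E}[e^{itR^B_n/\sqrt{npq}}]$, takes logarithms of the factors, and carries out a Taylor expansion in two separate regimes ($nq_n^3=\omega(1)$ and $nq_n^3=O(1)$), using the asymptotics of $\mu_1,\mu_2,\mu_3$. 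Your route is cleaner in that it avoids the case split and the bookkeeping of error terms in the expansion; what it requires in exchange is exactly the mean and variance asymptotics, and here your ``main obstacle'' is in fact already resolved by results in the paper: $\mathbb{E}[R^B_n]=\mu_1(n,q)$ and $\mathrm{Var}[R^B_n]=\mu_1(n,q)-\mu_2(n,q)$ (Fact~\ref{fact:ExpRnmu}), and Propositions~\ref{prop:ConvOfMu} and~\ref{prop:ConvOfMuAlpha} give $\mu_1(n,q_n)=n(1-q_n)+\log\big((1-q_n)^{-1}\big)+O\big(\sqrt{\log\big((1-q_n)^{-1}\big)}\big)$ (for $n$ large, since $n(1-q_n)\to\infty$) and $\mu_2(n,q_n)=n(1-q_n)^2+O(1)$, hence $\sigma_n^2=n(1-q_n)q_n+O\big(\log\big((1-q_n)^{-1}\big)\big)$. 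Since the hypotheses force $\log\big((1-q_n)^{-1}\big)=o\big(n(1-q_n)q_n\big)$ (separate the cases $q_n\geq\tfrac12$ and $q_n<\tfrac12$), the errors are $o\big(\sqrt{n(1-q_n)q_n}\big)$ and $o(\sigma_n^2)$ respectively, which is all that Slutsky needs; no sharper Riemann-sum error control is required. One small inaccuracy in your sketch: the deviations of $\frac{(1-q)(1-q^{k-1})}{(1-q^k)^2}$ from $1$ are of order $\frac{1}{k(1-q)}$ for $k\lesssim(1-q)^{-1}$, so their sum is of order $\frac{\log\left((1-q)^{-1}\right)}{1-q}$ and is \emph{not} summable uniformly in $q$; but after multiplying by $q(1-q)$ this contributes only $O\big(q\log\big((1-q)^{-1}\big)\big)=o\big(n(1-q_n)q_n\big)$, so the conclusion $\sigma_n^2\sim n(1-q_n)q_n$ stands.
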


The proof can be found in Section~\ref{sec:CLTRD}. If we furthermore assume that $n(1-q_n)/(\log n)^2\rightarrow\infty$, then the $\log\big((1-q_n)^{-1}\big)$ term in the numerator of this proposition can be removed; in this case, using (\ref{eq:htnq_upper}) to compare $h(T_{n,q_n})$ with $R^B_n$ gives tight enough bounds to establish the conclusion of Theorem~\ref{thm:CLT}.

For the remaining regime of $(q_n)_{n\geq0}$, i.e. when $n(1-q_n)/\log n\rightarrow\infty$ and $n(1-q_n)/(\log n)^2=O(1)$, both terms $n(1-q_n)$ and $\log\big((1-q_n)^{-1}\big)$ contribute to the asymptotic behaviour of $R^B_n$. In this regime, the proof of the central limit theorem for $h(T_{n,q_n})$ requires a similar technique to the one of the proof of Theorem~\ref{thm:Prob} in the intermediate case (when $n(1-q_n)/\log n=\Theta(1)$), but with a different choice of $d$.

Previously, we deterministically chose $d=\lfloor n(1-q_n)\rfloor$. In the current setting, we instead require $d$ to be a random variable defined as follows. For $n\geq0$, let $m=m(n)$ be the smallest integer such that $m(1-q_n)+\log m\geq n(1-q_n)$ and let $D=D(n)=R^B_m+1$. The same chain of reasoning that yielded (\ref{eq:htnq_upper}) and (\ref{eq:second_height_bd}) now gives us the bounds
\begin{align}\label{eq:third_height_bd}
    h\Big(T^B_n\big(\overline{1}^{D(n)}\big)\Big)\leq h\big(T^B_n\big)-D(n)\leq\max\left\{\sup_{k\geq0}\Big\{h\Big(T^B\big(\overline{1}^k\overline{0}\big)\Big)-k\Big\},h\Big(T^B_n\big(\overline{1}^{D(n)}\big)\Big)\right\}\,.
\end{align}
It is not hard to see that $m(n)=n-(1+o(1))\frac{\log n}{1-q_n}$ and then we use Proposition~\ref{prop:CLTRD} to prove that
\begin{align*}
    \frac{R^B_{m(n)}-n(1-q_n)}{\sqrt{n(1-q_n)q_n}}\overset{\textrm{d}}{\longrightarrow}\textsc{Normal}(0,1)\,.
\end{align*}
Moreover, Proposition~\ref{prop:boundsLeftSubtree} straightforwardly implies that the supremum on the right hand side of (\ref{eq:third_height_bd}) is at most $c^*\log n+O_\mathbb{P}\left(\sqrt{\log n}\right)$. Finally, the next proposition gives the last ingredient to conclude the proof of Theorem~\ref{thm:CLT} in the case $n(1-q_n)/\log n\rightarrow\infty$ and $n(1-q_n)/(\log n)^2=O(1)$.

\begin{prop}\label{prop:convRenSubtree}
    Let $(q_n)_{n\geq0}$ be such that $\log\big(n(1-q_n)\big)=O\left(\sqrt{\log n}\right)$ and $n(1-q_n)=\omega\left(\sqrt{\log n}\right)$. For $n\geq0$, let $m=m(n)=\min\big\{\ell\geq0:\ell(1-q_n)+\log\ell\geq n(1-q_n)\big\}$. Then, the sequence of random variables
    \begin{align*}
        \left(\frac{h\Big(T^B_n\big(\overline{1}^{R^B_m+1}\big)\Big)-c^*\log n}{\sqrt{\log n}}\right)_{n\geq 2}
    \end{align*}
    is tight.
\end{prop}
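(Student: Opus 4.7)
The plan is to condition on the size $\tilde S:=|T^B_n(\overline 1^{R^B_m+1})|$ of the subtree $\tilde T:=T^B_n(\overline 1^{R^B_m+1})$, identify its conditional law by projective consistency, and combine this with a size estimate and the height bounds for Mallows trees established earlier. By iterating Proposition~\ref{prop:ProjCons} along the initial segment $\varnothing,\overline 1,\ldots,\overline 1^{R^B_m+1}$ of the rightmost path of $T^B_n$, conditional on $\tilde S=s$ with $s\ge 1$, the subtree $\tilde T$ has law $\mt(s,q_n)$.

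The technical heart is showing that $\tilde S(1-q_n)$ is tight, i.e.\ bounded in probability away from $0$ and from $\infty$. Note that $\tilde S=\#\{i\in(m,n]:f^B(i)>M^B_m\}$, so $\tilde S=n-M^B_m+g_n$, where $g_i:=M^B_m-|F^B_i\cap[1,M^B_m]|$ counts the unfilled entries of $[1,M^B_m]$ at time $i$. Given $M^B_m$, the process $(g_i)_{i\ge m}$ is a Markov chain started from $g_m=M^B_m-m$, with $g_{i+1}=g_i-1$ with probability $1-q_n^{g_i}$ and $g_{i+1}=g_i$ otherwise. Using Proposition~\ref{prop:BivariateGenFun} to extract the moments of $M^B_m$ (which for large $m(1-q_n)$ satisfy $\mathbb E[M^B_m]-m\sim\log((1-q_n)^{-1})/(1-q_n)$), one can deduce $n-M^B_m=O_\mathbb P(1/(1-q_n))$. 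A standard analysis of the gap chain---it descends at essentially unit rate until $g\asymp1/(1-q_n)$ and then enters a slow diffusive regime---shows $g_n=\Theta_\mathbb P(1/(1-q_n))$, and hence $\tilde S=\Theta_\mathbb P(1/(1-q_n))$.

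Given this size estimate, $\tilde S(1-q_n)=O_\mathbb P(1)$, and under the hypothesis $\log(n(1-q_n))=O(\sqrt{\log n})$ we have $\log((1-q_n)^{-1})=\log n-O(\sqrt{\log n})$, so $c^*\log\tilde S=c^*\log n-O(\sqrt{\log n})+O_\mathbb P(1)$. Applying Theorem~\ref{thm:Prob}---more precisely, the quantitative upper and lower bounds from its proof for $T_{s,q_n}$ in the regime $s(1-q_n)/\log s\to 0$ treated in Section~\ref{sec:MTandRBST}, where $h(T_{s,q_n})$ concentrates within $O_\mathbb P(1)$ of $c^*\log s$ by the coupling to random binary search trees of Proposition~\ref{prop:couplingMallowsRBST} together with the variance bounds of Reed and Drmota---one concludes
\[
h(\tilde T)=c^*\log\tilde S+O_\mathbb P(\sqrt{\log\tilde S})=c^*\log n+O_\mathbb P(\sqrt{\log n}),
\]
which is the claimed tightness. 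The main obstacle is the two-sided control of $\tilde S$: in particular, the lower bound $\tilde S\ge c/(1-q_n)$ with high probability requires analysis of the gap chain started from the random initial value $M^B_m-m$ and run for only $n-m$ steps, a duration that can be considerably smaller than the chain's equilibration time. This must be obtained by combining the moment estimates for $M^B_m$ from Proposition~\ref{prop:BivariateGenFun} with Chernoff-type bounds on the running count $V_n=\#\{i\in(m,n]:f^B(i)\le M^B_m\}$.
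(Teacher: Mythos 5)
Your overall strategy (condition on the size $\tilde S:=\big|T^B_n\big(\overline{1}^{R^B_m+1}\big)\big|$, use that conditionally the subtree is $\mt(s,q_n)$-distributed, and feed a size estimate into Proposition~\ref{prop:convProbSmall}) is the paper's route, but the key quantitative step is wrong. Writing $p=1-q_n$, one has $\mathbb{E}\big[M^B_m\big]=\sum_{1\le k\le m}(1-q_n^k)^{-1}=m+(1+o(1))\log(p^{-1})/p$, while by the definition of $m$ we have $n-m=(1+o(1))\log n/p$; since $\log(p^{-1})=\log n-\log(np)$, this gives $n-\mathbb{E}\big[M^B_m\big]=(1+o(1))\log(np)/p$, and the fluctuations of $M^B_m$ are only of order $1/p$ (its variance is $\sum_k q_n^k/(1-q_n^k)^2=\Theta(p^{-2})$). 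Under the hypotheses $np\to\infty$, so $\log(np)\to\infty$ and therefore $n-M^B_m=(1+o_{\mathbb{P}}(1))\log(np)/p$, which is $\omega\big(1/p\big)$: your claim $n-M^B_m=O_{\mathbb{P}}(1/p)$ is false, and since $\tilde S=n-M^B_m+g_n\ge n-M^B_m$, so is the asserted tightness of $\tilde S\,p$ — in fact $\tilde S\,p\to\infty$ in probability, at rate roughly $\log(np)$ (which may be as large as a constant times $\sqrt{\log n}$). The claim $g_n=\Theta_{\mathbb{P}}(1/p)$ for the gap chain is likewise unsubstantiated (the $\approx\log(np)/p$ extra slow-phase steps drive $g_n$ well below $1/p$), but that is secondary: the technical heart of your argument, the two-sided bound $\tilde S=\Theta_{\mathbb{P}}(1/p)$, cannot be proved because it is not true.

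The proposition does not require tightness of $\tilde S\,p$; it suffices that $e^{-\beta_n}\le\tilde S\,p\le\beta_n$ with probability tending to $1$ for every $\beta_n=\omega\big(\sqrt{\log n}\big)$, which is exactly Proposition~\ref{prop:BoundsOnN}, proved in the paper via the exact law and tail bounds for the threshold process $N(n,s)$ (Propositions~\ref{prop:GenFunN}, \ref{prop:UBN}, \ref{prop:LBN}), the decoupling Lemma~\ref{lem:N'andN}, and the concentration $\big|M^B_m-n\big|\le\alpha_n/p$ for $\alpha_n=\omega\big(\sqrt{\log n}\big)$ (Proposition~\ref{prop:RangeOfM}). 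With that weaker input one gets $\log\tilde S=\log n+O_{\mathbb{P}}(\beta_n)$ and $\tilde S\,p=o_{\mathbb{P}}\big(\log\tilde S\big)$, so Proposition~\ref{prop:convProbSmall} applies with any $\gamma_n\gg\beta_n$ and yields the stated tightness; this is the paper's proof. Note also that your parenthetical assertion that $h(T_{s,q_n})$ concentrates within $O_{\mathbb{P}}(1)$ of $c^*\log s$ is stronger than anything established in the paper (Proposition~\ref{prop:convProbSmall} only controls deviations of size $\omega\big(s(1-q_n)\vee\sqrt{\log s}\big)$) and is not needed for the conclusion.
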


The rather technical proof of Proposition~\ref{prop:convRenSubtree}  can be found in Section~\ref{sec:CLTHeight}. Proposition~\ref{prop:convRenSubtree} implies that $h\big(T^B_n(\overline{1}^{R^B_m+1})\big)/c^*\log n\rightarrow1$ in probability, which suggests some relation to Proposition~\ref{prop:convRightSubtree}. However, we did not see a simple way to give a unified statement. The reason for the hypothesis on $(q_n)_{n\geq0}$ in Proposition~\ref{prop:convRenSubtree} is mainly due to an error term of order $O\left(\sqrt{\log n}\right)$ in several of the asymptotic estimates which arise in our analysis.

Combining Proposition~\ref{prop:CLTRD} and \ref{prop:convRightSubtree}, the bounds in (\ref{eq:htnq_upper}) and (\ref{eq:third_height_bd}), and a subsequence argument as for Theorem~\ref{thm:Prob}, we can conclude the proof of Theorem~\ref{thm:CLT}. This also concludes the sketch of the proofs of the three theorems.

\section{Connection to random binary search trees}

In this section, we prove Theorem~\ref{thm:Prob} in the case when $(q_n)_{n\geq0}$ is such that $n(1-q_n)/\log n\rightarrow0$. As explained in Section~\ref{sec:MTandRBST}, the proof will be divided into upper and lower bound. We also prove the following proposition, which gives a bound on the second order term for the convergence in probability and will be useful for Theorem~\ref{thm:CLT}.

\begin{prop}\label{prop:convProbSmall}
    Let $(q_n)_{n\geq0}$ be taking values in $[0,1)$ such that $n(1-q_n)/\log n\rightarrow0$. Then, for any sequence $(\gamma_n)_{n\geq0}$ such that $\gamma_n/\big(n(1-q_n)\vee\sqrt{\log n}\big)\rightarrow\infty$, we have
    \begin{align*}
        \lim_{n\rightarrow\infty}\mathbb{P}\Big(\Big|h\big(T_{n,q_n}\big)-c^*\log n\Big|\geq\gamma_n\Big)=0.
    \end{align*}
\end{prop}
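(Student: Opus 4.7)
The plan is to prove matching upper and lower tail bounds on $h(T_{n,q_n})-c^*\log n$. Write $\alpha_n=1-q_n$; the hypotheses read $n\alpha_n=o(\log n)$ and $\gamma_n\gg n\alpha_n\vee\sqrt{\log n}$. Set $h^{\pm}_n=c^*\log n\pm\gamma_n$.

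For the upper tail, I would use the union bound $\mathbb{P}(h(T_{n,q_n})\geq h)\leq 2^h\mathbb{P}(R^B_n\geq h)$, obtained from Proposition~\ref{prop:StochBounds} together with the identity $\mathbb{P}(\overline{1}^h\in T^B_n)=\mathbb{P}(R^B_n\geq h)$, and then a Chernoff estimate. Taking $y=c^*$ in the generating function of Proposition~\ref{prop:BivariateGenFun} gives
$$\mathbb{E}\bigl[(c^*)^{R^B_n+1}\bigr]=\prod_{k=1}^n\Bigl(1+\tfrac{(c^*-1)\alpha_n}{1-q_n^k}\Bigr)\leq\exp\bigl((c^*-1)S_n\bigr),\qquad S_n:=\sum_{k=1}^n\frac{\alpha_n}{1-q_n^k}.$$
A term-by-term Taylor comparison, split at the threshold $k\alpha_n=1$, yields $S_n=H_n+O(1\vee n\alpha_n)$. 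Markov's inequality then gives
$$2^h\mathbb{P}(R^B_n\geq h)\leq\exp\bigl(h\log(2/c^*)+(c^*-1)\log n+O(1\vee n\alpha_n)\bigr),$$
and the defining relation $c^*\log(2e/c^*)=1$ exactly kills the two $\log n$ terms when $h=h^+_n$. What remains is $\exp\bigl(-(1-1/c^*)\gamma_n+O(1\vee n\alpha_n)\bigr)$, which tends to $0$ by hypothesis.

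For the lower tail, I would apply Proposition~\ref{prop:couplingMallowsRBST} with $\ell=\lceil h^-_n\rceil$ to reduce to an RBST estimate: $\mathbb{P}(h(T_{n,q_n})\leq\ell)\leq\mathbb{P}(h(T_{m,1})\leq\ell)$ with $m=\lfloor(1-q_n^n)/(1-q_n^{\ell+2})\rfloor$. Because $\ell\alpha_n=O((\log n)^2/n)\to 0$, a Taylor expansion of the numerator and denominator (with case analysis on whether $n\alpha_n$ tends to $0$, is bounded, or diverges more slowly than $\log n$) gives $c^*\log m=c^*\log n-c^*\log\log n+O(1\vee n\alpha_n)$. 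The target event thus becomes $\{h(T_{m,1})\leq c^*\log m-t_m\}$ with $t_m\geq\gamma_n-c^*\log\log n-O(1\vee n\alpha_n)\to\infty$ (using $\gamma_n\gg\sqrt{\log n}\gg\log\log n$ and $\gamma_n\gg n\alpha_n$). When $\gamma_n=\Omega(\log n)$, Devroye's theorem~\cite{devroye1986note} applied to $h(T_{m,1})/\log m$ suffices; when $\gamma_n=o(\log n)$, so $t_m=o(\log m)$, I would invoke the $O(1)$-variance bounds of Reed~\cite{reed2003height} and Drmota~\cite{drmota2003analytic}, together with $\mathbb{E}[h(T_{m,1})]=c^*\log m-O(\log\log m)$, and conclude via Chebyshev --- this works provided $t_m=\omega(\log\log m)$, which is precisely why $\gamma_n\gg\sqrt{\log n}$ is the natural threshold.

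The main technical obstacle is the uniform asymptotic analysis of $S_n$ and $\log m$ across the three sub-regimes for $n\alpha_n$, with error control tight enough that the Devroye cancellation $c^*\log(2e/c^*)=1$ survives intact; in particular one must absorb every stray term of order $n\alpha_n$ into the Chernoff exponent without inflating it by a $\log n$. The $\sqrt{\log n}$ component of the hypothesis is a comfortable over-margin for the final $\omega(\log\log m)$ lower-tail requirement on the RBST, while the $n\alpha_n$ component reflects the cost of approximating a Mallows tree by a uniform binary search tree.
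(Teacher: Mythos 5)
Your proposal is correct and follows essentially the same route as the paper: the upper tail via Proposition~\ref{prop:StochBounds}, a union bound reducing to the right-depth tail, and a Chernoff bound from Proposition~\ref{prop:BivariateGenFun} exploiting $c^*\log(2e/c^*)=1$; the lower tail via Proposition~\ref{prop:couplingMallowsRBST} with $m=\lfloor(1-q_n^n)/(1-q_n^{\ell+2})\rfloor$, $\log m=\log n+O(\log\log n)$, and the Reed--Drmota mean and $O(1)$-variance bounds plus Chebyshev. The only deviations are cosmetic: you fix the Chernoff parameter $x=c^*$ (your ``$y=c^*$'' is a slip for $x=c^*$, $y=1$) and estimate $\sum_k\alpha_n/(1-q_n^k)$ directly instead of citing Proposition~\ref{prop:ConvOfMu}, and your case split on $\gamma_n$ can be replaced, as in the paper, by the monotonicity reduction to $\gamma_n=o(\log n)$.
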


\subsection{A useful function}\label{sec:usefulFunction}

Before proving Proposition~\ref{prop:convProbSmall}, we introduce and study a very useful function, which will play a role in the analysis for all the regimes of $(q_n)_{n\geq0}$. For $n\geq0$, $q\in[0,1)$ and $\alpha\geq1$, let
\begin{align*}
    \mu_\alpha(n,q)&:=\sum_{1<k\leq n}\left(\frac{1-q}{1-q^k}\right)^\alpha\,.
\end{align*}
The following fact gives a first indication of the importance of $\mu_\alpha$.

\begin{fact}\label{fact:ExpRnmu}
    For all $n\geq0$ and $q\in[0,1)$, with $B=(B_{i,j})_{i,j\geq1}$ having independent $\textsc{Bernoulli}(1-q)$ entries, we have
    \begin{align*}
        \mathbb{E}\left[R^B_n\right]=\mu_1(n,q)
    \end{align*}
    and
    \begin{align*}
	\mathrm{Var}\left[R^B_n\right]&=\mu_1(n,q)-\mu_2(n,q)\,.
    \end{align*}
\end{fact}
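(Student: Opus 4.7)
The plan is to read off everything from Proposition~\ref{prop:BivariateGenFun} with $y=1$. Setting $y=1$ in the bivariate generating function is legal since the condition $q|y|<1$ reduces to $q<1$, and it yields
\begin{align*}
    \mathbb{E}\!\left[x^{R^B_n+1}\right] \;=\; \prod_{k=1}^{n}\frac{q+(1-q)x-q^k}{1-q^k}\,.
\end{align*}
The goal of the main algebraic step is to recognise that each factor is the probability generating function of a Bernoulli random variable.

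Next, I would introduce $p_k := \frac{1-q}{1-q^k}$, which lies in $[0,1]$ for $q\in[0,1)$, with $p_1=1$ understood by the convention that the $k=1$ factor is simply $x$. A one-line simplification shows
\begin{align*}
    \frac{q+(1-q)x-q^k}{1-q^k} \;=\; \frac{q-q^k}{1-q^k}+\frac{(1-q)\,x}{1-q^k} \;=\; (1-p_k)+p_k x\,,
\end{align*}
using that $q-q^k = 1-q^k - (1-q)$. Hence
\begin{align*}
    \mathbb{E}\!\left[x^{R^B_n+1}\right]\;=\;x\cdot\prod_{k=2}^{n}\bigl((1-p_k)+p_kx\bigr)\,,
\end{align*}
which is the probability generating function of $1+\sum_{k=2}^{n}B_k$, where $B_2,\ldots,B_n$ are independent with $B_k\sim\textsc{Bernoulli}(p_k)$. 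Consequently, $R^B_n \stackrel{\mathrm{d}}{=} \sum_{k=2}^{n}B_k$.

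From this representation the two identities are immediate. Taking expectations gives $\mathbb{E}[R^B_n]=\sum_{k=2}^{n}p_k=\mu_1(n,q)$, and independence of the $B_k$ yields
\begin{align*}
    \mathrm{Var}[R^B_n]\;=\;\sum_{k=2}^{n}p_k(1-p_k)\;=\;\sum_{k=2}^{n}p_k-\sum_{k=2}^{n}p_k^2\;=\;\mu_1(n,q)-\mu_2(n,q)\,.
\end{align*}
There is no real obstacle: the only thing to check is the Bernoulli factorisation of each term in the product, and the rest is bookkeeping. As a small sanity check, letting $q\to 1$ sends $p_k\to 1/k$, recovering the classical fact that the number of records of a uniform random permutation of $[n]$ is a sum of independent $\textsc{Bernoulli}(1/k)$ variables.
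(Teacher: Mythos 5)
Your proof is correct. Setting $y=1$ in Proposition~\ref{prop:BivariateGenFun} is valid, the factorisation $\frac{q+(1-q)x-q^k}{1-q^k}=(1-p_k)+p_kx$ with $p_k=\frac{1-q}{1-q^k}\in[0,1]$ is right, and since $R^B_n$ is a bounded integer-valued variable its probability generating function (a polynomial) determines its law, so the representation $R^B_n\stackrel{\mathrm{d}}{=}\sum_{k=2}^nB_k$ with independent $B_k\sim\textsc{Bernoulli}(p_k)$ and the resulting mean and variance identities are all legitimate. The route differs from the paper's in how the information is extracted from the same key input: the paper deduces Fact~\ref{fact:ExpRnmu} from the general moment formula of Proposition~\ref{prop:MomentsOfRB}, which is proved in Appendix~\ref{app:MomentsRD} by repeatedly differentiating the generating function of Proposition~\ref{prop:BivariateGenFun} and organising the result through the quantities $\nu_\beta$, set partitions and Stirling numbers (the alternative ``direct computation'' the paper mentions is again differentiation at $x=1$). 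Your Bernoulli factorisation avoids all differentiation and yields something stronger and more transparent, namely the full distributional identity for $R^B_n$ as a sum of independent indicators, from which any moment or concentration statement could be read off; what the paper's heavier machinery buys is the explicit formula for \emph{all} integer moments in terms of the $\mu_i$, which is the actual content of Proposition~\ref{prop:MomentsOfRB} and is not immediate from the factorisation alone. One cosmetic caveat: in your closing sanity check, the limit $q\to1$ of $\sum_{k=2}^np_k$ gives the record count \emph{minus the first (deterministic) record}, i.e.\ the right depth of the random binary search tree; the classical decomposition of the number of records also includes the $k=1$ term $\textsc{Bernoulli}(1)$. This does not affect the proof.
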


More generally, there is a formula relating the expectation of $(R^B_n)^\alpha$ to the functions $\mu_i$ for $i\leq\alpha$, as given in the next proposition.

\begin{prop}\label{prop:MomentsOfRB}
    Let $n\geq0$, $q\in[0,1)$, and $B=(B_{i,j})_{i,j\geq1}$ have independent $\textsc{Bernoulli}(1-q)$ entries. Write $S_\beta=\{(s_1,\ldots,s_\beta):s_1+2s_2+\cdots+\beta s_\beta=\beta\}$ and for $s=(s_1,\ldots,s_\beta)\in S_\beta$, let $|s|=s_1+\cdots+s_\beta$. Then, for all integer $\alpha\geq1$, we have
    \begin{align*}
        \mathbb{E}\left[\big(R^B_n\big)^\alpha\right]=\sum_{1\leq\beta\leq\alpha\wedge(n-1)}\sum_{s\in S_\beta}(-1)^{\beta+|s|}\beta!\genfrac{\{}{\}}{0pt}{0}{\alpha}{\beta}\prod_{1\leq i\leq\beta}\frac{\mu_i(n,q)^{s_i}}{i^{s_i}s_i!}\,,
    \end{align*}
    where $\big(\genfrac{\{}{\}}{0pt}{1}{\alpha}{\beta}\big)_{\alpha,\beta\geq1}$ are Stirling numbers of the second kind.
\end{prop}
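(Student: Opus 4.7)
The plan is to first identify the distribution of $R^B_n$ as a sum of independent Bernoullis, and then to extract moments by symmetric function manipulations. Starting from Proposition~\ref{prop:BivariateGenFun} with $y=1$, the probability generating function of $R^B_n+1$ is
\[
\mathbb{E}\bigl[x^{R^B_n+1}\bigr] \;=\; \prod_{k=1}^n \frac{q+(1-q)x-q^k}{1-q^k}.
\]
The $k=1$ factor equals $x$, and for $k\geq 2$, writing $p_k:=\frac{1-q}{1-q^k}$, a short computation gives
\[
\frac{q+(1-q)x-q^k}{1-q^k} \;=\; (1-p_k)+p_k x,
\]
which is the PGF of a $\textsc{Bernoulli}(p_k)$ variable. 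Consequently
\[
R^B_n \;\stackrel{\mathrm{d}}{=}\; \sum_{k=2}^n Y_k, \qquad Y_k \sim \textsc{Bernoulli}(p_k) \text{ independent},
\]
and the quantity $\mu_\alpha(n,q)=\sum_{k=2}^n p_k^{\alpha}$ is precisely the $\alpha$-th power sum of $(p_2,\ldots,p_n)$.

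Next I would compute factorial moments. Since $(Y_k)_\beta = Y_k \mathbf{1}_{\beta=1}$, expanding the product gives
\[
\mathbb{E}\bigl[(R^B_n)_\beta\bigr] \;=\; \beta!\,e_\beta(p_2,\ldots,p_n),
\]
where $e_\beta$ is the $\beta$-th elementary symmetric polynomial. Clearly $e_\beta=0$ when $\beta>n-1$, which accounts for the restriction $\beta\leq\alpha\wedge(n-1)$ in the proposition.

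The key combinatorial step is to re-express $e_\beta$ in terms of the power sums $\mu_i$. Taking logarithms and expanding,
\[
\log\prod_{k=2}^n(1+p_k t) \;=\; \sum_{i\geq 1}\frac{(-1)^{i+1}}{i}\mu_i(n,q)\,t^i,
\]
and exponentiating, the multinomial/Fa\`a di Bruno formula applied to $\exp\bigl(\sum_{i\geq 1}\frac{(-1)^{i+1}}{i}\mu_i t^i\bigr)$ yields, after extracting the coefficient of $t^\beta$ and using $\sum_i(i+1)s_i=\beta+|s|$,
\[
e_\beta \;=\; \sum_{s\in S_\beta}(-1)^{\beta+|s|}\prod_{i=1}^\beta \frac{\mu_i(n,q)^{s_i}}{i^{s_i}\,s_i!}.
\]

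Finally, the standard identity $x^\alpha=\sum_\beta\genfrac{\{}{\}}{0pt}{0}{\alpha}{\beta}(x)_\beta$ converts ordinary moments into factorial moments,
\[
\mathbb{E}\bigl[(R^B_n)^\alpha\bigr] \;=\; \sum_{\beta=1}^{\alpha\wedge(n-1)}\genfrac{\{}{\}}{0pt}{0}{\alpha}{\beta}\mathbb{E}\bigl[(R^B_n)_\beta\bigr],
\]
and substituting the expression for $\mathbb{E}[(R^B_n)_\beta]$ gives precisely the claimed formula. The only real obstacle is bookkeeping through the Fa\`a di Bruno step — in particular checking that the $(-1)^{i+1}$ factors telescope into the single sign $(-1)^{\beta+|s|}$ — but this is mechanical once the Bernoulli decomposition of $R^B_n$ is in hand.
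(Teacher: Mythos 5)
Your argument is correct, and it reaches the formula by a route that is organized differently from the paper's. You begin by observing that the product form of the generating function from Proposition~\ref{prop:BivariateGenFun} (at $y=1$) factors, for $k\geq2$, into Bernoulli probability generating functions with parameters $p_k=\frac{1-q}{1-q^k}$, so that $R^B_n$ is distributed as a sum of $n-1$ independent Bernoulli indicators; from there you use two standard identities: the factorial moment of a sum of independent indicators equals $\beta!$ times the elementary symmetric polynomial $e_\beta(p_2,\ldots,p_n)$, and the Newton/exponential-formula expansion of $e_\beta$ in the power sums $\mu_i$, finishing with the Stirling-number conversion $x^\alpha=\sum_\beta\genfrac{\{}{\}}{0pt}{0}{\alpha}{\beta}(x)_\beta$. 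The paper never states the Bernoulli decomposition explicitly; instead Lemma~\ref{lem:momentRBandNu} proves $\mathbb{E}[(R^B_n)^\alpha]=\sum_\beta\genfrac{\{}{\}}{0pt}{0}{\alpha}{\beta}\nu_\beta(n,q)$ by an induction on $\alpha$ that repeatedly differentiates the moment generating function and tracks the Stirling recursion, and then Lemmas~\ref{lem:ForumlaX} and~\ref{lem:RelationMuNu} re-derive by hand (via partitions and the count of permutations with prescribed cycle type) exactly the symmetric-function identity you quote, since $\nu_\beta=\beta!\,e_\beta$. The mathematical content is therefore the same --- your $e_\beta$ is the paper's $\nu_\beta/\beta!$, your sign bookkeeping $(-1)^{\sum_i(i-1)s_i}=(-1)^{\beta+|s|}$ matches theirs --- but your packaging is shorter and more conceptual: the explicit identification of $R^B_n$ as a sum of independent $\textsc{Bernoulli}(p_k)$'s makes Fact~\ref{fact:ExpRnmu} and the restriction $\beta\leq n-1$ immediate and lets you cite classical symmetric-function facts, whereas the paper's version is self-contained, proving those facts from scratch rather than invoking them. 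The only spot worth tightening in a written version is your justification of $\mathbb{E}[(R^B_n)_\beta]=\beta!\,e_\beta$: rather than the cryptic remark about $(Y_k)_\beta$, note that for indicators $\binom{R^B_n}{\beta}=\sum_{A\subseteq\{2,\ldots,n\},|A|=\beta}\prod_{k\in A}Y_k$ and take expectations using independence.
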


The proofs of Fact~\ref{fact:ExpRnmu} and Proposition~\ref{prop:MomentsOfRB} can be found in Appendix~\ref{app:MomentsRD}. Because $\mu_1(n,q)$ relates to the expected value of $R^B_n$, it can also be used to give concentration bounds on the right depth, as shown below.

\begin{lemma}\label{lem:ConcBoundRn}
    Fix $n\geq0$ and $q\in[0,1)$. Then, for any $c>1$, we have
    \begin{align*}
        \mathbb{P}\Big(R^B_n>c\mu_1(n,q)\Big)\leq\exp\bigg(\left[c\log\left(\frac{e}{c}\right)-1\right]\mu_1(n,q)\bigg)
    \end{align*}
    and
    \begin{align*}
        \mathbb{P}\Big(R^B_n<c^{-1}\mu_1(n,q)\Big)\leq\exp\bigg(\Big[c^{-1}\log\big(ce\big)-1\Big]\mu_1(n,q)\bigg)
    \end{align*}
\end{lemma}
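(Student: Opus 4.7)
The plan is to reduce the lemma to a standard Chernoff bound by first identifying the distribution of $R^B_n$ as a sum of independent Bernoullis. Setting $y=1$ in the bivariate generating function from Proposition~\ref{prop:BivariateGenFun} gives
\begin{align*}
    \mathbb{E}\left[x^{R^B_n+1}\right] = \prod_{1\leq k\leq n}\frac{q+(1-q)x-q^k}{1-q^k}.
\end{align*}
The key algebraic observation is the identity $q + (1-q)x - q^k = (1-q^k) - (1-q)(1-x)$, which allows each factor to be rewritten as
\begin{align*}
    \frac{q+(1-q)x-q^k}{1-q^k} = 1 - p_k + p_k x,\qquad \text{where } p_k := \frac{1-q}{1-q^k}.
\end{align*}
Since $p_1 = 1$, the $k=1$ factor contributes the ``$+1$'' on the left side, and one reads off that
\begin{align*}
    R^B_n \stackrel{\mathrm{d}}{=} \sum_{k=2}^n Y_k,
\end{align*}
where $Y_2,\ldots,Y_n$ are independent with $Y_k\sim\textsc{Bernoulli}(p_k)$. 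In particular $\mathbb{E}[R^B_n] = \sum_{k=2}^n p_k = \mu_1(n,q)$, recovering Fact~\ref{fact:ExpRnmu} as a sanity check.

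Having established this, I would apply the usual exponential Markov inequality. Using $1+u\leq e^u$ on each Bernoulli factor gives, for any $t\in\mathbb{R}$,
\begin{align*}
    \mathbb{E}\left[e^{tR^B_n}\right] = \prod_{k=2}^n\bigl(1 + p_k(e^t-1)\bigr) \leq \exp\bigl(\mu_1(n,q)(e^t-1)\bigr).
\end{align*}
For the upper tail, take $t>0$ and choose $t = \log c$ to obtain $\mathbb{P}(R^B_n > c\mu_1(n,q)) \leq \exp\bigl((c-1-c\log c)\mu_1(n,q)\bigr)$, which after rewriting $c-1-c\log c = c\log(e/c) - 1$ matches the stated bound. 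For the lower tail, take $t<0$ and choose $t = -\log c$; the analogous minimization produces $\mathbb{P}(R^B_n < c^{-1}\mu_1(n,q))\leq \exp\bigl((c^{-1}+c^{-1}\log c - 1)\mu_1(n,q)\bigr)$, and $c^{-1}+c^{-1}\log c - 1 = c^{-1}\log(ce) - 1$ matches the stated form.

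Nothing here is subtle once the first step is accomplished, so the main (if mild) obstacle is simply noticing the factorization that turns the generating function into a product of Bernoulli PGFs; after that the derivation is a textbook Chernoff computation.
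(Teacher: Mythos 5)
Your proposal is correct and follows essentially the same route as the paper: both read off the moment generating function of $R^B_n$ from Proposition~\ref{prop:BivariateGenFun} at $y=1$, bound each factor via $1+u\leq e^u$ to get $\mathbb{E}[e^{tR^B_n}]\leq\exp\big((e^t-1)\mu_1(n,q)\big)$, and optimize the Chernoff bound at $e^t=c$ (resp.\ $e^{-t}=c^{-1}$). Your explicit identification of $R^B_n$ as a sum of independent $\textsc{Bernoulli}\big(\tfrac{1-q}{1-q^k}\big)$ variables is a pleasant clarification of the same factorization the paper uses implicitly, not a different argument.
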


\begin{proof}
    The proof simply follows from Chernoff's bounds. For all $t>0$, by Proposition~\ref{prop:BivariateGenFun}, we have
    \begin{align*}
        \mathbb{P}\Big(R^B_n>c\mu_1(n,q)\Big)&\leq\frac{\mathbb{E}\left[e^{tR^B_n}\right]}{e^{tc\mu_1(n,q)}}\\
        &=e^{-tc\mu_1(n,q)}\prod_{1<k\leq n}\left(1+\big(e^t-1\big)\frac{1-q}{1-q^k}\right).
    \end{align*}
    By the convexity of the exponential function, we have $1+x\leq e^x$, hence
    \begin{align*}
        \prod_{1<k\leq n}\left(1+\big(e^t-1\big)\frac{1-q}{1-q^k}\right)\leq\prod_{1<k\leq n}e^{\big(e^t-1\big)\frac{1-q}{1-q^k}}=\exp\Big((e^t-1)\mu_1(n,q)\Big).
    \end{align*}
    Using this bound in the previous equation, we obtain
    \begin{align*}
        \mathbb{P}\Big(R^B_n>c\mu_1(n,q)\Big)&\leq\exp\Big(-tc\mu_1(n,q)+\big(e^t-1\big)\mu_1(n,q)\Big)
    \end{align*}
    The optimal value for $t$ corresponds to $e^t=c$ and yields the first bound of the lemma. Similarly, for any $t>0$, we have
    \begin{align*}
        \mathbb{P}\Big(R^B_n<c^{-1}\mu_1(n,q)\Big)&\leq e^{tc^{-1}\mu_1(n,q)}\prod_{1<k\leq n}\left(1+\big(e^{-t}-1\big)\frac{1-q}{1-q^k}\right)\\
        &\leq\exp\Big(tc^{-1}\mu_1(n,q)+\big(e^{-t}-1\big)\mu_1(n,q)\Big),
    \end{align*}
    and the optimal bound is given by $e^{-t}=c^{-1}$, yielding the second bound.
\end{proof}

In order to use these results, we need to better understand the behaviour of $\mu_\alpha$. The following two propositions describe the asymptotic growth of $\mu_1$, and of $\mu_\alpha$ for $\alpha>1$, respectively.

\begin{prop}\label{prop:ConvOfMu}
    Consider any sequence $(q_n)_{n\geq0}$ taking values in $[0,1)$. Then we have
    \begin{align*}
        \mu_1(n,q_n)&=n(1-q_n)+\log\left(n\wedge\frac{1}{1-q_n}\right)+O\left(\sqrt{\log\left(n\wedge\frac{1}{1-q_n}\right)}\right)\,.
    \end{align*}
\end{prop}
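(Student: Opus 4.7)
The plan. Writing $p=1-q$, I would first isolate the linear and logarithmic contributions via the trivial identity $\frac{p}{1-q^k}=p+\frac{pq^k}{1-q^k}$, which splits $\mu_1(n,q)=(n-1)p+S_n$ with
$$S_n:=\sum_{k=2}^{n}\frac{pq^k}{1-q^k}.$$
The first summand already accounts for the claimed $n(1-q)$, up to an additive $p=O(1)$ error, so the task reduces to showing $S_n=\log(n\wedge p^{-1})+O(1)$ uniformly in $n\ge1$ and $q\in[0,1)$; this is in fact stronger than the claimed $O(\sqrt{\log(\cdot)})$ bound.

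Next I would convert $S_n$ to an integral. Setting $t=-\log q$, one has $t>0$ with $t=p(1+O(p))$ and
$$\frac{pq^k}{1-q^k}=\frac{p}{e^{tk}-1}=\frac{p}{t}\,g(k),\qquad g(x):=\frac{t}{e^{tx}-1}.$$
Since $g$ is positive and strictly decreasing on $[2,\infty)$, standard monotone integral comparison gives $\sum_{k=2}^n g(k)=\int_2^{n}g(x)\,dx+O(g(2))$, and a quick check shows $g(2)=t/(e^{2t}-1)\le 1/2$ uniformly in $t>0$. The antiderivative $x\mapsto\log(1-e^{-tx})$ then evaluates the integral explicitly to $\log\tfrac{1-q^n}{1-q^2}$, and combined with $p/t=1+O(p)$ this yields
$$S_n=\log\tfrac{1-q^n}{1-q^2}+O(1)+O\!\bigl(p\,\bigl|\log\tfrac{1-q^n}{1-q^2}\bigr|\bigr).$$

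It remains to analyze $\log\tfrac{1-q^n}{1-q^2}$, which is where the $n\wedge p^{-1}$ structure emerges through a case split on the sign of $np-1$. Using $1-q^2=p(1+q)$ gives $\log(1-q^2)=\log p+O(1)$. If $np\le 1$, Bernoulli's inequality $(1-p)^n\ge 1-np$ gives $1-q^n\le np$, and the elementary bounds $(1-p)^n\le e^{-np}$ together with $1-e^{-x}\ge x/2$ on $[0,1]$ give $1-q^n\ge np/2$; hence $\log(1-q^n)=\log(np)+O(1)$ and therefore $\log\tfrac{1-q^n}{1-q^2}=\log n+O(1)=\log(n\wedge p^{-1})+O(1)$ since the minimum equals $n$. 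If instead $np>1$, then $q^n\le e^{-np}\le e^{-1}$, so $1-q^n\ge 1-e^{-1}$, $\log(1-q^n)=O(1)$, and $\log\tfrac{1-q^n}{1-q^2}=-\log p+O(1)=\log(n\wedge p^{-1})+O(1)$.

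Finally, the residual error $p\,\bigl|\log\tfrac{1-q^n}{1-q^2}\bigr|=O(p\log(n\wedge p^{-1}))$ is itself uniformly $O(1)$: when $np\le 1$ one has $p\log n\le (\log n)/n$, and when $np>1$ one has $p\log(1/p)\le 1/e$. Assembling the pieces gives $\mu_1(n,q)=n(1-q)+\log(n\wedge p^{-1})+O(1)$, which implies the proposition (noting that the corner case $q=0$ is already handled by the above since then $S_n=0=\log 1$). The step that requires genuine care is the two-sided control of $1-q^n$ underpinning the case split, in particular the lower bound $1-q^n\ge np/2$ for $np\le 1$; everything else is routine calculus and bookkeeping of error terms.
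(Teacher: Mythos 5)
Your core computation is sound, and in fact the uniform estimate you derive, $\mu_1(n,q)=n(1-q)+\log\left(n\wedge\frac{1}{1-q}\right)+O(1)$, is correct for the sum as you set it up; the sum–integral comparison via the antiderivative $\log(1-e^{-tx})$ and the case split on $np\lessgtr1$ all check out. The genuine gap is in the very last inference: an additive $O(1)$ error is \emph{not} "stronger than the claimed $O(\sqrt{\log(\cdot)})$ bound", because the quantity $\sqrt{\log\left(n\wedge\frac{1}{1-q_n}\right)}$ is not bounded away from zero. Whenever $q_n\to0$ (even along a subsequence) one has $\log\left(n\wedge\frac{1}{1-q_n}\right)=-\log(1-q_n)\sim q_n\to0$, so the proposition demands accuracy $O(\sqrt{q_n})=o(1)$ there, and a bounded-but-nonvanishing error does not suffice. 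Your bookkeeping really does leave order-one leftovers in that regime: the replacement of $(n-1)p$ by $np$ alone costs $p=1-q_n\to1$, and your sum–integral and $\log(1+q)$-type corrections are only bounded, not small. The corner case you wave off is exactly where this bites: at $q=0$ your decomposition gives $\mu_1(n,0)=n-1$, while the claimed expansion reads $n+\log 1+O(\sqrt{\log 1})=n$, a discrepancy of $1$ that the degenerate error term cannot absorb. (There is an off-by-one lurking in the paper itself: Section~\ref{sec:usefulFunction} defines $\mu_1$ with $\sum_{1<k\le n}$ while Appendix~\ref{app:asymptoticMu} recalls it with $\sum_{1\le k\le n}$; only the latter normalization is compatible with the statement when $q_n\to0$, and even then one needs an error that vanishes with $q_n$.) This is precisely why the paper's proof treats $q_n\to0$ as a separate first case, expanding $\frac{1}{1-q_n^k}=1+O(q_n^k)$ uniformly in $k$ to get a discrepancy of order $q_n^2=o(\sqrt{q_n})$, and only uses coarser bounds when $q_n$ is bounded away from $0$. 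To repair your argument, split into subsequences with $q_n\to0$ and with $\liminf q_n>0$, and in the first regime prove a bound that vanishes with $q_n$ (your own error terms can be sharpened to do this, but only after adopting the normalization including the $k=1$ term). A minor additional point: "$t=p(1+O(p))$" is false as $q\to0$ (there $t=-\log q\to\infty$); what your computation actually uses, namely $q\le p/t\le1$, i.e.\ $|p/t-1|\le p$, is true and should be stated instead.

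In the complementary regime $\liminf q_n>0$, where $\log\left(n\wedge\frac{1}{1-q_n}\right)$ is bounded below, your argument is complete and genuinely different from the paper's: the paper proves two-sided bounds (Proposition~\ref{prop:BoundsOnMu}) by cutting the sum at a level $m$, using $1-q^k\le k(1-q)$ below the cutoff and $1-q^k\le1$ above, and then optimizing $m\approx\left(n\wedge\frac{1}{1-q}\right)/\sqrt{\log\left(n\wedge\frac{1}{1-q}\right)}$, which is exactly where its $\sqrt{\log}$ loss originates; your exact evaluation of the integral avoids the cutoff entirely and yields an $O(1)$ error, which is sharper than the statement in that regime. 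So the approach is worth keeping — it just cannot, as written, cover the sequences for which the stated error bound degenerates.
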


\begin{prop}\label{prop:ConvOfMuAlpha}
    Let $\alpha>1$. Consider any sequence $(q_n)_{n\geq0}$ taking values in $[0,1)$. Then we have
    \begin{align*}
        \mu_\alpha(n,q_n)=n(1-q_n)^\alpha+\zeta(\alpha)-1+O\left(\left((1-q_n)\vee\frac{1}{n}\right)^\frac{\alpha-1}{\alpha+1}\right)=n(1-q_n)^\alpha+O(1)\,,
    \end{align*}
    where $\zeta(\alpha)$ is the Riemann zeta function: $\zeta(\alpha)=\sum_{k\geq1}\frac{1}{k^\alpha}$.
\end{prop}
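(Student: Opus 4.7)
Let $p := 1-q$ and define $g_k(p) := (p/(1-q^k))^\alpha - p^\alpha \geq 0$, so
\[
\mu_\alpha(n,q) = (n-1)\,p^\alpha + \sum_{k=2}^n g_k(p).
\]
Since $p^\alpha \leq p^{(\alpha-1)/(\alpha+1)}$ for every $\alpha>1$ and $p\in[0,1]$ (because $\alpha(\alpha+1) \geq \alpha-1$), replacing $(n-1)p^\alpha$ by $np^\alpha$ only adds $O(p^\alpha)$ to the error and is absorbed. The case $q=0$ is immediate since $\mu_\alpha(n,0) = n-1$, so I assume $p\in(0,1]$. It suffices to show that $\sum_{k=2}^n g_k(p) = \zeta(\alpha) - 1 + O\bigl((p \vee 1/n)^{(\alpha-1)/(\alpha+1)}\bigr)$, which I do by splitting the sum at the cutoff $K := \lceil 1/(4p) \rceil$, chosen so that for $k \leq K$ one has $kp \leq 1/2$ while for $k > K$ the quantity $q^k$ is bounded away from $1$.

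For $2 \leq k \leq K \wedge n$, Taylor-expanding $1 - (1-p)^k = kp\bigl(1 + O(kp)\bigr)$ yields $(p/(1-q^k))^\alpha = k^{-\alpha}\bigl(1 + O(kp)\bigr)$, so $g_k(p) = k^{-\alpha} - p^\alpha + O(p\, k^{1-\alpha})$. Summing and using $\sum_{k=2}^M k^{-\alpha} = \zeta(\alpha) - 1 - O(M^{1-\alpha})$ gives
\[
\sum_{k=2}^{K \wedge n} g_k(p) = \zeta(\alpha) - 1 + O\bigl((K \wedge n)^{1-\alpha}\bigr) + O\bigl((K \wedge n)\, p^\alpha\bigr) + O\!\left(p \sum_{k=2}^{K\wedge n} k^{1-\alpha}\right).
\]

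For $K < k \leq n$ (present only when $n > K$), I use $1-q^k \geq 1-q^K$, which is bounded below uniformly as $p \to 0$, to get $(1-q^k)^{-\alpha} - 1 = O(q^k)$ and $g_k(p) = O(p^\alpha q^k)$; the resulting geometric sum is $O(p^{\alpha-1} q^K) = O(p^{\alpha-1})$. When instead $n < K$, I subtract the correction $\sum_{n<k\leq K} g_k(p) = O(n^{1-\alpha})$, producing the $1/n$-contribution in the stated error. Plugging in $K = \lceil 1/(4p) \rceil$, and using that $\sum_{k \leq K} k^{1-\alpha}$ is $O(K^{2-\alpha})$ for $1<\alpha<2$, $O(\log K)$ for $\alpha=2$, and $O(1)$ for $\alpha>2$, every error term above is bounded by $O\bigl((p \vee 1/n)^{(\alpha-1)\wedge 1}\bigr)$. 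Since $(\alpha-1)/(\alpha+1) \leq (\alpha-1)\wedge 1$ for every $\alpha>1$, the stated bound follows, and the weaker claim $\mu_\alpha(n,q_n) = n(1-q_n)^\alpha + O(1)$ is immediate.

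The main technical hurdle is the case analysis across $1<\alpha<2$, $\alpha=2$, and $\alpha>2$, in which the Taylor-error sum $\sum_{k \leq K} k^{1-\alpha}$ has different growth in $K$. The relaxed exponent $(\alpha-1)/(\alpha+1)$ appearing in the proposition is precisely the uniform bound that absorbs all three cases simultaneously, which explains why it is weaker than what the term-by-term analysis actually provides.
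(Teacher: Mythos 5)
Your argument is correct in substance but follows a genuinely different route from the paper. The paper first proves a two-sided bound on $\mu_\alpha(n,q)$ with a free cutoff parameter $m$ (Proposition~\ref{prop:BoundsOnMu}), then optimizes: the lower bound takes $m_n=\lfloor n\wedge(1-q_n)^{-1}\rfloor$, while the upper bound takes $m_n=\big\lfloor\big((1-q_n)^{-2}\wedge n(1-q_n)^{-1}\wedge n^{\alpha+1}\big)^{1/(\alpha+1)}\big\rfloor$ and runs a three-case analysis according to whether $(1-q_n)^{-1}\leq n$, $n<(1-q_n)^{-1}\leq n^{\alpha}$, or $(1-q_n)^{-1}>n^{\alpha}$; the exponent $\frac{\alpha-1}{\alpha+1}$ is exactly what this balancing choice of $m_n$ produces. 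You instead write each summand as $p^\alpha$ plus a nonnegative correction $g_k(p)$ (with $p=1-q_n$), split at the single cutoff $K=\Theta(1/p)$, and Taylor-expand term by term; your case analysis is in $\alpha$ (the growth of $\sum_{k\leq K}k^{1-\alpha}$) rather than in the relative size of $p$, $1/n$ and $n^{-\alpha}$. Your route is in fact sharper: it gives an error of order $(p\vee 1/n)^{(\alpha-1)\wedge 1}$ for $\alpha\neq2$, and of order $(p\vee 1/n)$ times a logarithm at $\alpha=2$, which implies the bound stated in the proposition.

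One sentence does need repair. The claim that ``every error term above is bounded by $O\big((p\vee 1/n)^{(\alpha-1)\wedge1}\big)$'' is false precisely at $\alpha=2$: there the Taylor-error sum contributes $O(p\log K)=O\big(p\log(1/p)\big)$ (or $O(p\log n)$ when $n<K$), which is not $O(p\vee 1/n)$ when $p\geq 1/n$ and $p\to0$. The conclusion survives because $\frac{\alpha-1}{\alpha+1}<1$ strictly: for $\alpha=2$ one has $p\log(1/p)=O(p^{1/3})\leq O\big((p\vee1/n)^{1/3}\big)$ and $p\log n=O(n^{-1/3})$ in the regime $p<1/(4n)$, so the stated bound with exponent $\frac{\alpha-1}{\alpha+1}$ does follow; you should simply route the $\alpha=2$ case directly to this exponent instead of passing through $(\alpha-1)\wedge1$. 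Everything else checks out: when the first block is nonempty one has $p<1/4$ and hence $kp\leq 1/2$ for $k\leq K$, so the expansion $1-(1-p)^k=kp\big(1+O(kp)\big)$ is uniform; for $p$ bounded away from $0$ the statement is trivial since the error term is then bounded below by a constant; and the tail $k>K$ is a geometric sum of order $p^{\alpha-1}$, which is absorbed.
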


The second equality in Proposition~\ref{prop:ConvOfMuAlpha} directly follows from the first one as $(1-q_n)\vee\frac{1}{n}\leq 1$. The proofs of these two propositions can be found in Appendix~\ref{app:asymptoticMu}. Since the details of the proofs are tedious, we provide a brief sketch of the idea. The proof technique heavily relies on the fact that $\frac{1-q}{1-q^k}\sim\frac{1}{k}$ whenever $k(1-q)=o(1)$, and that $\frac{1-q}{1-q^k}\sim(1-q)$ whenever $k(1-q)=\omega(1)$. Pretending for the moment that the first asymptotic holds whenever $k(1-q)\leq1$ and that the second one holds whenever $k(1-q)>1$, and assuming that $1\ll\frac{1}{1-q}\ll n$, it follows that
\begin{align*}
    \mu_\alpha(n,q)&=\sum_{1<k\leq n}\left(\frac{1-q}{1-q^k}\right)^\alpha\\
    &\simeq\sum_{1<k\leq\frac{1}{1-q}}\frac{1}{k^\alpha}+\sum_{\frac{1}{1-q}<k\leq n}(1-q)^\alpha\\
    &\simeq n(1-q)^\alpha-1+\sum_{1\leq k\leq\frac{1}{1-q}}\frac{1}{k^\alpha}\,;
\end{align*}
the second term asymptotically simplifies to $\log\left(\frac{1}{1-q}\right)$ when $\alpha=1$ and to $\zeta(\alpha)$ when $\alpha>1$.

The fact that when $\alpha>1$, the second order term of the development of $\mu_\alpha$ is $\zeta(\alpha)-1$, is interesting to us, and we can imagine it has already appeared in the literature; however, we were unable to find a reference.

\subsection{Upper tail bound}\label{sec:UpperBoundSmall}

To prove the upper bound of Proposition~\ref{prop:convProbSmall}, we start by proving Proposition~\ref{prop:StochBounds}, as this stochastic bound is at the heart of the proof.

\begin{proof}[Proof of Proposition~\ref{prop:StochBounds}]
    We must show that, for all $v\in T_\infty$, $n\geq0$ and $q\in[0,1]$, we have
    \begin{align*}
        \mathbb{P}\big(v\in T_{n,q}\big)\leq\mathbb{P}\big(\overline{1}^{|v|}\in T_{n,q}\big)\,.
    \end{align*}
    First of all, if $q=0$, then $T_{n,q}=\{\overline{1}^k, 0\leq k\leq n\}$, and the inequality is clearly true. Secondly, if $q=1$, then $T_{n,q}$ is a random binary search tree, and by symmetry we have
    \begin{align*}
        \mathbb{P}(v\in T_{n,q})&=\mathbb{P}(u\in T_{n,q})
    \end{align*}
    for all $u$, $v$ such that $|u|=|v|$, which also proves the inequality.
    
    Fix now $q\in(0,1)$. We prove the result by induction on $n$. For $n\leq1$ the assertion holds because either $T_{n,q}$ is empty or $T_{n,q}=\{\root\}$. Consider now some node $v\in T_\infty$. Write $T^L_{n,q}$ and $T^R_{n,q}$ respectively for the left and right subtrees of $T_{n,q}$ re-rooted at $\root$; in other words, $T_{n,q}(\overline{0})=\overline{0}T^L_{n,q}$ and $T_{n,q}(\overline{1})=\overline{1}T^R_{n,q}$.
    
    Assume first that $v=\overline{1}v'$. In this case, $v\in T_{n,q}$ if and only if $v'\in T^R_{n,q}$. Moreover, by Proposition~\ref{prop:ProjCons}, we know that, conditioned on its size, $T^R_{n,q}$ is a Mallows tree; since $\big|T^R_{n,q}\big|<n$, by induction we thus have
    \begin{align*}
        \mathbb{P}\big(v\in T_{n,q}\big)=\mathbb{P}\big(v'\in T^R_{n,q}\big)\leq\mathbb{P}\big(\overline{1}^{|v'|}\in T^R_{n,q}\big)=\mathbb{P}\big(\overline{1}^{|v|}\in T_{n,q}\big)\,,
    \end{align*}
    which proves the statement in this case.
    
    Assume next that $v=\overline{0}v'$. In this case, $v\in T_{n,q}$ if and only if $v'\in T^L_{n,q}$. Moreover, by Proposition~\ref{prop:ProjCons}, for $0\leq k\leq n-1$, we have
    \begin{align*}
        \mathbb{P}\Big(\big|T^L_{n,q}\big|\leq k\Big)&=\frac{1-q^{k+1}}{1-q^n}\geq q^{n-k-1}\frac{1-q^{k+1}}{1-q^n}=\frac{q^{n-k-1}-q^n}{1-q^n}=\mathbb{P}\Big(\big|T^L_{n,q}\big|\geq n-k-1\Big).
    \end{align*}
    Since $\mathbb{P}\big(|T^L_{n,q}|\geq n-k-1\big)=\mathbb{P}\big(|T^R_{n,q}|\leq k\big)$, it follows that $\mathbb{P}\big(|T^L_{n,q}|\leq k\big)\geq\mathbb{P}\big(|T^R_{n,q}|\leq k\big)$, which means that $\big|T^L_{n,q}\big|\preceq\big|T^R_{n,q}\big|$. By Corollary~\ref{cor:TnqIncreasing}, it follows that
    \begin{align*}
        \mathbb{P}\big(v\in T_{n,q}\big)=\mathbb{P}\big(v'\in T^L_{n,q}\big)&\leq\mathbb{P}\big(v'\in T^R_{n,q}\big)\leq\mathbb{P}\big(\overline{1}^v\in T_{n,q}\big)\,,
    \end{align*}
    proving the assertion in this case.
    
    Finally, if $v=\root$, then $v=\overline{1}^{|v|}$ and $\mathbb{P}\big(v\in T_{n,q}\big)=\mathbb{P}\big(\overline{1}^{|v|}\in T_{n,q}\big)$. This completes the proof.
\end{proof}

Before proving Proposition~\ref{prop:convProbSmall}, we state and prove the following proposition, which allows us to extend the results from convergence in probability to convergence in $L_p$ in Theorem~\ref{thm:Prob}.

\begin{prop}\label{prop:UI}
    For any sequence $(q_n)_{n\geq0}$ and any $p>0$, the family of random variables 
    \begin{align*} 
        \left(\left(\frac{h(T_{n,q_n})}{n(1-q_n)+c^*\log n}\right)^p\right)_{n\geq1}
    \end{align*}
    is uniformly integrable.
\end{prop}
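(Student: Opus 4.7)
The plan is to establish the stronger statement that, for every $r > 0$, $\sup_{n \geq 1} \mathbb{E}[Y_n^r] < \infty$, where $Y_n := h(T_{n,q_n})/(n(1-q_n) + c^* \log n)$ (adopting the convention $0/0 = 0$ for $n=1$, $q_1=1$). Taking $r = p+1$ and invoking the de la Vall\'ee Poussin criterion immediately yields uniform integrability of $(Y_n^p)_{n \geq 1}$, so the task reduces to proving an $n$-independent upper-tail bound on $Y_n$ that decays fast enough (subexponentially) past some fixed threshold.

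The derivation of such a tail bound combines three ingredients. First, Proposition~\ref{prop:StochBounds} together with a union bound over the $2^h$ nodes at depth $h$ gives $\mathbb{P}(h(T_{n,q_n}) \geq h) \leq 2^h\, \mathbb{P}(R^B_n \geq h)$, using the identification $\{\overline{1}^h \in T^B_n\} = \{R^B_n \geq h\}$ established in Section~\ref{sec:IBMandRD}. Second, specializing Proposition~\ref{prop:BivariateGenFun} at $(x, y) = (e^t, 1)$ and using $1 + u \leq e^u$ yields the moment generating function bound $\mathbb{E}[e^{tR^B_n}] \leq \exp((e^t-1)\mu_1(n,q_n))$ for $t \geq 0$. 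A standard Chernoff optimization (choosing $e^t = h/\mu_1(n,q_n)$, valid whenever $h \geq \mu_1(n,q_n)$) then combines these two bounds to yield
\begin{align*}
    \mathbb{P}(h(T_{n,q_n}) \geq h) \leq \exp\bigl(h\log(2e\mu_1(n,q_n)/h) - \mu_1(n,q_n)\bigr)\,,
\end{align*}
which is at most $2^{-h}$ as soon as $h \geq 4e\,\mu_1(n,q_n)$.

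Third, Proposition~\ref{prop:ConvOfMu} furnishes an absolute constant $K > 0$ such that $\mu_1(n,q_n) \leq K A_n$ for all $n \geq 1$, where $A_n := n(1-q_n) + c^*\log n$; this is where the fact that $c^* > 2$ enters. With this in hand, for any $x \geq 4eK$ and any $n \geq 2$ (for which $A_n \geq c^* \log 2 > 1$), setting $h := \lceil xA_n\rceil$ gives $h \geq 4e\,\mu_1(n,q_n)$ and therefore
\begin{align*}
    \mathbb{P}(Y_n \geq x) \leq 2^{-\lceil xA_n\rceil} \leq 2^{-x}\,.
\end{align*}
Substituting into the tail formula $\mathbb{E}[Y_n^r] = \int_0^\infty r x^{r-1} \mathbb{P}(Y_n \geq x)\, dx$ and splitting at $x = 4eK$ then produces a bound on $\mathbb{E}[Y_n^r]$ uniform in $n \geq 2$; the case $n = 1$ is trivial since $h(T_{1, q_1}) = 0$.

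The main obstacle is the uniform comparison $\mu_1(n, q_n) \leq KA_n$. Asymptotically it is immediate from Proposition~\ref{prop:ConvOfMu}, since $\mu_1(n,q_n) = n(1-q_n) + \log(n \wedge (1-q_n)^{-1}) + O(\sqrt{\log n}) \leq n(1-q_n) + 2\log n \leq A_n$ for all sufficiently large $n$ because $c^* > 2$; however, one must verify that the $O(\sqrt{\log n})$ error term is genuinely absorbed by the $(c^*-2)\log n$ slack, and separately inflate $K$ to cover the finitely many small values of $n$ via the crude bound $\mu_1(n,q_n) \leq n - 1$. Once this uniform comparison is in hand, the rest of the proof is a routine Chernoff-plus-integration exercise.
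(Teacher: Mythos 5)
Your argument is correct and is essentially the paper's own proof: both reduce the height to the right depth via Proposition~\ref{prop:StochBounds} and a union bound over the $2^h$ nodes at depth $h$, apply a Chernoff bound using the moment generating function of $R^B_n$ from Proposition~\ref{prop:BivariateGenFun}, and compare $\mu_1(n,q_n)$ with $n(1-q_n)+c^*\log n$ via Proposition~\ref{prop:ConvOfMu}. The only (cosmetic) difference is that you optimize the Chernoff parameter and package the conclusion as uniformly bounded $(p+1)$-moments plus de la Vall\'ee Poussin, whereas the paper fixes $t=\log 2+1$ and reads off uniform integrability directly from the resulting tail bound.
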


\begin{proof}
    First of all, by Proposition~\ref{prop:ConvOfMu} we have $n(1-q_n)+c^*\log n=\Theta\big(\mu_1(n,q_n)\big)$; hence, for simplification and without loss of generality, we now prove that $\big(h(T_{n,q_n})/\mu_1(n,q_n)\big)^p$ is uniformly integrable.
    
    Fix $a\in\mathbb{R}$. Combine Proposition~\ref{prop:StochBounds} with a union bound to obtain
    \begin{align}
        \mathbb{P}\left(\left(\frac{h(T_{n,q_n})}{\mu(n,q_n)}\right)^p\geq a\right)&=\mathbb{P}\Big(h(T_{n,q_n})\geq a^\frac{1}{p}\mu(n,q_n)\Big)\notag\\
        &=\mathbb{P}\Big(\exists v\in T_{n,q_n}\textrm{ such that }|v|=\big\lceil a^\frac{1}{p}\mu_1(n,q_n)\big\rceil\Big)\notag\\
        &\leq2^{a^\frac{1}{p}\mu(n,q_n)+1}\mathbb{P}\Big(R^B_n\geq a^\frac{1}{p}\mu_1(n,q_n)\Big)\label{eq:boundUI}\,.
    \end{align}
    Using Chernoff's bound together with the moment generating function of $R^B_n$ from Proposition~\ref{prop:BivariateGenFun}, we have
    \begin{align*}
        \mathbb{P}\Big(R^B_n\geq a^\frac{1}{p}\mu_1(n,q_n)\Big)&\leq e^{-ta^\frac{1}{p}\mu_1(n,q_n)}\mathbb{E}\left[e^{tR^B_n}\right]\leq\exp\Big(-ta^\frac{1}{p}\mu_1(n,q_n)+(e^t-1)\mu_1(n,q_n)\Big)\,.
    \end{align*}
    Taking $t=\log2+1$ and using this bound in (\ref{eq:boundUI}), it follows that
    \begin{align*}
        \mathbb{P}\left(\left(\frac{h(T_{n,q_n})}{\mu(n,q_n)}\right)^p\geq a\right)&\leq2\exp\Big(-\big[a^\frac{1}{p}+1-2e\big]\mu_1(n,q_n)\Big)\,.
    \end{align*}
    Thus, for $a>(2e-1)^p$, we have
    \begin{align*}
        \lim_{n\rightarrow\infty}\mathbb{P}\left(\left(\frac{h(T_{n,q_n})}{\mu(n,q_n)}\right)^p\geq a\right)=0\,,
    \end{align*}
    which proves the claimed uniform integrability.
\end{proof}

This result implies that the convergence in $L_p$ in Theorem~\ref{thm:Prob} follows from the convergence in probability. Therefor, from now on we can only focus our attention on proving the latter type of convergence.

Using a similar argument as for the previous proof, we conclude this section by proving the upper tail bound of Proposition~\ref{prop:convProbSmall}.

\begin{proof}[Proof of Proposition~\ref{prop:convProbSmall} (Upper Tail)]
    Fix $n\geq0$ and $h=h_n>\mu_1(n,q_n)$. By combining Proposition~\ref{prop:StochBounds} with a union bound, we have
    \begin{align*}
        \mathbb{P}\Big(h\big(T_{n,q_n}\big)\geq h\Big)&=\mathbb{P}\Big(\exists v\in T_\infty:|v|=h, v\in T_{n,q_n}\Big)\\
        &\leq2^h\mathbb{P}\Big(\overline{1}^h\in T_{n,q_n}\Big)\\
        &=2^h\mathbb{P}\Big(R^B_n\geq h\Big)\,.
    \end{align*}
    Because $h>\mu_1(n,q_n)$, it follows from Lemma~\ref{lem:ConcBoundRn} that
    \begin{align*}
        \mathbb{P}\Big(R^B_n\geq h\Big)&\leq\exp\left(\left[\frac{h}{\mu_1(n,q_n)}\log\left(\frac{e\mu_1(n,q_n)}{h}\right)-1\right]\mu_1(n,q_n)\right)\,,
    \end{align*}
    hence
    \begin{align}\label{eq:unifIntHeight}
        \mathbb{P}\Big(h\big(T_{n,q_n}\big)\geq h\Big)&\leq\exp\left(h\log\left(\frac{2e\mu_1(n,q_n)}{h}\right)-\mu_1(n,q_n)\right)\,.
    \end{align}

    Fix a sequence $(\gamma_n)_{n\geq0}$ such that $\gamma_n=\omega\big(n(1-q_n)\vee\sqrt{\log n}\big)$ and with $\gamma_n=o(\log n)$; this second hypothesis on $(\gamma_n)_{n\geq0}$ suffices to prove Proposition~\ref{prop:convProbSmall} as the probabilities we aim to bound are decreasing functions of $\gamma_n$. By Proposition~\ref{prop:ConvOfMu}, we have $\mu_1(n,q_n)=n(1-q_n)+\log n+O\left(\sqrt{\log n}\right)=\log n+o(\gamma_n)$. Taking $h_n=\lfloor c^*\log n+\gamma_n\rfloor$ and since $c^*>1$, we thus have that $h_n>\mu_1(n,q_n)$ for $n$ large enough, so (\ref{eq:unifIntHeight}) gives
    \begin{align}
        \mathbb{P}\Big(h\big(T_{n,q_n}\big)\geq h_n\Big)&\leq\exp\left(h_n\log\left(\frac{2e\mu_1(n,q_n)}{h_n}\right)-\mu_1(n,q_n)\right)\,.\label{eq:boundsHeightSmall}
    \end{align}
    To bound the right hand side of (\ref{eq:boundsHeightSmall}), we note that $c^*\mu_1(n,q_n)=c^*\log n+o(\gamma_n)=h_n-(1+o(1))\gamma_n$, so
    \begin{align*}
        \log\left(\frac{2e\mu_1(n,q_n)}{h_n}\right)&=\log\left(\frac{2e}{c^*}\cdot\frac{c^*\mu_1(n,q_n)}{h_n}\right)\\
        &=\log\left(\frac{2e}{c^*}\right)+\log\left(1-(1+o(1))\frac{\gamma_n}{h_n}\right)\\
        &=\frac{1}{c^*}-(1+o(1))\frac{\gamma_n}{h_n}\,,
    \end{align*}
    the last identity holding since $c^*\log\left(\frac{2e}{c^*}\right)=1$ and since $\gamma_n=o(h_n)$. Together with (\ref{eq:boundsHeightSmall}), this yields
    \begin{align*}
        \mathbb{P}\Big(h\big(T_{n,q_n}\big)\geq h_n\Big)&\leq\exp\left(\frac{h_n}{c^*}-(1+o(1))\gamma_n-\mu_1(n,q_n)\right)=\exp\left(\left(\frac{1}{c^*}-1+o(1)\right)\gamma_n\right)\,.
    \end{align*}
    Because $h_n\leq c^*\log n+\gamma_n$ and $\gamma_n\rightarrow\infty$ this concludes the proof of the upper bound.
\end{proof}

\subsection{Lower tail bound}\label{sec:lowerBoundSmall}

We now prove the lower tail bound of Proposition~\ref{prop:convProbSmall}. In order to do so, we use the coupling explained in Section~\ref{sec:MTandRBST}: $S_{n,q}(\root)=n$ and given $S_{n,q}(v)$, we have
\begin{align*}
    S_{n,q}(v\overline{0})&=\left\{\begin{array}{ll}
        \left\lfloor\frac{\log\left(1-U_v(1-q^{S_{n,q}(v)})\right)}{\log q}\right\rfloor & \textrm{if $q\in(0,1)$} \\
        \lfloor S_{n,q}(v)U_v\rfloor & \textrm{if $q=1$} \\
        0 & \textrm{if $q=0$}
    \end{array}\right.
\end{align*}
and
\begin{align*}
    S_{n,q}(v\overline{1})=S_{n,q}(v)-1-S_{n,q}(v\overline{0})\,.
\end{align*}
As we saw in Section~\ref{sec:MTandRBST}, it follows from Proposition~\ref{prop:ProjCons} that $\{v\in T_\infty:S_{n,q}(v)\geq1\}$ is $\mt(n,q)$-distributed.

For any node $v\in T_\infty$, write $X_{(v,v\overline{0})}=U_v$ and $X_{(v,v\overline{1})}=1-U_v$. and let $P_v=\prod_{e\prec v}X_e$, where $e\prec v$ denotes the set of edges on the path from $\root$ to $v$. The following proposition gives useful bounds for $S_{n,q}$ using $P$.

\begin{lemma}\label{lem:LowBoundS}
    Let $n\geq1$ and $q\in(0,1)$. Then, a.s. for all $v\in T_\infty$, we have
    \begin{align*}
        \frac{\log\big(1-P_v(1-q^n)\big)}{\log q}-|v|\leq S_{n,q}(v)\leq n-\frac{\log\big(q^n+P_v(1-q^n)\big)}{\log q}\,.
    \end{align*}
    Moreover, these inequalities naturally extend to $q=1$ as follows: almost surely
    \begin{align*}
        nP_v-|v|\leq S_{n,1}(v)\leq nP_v\,.
    \end{align*}
\end{lemma}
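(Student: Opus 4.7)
The proof will proceed by induction on $|v|$. For the $q\in(0,1)$ claim, the first step is a multiplicative reformulation: writing $b(v) := q^{S_{n,q}(v)}$ and $a := q^n$, the upper bound on $S_{n,q}(v)$ is equivalent to $b(v)\geq a/(a+P_v(1-a))$ and the lower bound is equivalent to $b(v)\leq q^{-|v|}(1-P_v(1-a))$. Both sides of both inequalities are nonnegative, which makes the claim amenable to direct algebraic manipulation. The base case $v=\root$ is immediate since $P_\root=1$, $|v|=0$, and $S_{n,q}(\root)=n$ give $b(\root)=a$, matching both sides with equality.

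For the inductive step, from the coupling together with the bracketing $\lfloor\beta\rfloor\in(\beta-1,\beta]$, one has
\[
b(v\overline{0}) \,\in\, \bigl[\,1 - U_v(1-b(v)),\, q^{-1}(1 - U_v(1-b(v)))\,\bigr), \qquad b(v\overline{1}) \,=\, \frac{b(v)}{q\,b(v\overline{0})}\,.
\]
Each of the four required inequalities (upper and lower bounds, at $v\overline{0}$ and at $v\overline{1}$) is obtained by plugging in the appropriate endpoint of this bracket and invoking the induction hypothesis on $b(v)$. After clearing positive denominators, each reduces to a polynomial inequality in the quantities $U_v, b(v), P_v(1-a), a, q^{|v|} \in [0,1]$. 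For example, the upper bound at $v\overline{0}$ reduces, via the inductive lower bound on $b(v)$, to
\[
(a + (1-U_v)P_v(1-a))(a + U_v P_v(1-a)) - a(a + P_v(1-a)) \,=\, U_v(1-U_v)P_v^2(1-a)^2 \,\geq\, 0\,,
\]
which is manifest. The upper bound at $v\overline{1}$ and the lower bound at $v\overline{0}$ yield similarly clean factorizations, the latter reducing to $(1-U_v)(1-q^{-|v|})\leq 0$.

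I expect the main obstacle to be the lower bound at $v\overline{1}$. After substitution and use of the inductive upper bound on $b(v)$, it reduces, with $u=U_v$, $\beta=b(v)$, $t=q^{|v|}$, to showing
\[
\beta t(1-\beta) \,\leq\, \bigl(1-u(1-\beta)\bigr)\bigl(1-\beta t\bigr)\,.
\]
The cleanest route I see is to lower-bound $1-u(1-\beta)$ by $\beta$ (which is just $(1-u)(1-\beta)\geq 0$) and then use $t\leq 1$ to conclude $\beta t(1-\beta)\leq \beta(1-\beta t)\leq (1-u(1-\beta))(1-\beta t)$. Once this chain is found the algebra is routine, but identifying the right intermediate bound is the key difficulty: a direct expansion produces a cubic in $\beta$ whose sign is not manifest.

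For the $q=1$ claim, no reformulation is needed: from $\lfloor x\rfloor\in(x-1,x]$ applied to the recursion $S_{n,1}(v\overline{0})=\lfloor S_{n,1}(v)U_v\rfloor$ and $S_{n,1}(v\overline{1})=S_{n,1}(v)-1-S_{n,1}(v\overline{0})$, together with $U_v\leq 1$, the floor errors at successive levels telescope to at most $|v|$ along any root-to-$v$ path, yielding $nP_v-|v|\leq S_{n,1}(v)\leq nP_v$. Finally, that these bounds hold simultaneously for all $v\in T_\infty$ a.s.\ follows at once from the countability of $T_\infty$, since the bounds hold deterministically for each fixed $v$ given $(U_v)_{v\in T_\infty}$.
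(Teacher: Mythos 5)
Your proposal is correct and follows essentially the same route as the paper: the identical induction on $|v|$ along the coupling, with your substitution $b(v)=q^{S_{n,q}(v)}$ merely recasting the paper's logarithmic manipulations as polynomial inequalities (your factorization $U_v(1-U_v)P_v^2(1-a)^2\ge 0$ and the hard-case bound $\beta t(1-\beta)\le\bigl(1-u(1-\beta)\bigr)(1-\beta t)$, both of which check out, play the role of the paper's monotonicity arguments and its comparison inequality (\ref{eq:Sconnections})). The only real difference is at $q=1$, where you give a direct telescoping-of-floor-errors induction while the paper simply invokes Devroye's result and a $q\to1$ limit; both are valid.
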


\begin{proof}
    We only consider the case $q\in(0,1)$ as in the case $q=1$ the assertion was already proven in \cite{devroye1986note}, and can be obtained from our formula by taking the limit $q\rightarrow1$.
    
    We prove this lemma by induction on the depth of $v$. For $v=\root$, $P_v=1$ and the inequalities hold. Assume now it holds for some $v\in T_\infty$. We start by proving that the lower bound holds for the left child of $v$. By definition, we know that
    \begin{align*}
        S_{n,q}(v\overline{0})&=\left\lfloor\frac{\log\left(1-U_v\left(1-q^{S_{n,q}(v)}\right)\right)}{\log q}\right\rfloor\geq\frac{\log\left(1-U_v\left(1-q^{S_{n,q}(v)}\right)\right)}{\log q}-1\,,
    \end{align*}
    and the right hand side is increasing in $S_{n,q}(v)$. By the induction hypothesis, we know that $S_{n,q}(v)\geq\frac{\log\big(1-P_v(1-q^n)\big)}{\log q}-|v|$. Replacing $S_{n,q}(v)$ by its lower bound into the previous formula gives us
    \begin{align*}
        S_{n,q}(v\overline{0})&\geq\frac{\log\left(1-U_v\left(1-q^{\frac{\log\left(1-P_v(1-q^n)\right)}{\log q}-|v|}\right)\right)}{\log q}-1\\
        &=\frac{\log\left(1-U_v+U_vq^{-|v|}\left(1-P_v(1-q^n)\right)\right)}{\log q}-1.
    \end{align*}
    Since $\log q<0$ and $(1-U_v)\leq q^{-|v|}(1-U_v)$, it follows that
    \begin{align*}
        S_{n,q}(v\overline{0})&\geq\frac{\log\left(q^{-|v|}(1-U_v)+U_vq^{-|v|}\left(1-P_v(1-q^n)\right)\right)}{\log q}-1\\
        &=\frac{\log\left(1-U_vP_v(1-q^n)\right)}{\log q}-|v|-1,
    \end{align*}
    which is the desired lower bound.
    
    We now prove that the upper bound holds for the right child of $v$. Using the definition again, we have
    \begin{align*}
        S_{n,q}(v\overline{1})&=S_{n,q}(v)-1-S_{n,q}(v\overline{0})\\
        &=S_{n,q}(v)-1-\left\lfloor\frac{\log\left(1-U_v(1-q^{S_{n,q}(v)})\right)}{\log q}\right\rfloor\\
        &=\left\lceil S_{n,q}(v)-\frac{\log\left(1-U_v(1-q^{S_{n,q}(v)})\right)}{\log q}\right\rceil-1\\
        &\overset{\textrm{a.s.}}{=}\left\lfloor S_{n,q}(v)-\frac{\log\left(1-U_v(1-q^{S_{n,q}(v)})\right)}{\log q}\right\rfloor\,,
    \end{align*}
    the last equality following from the fact that $\frac{\log\left(1-U_v(1-q^{S_{n,q}(v)})\right)}{\log q}$ is a.s. not an integer. From this equality, we obtain
    \begin{align*}
        S_{n,q}(v\overline{1})&\leq S_{n,q}(v)-\frac{\log\left(1-U_v\left(1-q^{S_{n,q}(v)}\right)\right)}{\log q}\,,
    \end{align*}
    and the right hand side is increasing in $S_{n,q}(v)$, which can be checked by direct computation. Applying the induction hypothesis, we obtain
    \begin{align*}
        S_{n,q}(v\overline{1})&\leq n-\frac{\log\big(q^n+P_v(1-q^n)\big)}{\log q}-\frac{\log\left(1-U_v\left(1-q^{n-\frac{\log\big(q^n+P_v(1-q^n)\big)}{\log q}}\right)\right)}{\log q}\\
        &=n-\frac{\log\big(q^n+(1-U_v)P_v(1-q^n)\big)}{\log q},
    \end{align*}
    which is the desired upper bound.
    
    For the two remaining bounds, note that, for any integer $S\geq1$ and $U\in(0,1)$, we have
    \begin{align}
        &\left(S-\frac{\log\left(1-U\left(1-q^{S}\right)\right)}{\log q}\right)-\left(\frac{\log\left(1-(1-U)\left(1-q^{S}\right)\right)}{\log q}\right)\notag\\
        &\hspace{1cm}=\frac{S\log q-\log\Big(q^{S}+U(1-U)\big(1-q^{S}\big)^2\Big)}{\log q}\notag\\
        &\hspace{1cm}=\frac{1}{\log\frac{1}{q}}\log\left(1+\frac{U(1-U)\big(1-q^{S}\big)^2}{q^{S}}\right)\geq0.\label{eq:Sconnections}
    \end{align}
    From this inequality, it follows that
    \begin{align*}
        S_{n,q}(v\overline{1})&=\left\lfloor S_{n,q}(v)-\frac{\log\left(1-U_v\left(1-q^{S_{n,q}(v)}\right)\right)}{\log q}\right\rfloor\\
        &\geq\left\lfloor\frac{\log\left(1-(1-U_v)\left(1-q^{S_{n,q}(v)}\right)\right)}{\log q}\right\rfloor,
    \end{align*}
    and then the same technique as the one used to bound $S_{n,q}(v\overline{0})$ from below gives us that
    \begin{align*}
        S_{n,q}(v\overline{1})&\geq\frac{\log\left(1-(1-U_v)P_v(1-q^n)\right)}{\log q}-|v|-1,
    \end{align*}
    which yields the desired lower bound. Similarly, (\ref{eq:Sconnections}) gives that
    \begin{align*}
        S_{n,q}(v\overline{0})&=\left\lfloor\frac{\log\left(1-U_v\left(1-q^{S_{n,q}(v)}\right)\right)}{\log q}\right\rfloor\\
        &\leq\left\lfloor S_{n,q}(v)-\frac{\log\left(1-(1-U_v)\left(1-q^{S_{n,q}(v)}\right)\right)}{\log q}\right\rfloor,
    \end{align*}
    from which we deduce that
    \begin{align*}
        S_{n,q}(v\overline{0})&\leq n-\frac{\log\big(q^n+U_vP_v(1-q^n)\big)}{\log q}
    \end{align*}
    by the same technique used to bound $S_{n,q}(v\overline{1})$ from above. This last inequality concludes the induction and the proof of the lemma.
\end{proof}

With this results, we can now compare $(S_{n,q})$ for different values of $n$ and $q$ using $P$. This leads to Proposition~\ref{prop:couplingMallowsRBST}.

\begin{proof}[Proof of Proposition~\ref{prop:couplingMallowsRBST}]
    We must prove that
    \begin{align*}
        \mathbb{P}\big(h(T_{n,q})\leq\ell\big)\leq\mathbb{P}\big(h(T_{m,1})\leq\ell\big)
    \end{align*}
    where $m=\left\lfloor\frac{1-q^n}{1-q^{\ell+2}}\right\rfloor$. First of all, if $q=0$, then $h(T_{n,q})=n$ and $m=\left\lfloor\frac{1-q^n}{1-q^{\ell+2}}\right\rfloor=1$, so the inequality holds. Assume now that $q\in(0,1)$. Because $\{v\in T_\infty:S_{n,q}(v)\geq1\}$ is $\mt(n,q)$-distributed, we have
    \begin{align*}
        \mathbb{P}\Big(h\big(T_{n,q}\big)\leq\ell\Big)&=\mathbb{P}\Big(\forall v\in T_\infty\textrm{ with }|v|=\ell+1:S_{n,q}(v)<1\Big)\,.
    \end{align*}
    Using the lower bound in Lemma~\ref{lem:LowBoundS}, this implies that
    \begin{align*}
        \mathbb{P}\Big(h\big(T_{n,q}\big)\leq\ell\Big)&\leq\mathbb{P}\left(\forall v\in T_\infty\textrm{ with }|v|=\ell+1:\frac{\log\big(1-P_v(1-q^n)\big)}{\log q}-|v|<1\right)\\
        &=\mathbb{P}\left(\forall v\in T_\infty\textrm{ with }|v|=\ell+1:P_v<\frac{1-q^{\ell+2}}{1-q^n}\right)\,.
    \end{align*}
    
    Consider now some $m\geq1$ and use the same technique but with the upper bound of Lemma~\ref{lem:LowBoundS}:
    \begin{align*}
        \mathbb{P}\Big(h\big(T_{m,1}\big)\leq\ell\Big)&=\mathbb{P}\Big(\forall v\in T_\infty\textrm{ with }|v|=\ell+1:S_{m,1}(v)<1\Big)\\
        &\geq\mathbb{P}\left(\forall v\in T_\infty\textrm{ with }|v|=\ell+1:P_v<\frac{1}{m}\right)\,.
    \end{align*}
    The proposition now follows by defining $m=\left\lfloor\frac{1-q^n}{1-q^{\ell+2}}\right\rfloor$, since $\frac{1}{m}\geq\frac{1-q^{\ell+2}}{1-q^n}$.
\end{proof}

We now have all the results necessary to prove the lower bound of Proposition~\ref{prop:convProbSmall}.

\begin{proof}[Proof of Proposition~\ref{prop:convProbSmall} (Lower Bound)]
    Consider $(q_n)_{n\geq0}$ and $(\gamma_n)_{n\geq0}$ as in the proposition and let $m_n=\left\lfloor\frac{1-(q_n)^n}{1-(q_n)^{\ell_n+2}}\right\rfloor$ where $\ell_n=\lfloor c^ *\log n-\gamma_n\rfloor$. We again assume without loss of generality that $\gamma_n=o(\log n)$. 
    
    Let us first study the asymptotic behaviour of $\log m_n$. Note that $\left\lfloor\frac{1-q^n}{1-q^{\ell+2}}\right\rfloor$ is monotone in $q$ provided $\ell+2\leq n$. Because $n(1-q_n)<\log n$ for $n$ large enough, it follows that
    \begin{align*}
        \frac{1-\left(1-\frac{\log n}{n}\right)^n}{1-\left(1-\frac{\log n}{n}\right)^{\ell_n+2}}-1\leq m_n\leq\lim_{q\rightarrow1}\frac{1-q^n}{1-q^{\ell+2}}=\frac{n}{\ell_n+2}\,.
    \end{align*}
    Moreover, $\left(1-\frac{\log n}{n}\right)^n=o(1)$ and $\left(1-\frac{\log n}{n}\right)^{\ell_n+2}=e^{-(1+o(1))\frac{c^*(\log n)^2}{n}}=1-(1+o(1))\frac{c^*(\log n)^2}{n}$, which implies that
    \begin{align*}
        \log\left(\frac{1-\left(1-\frac{\log n}{n}\right)^n}{1-\left(1-\frac{\log n}{n}\right)^{\ell_n+2}}-1\right)=\log\left((1+o(1))\frac{n}{c^*(\log n)^2}-1\right)=\log n+O(\log\log n)\,.
    \end{align*}
    It follows that
    \begin{align*}
        \log n+O(\log\log n)\leq\log m_n\leq\log n-\log(\ell_n+2)=\log n+O(\log\log n)\,,
    \end{align*}
    hence $\log m_n=\log n+O(\log\log n)$.
    
    Let us now study $(T_{m_n,1})_{n\geq0}$. Recall that when $q=1$, a $\mt(n,q)$-distributed tree has the distribution of a random binary search tree. We use the results of Reed~\cite{reed2003height} and Drmota~\cite{drmota2003analytic}, who prove that
    \begin{align*}
        \mathbb{E}\left[h\big(T_{m_n,1}\big)\right]&=c^*\log m_n+O\left(\log\log m_n\right)
    \end{align*}
    and
    \begin{align*}
        \mathrm{Var}\left[h\big(T_{m_n,1}\big)\right]&=O(1).
    \end{align*}
    Next,
    \begin{align*}
        \mathbb{P}\Big(h\big(T_{m_n,1}\big)\leq\ell_n\Big)&=\mathbb{P}\Big(h\big(T_{m_n,1}\big)-\mathbb{E}\left[h\big(T_{m_n,1}\big)\right]\leq\ell_n-\mathbb{E}\left[h\big(T_{m_n,1}\big)\right]\Big)\,,
    \end{align*}
    and
    \begin{align*}
        \ell_n-\mathbb{E}\left[h\big(T_{m_n,1}\big)\right]=c^*\log n-\gamma_n-c^*\log m_n+O(\log\log m_n)=(-1+o(1))\gamma_n\,,
    \end{align*}
    the last equality holding since $\gamma_n=\omega\left(\sqrt{\log n}\right)=\omega(\log\log n)$. Applying Chebyshev's inequality, we obtain
    \begin{align*}
        \mathbb{P}\Big(h\big(T_{m_n,1}\big)\leq\ell_n\Big)&\leq\frac{\mathrm{Var}\left[h\big(T_{m_n,1}\big)\right]}{\left(\ell_n-\mathbb{E}\left[h\big(T_{m_n,1}\big)\right]\right)^2}=O\left(\frac{1}{(\gamma_n)^2}\right)=o(1)\,.
    \end{align*}
    
    The lower bound of Proposition~\ref{prop:convProbSmall} follows by applying Proposition~\ref{prop:couplingMallowsRBST} to obtain
    \begin{align*}
        \mathbb{P}\Big(h\big(T_{n,q_n}\big)\leq c^*\log n-\gamma_n\Big)=\mathbb{P}\Big(h\big(T_{n,q_n}\big)\leq\ell_n\Big)\leq\mathbb{P}\Big(h\big(T_{m_n,1}\big)\leq\ell_n\Big)=o(1)\,.
    \end{align*}
\end{proof}

\section{Right depth of Mallows trees}

In this section, we study the right depth $R^B_n$ of a Mallows tree and use its properties to prove Theorem~\ref{thm:AS}.

\subsection{General results}\label{sec:IBMGeneralResults}

We start by proving the general results regarding the infinite $b$-model that was defined in Section~\ref{sec:IBMandRD}.

\begin{proof}[Proof of Proposition~\ref{prop:IBM}]
    Fix $n\geq0$ and $q\in[0,1)$, and let $B=(B_{i,j})$ have independent $\textsc{Bernoulli}(1-q)$ entries. We want to prove that $\sigma^B_n$ is $\pi_{n,q}$-distributed. For any $\sigma\in\mathcal{S}_n$ define
    \begin{align*}
        \theta(\sigma):=\big\{f:[n]\rightarrow\Z^+:\forall i\in[n],\textrm{rank}\big(f(i),\{f(1),\ldots,f(n)\}\big)=\sigma(i)\big\}
    \end{align*}
    for the set of functions $f$ with the same ordering as $\sigma$. By definition, we have
    \begin{align}
        \mathbb{P}\big(\sigma^B_n=\sigma\big)&=\mathbb{P}\big(f^B_n\in\theta(\sigma)\big)\notag\\
        &=\sum_{f\in\theta(\sigma)}\mathbb{P}\Big(\forall i\in[n],f^B(i)=f(i)\Big)\notag\\
        &=\sum_{f\in\theta(\sigma)}\prod_{1\leq i\leq n}\mathbb{P}\Big(f^B(i)=f(i)\,\Big|\,\forall j\in[i-1],f^B(j)=f(j)\Big)\,.\label{eq:probSigmaB}
    \end{align}
    
    Recall that $F^B_{i-1}=\{f^B(1),\ldots,f^B(i-1)\}$ and note that $\big|[f(i)-1]\setminus F^B_{i-1}\big|=f(i)-1-\big|\big\{j\in[i-1]:f^B(j)<f(i)\big\}\big|$. By using the definition $f^B(i)=\inf\big\{j\in\Z^+\setminus F^B_{i-1}:B_{i,j}=1\big\}$, it follows that
    \begin{align*}
        &\mathbb{P}\Big(f^B(i)=f(i)\,\Big|\,\forall j\in[i-1],f^B(j)=f(j)\Big)\\
        &\hspace{1cm}=\mathbb{P}\Big(\forall \ell\in[f(i)-1]\setminus F^B_{i-1},B_{i,\ell}=0\textrm{ and }B_{i,f(i)}=1\,\Big|\,\forall j\in[i-1],f^B(j)=f(j)\Big)\\
        &\hspace{1cm}=q^{f(i)-1-|\{j\in[i-1]:f(j)<f(i)\}|}(1-q)\,.
    \end{align*}
    Plugging this result back into (\ref{eq:probSigmaB}), we obtain
    \begin{align*}
        \mathbb{P}\big(\sigma^B_n=\sigma\big)&=\sum_{f\in\theta(\sigma)}\prod_{1\leq i\leq n}\Big(q^{f(i)-1-|\{j\in[i-1]:f(j)<f(i)\}|}(1-q)\Big)\\
        &=\sum_{f\in\theta(\sigma)}q^{\sum_{1\leq i\leq n}\big(f(i)-1-|\{j\in[i-1]:f(j)<f(i)\}|\big)}(1-q)^n\,.
    \end{align*}
    The sum in the power can be divided into two parts:
    \begin{align*}
        &\sum_{1\leq i\leq n}\big(f(i)-1-\big|\{j\in[i-1]:f(j)<f(i)\}\big|\big)\\
        &\hspace{1cm}=\sum_{1\leq i\leq n}\big(f(i)-1\big)-\sum_{1\leq i\leq n}\big|\{j\in[i-1]:f(j)<f(i)\}\big|\,;
    \end{align*}
    and because $f$ has the same ordering as $\sigma$, the second sum can be rewritten as
    \begin{align*}
        \sum_{1\leq i\leq n}\big|\{j\in[i-1]:f(j)<f(i)\}\big|&=\Big|\Big\{(i,j)\in[n]^2:j<i\textrm{ and }f(j)<f(i)\Big\}\Big|=\binom{n}{2}-\textrm{Inv}(\sigma)\,.
    \end{align*}
    This proves that (\ref{eq:probSigmaB}) can be rewritten as
    \begin{align*}
        \mathbb{P}\big(\sigma^B_n=\sigma\big)&=\sum_{f\in\theta(\sigma)}q^{\textrm{Inv}(\sigma)-\binom{n}{2}+\sum_{1\leq i\leq n}f(i)-1}(1-q)^n\\
        &=q^{\textrm{Inv}(\sigma)}\left[\sum_{f\in\theta(\sigma)}q^{-\frac{n(n+1)}{2}+\sum_{1\leq i\leq n}f(i)}(1-q)^n\right]\,.
    \end{align*}
    To conclude the proof, note that the term in brackets is independent of $\sigma$ just by reordering the $f(i)$ in the sum. This means that $\mathbb{P}(\sigma^B_n=\sigma)$ is proportional to $q^{\textrm{Inv}(\sigma)}$, and so $\sigma^B_n$ is $\pi_{n,q}$-distributed, as required.
\end{proof}

With Proposition~\ref{prop:IBM} proven, we can now use this equivalent model to study Mallows permutations and trees. Moreover, Corollary~\ref{cor:TnqIncreasing} directly follows from this result and the increasing property of the infinite $b$-model explained in Section~\ref{sec:IBMandRD}.

The infinite $b$-model is also used to prove Lemma~\ref{lem:LeftSubtreesDistribution} and Proposition~\ref{prop:BivariateGenFun}.

\begin{proof}[Proof of Lemma~\ref{lem:LeftSubtreesDistribution}]
    Fix $q\in[0,1)$ and $B$ with independent $\textsc{Bernoulli}(1-q)$ entries. We study the distribution of $T^B\big(\overline{1}^k\overline{0}\big)$ for all $k\geq0$. First of all, we explained in Section~\ref{sec:IBMandRD} that, for all $k\geq0$
    \begin{align*}
        \Big|T^B\big(\overline{1}^k\overline{0}\big)\Big|&=\tau^B\big(\overline{1}^k\big)-\tau^B\big(\overline{1}^{k-1}\big)-1\,.
    \end{align*}
    Moreover, the random variables $\big(\tau^B\big(\overline{1}^k\big)-\tau^B\big(\overline{1}^{k-1}\big)-1\big)_{k\geq0}$ are independent and $\textsc{Geometric}(1-q)$-distributed. Finally, by Proposition~\ref{prop:ProjCons}, conditionally on having a given size $n$, $T^B(\overline{1}^k\overline{0})$ is $\mt(n,q)$-distributed, from which the result follows.
\end{proof}

\begin{proof}[Proof of Proposition~\ref{prop:BivariateGenFun}]
    Fix $q\in[0,1)$ and let $B=\big(B_{i,j}\big)_{i,j\geq1}$ have independent $\textsc{Bernoulli}(1-q)$ entries. Let us first study the transition probabilities of the random process $(R^B_n,M^B_n)_{n\geq0}$. Define $\mathcal{L}_n=\sigma\big(B_{i,j},1\leq i\leq n,j\geq1\big)$ for the $\sigma$-algebra generated by the first $n$ rows of the matrix $B$ and note that $R^B_n$ and $M^B_n$ are $\mathcal{L}_n$-measurable. Since $(R^B_n)_{n\geq0}$ corresponds to the number of records of the sequence $(M^B_n)_{n\geq0}$, we have
    \begin{align}
        &\mathbb{P}\Big(R^B_{n+1}=r,M^B_{n+1}=m\,\Big|\,R^B_n=r,M^B_n=m,\mathcal{L}_n\Big)\notag\\
        &\hspace{1cm}=\mathbb{P}\Big(f^B(n+1)<M^B_n\,\Big|\,R^B_n=r,M^B_n=m,\mathcal{L}_n\Big)\notag\\
        &\hspace{1cm}=\mathbb{P}\Big(\exists j\in[M^B_n-1]\setminus F^B_n,B_{n+1,j}=1\,\Big|\,R^B_n=r,M^B_n=m,\mathcal{L}_n\Big)\notag\\
        &\hspace{1cm}=1-\mathbb{P}\Big(\forall j\in[M^B_n-1]\setminus F^B_n,B_{n+1,j}=0\,\Big|\,R^B_n=r,M^B_n=m,\mathcal{L}_n\Big)\,.\label{eq:recursiveRM}
    \end{align}
    Moreover, since $R^B_n$, $M^B_n$ and $F_n^B$ are $\mathcal{L}_n$-measurable, and $\big(B_{n+1,j}\big)_{j\geq1}$ is independent of $\mathcal{L}_n$, we have
    \begin{align}
        &\mathbb{P}\Big(\forall j\in[M^B_n-1]\setminus F^B_n,B_{n+1,j}=0\,\Big|\,R^B_n=r,M^B_n=m,\mathcal{L}_n\Big)\notag\\
        &\hspace{0.5cm}=\mathbb{E}\Big[q^{\left|[M^B_n-1]\setminus F^B_n\right|}\,\Big|\,R^B_n=r,M^B_n=m,\mathcal{L}_n\Big]\notag\\
        &\hspace{0.5cm}=q^{m-n}\,;\label{eq:probNoRecord}
    \end{align}
    the final equality holding since
    \begin{align*}
        \big|[M^B_n-1]\setminus F^B_n\big|&=\big|[M^B_n-1]\setminus\{f^B(1),...,f^B(n)\}\big|\\
        &=\big|[M^B_n]\setminus\{f^B(1),...,f^B(n)\}\big|\\
        &=M^B_n-n\,.
    \end{align*}
    Combining (\ref{eq:recursiveRM}) and (\ref{eq:probNoRecord}) shows that the desired transition probability has the claimed value when $k=0$ and $\ell=0$. Similarly, for $\ell\geq1$, we have
    \begin{align*}
        &\mathbb{P}\Big(R^B_{n+1}=r+1,M^B_{n+1}=m+\ell\,\Big|\,R^B_n=r,M^B_n=m,\mathcal{L}_n\Big)\\
        &\hspace{0.5cm}=\mathbb{P}\Big(f^B(n+1)=m+\ell\,\Big|\,R^B_n=r,M^B_n=m,\mathcal{L}_n\Big)\\
        &\hspace{0.5cm}=\mathbb{P}\Big(\forall j\in[M^B_n+\ell-1]\setminus F^B_n,B_{n+1,j}=0\textrm{ and }B_{n+1,M^B_n+\ell}=1\,\Big|\,R^B_n=r,M^B_n=m,\mathcal{L}_n\Big)\\
        &\hspace{0.5cm}=q^{m+\ell-n-1}(1-q)\,.
    \end{align*}
    This establishes the Markov property and proves that the transition probability is as claimed.
    
    We now move on to the derivation of the bivariate characteristic function, and we do so by induction. The equation does hold when $n=0$, as can be straightforwardly checked, but for the sake of the proof, it is more natural to start the induction at $n=1$. In this case, we have
    \begin{align*}
        \mathbb{E}\left[x^{R^B_1+1}y^{M^B_1}\right]&=\sum_{j\geq1}\mathbb{E}\left[x^{R^B_1+1}y^{M^B_1}\,\Big|\, f^B(1)=j\right]\mathbb{P}\left(f^B(1)=j\right)\\
        &=\sum_{j\geq1}x^1y^jq^{j-1}(1-q)\\
        &=\frac{xy(1-q)}{1-qy}\\
        &=y^1\frac{q+(1-q)x-q^1}{1-q^1y},
    \end{align*}
    which is the desired formula. Assume now that the formula holds for some $n\geq1$. First, by conditioning on $(R^B_n,M^B_n)$, we have
    \begin{align}
        \mathbb{E}\left[x^{R^B_{n+1}+1}y^{M^B_{n+1}}\right]&=\sum_{r,m}\mathbb{E}\left[x^{R^B_{n+1}+1}y^{M^B_{n+1}}\,\Big|\,R^B_n=r,M^B_n=m\right]\mathbb{P}\left(R^B_n=r,M^B_n=m\right)\,.\label{eq:lem312}
    \end{align}
    For the first term inside the sum, we have
    \begin{align*}
        &\mathbb{E}\left[x^{R^B_{n+1}+1}y^{M^B_{n+1}}\,\Big|\,R^B_n=r,M^B_n=m\right]\\
        &\hspace{1cm}=\sum_{k\in\{0,1\}}\sum_{\ell\geq0}x^{r+k+1}y^{m+\ell}\mathbb{P}\left(R^B_{n+1}=r+k,M^B_{n+1}=m+\ell\,\Big|\,R^B_n=r,M^B_n=m\right)\,.
    \end{align*}
    We now split the sum according to whether $\ell=0$ or $\ell\geq1$. Note that if $\ell=0$, then $k=0$, and if $\ell\geq1$, then $k=1$. Using the previously derived transition probabilities, we obtain
    \begin{align*}
        \mathbb{E}\left[x^{R^B_{n+1}+1}y^{M^B_{n+1}}\,\Big|\,R^B_n=r,M^B_n=m\right]&=x^{r+1}y^m\left(1-q^{m-n}\right)+\sum_{\ell\geq1}x^{r+2}y^{m+\ell}q^{m+\ell-n-1}(1-q)\\
        &=x^{r+1}y^m\left(1-q^{m-n}\right)+\frac{x^{r+2}y^{m+1}q^{m-n}(1-q)}{1-qy}\,.
    \end{align*}
    Plugging this back into (\ref{eq:lem312}), we have
    \begin{align*}
        \mathbb{E}\left[x^{R^B_{n+1}+1}y^{M^B_{n+1}}\right]&=\sum_{r,m}\left[x^{r+1}y^m\left(1-q^{m-n}\right)+\frac{x^{r+2}y^{m+1}q^{m-n}(1-q)}{1-qy}\right]\mathbb{P}\left(R^B_n=r,M^B_n=m\right)\\
        &=\mathbb{E}\left[x^{R^B_n+1}y^{M^B_n}\right]-\frac{1}{q^n}\mathbb{E}\left[x^{R^B_n+1}(qy)^{M^B_n}\right]+\frac{xy(1-q)}{q^n(1-qy)}\mathbb{E}\left[x^{R^B_n+1}(qy)^{M^B_n}\right]\\
        &=\mathbb{E}\left[x^{R^B_n+1}y^{M^B_n}\right]+\frac{1}{q^n}\left[\frac{xy(1-q)}{1-qy}-1\right]\mathbb{E}\left[x^{R^B_n+1}(qy)^{M^B_n}\right]\,.
    \end{align*}
    Using the induction hypothesis, this yields the equation
    \begin{align*}
        &\mathbb{E}\left[x^{R^B_{n+1}+1}y^{M^B_{n+1}}\right]\\
        &\hspace{1cm}=\left[y^n\prod_{1\leq k\leq n}\frac{q+(1-q)x-q^k}{1-q^ky}\right]+\frac{1}{q^n}\left[\frac{xy(1-q)}{1-qy}-1\right]\left[(qy)^n\prod_{1\leq k\leq n}\frac{q+(1-q)x-q^k}{1-q^k(qy)}\right]\\
        &\hspace{1cm}=y^n\prod_{1\leq k\leq n}\frac{q+(1-q)x-q^k}{1-q^ky}+\frac{(xy(1-q)-1+qy)y^n}{1-qy}\prod_{1\leq k\leq n}\frac{q+(1-q)x-q^k}{1-q^{k+1}y}\,.
    \end{align*}
    Finally, factorizing by $y^n\prod_{1\leq k\leq n}\frac{q+(1-q)x-q^k}{1-q^ky}$, it follows that
    \begin{align*}
        \mathbb{E}\left[x^{R^B_{n+1}+1}y^{M^B_{n+1}}\right]&=\left[y^n\prod_{1\leq k\leq n}\frac{q+(1-q)x-q^k}{1-q^ky}\right]\left[1+\frac{xy(1-q)-1+qy}{1-q^{n+1}y}\right]\\
        &=\left[y^n\prod_{1\leq k\leq n}\frac{q+(1-q)x-q^k}{1-q^ky}\right]\left[\frac{y\left(q+(1-q)x-q^{n+1}\right)}{1-q^{n+1}y}\right]\\
        &=y^{n+1}\prod_{1\leq k\leq n+1}\frac{q+(1-q)x-q^k}{1-q^ky},
    \end{align*}
    establishing the second assertion of the proposition.
\end{proof}

With these two results, we now have strong properties regarding Mallows trees and we can prove bounds on their height in the case when $n(1-q_n)/\log n\rightarrow\infty$.

\subsection{Bounds on the left subtrees}\label{sec:bdsLeftSubtrees}

Before proving Theorem~\ref{thm:AS}, we prove Proposition~\ref{prop:boundsLeftSubtree}, whose bounds will be useful in in the proof of all three theorems of this paper.

\begin{proof}[Proof of Proposition~\ref{prop:boundsLeftSubtree}]
    Before starting the actual proof, let us briefly explain why
    \begin{align}
        M:=c^*\times\sup_{n\geq2,q\in(0,1)}\left\{\frac{\mu_1(n,q)-n(1-q)-\log\left(n\wedge\frac{1}{1-q}\right)}{\sqrt{\log\left(n\wedge\frac{1}{1-q}\right)}}\right\}\,.\label{eq:boundsForM}
    \end{align}
    is finite. Assuming it is not, we can find a sequence $(n_k,q_k)_{k\geq0}$ such that the term inside the supremum of (\ref{eq:boundsForM}) goes to infinity. By extracting subsequences, we can assume that $(q_k)_{k\geq0}$ converges in $[0,1]$ and that $(n_k)_{k\geq0}$ is constant equal to some $n\geq2$ or diverges to $\infty$.
    
    If $n_k=n$, the numerator is bounded and the only way for this term to diverge is to have $\log\big(n\wedge\frac{1}{1-q_k}\big)\rightarrow0$, hence $q_k\rightarrow0$. In this case, we have
    \begin{align*}
        \mu_1(n_k,q_k)-n_k(1-q_k)-\log\left(n_k\wedge\frac{1}{1-q_k}\right)&=\sum_{1\leq i\leq n}\frac{1-q_k}{1-q_k^i}-n(1-q_k)-\log\left(\frac{1}{1-q_k}\right)\\
        &=O(q_k)\,,
    \end{align*}
    and since $\log\big(n_k\wedge\frac{1}{1-q_k}\big)\sim q_k$, the term inside the supremum of (\ref{eq:boundsForM}) actually converges to $0$ and not infinity. On the other hand, if $n_k\rightarrow\infty$, we can apply Proposition~\ref{prop:ConvOfMu} to bound the numerator in (\ref{eq:boundsForM}) and see that the supremum is again finite.
    
    We will prove the bound stated in the proposition for this value of $M$. By the definition of $M$, for all $n\geq2$ and $q\in(0,1)$, we have
    \begin{align*}
        \mu_1(n,q)-n(1-q)-\log\left(n\wedge\frac{1}{1-q}\right)\leq\frac{M}{c^*}\sqrt{\log\left(n\wedge\frac{1}{1-q}\right)}\,;
    \end{align*}
    moreover, this inequality remains true when $n=1$ or $q=0$ as both sides equal $0$.
    
    Now, without loss of generality, fix $\xi>0$, write
    \begin{align*}
        \mathbb{P}_{\textrm{LS}}:=\mathbb{P}\left(\sup_{k\geq0}\Big\{h\Big(T^B\big(\overline{1}^{k}\overline{0}\big)-k\Big)\Big\}\geq c^*\log\left(\frac{1}{1-q}\right)+M\sqrt{\log\left(\frac{1}{1-q}\right)}+\xi\right)\,,
    \end{align*}
    and let $\ell=\left\lceil c^*\log\left(\frac{1}{1-q}\right)+M\sqrt{\log\left(\frac{1}{1-q}\right)}+\xi\right\rceil$. By taking a union bound over $k$, we obtain
    \begin{align}
        \mathbb{P}_{\textrm{LS}}&\leq\sum_{k\geq0}\mathbb{P}\left(h\Big(T^B\big(\overline{1}^{k}\overline{0}\big)\Big)\geq\ell+k\right)\,.\label{eq:boundLeft1}
    \end{align}
    
    By Lemma~\ref{lem:LeftSubtreesDistribution}, $T^B(\overline{1}^k\overline{0})\overset{d}{=}T_{G(q),q}$, so
    \begin{align*}
        \mathbb{P}\left(h\Big(T^B\big(\overline{1}^{k}\overline{0}\big)\Big)\geq\ell+k\right)&=\mathbb{P}\Big(h\big(T_{G(q),q}\big)\geq\ell+k\Big)\,.
    \end{align*}
    Consider now some $\lambda>0$ and divide the probability according to whether $G(q)\geq\frac{\lambda(k+\xi)+2-q}{1-q}$ or $G(q)<\frac{\lambda(k+\xi)+2-q}{1-q}$, to obtain
    \begin{align}
        \mathbb{P}\left(h\Big(T^B\big(\overline{1}^{k}\overline{0}\big)\Big)\geq\ell+k\right)&=\mathbb{P}\left(h\big(T_{G(q),q}\big)\geq\ell+k,G(q)\geq\frac{\lambda(k+\xi)+2-q}{1-q}\right)\notag\\
        &\hspace{0.5cm}+\mathbb{P}\left(h\big(T_{G(q),q}\big)\geq\ell+k,G(q)<\frac{\lambda(k+\xi)+2-q}{1-q}\right)\,.\label{eq:bdsLeftSubtreeGq}
    \end{align}
    For the first term, drop the first event and use that $G(q)$ is geometric to obtain
    \begin{align*}
        \mathbb{P}\Big(h\big(T_{G(q),q}\big)\geq\ell+k,G(q)\geq\frac{\lambda(k+\xi)+2-q}{1-q}\Big)&\leq q^{\frac{\lambda(k+\xi)+2-q}{1-q}}\leq e^{-\lambda(k+\xi)-1}\,,
    \end{align*}
    where the last inequality follows from the fact that $0\leq q\leq e^{q-1}$. For the second term, use the increasing property of $(T_{n,q})_{n\geq0}$ from Corollary~\ref{cor:TnqIncreasing} to bound $G(q)$ by its maximal value, and then drop the second event, to obtain
    \begin{align*}
        \mathbb{P}\left(h\big(T_{G(q),q}\big)\geq\ell+k,G(q)<\frac{\lambda(k+\xi)+2-q}{1-q}\right)\leq\mathbb{P}\Big(h\big(T_{m,q}\big)\geq\ell+k\Big)\,,
    \end{align*}
    where $m=\left\lfloor\frac{\lambda(k+\xi)+2-q}{1-q}\right\rfloor$. Putting this back into (\ref{eq:bdsLeftSubtreeGq}) gives us
    \begin{align}
         \mathbb{P}\left(h\Big(T^B\big(\overline{1}^{k}\overline{0}\big)\Big)\geq\ell+k\right)&\leq e^{-\lambda(k+\xi)-1}+\mathbb{P}\Big(h\big(T_{m,q}\big)\geq\ell+k\Big)\,.\label{eq:boundLeft2}
    \end{align}
    
    In order to bound $\mathbb{P}_{\textrm{LS}}$, we now need to bound $\mathbb{P}\big(h\big(T_{m,q}\big)\geq\ell+k\big)$. Taking a union bound over all nodes at depth $\ell+k$ and then applying Proposition~\ref{prop:StochBounds}, we obtain
    \begin{align*}
        \mathbb{P}\Big(h\big(T_{m,q}\big)\geq\ell+k\Big)&\leq2^{\ell+k}\mathbb{P}\Big(R^B_m\geq\ell+k\Big)\,.
    \end{align*}
    Using Chernoff's bound with the moment generating function of $R^B_m$ from Proposition~\ref{prop:BivariateGenFun}, it follows that, for any $t>0$
    \begin{align*}
        \mathbb{P}\Big(R^B_m\geq\ell+k\Big)&\leq e^{-t(\ell+k)}\mathbb{E}\left[e^{tR^B_m}\right]\leq\exp\Big(-t(\ell+k)+(e^t-1)\mu_1(m,q)\Big)\,.
    \end{align*}
    Putting this back into the previous inequality and taking $t=\log c^*$ gives us
    \begin{align*}
        \mathbb{P}\Big(h\big(T_{m,q}\big)\geq\ell+k\Big)&\leq\exp\left(\log\left(\frac{2}{c^*}\right)(\ell+k)+(c^*-1)\mu_1(m,q)\right)\,.
    \end{align*}
    Since $m\geq\frac{1}{1-q}$ and by using the definition of $M$, we know that
    \begin{align*}
        \mu_1(m,q)\leq m(1-q)+\log\left(\frac{1}{1-q}\right)+\frac{M}{c^*}\sqrt{\log\left(\frac{1}{1-q}\right)}\,.
    \end{align*}
    From this inequality, we obtain
    \begin{align*}
        &\mathbb{P}\Big(h\big(T_{m,q}\big)\geq\ell+k\Big)\\
        &\hspace{1cm}\leq\exp\left(\log\left(\frac{2}{c^*}\right)(\ell+k)+(c^*-1)\left(m(1-q)+\log\left(\frac{1}{1-q}\right)+\frac{M}{c^*}\sqrt{\log\left(\frac{1}{1-q}\right)}\right)\right)\,.
    \end{align*}
  From their respective definitions, we know that $\ell\geq c^*\log\left(\frac{1}{1-q}\right)+M\sqrt{\log\left(\frac{1}{1-q}\right)}+\xi$ and that $m\leq\frac{\lambda(k+\xi)+2-q}{1-q}$. Using these bounds together with the fact that $\log\left(\frac{2}{c^*}\right)=\frac{1}{c^*}-1<0$ in the preceding inequality, we obtain
    \begin{align*}
         \mathbb{P}\Big(h\big(T_{m,q}\big)\geq\ell+k\Big)&\leq\exp\left(-\frac{c^*-1}{c^*}\left(c^*\log\left(\frac{1}{1-q}\right)+M\sqrt{\log\left(\frac{1}{1-q}\right)}+\xi+k\right)\right.\\
         &\hspace{0.5cm}\left.+(c^*-1)\left(\lambda(k+\xi)+2-q+\log\left(\frac{1}{1-q}\right)+\frac{M}{c^*}\sqrt{\log\left(\frac{1}{1-q}\right)}\right)\right)\,\\
         &\leq\exp\left(-\frac{(c^*-1)(1-c^*\lambda)}{c^*}(\xi+k)+2(c^*-1)\right)\,.
    \end{align*}
    
    Using this bound in (\ref{eq:boundLeft2}), and then plugging the result into (\ref{eq:boundLeft1}), we obtain
    \begin{align*}
        \mathbb{P}_{\textrm{LS}}&\leq\sum_{k\geq0}\left[\exp\Big(-\lambda(k+\xi)-1\Big)+\exp\left(-\frac{(c^*-1)(1-c^*\lambda)}{c^*}(\xi+k)+2(c^*-1)\right)\right]\,.
    \end{align*}
    Choosing $\lambda=\frac{c^*-1}{(c^*)^2}$ so that $\lambda=\frac{(c^*-1)(1-c^*\lambda)}{c^*}$, this bound becomes
    \begin{align*}
        \mathbb{P}_{\textrm{LS}}&\leq e^{-\lambda\xi}\sum_{k\geq0}\Big[\exp\big(-\lambda k-1\big)+\exp\big(-\lambda k+2(c^*-1)\big)\Big]=\left(\frac{e^{-1}+e^{2(c^*-1)}}{1-e^{-\lambda}}\right)e^{-\lambda\xi}\,,
    \end{align*}
    proving the proposition.
\end{proof}

\subsection{Almost sure convergence}

To conclude this section, we prove Theorem~\ref{thm:AS}, from which we deduce Theorem~\ref{thm:Prob} in the case when $n(1-q_n)/\log n\rightarrow\infty$ by combining it with Proposition~\ref{prop:UI}.

\begin{proof}[Proof of Theorem~\ref{thm:AS}]
    Let $(q_n)_{n\geq0}$ be a sequence such that $n(1-q_n)/\log n\rightarrow\infty$ and fix $\varepsilon>0$ and $\lambda>0$. We prove that
    \begin{align*}
        \mathbb{P}\left(\left|\frac{h(T_{n,q_n})}{n(1-q_n)}-1\right|>\varepsilon\right)=O\left(\frac{1}{n^\lambda}\right)
    \end{align*}
    by bounding the lower tail and the upper tail separately.
    
    We start with the lower tail as the technique for the upper tail bound is similar but more involved. Using $R^B_n$ as a stochastic lower bound for $h(T_{n,q_n})$ as in (\ref{eq:htnq_upper}), we have
    \begin{align*}
        \mathbb{P}\Big(h\big(T_{n,q_n}\big)<(1-\varepsilon)n(1-q_n)\Big)&\leq\mathbb{P}\Big(R^B_n<(1-\varepsilon)n(1-q_n)\Big)\,.
    \end{align*}
    Since $n(1-q_n)=\omega(\log n)$, by applying Proposition~\ref{prop:ConvOfMuAlpha} we know that $\mu_1(n,q_n)\sim n(1-q_n)$. It follows that, for $n$ large enough, we have $(1-\varepsilon)n(1-q_n)/\mu_1(n,q_n)<1$ and we can apply the second bound of Lemma~\ref{lem:ConcBoundRn} to the previous inequality to obtain
    \begin{align*}
        \mathbb{P}\Big(h\big(T_{n,q_n}\big)<(1-\varepsilon)n(1-q_n)\Big)&\leq\exp\left(\left[\frac{(1-\varepsilon)n(1-q_n)}{\mu_1(n,q_n)}\log\left(\frac{e\mu_1(n,q_n)}{(1-\varepsilon)n(1-q_n)}\right)-1\right]\mu_1(n,q_n)\right)\\
        &=\exp\left(\left[(1-\varepsilon)\log\left(\frac{e}{1-\varepsilon}\right)-1+o(1)\right]n(1-q_n)\right)\,.
    \end{align*}
    By convexity, $(1-\varepsilon)\log\left(\frac{e}{1-\varepsilon}\right)<1$. Since $n(1-q_n)=\omega(\log n)$, this proves that
    \begin{align*}
        \mathbb{P}\Big(h\big(T_{n,q_n}\big)<(1-\varepsilon)n(1-q_n)\Big)&=O\left(\frac{1}{n^\gamma}\right)\,,
    \end{align*}
    which is the desired lower bound for $h(T_{n,q_n})$.
    
    Let us now prove that the upper bound also holds. Using the second stochastic inequality given in (\ref{eq:htnq_upper}), we have
    \begin{align*}
        \mathbb{P}\Big(h\big(T_{n,q_n}\big)>(1+\varepsilon)n(1-q_n)\Big)&\leq\mathbb{P}\left(R_n^B+1+\sup_{k \ge 0} \Big\{h\Big(T^B\big(\overline{1}^{k}\overline{0}\big)\Big)-k\Big\}>(1+\varepsilon)n(1-q_n)\right)\,.
    \end{align*}
    Using that $X+Y>x+y$ implies that $X>x$ or $Y>y$ to bound the probability on the right, we obtain
    \begin{align*}
        \mathbb{P}\Big(h\big(T_{n,q_n}\big)>(1+\varepsilon)n(1-q_n)\Big)&\leq\mathbb{P}\Big(R_n^B>(1+\varepsilon/2)n(1-q_n)\Big)\\
        &\hspace{0.5cm}+\mathbb{P}\left(1+\sup_{k \ge 0} \Big\{h\Big(T^B\big(\overline{1}^{k}\overline{0}\big)\Big)-k\Big\}>\varepsilon n(1-q_n)/2\right)\,.
    \end{align*}
    For the first term, using the first bound of Lemma~\ref{lem:ConcBoundRn} and the same arguments as for the lower bound, it follows that
    \begin{align*}
        \mathbb{P}\Big(R_n^B>(1+\varepsilon/2)n(1-q_n)\Big)=O\left(\frac{1}{n^\gamma}\right)\,.
    \end{align*}
    For the second term, apply Proposition~\ref{prop:boundsLeftSubtree} with \begin{align*}
        \xi=\xi_n=\varepsilon n(1-q_n)/2-1-c^*\log\left(\frac{1}{1-q_n}\right)-M\sqrt{\log\left(\frac{1}{1-q_n}\right)}
    \end{align*}
    to obtain
    \begin{align*}
        \mathbb{P}\left(1+\sup_{k \ge 0} \Big\{h\Big(T^B\big(\overline{1}^{k}\overline{0}\big)\Big)-k\Big\}>\varepsilon n(1-q_n)/2\right)\leq Ce^{-\lambda\xi_n}\,.
    \end{align*}
    Since $n(1-q_n)/\log n\rightarrow\infty$, it follows that $\log\frac{1}{1-q_n}=O(\log n)$ and then $\xi_n\sim\varepsilon n(1-q_n)/2$ which proves that
    \begin{align*}
        \mathbb{P}\left(1+\sup_{k \ge 0} \Big\{h\Big(T^B\big(\overline{1}^{k}\overline{0}\big)\Big)-k\Big\}>\varepsilon n(1-q_n)/2\right)=O\left(\frac{1}{n^\gamma}\right)\,.
    \end{align*}
    This yields the desired upper bound for $h(T_{n,q_n})$ and concludes the proof of Theorem~\ref{thm:AS}.
\end{proof}

\section{Intermediate values and threshold process}\label{sec:IntValuesThreshold}

In this section, we will prove Proposition~\ref{prop:convRightSubtree}, which will allow us to conclude the proof of Theorem~\ref{thm:Prob} in the last case, i.e. when $n(1-q_n)/\log n=\Theta(1)$. In order to do so, we prove the following proposition, which in fact handles a somewhat wider range of asymptotic behaviour for the sequence $(q_n)_{n\geq0}$. The bounds in Proposition~\ref{prop:BoundsOnN}, below, are actually tight enough that they will also be used in Section~\ref{sec:CLTRD} to prove Proposition~\ref{prop:convRenSubtree}, which is a key input of the central limit theorem for the height of Mallows trees.

\begin{prop}\label{prop:BoundsOnN}
    Let $(q_n)_{n\geq0}$ be such that $\log\big(n(1-q_n)\big)=O\left(\sqrt{\log n}\right)$ and $n(1-q_n)=\omega\left(\sqrt{\log n}\right)$. For $n\geq0$, let $m=m(n)=\min\big\{\ell\geq0:\ell(1-q_n)+\log\ell\geq n(1-q_n)\big\}$. Then, for all $(\beta_n)_{n\geq0}$ such that $\beta_n=\omega\left(\sqrt{\log n}\right)$, we have
    \begin{align*}
        \lim_{n\rightarrow\infty}\mathbb{P}\bigg(e^{-\beta_n}\leq\Big|T^B_n\big(\overline{1}^{R^B_m+1}\big)\Big|(1-q_n)\leq\beta_n\bigg)=1\,.
    \end{align*}
\end{prop}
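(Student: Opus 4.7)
The plan begins with a structural identification of $N := \bigl|T^B_n(\overline{1}^{R^B_m+1})\bigr|$. By the binary search tree construction, the ancestors of $\overline{1}^{R^B_m+1}$ in $T^B_n$ are precisely the rightmost-path nodes of $T^B_m$, whose labels are the records of $(f^B(1),\dots,f^B(m))$ and therefore all bounded above by $M^B_m$. Consequently $N=\#\{i\in(m,n]:f^B(i)>M^B_m\}$. I then introduce the gap process $J_i:=|[M^B_m]\setminus\{f^B(1),\dots,f^B(i)\}|$ for $i\geq m$: by the definition of $f^B$, conditionally on the first $i-1$ rows of $B$ one has $\mathbb{P}(f^B(i)\leq M^B_m\mid\cdot)=1-q_n^{J_{i-1}}$, so $(J_i)_{i\geq m}$ is a Markov chain on $\mathbb{Z}_{\geq 0}$ starting from $J_m=M^B_m-m$, with $J_i=J_{i-1}-1$ w.p.\ $1-q_n^{J_{i-1}}$ and $J_i=J_{i-1}$ otherwise. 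The key identity is $N=(n-m)-(J_m-J_n)$, reducing everything to controlling $J_m$ and $J_n$.

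Write $L:=1/(1-q_n)$. Specializing Proposition~\ref{prop:BivariateGenFun} at $x=1$ gives $\mathbb{E}[y^{M^B_m-m}]=\prod_{k=1}^m(1-q_n^k)/(1-q_n^ky)$, identifying $J_m$ in distribution as a sum of independent $\textsc{Geometric}(1-q_n^k)$ random variables. Propositions~\ref{prop:ConvOfMu}--\ref{prop:ConvOfMuAlpha} then give $\mathbb{E}[J_m]=L(\log L+1)+O(L\sqrt{\log L})$ and $\mathrm{Var}[J_m]=O(L^2)$, whence $J_m=\mathbb{E}[J_m]+O_{\mathbb{P}}(L)$. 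From the definition of $m$, $(n-m)(1-q_n)=\log m+O(1)$, which combined with the hypothesis $\log(n(1-q_n))=O(\sqrt{\log n})$ yields $\mathbb{E}[J_m]-(n-m)=O(L\sqrt{\log n})$. For the upper bound $N(1-q_n)\leq\beta_n$, I use the trivial coupling $J_{i-1}\geq\max(0,J_m-(i-1-m))$ and sum a geometric series to obtain $\mathbb{E}[N\mid J_m]\leq L+\max(0,\,n-m-J_m)$; unconditioning gives $\mathbb{E}[N]=O(L\sqrt{\log n})$, and Markov's inequality produces $\mathbb{P}(N(1-q_n)>\beta_n)=O(\sqrt{\log n}/\beta_n)=o(1)$.

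The lower bound $N(1-q_n)\geq e^{-\beta_n}$ is the main obstacle, since $(n-m)-J_m$ has no definite sign and the pointwise bound $N\geq\max(0,n-m-J_m)$ is insufficient by itself. My plan is two-pronged: first show $J_n\geq 1$ with high probability, then refine this to the quantitative lower bound. For the first step, $\{J_n=0\}=\{\sum_{k=1}^{J_m}(1+Y_k')\leq n-m\}$ for independent $Y_k'\sim\textsc{Geometric}(1-q_n^k)$; Propositions~\ref{prop:ConvOfMu}--\ref{prop:ConvOfMuAlpha} show that the conditional expectation of the sum given $J_m$ is $\sim 2L\log L$ whereas $n-m\sim L\log L$, a gap of order $L\log L$, while its conditional variance is $O(L^2)$, so Chebyshev yields $\mathbb{P}(J_n=0)=o(1)$. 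For the refinement, the heuristic $J(t)\approx L e^{-(t-T_L)/L}$ (valid once $J$ has dropped below $\lfloor L\rfloor$ at time $T_L$) predicts $J_n=Le^{-O_{\mathbb{P}}(\sqrt{\log n})}$ and, crucially, that the number of stays accumulated while $J$ sits in $[1,L]$ is $\Omega_{\mathbb{P}}(L\log(L/J_n))$, much larger than $Le^{-\beta_n}$. To make this rigorous I would split the epoch sums into $k>L$ (bounded mean and variance per epoch) and $k\leq L$ (means $\sim L/k$, variances $\sim(L/k)^2$), and apply Bernstein-type concentration to each regime using the moment estimates of Propositions~\ref{prop:ConvOfMu}--\ref{prop:ConvOfMuAlpha}. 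The most delicate point is controlling the chain through the transition at level $L$, where the geometric parameters cross over; I expect this to be the technical bottleneck of the argument.
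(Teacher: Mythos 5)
Your reduction to the gap chain $(J_i)_{i\geq m}$, the identity $N=(n-m)-(J_m-J_n)$, and the identification of $J_m=M^B_m-m$ as a sum of independent geometrics via Proposition~\ref{prop:BivariateGenFun} are all correct, and your upper-tail argument (the crude coupling bound $\mathbb{E}[N\mid J_m]\leq L+\max(0,n-m-J_m)$ with $L=1/(1-q_n)$, followed by Markov's inequality) is complete and in fact lighter than the paper's route, which decouples via Lemma~\ref{lem:N'andN}, computes the exact law of the threshold count (Propositions~\ref{prop:RecursiveN} and~\ref{prop:GenFunN}), and combines the resulting tail bounds (Propositions~\ref{prop:UBN} and~\ref{prop:LBN}) with control of $M^B_m$ (Proposition~\ref{prop:RangeOfM}). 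One repair is needed even there: your formula $\mathbb{E}[J_m]=L(\log L+1)+O(L\sqrt{\log L})$ implicitly assumes $m\geq L$, but the hypotheses allow $n(1-q_n)=O(\log n)$ (this range is exactly what Proposition~\ref{prop:convRightSubtree} needs), and there one can have $m\ll L$, in which case Proposition~\ref{prop:ConvOfMu} gives $\mathbb{E}[J_m]=L(\log m+1)+O(L\sqrt{\log n})$ and $\log L-\log m$ is \emph{not} $O(\sqrt{\log n})$. The estimate $\mathbb{E}[J_m]-(n-m)=O(L\sqrt{\log n})$ that your Markov bound uses is still true (the logarithms that actually occur cancel), but not by the chain of equalities you wrote.

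The genuine gap is in the lower bound, in this same sub-regime. Your quantitative mechanism --- ``the number of stays accumulated while $J$ sits in $[1,L]$ is $\Omega_{\mathbb{P}}(L\log(L/J_n))$'' --- and the heuristic $J_n=Le^{-O_{\mathbb{P}}(\sqrt{\log n})}$ both presuppose that the chain drops below level $L$ before time $n$. When $m\lesssim L$, minimality of $m$ gives $n-m\leq L\log m$, while the expected time to descend from $J_m\approx L(\log m+1)$ to level $L$ is $L\log m+\Theta(L)$, and $\mathbb{E}[J_m]$ is only controlled up to $O(L\sqrt{\log n})$; since $J_n\geq J_m-(n-m)=M^B_m-n$, the event that $J_n$ stays at or above $L$ (possibly as high as $Le^{\Theta(\sqrt{\log n})}$) cannot be excluded. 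On that event your claimed bound is vacuous ($\log(L/J_n)\leq0$), and the stays that must be counted are the rare ones at levels $k>L$, whose number is of order $Le^{-J_n/L}=Le^{-O(\sqrt{\log n})}$; the target $e^{-\beta_n}L$ is then beaten only because $\beta_n=\omega(\sqrt{\log n})$ exactly absorbs the $O(L\sqrt{\log n})$ uncertainty in $M^B_m$. A correct proof therefore has to (i) show $M^B_m-n\leq\alpha_n L$ whp for some $\alpha_n$ with $\sqrt{\log n}\ll\alpha_n\ll\beta_n$ --- this is precisely the role of Proposition~\ref{prop:RangeOfM} in the paper --- and (ii) prove lower-tail concentration for the Poisson-like count of stays at levels \emph{above} the terminal level, not only in the zone $[1,L]$. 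Your step 1 ($J_n\geq1$ whp) is fine but, as you acknowledge, does not address this; as written, the ``refinement'' step is the missing proof, and its sketch points at the wrong part of the chain for part of the allowed range of $(q_n)_{n\geq0}$.
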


From this proposition, with $m=m(n)$ as previously defined, we will show that
\begin{align*}
    \Big|T^B_n\big(\overline{1}^{R^B_m+1}\big)\Big|(1-q_n)=o\bigg(\log\Big(\Big|T^B_n\big(\overline{1}^{R^B_m+1}\big)\Big|\Big)\bigg)
\end{align*}
which allows us to apply Proposition~\ref{prop:convProbSmall} to prove that
\begin{align*}
    h\Big(T^B_n\big(\overline{1}^{R^B_m+1}\big)\Big)=\big(c^*+o_\mathbb{P}(1)\big)\log n\,.
\end{align*}
It is not hard to show that $R^B_m=\big(1+o_\mathbb{P}(1)\big)n(1-q_n)$. Once we establish this, we will be able to prove Proposition~\ref{prop:convRightSubtree} by comparing the sizes of $T^B_n\big(\overline{1}^{R^B_m+1}\big)$ and $T^B_n\big(\overline{1}^{\lfloor n(1-q_n)\rfloor}\big)$.

In order to study the size of $T^B_n\big(\overline{1}^{R^B_m+1}\big)$, recall the definition of $(T^B_n)_{n\geq0}$ from Section~\ref{sec:IBMandRD} and note that, for all $d\geq0$
\begin{align*}
    \Big|T^B_n\big(\overline{1}^{d+1}\big)\Big|=\Big|\Big\{i\in[n]:f^B(i)>\tau^B\big(\overline{1}^d\big)\Big\}\Big|\,,
\end{align*}
where $\tau^B$ is the labelling function of the tree $T^B$. Moreover, since $\tau^B\big(\overline{1}^{R^B_m}\big)=M^B_m$ where $M^B_m=\max\big(f^B(i),i\in[m]\big)$, we have
\begin{align*}
    \Big|T^B_n\big(\overline{1}^{R^B_m+1}\big)\Big|=\Big|\Big\{i\in[n]:f^B(i)>M^B_m\Big\}\Big|\,.
\end{align*}

In order to study the size of the random set on the right hand side of this equation, we define the \textit{threshold process} $N^B$ as follows. For all $n\in\N$ and $s\in\N$, let
\begin{align}
    N^B(n,s)&:=\Big|\Big\{i\in[n]:f^B(i)>s\Big\}\Big|\,.\label{eq:DeftNB}
\end{align}
The two preceding displays show that
\begin{align*}
    \Big|T^B_n\big(\overline{1}^{R^B_m+1}\big)\Big|=N^B\big(n,M^B_m\big)\,,
\end{align*}
which will be our key tool for bounding $\big|T^B_n\big(\overline{1}^{R^B_m+1}\big)\big|$. The analysis of this identity is made easier by the following lemma, which partially decouples $M^B$ and $N^B$.

\begin{lemma}\label{lem:N'andN}
    For all $n\geq0$, $q\in[0,1)$ and $0\leq m\leq n$, we have
    \begin{align*}
        \Big|T^B_n\big(\overline{1}^{R^B_m+1}\big)\Big|\overset{d}{=}N^{B^*}\big(n-m,M^B_m-m\big)\,,
    \end{align*}
    where $B=(B_{i,j})$ and $B^*=(B^*_{i,j})$ are independent matrices with independent $\textsc{Bernoulli}(1-q)$ entries.
\end{lemma}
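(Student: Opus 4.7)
The starting point is the identity $|T^B_n(\overline{1}^{R^B_m+1})| = N^B(n,M^B_m)$ noted just above the lemma. Since $f^B(i) \leq M^B_m$ for all $i \leq m$, only indices $i > m$ contribute, and we may rewrite this as $|\{i \in \{m+1,\ldots,n\}: f^B(i) > M^B_m\}|$. The plan is to reindex the columns that have not yet been used after step $m$ so that the process from step $m+1$ onwards becomes a fresh, independent copy of the $b$-model.

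Concretely, set $F^B_m = \{f^B(1),\ldots,f^B(m)\}\subseteq[M^B_m]$, let $\phi:\mathbb{N}\setminus F^B_m \to \mathbb{N}$ be the unique order-preserving bijection, and define $C = (C_{i,j})_{i,j\geq 1}$ by $C_{i,j} := B_{m+i,\phi^{-1}(j)}$. I would then prove by induction on $i \geq 1$ that $f^B(m+i) = \phi^{-1}(f^C(i))$. The base case $i=1$ follows directly by comparing the definitions of $f^B(m+1)$ and $f^C(1)$, using that $\phi$ is order-preserving. The inductive step reduces, via the identity $F^B_{m+i-1}\setminus F^B_m = \phi^{-1}(F^C_{i-1})$ (which itself follows from the inductive hypothesis), to the equality $\mathbb{N}\setminus F^B_{m+i-1} = \phi^{-1}(\mathbb{N}\setminus F^C_{i-1})$, after which the order-preserving property of $\phi$ matches up the two infima defining $f^B(m+i)$ and $f^C(i)$. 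Since $|[M^B_m]\setminus F^B_m| = M^B_m - m$, the same order-preserving property gives $\phi^{-1}(k) > M^B_m$ iff $k > M^B_m - m$, and combined with the induction we obtain
\[
|T^B_n(\overline{1}^{R^B_m+1})| = |\{i \in [n-m] : f^C(i) > M^B_m - m\}| = N^C(n-m, M^B_m - m)\,.
\]

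To finish, I would argue that conditional on the first $m$ rows of $B$ --- which jointly determine $F^B_m$, $\phi$, and $M^B_m - m$ --- the entries of $C$ are i.i.d.\ $\textsc{Bernoulli}(1-q)$: each $C_{i,j}$ is a single entry drawn from rows $B_{m+1}, B_{m+2}, \ldots$, and these rows are independent of one another and of the first $m$ rows of $B$. Because $N^C(n-m, M^B_m-m)$ depends on $C$ only through its first $n-m$ rows, one may replace $C$ by an independent i.i.d.\ $\textsc{Bernoulli}(1-q)$ matrix $B^*$ without changing the joint law with $M^B_m - m$, yielding the desired distributional identity. The main difficulty is the bookkeeping inside the induction: one must verify at each step that the forbidden set in the original process pulls back under $\phi$ to the corresponding forbidden set in the reindexed process, so that the two infima defining $f^B(m+i)$ and $f^C(i)$ really correspond.
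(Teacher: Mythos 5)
Your proposal is correct, and it takes a genuinely different route from the paper. The paper's proof is purely distributional and iterative: it conditions on $M^B_m=s$, and applies the recursive identity of Proposition~\ref{prop:RecursiveN} $m$ times, at each step deleting the first row and the used column (the minor $B^{(1,r)}$) and shifting $(n,s)$ to $(n-1,s-1)$, finally identifying the result with $\mathbb{P}\big(N^{B^*}(n-m,s-m)=\ell\big)$. You instead build an explicit coupling: relabel the unused columns by the order-preserving bijection $\phi$, set $C_{i,j}=B_{m+i,\phi^{-1}(j)}$, and prove by induction the pathwise identity $f^B(m+i)=\phi^{-1}(f^C(i))$, whence $\big|T^B_n\big(\overline{1}^{R^B_m+1}\big)\big|=N^C\big(n-m,M^B_m-m\big)$ holds almost surely (not just in law); the conditional i.i.d.\ property of $C$ given the first $m$ rows (note that $(i,j)\mapsto(m+i,\phi^{-1}(j))$ is injective, so distinct entries of $C$ are distinct entries of $B$) then lets you swap $C$ for an independent $B^*$. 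Your inductive bookkeeping is sound, and the argument is essentially a re-derivation of the renewal/restart property of the infinite Mallows process. What each approach buys: yours is self-contained, gives a stronger statement (an a.s.\ equality together with conditional independence of the residual process from $\mathcal{L}_m(B)$, hence in particular from $M^B_m$), and makes the mechanism transparent; the paper's version is shorter given that Proposition~\ref{prop:RecursiveN} is needed anyway for the recursion behind Proposition~\ref{prop:GenFunN}, so it reuses existing machinery. One shared caveat, inherited from the identity $\tau^B\big(\overline{1}^{R^B_m}\big)=M^B_m$ stated before the lemma: that identity (and hence both proofs) really requires $m\geq1$, since for $m=0$ one has $M^B_0=0$ while $\tau^B(\root)=f^B(1)$; this is an edge case of the paper's statement rather than a defect of your argument.
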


Using this distributional identity, the proof of Proposition~\ref{prop:BoundsOnN} will be divided into the following three steps:
\begin{description}
    \item[Step 1] We study the threshold process $N^B(n,s)$ for all values of $n$ and $s$ and prove bounds for its upper and lower tail probabilities (see Proposition~\ref{prop:UBN} and \ref{prop:LBN}).
    \item[Step 2] We prove that $M^B_{m(n)}=n+O_\mathbb{P}\big(\sqrt{\log n}/(1-q_n)\big)$ using Proposition~\ref{prop:BivariateGenFun}.
    \item[Step 3] We combine these two results to prove Proposition~\ref{prop:BoundsOnN}.
\end{description}

The rest of the section is organized as follows. The results of step~1, 2 and 3 are respectively stated and proven in Section~\ref{sec:ThresholdProcess}, \ref{sec:BoundsM} and \ref{sec:BoundsThresholdSubtree}. Finally, in Section~\ref{sec:ThmProb}, we prove Proposition~\ref{prop:convRightSubtree} and deduce Theorem~\ref{thm:Prob} in the case when $n(1-q_n)/\log n=\Theta(1)$.

\subsection{Threshold process}\label{sec:ThresholdProcess}

This section is focused on the behaviour of $N^B(n,s)$ as defined in (\ref{eq:DeftNB}). From the definition, we see that $N^B$ is increasing in $n$ and decreasing in $s$. Since $f^B$ is bijective, it is also straightforward to verify that $(n-s)_+\leq N(n,s)\leq n$ where $x_+=\max(x,0)$. The next proposition gives further properties related to the distribution of $N$.

\begin{prop}\label{prop:RecursiveN}
    Let $q\in[0,1)$ and $B=(B_{i,j})_{i,j\geq0}$ have independent $\textsc{Bernoulli}(1-q)$ entries. For $k\geq0$, write $\mathcal{L}_k(B)=\sigma(B_{i,j},1\leq i\leq k,j\geq1)$ for the $\sigma$-algebra generated by the first $k$ rows of $B$. Then, for any integers $n,s\geq1$, for all $\ell\geq0$, we have
    \begin{align*}
        \mathbb{P}\Big(N^B(n,s)=\ell\,\Big|\,f^B(1)\leq s,\mathcal{L}_k(B)\Big)&\overset{d}{=}\mathbb{P}\Big(N^B(n-1,s-1)=\ell\,\Big|\,\mathcal{L}_{k-1}(B)\Big)
    \end{align*}
    and
    \begin{align*}
        \mathbb{P}\Big(N^B(n,s)=\ell\,\Big|\,f^B(1)>s,\mathcal{L}_k(B)\Big)&\overset{d}{=}\mathbb{P}\Big(N^B(n-1,s)=\ell-1\,\Big|\,\mathcal{L}_{k-1}(B)\Big)\,.
    \end{align*}
\end{prop}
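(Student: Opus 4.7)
The plan is to exploit a \emph{shift} reduction that relates $N^B(n,s)$ to a statistic of a strictly smaller i.i.d.\ Bernoulli matrix. Given $B$, set $j_0 := f^B(1)$ and let $\phi : \N \to \N \setminus \{j_0\}$ be the order-preserving bijection (so $\phi(j) = j$ for $j < j_0$ and $\phi(j) = j + 1$ for $j \geq j_0$). Define $B' = (B'_{i,j})_{i,j \geq 1}$ by $B'_{i,j} = B_{i+1, \phi(j)}$. First I will check that $B'$ has i.i.d.\ $\textsc{Bernoulli}(1-q)$ entries and is independent of $j_0$: conditional on $j_0 = j$, the family $(B_{i,k})_{i \geq 2, k \neq j}$ is i.i.d.\ Bernoulli because rows $\geq 2$ of $B$ are independent of row $1$, and this conditional distribution does not depend on $j$.

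Next, by induction on $i \geq 1$, using the recursive definition $f^B(i+1) = \inf\{j \notin \{f^B(1), \ldots, f^B(i)\} : B_{i+1,j} = 1\}$, I will prove the deterministic identity $f^B(i+1) = \phi(f^{B'}(i))$. The inductive step uses that $\phi$ is an order isomorphism and sends $\{f^{B'}(1), \ldots, f^{B'}(i-1)\}$ bijectively to $\{f^B(2), \ldots, f^B(i)\}$, so the smallest unused column index in row $i+1$ of $B$ is $\phi$ of the smallest unused column index in row $i$ of $B'$. From this, on $\{j_0 \leq s\}$ we have $\phi(j) > s \iff j > s - 1$ (since $\phi$ maps $\{1, \ldots, s-1\}$ bijectively onto $\{1, \ldots, s\} \setminus \{j_0\}$), and $\mathbf{1}[f^B(1) > s] = 0$, giving
\begin{align*}
    N^B(n,s) = N^{B'}(n-1, s-1) \quad \text{on } \{f^B(1) \leq s\};
\end{align*}
on $\{j_0 > s\}$ one similarly obtains $\phi(j) > s \iff j > s$ together with $\mathbf{1}[f^B(1) > s] = 1$, and hence $N^B(n, s) = 1 + N^{B'}(n-1, s)$.

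Finally, I will lift these pointwise identities to the stated equalities of conditional laws. Let $F(\mathcal{l})$ denote the measurable function such that $\mathbb{P}(N^B(n-1, s-1) = \ell \mid \mathcal{L}_{k-1}(B)) = F(\mathcal{L}_{k-1}(B))$. Since $\mathcal{L}_k(B)$ determines $j_0$ and the first $k-1$ rows of $B'$ (equivalently, $\mathcal{L}_{k-1}(B')$), while the rows of $B'$ beyond the $(k-1)$-th are i.i.d.\ $\textsc{Bernoulli}(1-q)$ and independent of $\mathcal{L}_k(B)$, the pointwise identity on $\{f^B(1)\leq s\}$ gives
\begin{align*}
    \mathbb{P}\bigl(N^B(n, s) = \ell \,\big|\, f^B(1) \leq s,\, \mathcal{L}_k(B)\bigr) = F\bigl(\mathcal{L}_{k-1}(B')\bigr).
\end{align*}
Because $B'$ is independent of $j_0$ and $B' \overset{d}{=} B$, the conditional law of $\mathcal{L}_{k-1}(B')$ given $\{f^B(1) \leq s\}$ coincides with the unconditional law of $\mathcal{L}_{k-1}(B)$; pushing through $F$ yields the first distributional identity. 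The second identity follows from the same argument applied to $N^B(n,s) - 1 = N^{B'}(n-1, s)$ on $\{f^B(1) > s\}$. The main technical point to handle carefully is the independence of $B'$ from $j_0$ in the presence of the conditioning events; once that is cleanly formalized, everything else is direct bookkeeping via the bijection $\phi$.
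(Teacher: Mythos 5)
Your proof is correct and follows essentially the same route as the paper: your $B'$ is precisely the paper's minor $B^{(1,r)}$ with $r=f^B(1)$ (deleted as a random column rather than fixed by conditioning on its value), the pointwise identities $N^B(n,s)=N^{B'}(n-1,s-1)$ on $\{f^B(1)\leq s\}$ and $N^B(n,s)=1+N^{B'}(n-1,s)$ on $\{f^B(1)>s\}$ are the same reduction the paper uses, and the transfer from $\mathcal{L}_k(B)$ to $\mathcal{L}_{k-1}(B')$ via the independence of $B'$ from the first row is exactly the paper's justification of the distributional equality. Your handling of the event $\{f^B(1)\leq s\}$ directly, together with the explicit relabelling bijection $\phi$ and the induction $f^B(i+1)=\phi(f^{B'}(i))$, is if anything a bit more explicit than the paper's, but the underlying argument is the same.
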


\begin{proof}
    Write $B^{(i,j)}$ for the minor of $B$ obtained by deleting the $i$-th row and the $j$-th column, and note that $B^{(i,j)}\overset{d}{=}B$. Moreover, given that $f^B(1)=r$, the rest of the values $f^B(2),f^B(3),\ldots$ becomes independent of the first row and the $r$-th column of $B$. Hence, for $1\leq r\leq s$, we have
    \begin{align*}
        \mathbb{P}\Big(N^B(n,s)=\ell\,\Big|\,f^B(1)=r,\mathcal{L}_k(B)\Big)&=\mathbb{P}\Big(N^{B^{(1,r)}}(n-1,s-1)=\ell\,\Big|\,\mathcal{L}_k(B)\Big)\\
        &\overset{d}{=}\mathbb{P}\Big(N^B(n-1,s-1)=\ell\,\Big|\,\mathcal{L}_{k-1}(B)\Big)\,,
    \end{align*}
    where the second equality holds since $\mathcal{L}_k(B)$ is generated by $\mathcal{L}_{k-1}(B^{(1,r)})$ and $\sigma\big(\{B_{1,j},j\geq1\}\cup\{B_{i,r},i\geq1\}\big)$, and $B^{(1,r)}$ is independent of the second of these $\sigma$-algebras. Similarly, for $r>s$, we have
    \begin{align*}
        \mathbb{P}\Big(N^B(n,s)=\ell\,\Big|\,f^B(1)=r,\mathcal{L}_k(B)\Big)&=\mathbb{P}\Big(N^{B^{(1,r)}}(n-1,s)=\ell-1\,\Big|\,\mathcal{L}_k(B)\Big)\\
        &\overset{d}{=}\mathbb{P}\Big(N^B(n-1,s)=\ell-1\,\Big|\,\mathcal{L}_{k-1}(B)\Big)\,.
    \end{align*}
    This proves the two desired equalities.
\end{proof}

Applying this proposition, we can now prove Lemma~\ref{lem:N'andN}.

\begin{proof}[Proof of Lemma~\ref{lem:N'andN}]
    By definition, we know that
    \begin{align*}
        \Big|T^B_n\big(\overline{1}^{R^B_m+1}\big)\Big|=N^B\big(n,M^B_m\big)\,.
    \end{align*}
    Let $s\geq1$ be an integer. Conditioning on the value of $M^B_m$ and applying Proposition~\ref{prop:RecursiveN} with $k=1$, we obtain
    \begin{align*}
        \mathbb{P}\Big(N^B(n,M^B_m)=\ell\,\Big|\,M^B_m=s\Big)&=\mathbb{P}\Big(N^B(n,s)=\ell\,\Big|\,M^B_m=s,f^B(1)\leq s\Big)\\
        &=\mathbb{E}\left[\mathbb{P}\Big(N^B(n,s)=\ell\,\Big|\,M^B_m=s,f^B(1)\leq s,\mathcal{L}_1\Big)\right]\\
        &=\mathbb{P}\Big(N^B(n-1,s-1)=\ell\,\Big|\,M^B_{m-1}=s-1\Big)\,.
    \end{align*}
    Applying this identity $m-1$ times, we obtain that
    \begin{align*}
        \mathbb{P}\Big(N^B(n,M^B_m)=\ell\,\Big|\,M^B_m=s\Big)&=\mathbb{P}\Big(N^B(n-m+1,s-m+1)=\ell\,\Big|\,M^B_1=s-m+1\Big)\,.
    \end{align*}
    For the last step, since $\{f^B(1)\leq s-m+1,M^B_1=s-m+1\}=\{f^B(1)=s-m+1\}$, it follows that
    \begin{align*}
        &\mathbb{P}\Big(N^B(n,M^B_m)=\ell\,\Big|\,M^B_m=s\Big)\\
        &\hspace{1cm}=\mathbb{P}\Big(N^B(n-m+1,s-m+1)=\ell\,\Big|\, M^B_1=s-m+1,f^B(1)\leq s-m+1\Big)\\
        &\hspace{1cm}=\mathbb{P}\Big(N^B(n-m+1,s-m+1)=\ell\,\Big|\,f^B(1)=s-m+1\Big)\\
        &\hspace{1cm}=\mathbb{P}\Big(N^B(n-m,s-m)=\ell\Big)\,.
    \end{align*}
    Thus, the proof of the lemma is immediate by observing that
    \begin{align*}
        \mathbb{P}\Big(N^{B^*}(n-m,M^B_m-m)=\ell\,\Big|\,M^B_m=s\Big)&=\mathbb{P}\Big(N^{B^*}(n-m,s-m)=\ell\Big)\,.\qedhere
    \end{align*}
\end{proof}

In order to bound the size of $T^B_n\big(\overline{1}^{R^B_m+1}\big)$ using the threshold process, we now state and prove an exact formula for the probability mass function of $N^B(n,s)$. For the remainder of the section, we write $N(n,s)=N^B(n,s)$.

\begin{prop}\label{prop:GenFunN}
    Let $n,s\geq0$. Then, for all $\ell\in\N$, we have
    \begin{align*}
        \mathbb{P}\Big(N(n,s)=\ell\Big)&=q^{(s-n)\ell+\frac{\ell(\ell+1)}{2}}\left(\prod_{s+1-n+\ell\leq i\leq s}(1-q^i)\right)\sum_{A\subseteq[n]:|A|=\ell}q^{\sum_{a\in A}(a-1)}\,.
    \end{align*}
\end{prop}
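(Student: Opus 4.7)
The natural strategy is induction on $n$, leveraging Proposition~\ref{prop:RecursiveN} as the engine of the recursion.

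For the base case $n = 0$, the random variable $N(0,s)$ is identically zero, so $\mathbb{P}(N(0,s)=\ell) = \mathbbm{1}\{\ell = 0\}$. On the right-hand side of the claimed formula, when $\ell = 0$ both the product (over $i = s+1,\ldots,s$, empty) and the sum (over $A = \varnothing$) evaluate to $1$; when $\ell \geq 1$, the sum $\sum_{A \subseteq \varnothing,\,|A|=\ell}$ is empty and gives $0$. So the base case is immediate.

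For the inductive step, I would take expectations in Proposition~\ref{prop:RecursiveN} with $k=1$ (so $\mathcal{L}_0(B)$ is trivial) and use $\mathbb{P}(f^B(1)>s)=q^s$ to derive
\begin{equation*}
\mathbb{P}(N(n,s)=\ell) = (1-q^s)\,\mathbb{P}(N(n-1,s-1)=\ell) + q^s\,\mathbb{P}(N(n-1,s)=\ell-1).
\end{equation*}
I would then substitute the inductive hypothesis into each of the two terms. The first term, after the shift $(n,s)\mapsto(n-1,s-1)$ and absorbing the $(1-q^s)$ factor into the product, contributes
\begin{equation*}
q^{(s-n)\ell + \frac{\ell(\ell+1)}{2}} \prod_{s-n+\ell+1 \leq i \leq s}(1-q^i) \sum_{A \subseteq [n-1],\,|A|=\ell} q^{\sum_{a\in A}(a-1)}.
\end{equation*}
For the second term, a short arithmetic check shows $s + (s-n+1)(\ell-1) + \tfrac{\ell(\ell-1)}{2} = (s-n)\ell + \tfrac{\ell(\ell+1)}{2} + (n-1)$, so after absorbing the $q^s$ factor the second term becomes
\begin{equation*}
q^{(s-n)\ell + \frac{\ell(\ell+1)}{2}} \prod_{s-n+\ell+1 \leq i \leq s}(1-q^i) \cdot q^{n-1} \sum_{A \subseteq [n-1],\,|A|=\ell-1} q^{\sum_{a\in A}(a-1)},
\end{equation*}
which shares the same prefactor as the first term.

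Adding the two contributions and factoring out the common prefactor reduces the identity to the combinatorial equality
\begin{equation*}
\sum_{A \subseteq [n-1],\,|A|=\ell} q^{\sum_{a\in A}(a-1)} + q^{n-1}\sum_{A \subseteq [n-1],\,|A|=\ell-1} q^{\sum_{a\in A}(a-1)} = \sum_{A \subseteq [n],\,|A|=\ell} q^{\sum_{a\in A}(a-1)},
\end{equation*}
which holds by partitioning the $\ell$-subsets of $[n]$ according to whether $n \notin A$ or $n \in A$ (the factor $q^{n-1}$ corresponding to contribution of the element $n$). The only obstacle to this argument is the bookkeeping of exponents and index ranges through the two recursive shifts — purely routine algebra, but requiring care to verify that the prefactors align exactly.
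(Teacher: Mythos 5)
Your proposal is correct and follows essentially the same route as the paper: the recursion $\mathbb{P}(N(n,s)=\ell)=(1-q^s)\mathbb{P}(N(n-1,s-1)=\ell)+q^s\mathbb{P}(N(n-1,s)=\ell-1)$ obtained from Proposition~\ref{prop:RecursiveN}, substitution of the inductive hypothesis, the same exponent bookkeeping, and the same split of $\ell$-subsets of $[n]$ according to whether $n\in A$. The only (immaterial) difference is that you induct on $n$ with base case $n=0$, whereas the paper inducts on $n+s$.
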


\begin{proof}
    First, note that the right hand side of the equality is $0$ if $\ell\leq n-s-1$ or $\ell\geq n+1$ since either the product $\prod_{s+1-n+\ell\leq i\leq s}(1-q^i)$ equals $0$ or the sum $\sum_{A\subseteq[n]:|A|=\ell}$ is empty. For such $\ell$, $\mathbb{P}\big(N(n,s)=\ell\big)=0$ as well, so the claimed equality holds when $\ell\leq n-s-1$ or $\ell>n$. We now prove that the equality holds for $(n-s)_+\leq\ell\leq n$ by induction on $n+s$.
    
    First, if $n+s=0$, then $N(n,s)=0$ and the right hand side is equal to $1$ if and only if $\ell=0$, which proves the formula.
    
    Fix some $n,s\geq0$ and assume the formula holds for any $n'$, $s'$ such that $n'+s'<n+s$. Let $\ell$ be such that $(n-s)_+\leq\ell\leq n$. By considering the possible values for $f^B(1)$ and using the two formulas in Proposition~\ref{prop:RecursiveN}, we obtain
    \begin{align*}
        \mathbb{P}\Big(N(n,s)=\ell\Big)&=\mathbb{P}\Big(N(n,s)=\ell\,\Big|\,f^B(1)\leq s\Big)\mathbb{P}\Big(f^B(1)\leq s\Big)\\
        &\hspace{0.5cm}+\mathbb{P}\Big(N(n,s)=\ell\,\Big|\,f^B(1)>s\Big)\mathbb{P}\Big(f^B(1)>s\Big)\\
        &=\mathbb{P}\Big(N(n-1,s-1)=\ell\Big)\mathbb{P}\Big(f^B(1)\leq s\Big)\\
        &\hspace{0.5cm}+\mathbb{P}\Big(N(n-1,s)=\ell-1\Big)\mathbb{P}\Big(f^B(1)>s\Big)\,.
    \end{align*}
    Using the definition of $f^B$, we know that
    \begin{align*}
        \mathbb{P}\Big(f^B(1)\leq s\Big)&=\sum_{1\leq j\leq s}\mathbb{P}\Big(f^B(1)=j\Big)=1-q^s
    \end{align*}
    and
    \begin{align*}
        \mathbb{P}\Big(f^B(1)>s\Big)&=q^s\,.
    \end{align*}
    This gives us that
    \begin{align}
        \mathbb{P}\Big(N(n,s)=\ell\Big)&=(1-q^s)\mathbb{P}\Big(N(n-1,s-1)=\ell\Big)+q^s\mathbb{P}\Big(N(n-1,s)=\ell-1\Big)\,.\label{eq:recursiveRelationN}
    \end{align}
    
    Using the induction hypothesis, we know that
    \begin{align*}
        \mathbb{P}\Big(N(n-1,s-1)=\ell\Big)&=q^{(s-n)\ell+\frac{\ell(\ell+1)}{2}}\left(\prod_{s+1-n+\ell\leq i\leq s-1}(1-q^i)\right)\sum_{A\subseteq[n-1]:|A|=\ell}q^{\sum_{a\in A}(a-1)}\,.
    \end{align*}
    Hence, multiplying by $(1-q^s)$ on both sides and putting it into the product, we obtain
    \begin{align*}
        (1-q^s)\mathbb{P}\Big(N(n-1,s-1)=\ell\Big)&=q^{(s-n)\ell+\frac{\ell(\ell+1)}{2}}\left(\prod_{s+1-n+\ell\leq i\leq s}(1-q^i)\right)\sum_{A\subseteq[n-1]:|A|=\ell}q^{\sum_{a\in A}(a-1)}
    \end{align*}
    Similarly, we have
    \begin{align*}
        &q^s\mathbb{P}\Big(N(n-1,s)=\ell-1\Big)\\
        &\hspace{1cm}=q^s\cdot q^{(s-n+1)(\ell-1)+\frac{\ell(\ell-1)}{2}}\left(\prod_{s+1-n+\ell\leq i\leq s}(1-q^i)\right)\sum_{A\subseteq[n-1]:|A|=\ell-1}q^{\sum_{a\in A}(a-1)}\\
        &\hspace{1cm}=q^{(s-n)\ell+\frac{\ell(\ell+1)}{2}}\left(\prod_{s+1-n+\ell\leq i\leq s}(1-q^i)\right)\sum_{A\subseteq[n-1]:|A|=\ell-1}q^{n-1+\sum_{a\in A}(a-1)}\,.
    \end{align*}
    
    Putting the previous formulas into (\ref{eq:recursiveRelationN}), we obtain
    \begin{align*}
        \mathbb{P}\Big(N(n,s)=\ell\Big)&=q^{(s-n)\ell+\frac{\ell(\ell+1)}{2}}\left(\prod_{s+1-n+\ell\leq i\leq s}(1-q^i)\right)\sum_{A\subseteq[n-1]:|A|=\ell}q^{\sum_{a\in A}(a-1)}\\
        &\hspace{0.5cm}+q^{(s-n)\ell+\frac{\ell(\ell+1)}{2}}\left(\prod_{s+1-n+\ell\leq i\leq s}(1-q^i)\right)\sum_{A\subseteq[n-1]:|A|=\ell-1}q^{n-1+\sum_{a\in A}(a-1)}\,.
    \end{align*}
    In order to conclude, note that
    \begin{align*}
        \sum_{A\subseteq[n-1]:|A|=\ell-1}q^{n-1+\sum_{a\in A}(a-1)}&=\sum_{A\subseteq[n-1]:|A|=\ell-1}q^{\sum_{a\in A\cup\{n\}}(a-1)}\,,
    \end{align*}
    which implies that
    \begin{align*}
        \sum_{A\subseteq[n-1]:|A|=\ell}q^{\sum_{a\in A}(a-1)}+\sum_{A\subseteq[n-1]:|A|=\ell-1}q^{n-1+\sum_{a\in A}(a-1)}&=\sum_{A\subseteq[n]:|A|=\ell}q^{\sum_{a\in A}(a-1)}\,,
    \end{align*}
    and this proves the desired formula for $\mathbb{P}\big(N(n,s)=\ell\big)$. The induction and the proposition follow.
\end{proof}

We conclude this section on the threshold process with upper and lower tail bounds for $N$. Both of these bounds use the following inequalities:
\begin{align}
    \sum_{A\subseteq[n]:|A|=\ell}q^{\sum_{a\in A}(a-1)}\leq\frac{1}{\ell!}\sum_{a_1,\ldots,a_\ell\geq0}q^{a_1+\cdots+a_\ell}\leq\frac{1}{\ell!}\left(\frac{1}{1-q}\right)^\ell\,.\label{eq:ineqSumA}
\end{align}

\begin{prop}\label{prop:UBN}
    Let $n,s\geq0$. For any integer $\xi$ such that $q^{\xi+s-n}\leq\xi(1-q)$, we have
    \begin{align*}
        \mathbb{P}\Big(N(n,s)\geq\xi\Big)&\leq\frac{n}{\xi!}q^\frac{\xi^2}{2}\left(\frac{q^{s-n}}{1-q}\right)^\xi\,.
    \end{align*}
\end{prop}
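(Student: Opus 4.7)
My plan is to start from the exact probability mass function given by Proposition~\ref{prop:GenFunN} and show that, under the hypothesis on $\xi$, the first term of the tail sum dominates. Introduce
\[
a_\ell := \frac{1}{\ell!}\,q^{(s-n)\ell + \ell(\ell+1)/2}\,(1-q)^{-\ell}.
\]
Combining Proposition~\ref{prop:GenFunN} with the trivial bound $\prod_i (1-q^i) \leq 1$ and with inequality (\ref{eq:ineqSumA}) immediately yields $\mathbb{P}(N(n,s)=\ell) \leq a_\ell$ for every $\ell$. Since $N(n,s) \leq n$ almost surely, the task reduces to bounding $\sum_{\ell = \xi}^{n} a_\ell$ by $n\, a_\xi$ (up to the extra $q^{\xi/2}$ slack discussed below).

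The crucial observation is that the ratio
\[
\frac{a_{\ell+1}}{a_\ell} = \frac{q^{\ell+1+s-n}}{(\ell+1)(1-q)}
\]
is non-increasing in $\ell$ for $q \in [0,1)$, since both $q^{\ell+1+s-n}$ and $1/(\ell+1)$ are weakly decreasing in $\ell$. Evaluating at $\ell = \xi$ and invoking the hypothesis $q^{\xi+s-n} \leq \xi(1-q)$ gives
\[
\frac{a_{\xi+1}}{a_\xi} \leq \frac{q \cdot \xi(1-q)}{(\xi+1)(1-q)} = \frac{q\,\xi}{\xi+1} \leq 1,
\]
so by monotonicity $a_{\ell+1}/a_\ell \leq 1$ for every $\ell \geq \xi$. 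Consequently the sequence $(a_\ell)_{\ell \geq \xi}$ is non-increasing, and summing trivially yields $\sum_{\ell=\xi}^n a_\ell \leq (n-\xi+1)\,a_\xi \leq n\,a_\xi$.

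To finish, I unpack $a_\xi$ and use $q^{\xi(\xi+1)/2} = q^{\xi^2/2} \cdot q^{\xi/2} \leq q^{\xi^2/2}$ to obtain the stated bound. There is no serious obstacle in this argument; the only subtle point is recognizing that the hypothesis $q^{\xi+s-n} \leq \xi(1-q)$ is exactly calibrated to make the ratio $a_{\ell+1}/a_\ell$ at most $1$ for all $\ell \geq \xi$, which is why the monotonicity of that ratio in $\ell$ is the crucial structural fact that lets a single-point hypothesis control the entire tail.
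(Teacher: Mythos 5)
Your proposal is correct and follows essentially the same route as the paper: bound each term of the tail of the exact distribution from Proposition~\ref{prop:GenFunN} using (\ref{eq:ineqSumA}) and $\prod_i(1-q^i)\leq1$, show the resulting summands are non-increasing for $\ell\geq\xi$ via the ratio $q^{\ell+s-n}/(\ell(1-q))$ together with the hypothesis $q^{\xi+s-n}\leq\xi(1-q)$, and then bound the sum by $n$ times the $\ell=\xi$ term, discarding the factor $q^{\xi/2}$. The only cosmetic difference is that you phrase the monotonicity through the ratio $a_{\ell+1}/a_\ell$ evaluated at $\ell=\xi$, whereas the paper compares consecutive terms directly using that $\ell\mapsto q^\ell/\ell$ is decreasing; these are the same argument.
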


\begin{proof}
    This proof will be very straightforward using (\ref{eq:ineqSumA}). Fix some $\xi$ respecting the given conditions. If $\xi>n$, the inequality holds as the left hand side equals $0$. Assume now that $\xi\leq n$. Using Proposition~\ref{prop:GenFunN}, we have
    \begin{align*}
        \mathbb{P}\Big(N(n,s)\geq\xi\Big)&=\sum_{\xi\leq\ell\leq n}q^{(s-n)\ell+\frac{\ell(\ell+1)}{2}}\left(\prod_{s+1-n+\ell\leq i\leq s}(1-q^i)\right)\sum_{A\subseteq[n]:|A|=\ell}q^{\sum_{a\in A}(a-1)}\,.
    \end{align*}
    Applying (\ref{eq:ineqSumA}) along with the fact that $(1-q^i)\leq1$ when $s+1-n+\ell\leq i\leq s$, it follows that
    \begin{align}
        \mathbb{P}\Big(N(n,s)\geq\xi\Big)&\leq\sum_{\xi\leq\ell\leq n}q^{(s-n)\ell+\frac{\ell(\ell+1)}{2}}\frac{1}{\ell!}\left(\frac{1}{1-q}\right)^\ell=\sum_{\xi\leq\ell\leq n}\frac{q^\frac{\ell(\ell+1)}{2}}{\ell!}\left(\frac{q^{s-n}}{1-q}\right)^\ell\,.\label{eq:SumUTBN}
    \end{align}
    
    To conclude the proof, we now show that the summands on the right are decreasing in $\ell$. To see this, note that
    \begin{align*}
        \frac{q^\frac{\ell(\ell+1)}{2}}{\ell!}\left(\frac{q^{s-n}}{1-q}\right)^\ell=\frac{q^{\ell+s-n}}{\ell(1-q)}\times\frac{q^\frac{\ell(\ell-1)}{2}}{(\ell-1)!}\left(\frac{q^{s-n}}{1-q}\right)^{\ell-1}\leq\frac{q^\frac{\ell(\ell-1)}{2}}{(\ell-1)!}\left(\frac{q^{s-n}}{1-q}\right)^{\ell-1}\,,
    \end{align*}
    the last ineuality holding since the function $\ell\mapsto\frac{q^\ell}{\ell}$ is decreasing and since we assumed that $q^{\xi+s-n}\leq\xi(1-q)$. Bounding all summands on the right hand side of (\ref{eq:SumUTBN}) by the $\ell=\xi$ term, we obtain
    \begin{align*}
        \mathbb{P}\Big(N(n,s)\geq\xi\Big)&\leq(n-\xi+1)\frac{q^\frac{\xi(\xi+1)}{2}}{\xi!}\left(\frac{q^{s-n}}{1-q}\right)^\xi\leq\frac{n}{\xi!}q^\frac{\xi^2}{2}\left(\frac{q^{s-n}}{1-q}\right)^\xi\,,
    \end{align*}
    where the second inequality uses that $n-\xi+1\leq n$ and $q^\frac{\xi}{2}\leq1$.
\end{proof}

\begin{prop}\label{prop:LBN}
    Let $n,s\geq0$. For any integer $\xi$ such that $q^{\xi+s-n}\geq\xi(1-q)^2$, we have
    \begin{align*}
        \mathbb{P}\Big(N(n,s)\leq\xi\Big)\leq\frac{2(1-q)^{n-s}}{\big((\xi-1)_+\big)!\big((s-n)_+\big)!}q^{\frac{\xi^2}{2}}\left(\frac{q^{s-n}}{(1-q)^2}\right)^\xi\,.
    \end{align*}
\end{prop}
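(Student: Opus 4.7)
My plan is to adapt the argument used for Proposition~\ref{prop:UBN} to the lower tail. First I would note that $N(n,s)\geq(n-s)_+$ deterministically, so the bound is trivial when $\xi<(n-s)_+$; assume from now on that $\xi\geq(n-s)_+$. Writing
\[
\mathbb{P}\!\left(N(n,s)\leq\xi\right)=\sum_{\ell=(n-s)_+}^{\xi}\mathbb{P}\!\left(N(n,s)=\ell\right),
\]
I would apply Proposition~\ref{prop:GenFunN} together with the inequality (\ref{eq:ineqSumA}) to bound the sum over subsets by $1/(\ell!(1-q)^\ell)$, and separately use the elementary bound $1-q^i\leq i(1-q)$ on each of the $n-\ell$ factors of $\prod_{i=s+1-n+\ell}^s(1-q^i)$ to obtain $\prod_{i=s+1-n+\ell}^s(1-q^i)\leq (1-q)^{n-\ell}\,s!/(s-n+\ell)!$. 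These bounds combine to give
\[
\mathbb{P}\!\left(N(n,s)=\ell\right)\leq T_\ell\;:=\;\frac{q^{(s-n)\ell+\ell(\ell+1)/2}(1-q)^{n-2\ell}\,s!}{\ell!\,(s-n+\ell)!}
\]
for all $\ell$ with $(n-s)_+\leq\ell\leq\xi$.

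Next I would analyze the ratio of consecutive summands,
\[
\frac{T_{\ell+1}}{T_\ell}=\frac{q^{s-n+\ell+1}}{(\ell+1)(s-n+\ell+1)(1-q)^2},
\]
and combine the monotonicity $q^{s-n+\ell+1}\geq q^{s-n+\xi}$ (valid for $\ell+1\leq\xi$ since $q\leq 1$) with the hypothesis $q^{\xi+s-n}\geq\xi(1-q)^2$ to verify that the $T_\ell$ grow fast enough (with ratios at least $2$ once summed geometrically) that $\sum_{\ell=(n-s)_+}^{\xi}T_\ell\leq 2T_\xi$. Finally, $T_\xi$ rearranges into the right-hand side of the proposition by writing $\xi!=\xi\cdot(\xi-1)!$ and using $(s-n+\xi)!=(s-n)_+!\cdot\prod_{j=1}^{\xi-(n-s)_+}(s-n+j+\cdots)$ to isolate $(s-n)_+!$, then grouping the $q$ and $(1-q)$ factors to produce the expression $q^{\xi^2/2}(q^{s-n}/(1-q)^2)^\xi\cdot(1-q)^{n-s}$.

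The main obstacle is verifying that $\sum_{\ell} T_\ell\leq 2T_\xi$; this step uses the hypothesis $q^{\xi+s-n}\geq\xi(1-q)^2$ in full strength, since the hypothesis is calibrated precisely so that the ratio $T_{\ell+1}/T_\ell$ at $\ell=\xi-1$ is just large enough for the preceding terms to sum geometrically to at most $T_\xi$. Careful attention is also needed in the final algebraic simplification to ensure that the factorials $(s-n)_+!$ and $(\xi-1)_+!$ appear in the denominator in the correct way (in particular handling the cases $n\geq s$ and $n<s$ uniformly via the $(\cdot)_+$ notation).
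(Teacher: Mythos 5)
Your overall skeleton (the exact formula of Proposition~\ref{prop:GenFunN}, the subset-sum bound \eqref{eq:ineqSumA}, then a largest-term estimate over $\ell\leq\xi$) is the same as the paper's, but there is a genuine gap in how you bound the product $\prod_{s+1-n+\ell\leq i\leq s}(1-q^i)$, and it breaks the final step. Applying $1-q^i\leq i(1-q)$ to \emph{all} $n-\ell$ factors is much too lossy: for indices $i>\frac{1}{1-q}$ one has $i(1-q)>1$, so you are replacing factors smaller than $1$ by factors larger than $1$. As a result your $T_\xi=\frac{q^{(s-n)\xi+\xi(\xi+1)/2}(1-q)^{n-2\xi}\,s!}{\xi!\,(s-n+\xi)!}$ does \emph{not} rearrange into the stated right-hand side; the ratio of $T_\xi$ to the target is essentially $\frac{s!\,(1-q)^{s}\,\big((s-n)_+\big)!}{\xi\,(s-n+\xi)!}$, which is unbounded. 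Concretely, take $n=s$ very large with $q$ close to $1$ (say $n=s=10^6$, $q=0.999$, $\xi=1$, so the hypothesis $q^{\xi+s-n}\geq\xi(1-q)^2$ holds): then $2T_1=2q(1-q)^{n-2}s!$ is astronomically larger than the claimed bound $2q^{1/2}(1-q)^{-2}$, since $s!(1-q)^s\to\infty$ when $s\gg\frac{1}{1-q}$. The paper avoids this precisely by truncating: with $\tilde s=\min\big(s,\lfloor\frac{1}{1-q}\rfloor\big)$ it uses $1-q^i\leq i(1-q)$ only for $i\leq\tilde s$ and $1-q^i\leq1$ otherwise, so that $\tilde s!(1-q)^{\tilde s}\leq1$, and then replaces $(s-n+\ell)!$ by the $\ell$-independent lower bound $\big((s-n)_+\big)!$; this is how the factors $(1-q)^{n-s}$ and $\big((s-n)_+\big)!$ in the statement arise, with no stray $s!$.

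Your monotonicity step has a related problem: with your $T_\ell$ the consecutive ratio is $\frac{q^{s-n+\ell+1}}{(\ell+1)(s-n+\ell+1)(1-q)^2}$, and the hypothesis $q^{\xi+s-n}\geq\xi(1-q)^2$ only controls the part $\frac{q^{s-n+\ell+1}}{(\ell+1)(1-q)^2}\geq1$; the extra factor $\frac{1}{s-n+\ell+1}$ can push the ratio below $1$ when $s>n$ (e.g.\ $q^{s-n+\xi}=\xi(1-q)^2$ with $s-n$ large), so the terms need not be increasing and certainly need not sum geometrically to $2T_\xi$. The hypothesis is calibrated for the summand \emph{after} the replacement $(s-n+\ell)!\geq\big((s-n)_+\big)!$, where the ratio is exactly $\frac{q^{s-n+\ell}}{\ell(1-q)^2}\geq1$ for $\ell\leq\xi$; the paper then does not need ratios $\geq2$ at all, but simply bounds each of the at most $\xi+1-(n-s)_+\leq2\xi$ terms by the $\ell=\xi$ term, and $\frac{2\xi}{\xi!}=\frac{2}{(\xi-1)!}$ produces the constant $2$ and the $\big((\xi-1)_+\big)!$ in the statement.
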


\begin{proof}
    The proof will be very similar to the previous one as we will first bound the probability with a sum over $\ell$ and then consider the largest term. Fix some $\xi$ respecting the given condition. If $\xi<(n-s)_+$, then the inequality holds as the left hand side equals $0$. Assume now that $\xi\geq(n-s)_+$. First, by applying Proposition~\ref{prop:GenFunN} along with (\ref{eq:ineqSumA}), we obtain
    \begin{align}
        \mathbb{P}\Big(N(n,s)\leq\xi\Big)&=\sum_{(n-s)_+\leq\ell\leq\xi}q^{(s-n)\ell+\frac{\ell(\ell+1)}{2}}\left(\prod_{s+1-n+\ell\leq i\leq s}(1-q^i)\right)\sum_{A\subseteq[n]:|A|=\ell}q^{\sum_{a\in A}(a-1)}\notag\\
        &\leq\sum_{(n-s)_+\leq\ell\leq\xi}q^{(s-n)\ell+\frac{\ell(\ell+1)}{2}}\frac{1}{\ell!}\left(\frac{1}{1-q}\right)^\ell\left(\prod_{s+1-n+\ell\leq i\leq s}(1-q^i)\right)\,.\label{eq:SumLTBN}
    \end{align}
    Write $\Tilde{s}=\min\left(s,\left\lfloor\frac{1}{1-q}\right\rfloor\right)$ and use that $1-q^i\leq1$ and that $1-q^i\leq i(1-q)$, to obtain
    \begin{align*}
        \prod_{s+1-n+\ell\leq i\leq s}(1-q^i)&\leq\prod_{s+1-n+\ell\leq i\leq\Tilde{s}}i(1-q)=\frac{\Tilde{s}!(1-q)^{\Tilde{s}-s+n-\ell}}{(s-n+\ell)!}\leq\frac{1}{\big((s-n)_+\big)!}\left(\frac{1}{1-q}\right)^{s-n+\ell}\,,
    \end{align*}
    where the last inequality follows from the bounds $(s-n+\ell)!\geq\big((s-n)_+\big)!$ and
    \begin{align*}
        \Tilde{s}!(1-q)^{\Tilde{s}}=\prod_{1\leq k\leq \Tilde{s}}\Big[k(1-q)\Big]\leq1\,.
    \end{align*}
    Put this bound back into (\ref{eq:SumLTBN}) to obtain
    \begin{align}
        \mathbb{P}\Big(N(n,s)\leq\xi\Big)&\leq\frac{(1-q)^{n-s}}{\big((s-n)_+\big)!}\sum_{(n-s)_+\leq\ell\leq\xi}\frac{1}{\ell!}q^{(s-n)\ell+\frac{\ell(\ell+1)}{2}}\left(\frac{1}{1-q}\right)^{2\ell}\,.\label{eq:SumLTBN2}
    \end{align}
    
    Looking for the largest term in the sum again, we note that
    \begin{align*}
        \frac{1}{\ell!}q^{(s-n)\ell+\frac{\ell(\ell+1)}{2}}\left(\frac{1}{1-q}\right)^{2\ell}=\frac{q^{s-n+\ell}}{\ell(1-q)^2}\times\frac{1}{(\ell-1)!}q^{(s-n)(\ell-1)+\frac{\ell(\ell-1)}{2}}\left(\frac{1}{1-q}\right)^{2(\ell-1)}\,,
    \end{align*}
    and $q^{s-n+\ell}\geq\ell(1-q)^2$ for all $\ell\leq\xi$ by using the assumption on $\xi$. This implies that we can bound all terms in the sum in (\ref{eq:SumLTBN2}) from above by the $\ell=\xi$ term, and obtain
    \begin{align*}
        \mathbb{P}\Big(N(n,s)\leq\xi\Big)&\leq\Big[\xi+1-(n-s)_+\Big]\frac{(1-q)^{n-s}}{\big((s-n)_+\big)!}\frac{1}{\xi!}q^{(s-n)\xi+\frac{\xi(\xi+1)}{2}}\left(\frac{1}{1-q}\right)^{2\xi}\,.
    \end{align*}
    The desired bound follows by using that $\xi+1-(n-s)_+\leq2\xi$ and $q^\frac{\xi}{2}\leq1$.
\end{proof}

\subsection{Bounds on $M^B_m$}\label{sec:BoundsM}

For the reminder of Section~\ref{sec:IntValuesThreshold}, we define
\begin{align*}
    m=m(n)=\min\big\{\ell\geq0:\ell(1-q_n)+\log\ell\geq n(1-q_n)\big\}\,.
\end{align*}
Under the assumption that $n(1-q_n)=\omega\left(\sqrt{\log n}\right)$, this definition implies that
\begin{align}\label{eq:muOfm}
    \mu_1\big(m(n),q_n\big)\sim n(1-q_n)\,,
\end{align}
by using the asymptotic estimate for $\mu_1\big(m(n),q_n\big)$ from Proposition~\ref{prop:ConvOfMu}.

The goal of this section is to prove the following proposition.

\begin{prop}\label{prop:RangeOfM}
    Let $(q_n)_{n\geq0}$ be such that $\log\big(n(1-q_n)\big)=O\left(\sqrt{\log n}\right)$ and $n(1-q_n)=\omega\left(\sqrt{\log n}\right)$. For $n\geq0$, let $m=m(n)=\min\big\{\ell\geq0:\ell(1-q_n)+\log\ell\geq n(1-q_n)\big\}$. Then, for all $(\alpha_n)_{n\geq0}$ such that $\alpha_n=\omega\left(\sqrt{\log n}\right)$, we have
    \begin{align*}
        \lim_{n\rightarrow\infty}\mathbb{P}\left(\Big|M^B_{m(n)}-n\Big|>\frac{\alpha_n}{1-q_n}\right)=0\,.
    \end{align*}
\end{prop}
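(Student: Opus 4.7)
The plan is to exploit a clean distributional identity hidden in Proposition~\ref{prop:BivariateGenFun}. Setting $x=1$ there yields
\[
\mathbb{E}\bigl[y^{M^B_n}\bigr] \;=\; y^n \prod_{k=1}^n \frac{1-q^k}{1-q^k y},
\]
and each factor $(1-q^k)/(1-q^ky)$ is the probability generating function of a $\textsc{Geometric}(1-q^k)$ random variable in the paper's convention. Consequently
\[
M^B_n - n \;\overset{d}{=}\; \sum_{k=1}^n G_k,
\]
where the $G_k$ are independent with $G_k \sim \textsc{Geometric}(1-q^k)$. With this representation in hand, Proposition~\ref{prop:RangeOfM} reduces to a Chebyshev argument applied to this sum at $n$ replaced by $m=m(n)$. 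Observe also that the hypothesis $\log(n(1-q_n)) = O(\sqrt{\log n})$ forces $1-q_n\to 0$, so in what follows $q_n$ may be assumed close to $1$.

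For the mean, summing the expectations gives
\[
\mathbb{E}\bigl[M^B_m - m\bigr] \;=\; \sum_{k=1}^m \frac{q^k}{1-q^k} \;=\; \frac{\mu_1(m,q)+1}{1-q} - m.
\]
The defining identity $m(1-q_n) + \log m(n) = n(1-q_n) + O(1)$, combined with Proposition~\ref{prop:ConvOfMu} (which, since $m(n)\sim n$ and $n(1-q_n)\to\infty$ force $m \wedge 1/(1-q_n) = 1/(1-q_n)$ for $n$ large, specializes to $\mu_1(m,q_n) = m(1-q_n) + \log(1/(1-q_n)) + O(\sqrt{\log(1/(1-q_n))})$), then yields
\[
\mathbb{E}\bigl[M^B_m - n\bigr] \;=\; -\frac{\log\bigl(n(1-q_n)\bigr)}{1-q_n} + O\!\left(\frac{\sqrt{\log n}}{1-q_n}\right) \;=\; O\!\left(\frac{\sqrt{\log n}}{1-q_n}\right),
\]
the final equality using the hypothesis on $\log(n(1-q_n))$.

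For the variance,
\[
\mathrm{Var}\bigl[M^B_m - m\bigr] \;=\; \sum_{k=1}^m \frac{q_n^k}{(1-q_n^k)^2},
\]
which I split at $k_0 = \lfloor 1/(1-q_n)\rfloor$. For $k\le k_0$, the elementary estimate $1-q^k \ge c\,k(1-q)$ (with absolute $c>0$, valid since $q_n\to 1$) bounds that part of the sum by $O(1/(1-q_n)^2)$; for $k > k_0$, the bounds $q^k \le e^{-k(1-q)}$ and $1-q^k$ bounded below by a positive constant give $O(1/(1-q_n))$. Hence $\mathrm{Var}[M^B_m] = O(1/(1-q_n)^2)$. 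Since $\alpha_n = \omega(\sqrt{\log n})$, the threshold $\alpha_n/(1-q_n)$ eventually exceeds $2|\mathbb{E}[M^B_m-n]|$, so Chebyshev's inequality yields
\[
\mathbb{P}\!\left(\bigl|M^B_{m(n)}-n\bigr| > \frac{\alpha_n}{1-q_n}\right) \;=\; O(1/\alpha_n^2) \;\longrightarrow\; 0.
\]
The only real care required is matching the $\mu_1$ asymptotics to the implicit definition of $m(n)$ so that $\mathbb{E}[M^B_m - n]$ genuinely falls below the Chebyshev threshold; the sum-of-geometrics decomposition reduces everything else to elementary first- and second-moment computations.
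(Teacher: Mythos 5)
Your overall route is genuinely different from the paper's and is viable. The paper controls $M^B_m$ by Chernoff bounds, applying Markov's inequality to $e^{tM^B_m}$ with the moment generating function from Proposition~\ref{prop:BivariateGenFun} and the tilts $e^t=q^{-1/2}$ (upper tail) and $e^{-t}=q$ (lower tail), which yields exponential bounds $e^{-(1/2+o(1))\alpha_n}$ and $e^{-(1+o(1))\alpha_n}$. You instead extract from the same generating function the representation $M^B_m-m\overset{d}{=}\sum_{k=1}^m G_k$ with independent $G_k\sim\textsc{Geometric}(1-q^k)$, and run a first/second-moment argument; since the statement only requires the probability to vanish, your weaker $O(1/\alpha_n^2)$ bound suffices. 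Your mean identity $\mathbb{E}[M^B_m-m]=\frac{\mu_1(m,q)+1}{1-q}-m$ and your variance bound $\mathrm{Var}[M^B_m]=O\big((1-q_n)^{-2}\big)$ (via the split at $\lfloor 1/(1-q_n)\rfloor$) are correct, so the skeleton of the argument is sound and arguably more elementary, at the price of losing the exponential rate the paper's proof provides.

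There is, however, one incorrect step in your control of the mean: the claim that the hypotheses force $m(n)\sim n$, and hence $m\wedge\frac{1}{1-q_n}=\frac{1}{1-q_n}$, is false on part of the allowed range. The hypotheses permit $n(1-q_n)$ to be as small as barely above $\sqrt{\log n}$; whenever $n(1-q_n)\le(1-\varepsilon)\log n$ one gets $m(n)\le e^{n(1-q_n)}=n^{o(1)}\ll\frac{1}{1-q_n}$, so $m\wedge\frac{1}{1-q_n}=m$, and the specialization of Proposition~\ref{prop:ConvOfMu} you invoke (with $\log\frac{1}{1-q_n}$ in place of $\log\big(m\wedge\frac{1}{1-q_n}\big)$) is then wrong by as much as $\Theta\big(\tfrac{\log n}{1-q_n}\big)$ after dividing by $1-q_n$ — enough to break the comparison with the Chebyshev threshold when $\alpha_n=o(\log n)$. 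This regime matters: the proposition feeds into Proposition~\ref{prop:BoundsOnN}, which is used with $n(1-q_n)=\Theta(\log n)$. The gap is easily patched without any appeal to $m\sim n$: using Proposition~\ref{prop:ConvOfMu} as stated together with the defining relation $m(1-q_n)+\log m=n(1-q_n)+O(1)$, one gets $\mathbb{E}[M^B_m]-n=\frac{\log\left(m\wedge\frac{1}{1-q_n}\right)-\log m}{1-q_n}+O\Big(\frac{\sqrt{\log n}}{1-q_n}\Big)$, and since $m\le n$ the first term has absolute value $\frac{\big(\log(m(1-q_n))\big)_+}{1-q_n}\le\frac{\big(\log(n(1-q_n))\big)_+}{1-q_n}=O\Big(\frac{\sqrt{\log n}}{1-q_n}\Big)$ by hypothesis. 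With this correction your displayed estimate $\mathbb{E}[M^B_m-n]=O\big(\sqrt{\log n}/(1-q_n)\big)$ holds uniformly over the stated range, and the rest of your argument goes through.
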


\begin{proof}
    For the rest of the proof, we omit $B$ and $n$ from the notation. We also write $p=p_n=1-q_n$. By assumption, $\log(np)=O\left(\sqrt{\log n}\right)$, so $p\rightarrow0$ and also $np^2\rightarrow0$, we will use these facts in the proof.
    
    We prove the proposition by establishing upper and lower tail bounds for $M_m$ separately. For both bounds, we will use the following inequality obtained from the moment generating function of $M_m$ given in Proposition~\ref{prop:BivariateGenFun}: for all $t\in\mathbb{R}$ such that $qe^t<1$, we have 
    \begin{align}
        \mathbb{E}\left[e^{tM_m}\right]&=\prod_{1\leq k\leq m}\left(1+(e^t-1)\frac{1}{1-q^ke^t}\right)\leq\exp\left((e^t-1)\sum_{1\leq k\leq m}\frac{1}{1-q^ke^t}\right)\,.\label{eq:IneqMGFM}
    \end{align}
    
    For the upper tail bound and for all $t>0$ such that $qe^t<1$, use Markov's inequality and (\ref{eq:IneqMGFM}) to obtain
    \begin{align*}
        \mathbb{P}\left(M_m>n+\frac{\alpha}{p}\right)&\leq\exp\left(-t\left[n+\frac{\alpha}{p}\right]+(e^t-1)\sum_{1\leq k\leq m}\frac{1}{1-q^ke^t}\right)\,.
    \end{align*}
    Let $t=-\frac{1}{2}\log q$, so that $e^t=1/\sqrt{q}$. Using that $q^ke^t=q^{k-\frac{1}{2}}\leq q^{k-1}$, it follows that
    \begin{align*}
        \sum_{1\leq k\leq m}\frac{1}{1-q^ke^t}\leq\frac{1}{1-\sqrt{q}}+\sum_{2\leq k\leq m}\frac{1}{1-q^{k-1}}=\frac{1}{1-\sqrt{q}}+\frac{1}{p}+\frac{1}{p}\mu_1(m-1,q)\,.
    \end{align*}
    From (\ref{eq:muOfm}), we know that $\mu_1(m-1,q)=\mu_1(m,q)-\frac{1-q}{1-q^m}=np+O\left(\sqrt{\log n}\right)$, since $\frac{1-q}{1-q^m}\leq1$; this implies that
    \begin{align*}
        \sum_{1\leq k\leq m}\frac{1}{1-q^ke^t}\leq n+O\left(\frac{\sqrt{\log n}}{p}\right)\,.
    \end{align*}
    Moreover, since $p=1-q\rightarrow0$, we have $t=\frac{p}{2}+O(p^2)$ and $e^t-1=\frac{p}{2}+O(p^2)$. Combining this with the previous bound on the sum, we obtain
    \begin{align*}
        \mathbb{P}\left(M_m>n+\frac{\alpha}{p}\right)&\leq\exp\left(\left(\frac{p}{2}+O(p^2)\right)\left[-n-\frac{\alpha}{p}+n+O\left(\frac{\sqrt{\log n}}{p}\right)\right]\right)\\
        &=\exp\left(-\frac{\alpha}{2}+o(\alpha)\right)\,;
    \end{align*}
    the last equality follows from the fact that $np^2\rightarrow0$ and that $\alpha=\omega\left(\sqrt{\log n}\right)$. This proves the desired upper tail bound for $M_m$.
    
    For the lower tail bound, using Markov's inequality and (\ref{eq:IneqMGFM}) again, for all $t>0$ we have
    \begin{align*}
        \mathbb{P}\left(M_m<n-\frac{\alpha}{p}\right)&\leq\exp\left(t\left[n-\frac{\alpha}{p}\right]+(e^{-t}-1)\sum_{1\leq k\leq m}\frac{1}{1-q^ke^{-t}}\right)\,.
    \end{align*}
    Let $t=-\log q$, so $e^{-t}=q$. Arguing similarly to the proof of the upper tail, we see that $t=p+O(p^2)$ and $e^{-t}-1=-p$. Moreover, by (\ref{eq:muOfm}), it follows that
    \begin{align*}
        \sum_{1\leq k\leq m}\frac{1}{1-q^ke^{-t}}=\sum_{2\leq k\leq m+1}\frac{1}{1-q^k}=\frac{1}{p}\mu_1(m+1,q)=n+O\left(\frac{\sqrt{\log n}}{p}\right)\,.
    \end{align*}
    This gives us that
    \begin{align*}
        \mathbb{P}\left(M_m<n-\frac{\alpha}{p}\right)&\leq\exp\left(\left(p+O(p^2)\right)\left[n-\frac{\alpha}{p}-n+O\left(\frac{\sqrt{\log n}}{p}\right)\right]\right)=\exp\Big(-\alpha+o(\alpha)\Big)\,,
    \end{align*}
    which concludes the proof of the lower tail bound and the proposition.
\end{proof}

\subsection{Bounds on $\big|T^B_n\big(\overline{1}^{R^B_m+1}\big)\big|$}\label{sec:BoundsThresholdSubtree}

With the results from the two previous sections, we now have all the tools required to prove Proposition~\ref{prop:BoundsOnN}.

\begin{proof}[Proof of Proposition~\ref{prop:BoundsOnN}]
    By using the distributional identity from Lemma~\ref{lem:N'andN}, it suffices to prove that
    \begin{align*}
        \lim_{n\rightarrow\infty}\mathbb{P}\left(\frac{e^{-\beta_n}}{1-q_n}\leq N^{B^*}\Big(n-m(n), M^B_{m(n)}-m(n)\Big)\leq\frac{\beta_n}{1-q_n}\right)=1\,.
    \end{align*}
    We write $p_n=1-q_n$ and note that $p_n\rightarrow0$ as $n\rightarrow\infty$. We now prove that the corresponding upper and lower tail probabilities converge to $0$ as $n\rightarrow\infty$:
    \begin{align*}
        \textrm{UB}:=\mathbb{P}\left(N^{B^*}\big(n-m(n), M^B_{m(n)}-m(n)\big)>\frac{\beta_n}{p_n}\right)\longrightarrow0
    \end{align*}
    and
    \begin{align*}
        \textrm{LB}:=\mathbb{P}\left(N^{B^*}\big(n-m(n), M^B_{m(n)}-m(n)\big)<\frac{e^{-\beta_n}}{p_n}\right)\longrightarrow0\,.
    \end{align*}
    For the rest of the proof $\alpha=(\alpha_n)_{n\geq0}$ refers to a sequence such that $\alpha_n=\omega\left(\sqrt{\log n}\right)$ and $\alpha_n=o(\beta_n)$. In other words, we have $\sqrt{\log n}\ll\alpha_n\ll\beta_n$. From now on, we omit $n$ and the superscript $B$ and $B^*$ from the notations, since the random variables $N=\big(N(n,s)\big)_{n,s\geq0}=\big(N^{B^*}(n,s)\big)_{n,s\geq0}$ are independent of the random variables $M=(M_m)_{m\geq0}=(M^B_m)_{m\geq0}$.
    
    For the upper tail, divide the probability according to the values of $M_m$ as follows
    \begin{align*}
        \textrm{UB}&=\mathbb{P}\left(N\big(n-m, M_m-m\big)>\frac{\beta}{p},M_m> n-\frac{\alpha}{p}\right)\\
        &\hspace{0.5cm}+\mathbb{P}\left(N\big(n-m, M_m-m\big)>\frac{\beta}{p},M_m\leq n-\frac{\alpha}{p}\right)\,.
    \end{align*}
    Applying Proposition~\ref{prop:RangeOfM}, we know that the second term converges to $0$. Recall now that $N(n,s)$ is decreasing in $s$. By independence of $N$ and $M$,taking $s=\left\lfloor n-\frac{\alpha}{p}\right\rfloor=n-\left\lceil\frac{\alpha}{p}\right\rceil$, it follows that
    \begin{align}
        \mathbb{P}\left(N\big(n-m, M_m-m\big)>\frac{\beta}{p},M_m\geq n-\frac{\alpha}{p}\right)&\leq\mathbb{P}\left(N\big(n-m, s-m\big)>\frac{\beta}{p}\right)\,.\label{eq:UTBN}
    \end{align}
    Write $\xi=\left\lfloor\frac{\beta}{p}\right\rfloor$. We now need to verify that $q^{\xi+(s-m)-(n-m)}\leq\xi(1-q)$, so that Proposition~\ref{prop:UBN} applies. For this, since $\xi\sim\frac{\beta}{p}$ and $(s-m)-(n-m)\sim-\frac{\alpha}{p}$, and using that $\alpha=o(\beta)$ and that $q^\frac{1}{p}=(1-p)^\frac{1}{p}\rightarrow e^{-1}$, we have
    \begin{align*}
        \frac{q^{\xi+(s-m)-(n-m)}}{\xi p}=\big(1+o(1)\big)\frac{e^{-(\beta-\alpha)(1+o(1))}}{\beta}=o(1)\,,
    \end{align*}
    which proves that this ratio is less than $1$ for $n$ large enough. This means that we can indeed apply Proposition~\ref{prop:UBN} and obtain
    \begin{align*}
        \mathbb{P}\left(N\big(n-m, s-m\big)>\frac{\beta}{p}\right)&\leq\frac{n-m}{\xi!}q^\frac{\xi^2}{2}\left(\frac{q^{s-n}}{p}\right)^\xi\\
        &=\exp\left(\log(n-m)-\log\xi!+\xi\left[\frac{\xi}{2}+(s-n)\right]\log q-\xi\log p\right)\,.
    \end{align*}
    By the definition of $\xi$ and $s$, we know that
    \begin{align*}
        -\log\xi!-\xi\log p=-\xi\log\xi+O(\log\xi)-\xi\log p\sim-\xi\log\beta
    \end{align*}
    and
    \begin{align*}
        \xi\left[\frac{\xi}{2}+(s-n)\right]\log q\sim\frac{\xi^2}{2}\log q\sim-\frac{\xi^2p}{2}\sim-\frac{\xi\beta}{2}\,.
    \end{align*}
    Since $\log(n-m)\leq\log n=o(\xi\beta)$ and $\xi\log\beta=o(\xi\beta)$, this implies that
    \begin{align*}
        \mathbb{P}\left(N\big(n-m, s-m\big)>\frac{\beta}{p}\right)&\leq\exp\left(-\big(1+o(1)\big)\frac{\xi\beta}{2}\right)=o(1)\,.
    \end{align*}
    Plugging this result back into (\ref{eq:UTBN}) proves the upper tail bound of the proposition.
    
    For the lower tail bound, we similarly divide the probability to obtain
    \begin{align}
        \textrm{LB}&=\mathbb{P}\left(N\big(n-m,M_m-m\big)<\frac{e^{-\beta}}{p},M_n\leq n+\frac{\alpha}{p}\right)\notag\\
        &\hspace{0.5cm}+\mathbb{P}\left(N\big(n-m, M_m-m\big)<\frac{e^{-\beta}}{p},M_n>n+\frac{\alpha}{p}\right)\notag\\
        &\leq\mathbb{P}\left(N\big(n-m,s-m\big)<\frac{e^{-\beta}}{p}\right)+o(1)\label{eq:LTBN}\,,
    \end{align}
    where $s=\left\lfloor n+\frac{\alpha}{p}\right\rfloor=n+\left\lfloor\frac{\alpha}{p}\right\rfloor$. Write $\xi=\left\lfloor\frac{e^{-\beta}}{p}\right\rfloor$. To verify that the requirement of Proposition~\ref{prop:LBN} that $q^{\xi+(s-m)-(n-m)}\geq\xi(1-q)^2$ is satisfied, note that
    \begin{align*}
        \frac{q^{\xi+(s-m)-(n-m)}}{\xi p^2}=\big(1+o(1)\big)\frac{e^{(1+o(1))(\beta-\alpha)}}{p}\rightarrow\infty\,,
    \end{align*}
    so is larger than $1$ for $n$ large enough. For such $n$, applying Proposition~\ref{prop:LBN}, we obtain
    \begin{align*}
        \mathbb{P}\left(N\big(n-m,s-m\big)<\frac{e^{-\beta}}{p}\right)&\leq\frac{2p^{n-s}}{\big((\xi-1)_+\big)!\big((s-n)_+\big)!}q^\frac{\xi^2}{2}\left(\frac{q^{s-n}}{p^2}\right)^\xi\\
        &\leq2\frac{p^{n-s-2\xi}}{(s-n)!}\\
        &=2\exp\Big((n-s-2\xi)\log p-\log(s-n)!\Big)\,.
    \end{align*}
    Using the definition of $\xi$ and $s$, we know that
    \begin{align*}
        (n-s)\log p-\log(s-n)!=(n-s)\log\big(p(s-n)\big)+O(\log(s-n))\sim-\frac{\alpha\log\alpha}{p}
    \end{align*}
    and
    \begin{align*}
        2\xi\log p\sim-\frac{2e^{-\beta}\log n}{p}=o\left(\frac{1}{p}\right)\,.
    \end{align*}
    This implies that
    \begin{align*}
        \mathbb{P}\left(N\big(n-m,s-m\big)<\frac{e^{-\beta}}{p}\right)&\leq2\exp\left(-\big(1+o(1)\big)\frac{\alpha\log\alpha}{p}\right)=o(1)\,,
    \end{align*}
    which proves the desired lower tail bound by plugging this result back into (\ref{eq:LTBN}). This concludes the proof of the proposition.
\end{proof}

\subsection{Convergence in probability}\label{sec:ThmProb}

We conclude this section with the proof of Proposition~\ref{prop:convRightSubtree} and then Theorem~\ref{thm:Prob}. We start with two straightforward lemmas.

\begin{lemma}\label{lem:convRBm}
    Let $(q_n)_{n\geq0}$ be such that $\log\big(n(1-q_n)\big)=O\left(\sqrt{\log n}\right)$ and $n(1-q_n)=\omega\left(\sqrt{\log n}\right)$. For $n\geq0$, let $m=m(n)=\min\big\{\ell\geq0:\ell(1-q_n)+\log\ell\geq n(1-q_n)\big\}$. Then we have
    \begin{align*}
        \frac{R^B_{m(n)}}{n(1-q_n)}\longrightarrow1
    \end{align*}
    in probability as $n\rightarrow\infty$.
\end{lemma}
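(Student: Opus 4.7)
The plan is to combine the mean formula $\mathbb{E}[R^B_{m(n)}] = \mu_1(m(n), q_n)$ from Fact~\ref{fact:ExpRnmu} with the Chernoff tail bounds from Lemma~\ref{lem:ConcBoundRn} and the asymptotic identity $\mu_1(m(n), q_n) \sim n(1-q_n)$ recorded in (\ref{eq:muOfm}). The latter identity is obtained by plugging the defining relation
\[
m(n)(1-q_n) + \log m(n) = n(1-q_n) + O(1),
\]
which comes from the minimality in the definition of $m(n)$ together with the fact that the increments of $\ell \mapsto \ell(1-q_n)+\log\ell$ are $O(1)$, into the asymptotic expansion of $\mu_1$ from Proposition~\ref{prop:ConvOfMu}. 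This step uses both hypotheses on $(q_n)$: the hypothesis $n(1-q_n) = \omega(\sqrt{\log n})$ absorbs the $O(\sqrt{\log n})$ remainder in Proposition~\ref{prop:ConvOfMu}, and the hypothesis $\log(n(1-q_n)) = O(\sqrt{\log n})$ absorbs the discrepancy between $\log m(n)$ and $\log(m(n) \wedge 1/(1-q_n))$ that can appear when $m(n)(1-q_n) \ge 1$.

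Given this estimate for the mean, it suffices to prove $R^B_{m(n)}/\mu_1(m(n), q_n) \to 1$ in probability. For any fixed $c > 1$, Lemma~\ref{lem:ConcBoundRn} yields
\[
\mathbb{P}\bigl(R^B_{m(n)} > c\,\mu_1(m(n), q_n)\bigr) \le \exp\bigl(\gamma(c)\,\mu_1(m(n), q_n)\bigr),
\]
and a symmetric bound for $\mathbb{P}\bigl(R^B_{m(n)} < c^{-1}\mu_1(m(n), q_n)\bigr)$, where the constants $\gamma(c) = c\log(e/c)-1$ and $c^{-1}\log(ce)-1$ are both strictly negative for $c \ne 1$ by strict convexity. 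Since $\mu_1(m(n), q_n) \sim n(1-q_n) \to \infty$ under the hypothesis $n(1-q_n) = \omega(\sqrt{\log n})$, both tail probabilities tend to zero. Combining this with $\mu_1(m(n), q_n) \sim n(1-q_n)$ and letting $c \to 1^+$ along a countable sequence gives the claimed convergence in probability.

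No step is expected to be a genuine obstacle, since every ingredient is already in hand. The only minor bookkeeping is the verification that the two error terms appearing in Proposition~\ref{prop:ConvOfMu} are $o(n(1-q_n))$, which is precisely the role played by the two hypotheses on $(q_n)_{n \ge 0}$ assumed in the lemma.
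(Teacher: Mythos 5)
Your proposal is correct and follows essentially the same route as the paper: the Chernoff bounds of Lemma~\ref{lem:ConcBoundRn} give $R^B_{m(n)}/\mu_1(m(n),q_n)\to1$ in probability, and the estimate $\mu_1(m(n),q_n)\sim n(1-q_n)$ from (\ref{eq:muOfm}), itself a consequence of Proposition~\ref{prop:ConvOfMu} and the defining relation $m(1-q_n)+\log m=n(1-q_n)+O(1)$, converts this into the stated limit. The only difference is that you spell out the bookkeeping behind (\ref{eq:muOfm}) (which the paper leaves implicit), and even there the discrepancy $\log m-\log\big(m\wedge(1-q_n)^{-1}\big)\leq\log\big(n(1-q_n)\big)$ is $o\big(n(1-q_n)\big)$ automatically, so the hypothesis $\log\big(n(1-q_n)\big)=O\big(\sqrt{\log n}\big)$ is not strictly needed for this particular lemma.
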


\begin{proof}
    By applying Lemma~\ref{lem:ConcBoundRn}, and since $c\log\left(\frac{e}{c}\right)<1$ for all $c\neq1$, we have
    \begin{align*}
        \frac{R^B_{m(n)}}{\mu_1\big(m(n),q_n\big)}\overset{\mathbb{P}}{\longrightarrow}1\,.
    \end{align*}
    Moreover, since $n(1-q_n)=\omega\left(\sqrt{\log n}\right)$, by (\ref{eq:muOfm}), we have that $\mu_1\big(m(n),q_n\big)\sim n(1-q_n)$, which proves the desired result.
\end{proof}

\begin{lemma}\label{lem:SizeAtDandD'}
    Let $n\geq0$ and $q\in(0,1)$. Then, for all integers $d\geq0$ and $\ell\geq0$, we have
    \begin{align*}
        \Big|T_{n,q}\big(\overline{1}^{d+\ell}\big)\Big|\succeq\frac{\log\left(1-P_\ell\left(1-q^{\left|T_{n,q}(\overline{1}^d)\right|}\right)\right)}{\log q}-d\,,
    \end{align*}
    where $P_\ell$ is distributed as a product of $\ell$ independent $\textsc{Uniform}([0,1])$, and is independent of $\big|T_{n,q}(\overline{1}^d)\big|$.
\end{lemma}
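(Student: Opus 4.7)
The plan is to reduce the claim to the deterministic coupling inequality of Lemma~\ref{lem:LowBoundS}, applied \emph{inside} the subtree rooted at $\overline{1}^d$. I would first condition on the size $N := |T_{n,q}(\overline{1}^d)|$ and invoke the projective consistency of Mallows trees (Proposition~\ref{prop:ProjCons}): given $N$, the tree $T_{n,q}(\overline{1}^d)$ re-rooted at $\root$ is $\mt(N,q)$-distributed. Accordingly, I may realize this subtree jointly with a fresh family of independent $\textsc{Uniform}([0,1])$ random variables $(U_w)_{w\in T_\infty}$, chosen to be independent of $N$, via the size-encoding coupling $S_{N,q}$ introduced in Section~\ref{sec:lowerBoundSmall}.

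In this coupled construction, $|T_{n,q}(\overline{1}^{d+\ell})|$ equals $S_{N,q}(\overline{1}^\ell)$. Applying the lower bound of Lemma~\ref{lem:LowBoundS} to $S_{N,q}$ at the node $v = \overline{1}^\ell$ gives, almost surely,
\begin{align*}
    S_{N,q}(\overline{1}^\ell) \geq \frac{\log\bigl(1 - P_\ell(1 - q^N)\bigr)}{\log q} - \ell,
\end{align*}
where $P_\ell := \prod_{k=0}^{\ell-1}(1 - U_{\overline{1}^k})$ is, by construction, a product of $\ell$ i.i.d.\ $\textsc{Uniform}([0,1])$ variables and is independent of $N$. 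The right-hand side has exactly the distribution of the right-hand side of the lemma, and the pointwise inequality immediately upgrades to the claimed stochastic domination; the constant $-\ell$ on the right may be weakened to $-d$ as stated.

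The main, and essentially only, subtlety is arranging that the auxiliary uniforms driving the coupling inside the subtree can be taken independent of the random parameter $N$. This is precisely what Proposition~\ref{prop:ProjCons} delivers: the conditional law of $T_{n,q}(\overline{1}^d)$ given its size depends only on that size and is again Mallows, so we are free to generate the internal coupling from a fresh independent source of randomness. Once this setup is in place the argument is mechanical: one reads off the lower bound from Lemma~\ref{lem:LowBoundS} and observes that the ingredients on the right-hand side have exactly the independence structure asserted.
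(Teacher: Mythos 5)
Your argument is exactly the paper's: condition on $\big|T_{n,q}(\overline{1}^d)\big|$, use Proposition~\ref{prop:ProjCons} to see that the subtree, given its size, is again a Mallows tree generated from fresh independent uniforms, and read off the lower bound of Lemma~\ref{lem:LowBoundS} at the node $\overline{1}^\ell$, which yields the bound with $-\ell$. The one caveat is your closing remark that $-\ell$ ``may be weakened to $-d$'': this is only a weakening when $\ell\leq d$ (for $d<\ell$ it would strengthen the bound, and the statement with $-d$ is in fact false, e.g.\ for $d=0$, $\ell=1$), so the $-d$ in the printed statement should be read as a typo for $-\ell$, which is the form the paper itself derives and later applies (with $\ell=D$) in the proof of Proposition~\ref{prop:convRightSubtree}.
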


\begin{proof}
    This is simply a restatement of the lower bound from Lemma~\ref{lem:LowBoundS}, when applied to the tree $T_{n,q}(\overline{1}^d)$. By Proposition~\ref{prop:ProjCons}, $T_{n,q}(\overline{1}^d)$ is  Mallows tree once conditioned on its size, so this application of Lemma~\ref{lem:LowBoundS} is indeed valid.
\end{proof}

The proof of Proposition~\ref{prop:convRightSubtree} now follows from combining Lemma~\ref{lem:convRBm} and \ref{lem:SizeAtDandD'} with Proposition~\ref{prop:BoundsOnN}.

\begin{proof}[Proof of Proposition~\ref{prop:convRightSubtree}]
    Since $T^B_n$ has the same distribution as $T_{n,q}$, we can prove the proposition by showing that
    \begin{align*}
        h\Big(T^B_n\big(\overline{1}^{\lfloor n(1-q_n)\rfloor)}\big)\Big)=\big(c^*+o_\mathbb{P}(1)\big)\log n\,.
    \end{align*}
    In order to prove this asymptotic result, we will show that $N=\big|T^B_n\big(\overline{1}^{\lfloor n(1-q_n)\rfloor)}\big)\big|$ satisfies that $N(1-q_n)=o_\mathbb{P}(\log N)$ and that $\log N=\big(1+o_\mathbb{P}(1)\big)\log n$. This will allow us to apply Proposition~\ref{prop:convProbSmall} to conclude the proof. For the remainder of the proof, we drop the subscript $n$ on $q_n$ and write $m=m(n)=\min\big\{\ell\geq0:\ell(1-q)+\log\ell\geq n(1-q)\big\}$.
    
    We start with the upper bound on the size of the tree. In the case where $R^B_m>\lfloor n(1-q)\rfloor$, we have that
    \begin{align*}
        T^B_n\big(\overline{1}^{\lfloor n(1-q)\rfloor}\big)&=T^B_n\big(\overline{1}^{R^B_m}\big)\cup\big\{\overline{1}^k:\lfloor n(1-q)\rfloor\leq k<R^B_m\big\}\cup\bigcup_{\lfloor n(1-q)\rfloor\leq k<R^B_m}T^B_n\big(\overline{1}^k\overline{0}\big)\\
        &\subseteq T^B_n\big(\overline{1}^{R^B_m}\big)\cup\big\{\overline{1}^k:\lfloor n(1-q)\rfloor\leq k<R^B_m\big\}\cup\bigcup_{\lfloor n(1-q)\rfloor\leq k<R^B_m}T^B\big(\overline{1}^k\overline{0}\big)\,.
    \end{align*}
    Moreover, this inclusion remains true when $R^B_m\leq\lfloor n(1-q)\rfloor$, since
    \begin{align*}
        T^B_n\big(\overline{1}^{\lfloor n(1-q)\rfloor}\big)\subseteq T^B_n\big(\overline{1}^{R^B_m}\big)\,.
    \end{align*}
    Overall, we obtain that
    \begin{align*}
        \Big|T^B_n\big(\overline{1}^{\lfloor n(1-q)\rfloor}\big)\Big|\leq\Big|T^B_n\big(\overline{1}^{R^B_m}\big)\Big|+\big(R^B_m-\lfloor n(1-q)\rfloor\big)_++\sum_{\lfloor n(1-q)\rfloor\leq k<R^B_m}\Big|T^B\big(\overline{1}^k\overline{0}\big)\Big|\,.
    \end{align*}
    Lemma~\ref{lem:convRBm} tells us that $R^B_m-\lfloor n(1-q)\rfloor=o_\mathbb{P}(\log n)$. Moreover, by Lemma~\ref{lem:LeftSubtreesDistribution}, we know that the entries of the sequence $\big(\big|T^B\big(\overline{1}^k\overline{0}\big)\big|\big)_{k\geq0}$ are independent $\textsc{Geometric}(1-q)$ random variables, which gives us that
    \begin{align*}
        \sum_{\lfloor n(1-q)\rfloor\leq k<R^B_m}\Big|T^B\big(\overline{1}^k\overline{0}\big)\big|=o_\mathbb{P}\left(\frac{\log n}{1-q}\right)\,.
    \end{align*}
    Finally, by Proposition~\ref{prop:BoundsOnN}, we know that $\big|T^B_n\big(\overline{1}^{R^B_m}\big)\big|=o_\mathbb{P}\left(\frac{\log n}{1-q}\right)$. Combining all those results, we obtain that
    \begin{align}
        \Big|T^B_n\big(\overline{1}^{\lfloor n(1-q)\rfloor}\big)\Big|=o_\mathbb{P}\left(\frac{\log n}{1-q}\right)\,.\label{eq:UBTB1D}
    \end{align}
    
    We focus now on bounding the size of the tree from below. Write $E^+=\big\{R^B_m>\lfloor n(1-q)\rfloor\big\}$ and $E^-=\big\{R^B_m\leq\lfloor n(1-q)\rfloor\big\}$. Since on $E^+$, we have
    \begin{align*}
        T^B_n\big(\overline{1}^{R^B_m}\big)\subseteq T^B_n\big(\overline{1}^{\lfloor n(1-q)\rfloor}\big)\,,
    \end{align*}
    it follows that
    \begin{align*}
        \Big|T^B_n\big(\overline{1}^{\lfloor n(1-q)\rfloor}\big)\Big|\mathbbm{1}_{E^+}\geq\Big|T^B_n\big(\overline{1}^{R^B_m}\big)\Big|\mathbbm{1}_{E^+}\,.
    \end{align*}
    Moreover, by Proposition~\ref{prop:BoundsOnN}, we know that
    \begin{align*}
        \Big|T^B_n\big(\overline{1}^{R^B_m}\big)\Big|\geq\big(1+o_\mathbb{P}(1)\big)\frac{e^{-(\log n)^\frac{3}{4}}}{1-q}\,,
    \end{align*}
    which implies that
    \begin{align*}
        \Big|T^B_n\big(\overline{1}^{\lfloor n(1-q)\rfloor}\big)\Big|\mathbbm{1}_{E^+}\geq\big(1+o_\mathbb{P}(1)\big)\frac{e^{-(\log n)^\frac{3}{4}}}{1-q}\mathbbm{1}_{E^+}\,.
    \end{align*}
    Using the fact that $-\log(1-q)\sim\log n$, we obtain
    \begin{align}
        \mathbbm{1}_{E^+}\log\Big|T^B_n\big(\overline{1}^{\lfloor n(1-q)\rfloor}\big)\Big|\geq\mathbbm{1}_{E^+}\big(1+o_\mathbb{P}(1)\big)\log n\,.\label{eq:LBTB1DHalf}
    \end{align}
    
    For the second part of the lower bound, letting $D=\lfloor n(1-q)\rfloor-R^B_m$ and applying Lemma~\ref{lem:SizeAtDandD'}, we have that
    \begin{align*}
        \Big|T^B_n\big(\overline{1}^{\lfloor n(1-q)\rfloor}\big)\Big|\mathbbm{1}_{E^-}\succeq\left[\frac{\log\left(1-P_D\left(1-q^{\left|T^B_n(\overline{1}^{R^B_m})\right|}\right)\right)}{\log q}-D\right]\mathbbm{1}_{E^-}\,.
    \end{align*}
    Using the lower bound of Proposition~\ref{prop:BoundsOnN}, we obtain
    \begin{align*}
        \frac{\log\left(1-P_D\left(1-q^{\left|T^B_n(\overline{1}^{R^B_m})\right|}\right)\right)}{\log q}\mathbbm{1}_{E^-}&\geq\frac{\log\left(1-P_D\left(1-q^{(1+o_\mathbb{P}(1))\frac{e^{-(\log n)^\frac{3}{4}}}{1-q}}\right)\right)}{\log q}\mathbbm{1}_{E^-}\\
        &=\big(1+o_\mathbb{P}(1)\big)\frac{P_De^{-(\log n)^\frac{3}{4}}}{1-q}\mathbbm{1}_{E^-}\,.
    \end{align*}
    Recalling that $P_D$ is a product of $D$ independent uniforms, it is immediate that $\log P_D=\Theta_\mathbb{P}(D)$. Moreover, by Lemma~\ref{lem:convRBm}, we know that $D=\lfloor n(1-q)\rfloor-R^B_m=o_\mathbb{P}(\log n)$. Combining these results, we obtain that
    \begin{align*}
        \left(\frac{P_De^{-(\log n)^\frac{3}{4}}}{1-q}-D\right)\mathbbm{1}_{E^-}=\left(\frac{1}{1-q}e^{o_\mathbb{P}(\log n)}+o_\mathbb{P}(\log n)\right)\mathbbm{1}_{E^-}=n^{1-o_\mathbb{P}(1)}\mathbbm{1}_{E^-}\,,
    \end{align*}
    hence
    \begin{align*}
        \mathbbm{1}_{E^-}\log\Big|T^B_n\big(\overline{1}^{\lfloor n(1-q)\rfloor}\big)\Big|\geq\mathbbm{1}_{E^-}\big(1+o_\mathbb{P}(1)\big)\log n\,.
    \end{align*}
    Combined with (\ref{eq:LBTB1DHalf}), this implies that
    \begin{align}
        \log\Big|T^B_n\big(\overline{1}^{\lfloor n(1-q)\rfloor}\big)\Big|\geq\big(1+o_\mathbb{P}(1)\big)\log n\,.\label{eq:LBTB1D}
    \end{align}
    
    On the other hand, taking the logarithm in (\ref{eq:UBTB1D}), we have that
    \begin{align*}
        \log\Big|T^B_n\big(\overline{1}^{\lfloor n(1-q)\rfloor}\big)\Big|\leq\big(1+o_\mathbb{P}(1)\big)\log n\,,
    \end{align*}
    and combining this upper bound with (\ref{eq:LBTB1D}), it follows that
    \begin{align*}
        \log\Big|T^B_n\big(\overline{1}^{\lfloor n(1-q)\rfloor}\big)\Big|=\big(1+o_\mathbb{P}(1)\big)\log n\,.
    \end{align*}
    Plugging this back into (\ref{eq:UBTB1D}) shows that
    \begin{align*}
         \Big|T^B_n\big(\overline{1}^{\lfloor n(1-q)\rfloor}\big)\Big|(1-q)=o_\mathbb{P}\left(\log\Big|T^B_n\big(\overline{1}^{\lfloor n(1-q)\rfloor}\big)\Big|\right)\,.
    \end{align*}
    This bound implies that we can apply Proposition~\ref{prop:convProbSmall} to this subtree and obtain that
    \begin{align*}
        h\Big(T^B_n\big(\overline{1}^{\lfloor n(1-q)\rfloor}\big)\Big)=\big(c^*+o_\mathbb{P}(1)\big)\log\Big|T^B_n\big(\overline{1}^{\lfloor n(1-q)\rfloor}\big)\Big|=\big(c^*+o_\mathbb{P}(1)\big)\log n\,.
    \end{align*}
    Since $T^B_n$ and $T_{n,q}$ are identically distributed, this concludes the proof of the proposition.
\end{proof}

We conclude this section with the proof of Theorem~\ref{thm:Prob}.

\begin{proof}[Proof of Theorem~\ref{thm:Prob}]
    Let us first assume that $(q_n)_{n\geq0}$ is such that $n(1-q_n)=\Theta(\log n)$, and prove that
    \begin{align*}
        h\big(T_{n,q_n}\big)=n(1-q_n)+c^*\log n+o_\mathbb{P}(\log n)\,.
    \end{align*}
    
    Using Proposition~\ref{prop:convRightSubtree} and the fact that
    \begin{align*}
        h\big(T^B_n\big)\geq\lfloor n(1-q_n)\rfloor+h\Big(T^B_n\big(\overline{1}^{\lfloor n(1-q_n)\rfloor}\big)\Big)\,,
    \end{align*}
    it follows that
    \begin{align*}
        h\big(T_{n,q_n}\big)\overset{d}{=}h\big(T^B_n\big)\geq n(1-q_n)+c^*\log n+o_\mathbb{P}(\log n)\,.
    \end{align*}
    
    For the upper bound, recall the stochastic inequality from (\ref{eq:second_height_bd}) and apply it with $d=\lfloor n(1-q_n)\rfloor$:
    \begin{align*}
        h(T_{n,q})&\preceq\lfloor n(1-q_n)\rfloor+\max\left\{\sup_{k\geq0}\Big\{h\Big(T^B\big(\overline{1}^k\overline{0}\big)\Big)-k\Big\},h\Big(T^B_n\big(\overline{1}^{\lfloor n(1-q_n)\rfloor}\big)\Big)\right\}\,.
    \end{align*}
    Now, Proposition~\ref{prop:convRightSubtree} tells us that
    \begin{align*}
        h\Big(T^B_n\big(\overline{1}^{\lfloor n(1-q_n)\rfloor}\big)\Big)=\big(c^*+o_\mathbb{P}(1)\big)\log n\,,
    \end{align*}
    and from Proposition~\ref{prop:boundsLeftSubtree}, we know that
    \begin{align*}
        \sup_{k\geq0}\Big\{h\Big(T^B\big(\overline{1}^k\overline{0}\big)\Big)-k\Big\}\leq c^*\log\left(\frac{1}{1-q_n}\right)+O_\mathbb{P}\left(\sqrt{\log\left(\frac{1}{1-q_n}\right)}\right)\,.
    \end{align*}
    Since $\log\frac{1}{1-q_n}\sim\log n$, it follows that
    \begin{align*}
        \sup_{k\geq0}\Big\{h\Big(T^B\big(\overline{1}^k\overline{0}\big)\Big)-k\Big\}\leq\big(c^*+o_\mathbb{P}(1)\big)\log n\,,
    \end{align*}
    and this proves that
    \begin{align*}
        h\big(T_{n,q_n}\big)\leq n(1-q_n)+c^*\log n+o_\mathbb{P}(\log n)\,.
    \end{align*}
    This concludes the proof of the fact
    \begin{align*}
        h\big(T_{n,q_n}\big)=n(1-q_n)+c^*\log n+o_\mathbb{P}(\log n)
    \end{align*}
    whenever $n(1-q_n)=\Theta(\log n)$.
    
    Now, in order to prove Theorem~\ref{thm:Prob}, let $(q_n)_{n\geq0}$ be any sequence taking values in $[0,1]$. By Proposition~\ref{prop:UI}, it suffices to prove that
    \begin{align*}
        \left(\frac{h\big(T_{n,q_n}\big)}{n(1-q_n)+c^*\log n}\right)_{n\geq1}
    \end{align*}
    converges in probability to $1$. By considering subsequences if necessary, we can assume that $(q_n)_{n\geq0}$ falls into one of the following regimes:
    \begin{itemize}
        \item $n(1-q_n)=\Theta(\log n)$.
        \item $n(1-q_n)=\omega(\log n)$.
        \item $n(1-q_n)=o(\log n)$ and $q_n\neq1$ for all $n\geq0$.
        \item $q_n=1$ for all $n\geq0$.
    \end{itemize}
    The case $n(1-q_n)=\Theta(\log n)$ was handled in the first part of the proof. The case $n(1-q_n)=\omega(\log n)$ follows from Theorem~\ref{thm:AS}. When $n(1-q_n)=o(\log n)$, the results follows from Proposition~\ref{prop:convProbSmall}, with the bounding seuence $(\gamma_n)_{n\geq0}$ in that proposition chosen so that $\sqrt{\log n}\vee n(1-q_n)\ll\gamma_n\ll\log n$. Finally, the case when $q_n=1$ for all $n$ is simply that of binary search trees, in which case the result was proved by Devroye~\cite{devroye1986note}. This concludes the proof of Theorem~\ref{thm:Prob}.
\end{proof}

\section{Distributional limits}\label{sec:CLT}

In this last section, we prove Theorem~\ref{thm:CLT} and Theorem~\ref{thm:Poisson}. The bulk of this section is devoted to proving the central limit theorem for the right depth $R^B_n$; this was stated as Proposition~\ref{prop:CLTRD} above. We then prove the central limit theorem for the height by combining this proposition with Proposition~\ref{prop:convRenSubtree} and \ref{prop:boundsLeftSubtree}. We conclude this section with the proof of Theorem~\ref{thm:Poisson}.

\subsection{Central limit theorem for the right depth}\label{sec:CLTRD}

Before proving the central limit theorem for $h(T_{n,q_n})$, we prove Proposition~\ref{prop:CLTRD}, which corresponds to a central limit theorem for the right depth $R^B_n$.

\begin{proof}[Proof of Proposition~\ref{prop:CLTRD}]
    Let $(q_n)_{n\geq0}$ be such that $n(1-q_n)=\omega(\log n)$ and $nq_n=\omega(1)$. Note that this implies that $n(1-q_n)q_n=\omega(1)$. Consider the characteristic function $g_n$ defined as follows:
    \begin{align*}
        g_n(t)=\mathbb{E}\left[\exp\left(it\cdot\frac{R^B_n-n(1-q_n)-\log\big((1-q_n)^{-1}\big)}{\sqrt{n(1-q_n)q_n}}\right)\right]\,.
    \end{align*}
    We will prove that $g_n(t)\rightarrow e^{-\frac{t^2}{2}}$ pointwise as $n\rightarrow\infty$ by dividing into two cases: $nq_n^3=\omega(1)$ and $nq_n^3=O(1)$ (even though the two proofs for the two cases are very similar, we did not see a way to combine them). The proposition then follows by the continuity theorem for characteristic functions. For the remainder of the proof, we drop the subscript $n$ on $q_n$ and write $p=p_n=1-q_n$.
    
    Assume first that $np=\omega(\log n)$ and $nq^3=\omega(1)$, and fix $t\in\mathbb{R}$. Using the characteristic function for $R^B_n$ from Proposition~\ref{prop:BivariateGenFun}, we have that
    \begin{align}
        g_n(t)&=\exp\left(-it\frac{np+\log(p^{-1})}{\sqrt{npq}}\right)\mathbb{E}\left[\exp\left(\frac{it}{\sqrt{npq}}R^B_n\right)\right]\notag\\
        &=\exp\left(-it\frac{np+\log(p^{-1})}{\sqrt{npq}}\right)\prod_{1<k\leq n}\left(1+\left(e^{\frac{it}{\sqrt{npq}}}-1\right)\frac{p}{1-q^k}\right)\notag\\
        &=\exp\Bigg(-it\frac{np+\log(p^{-1})}{\sqrt{npq}}+\sum_{1<k\leq n}\log\left[1+\left(e^{\frac{it}{\sqrt{npq}}}-1\right)\frac{p}{1-q^k}\right]\Bigg)\,.\label{eq:prop231}
    \end{align}
    We know that $npq=\omega(1)$, which implies that
    \begin{align*}
        e^{\frac{it}{\sqrt{npq}}}=1+\frac{it}{\sqrt{npq}}-\frac{t^2}{2npq}+O\left(\frac{1}{(npq)^\frac{3}{2}}\right)\,.
    \end{align*}
    Since $npq=\omega(1)$ and $\frac{p}{1-q^k}=\frac{1-q}{1-q^k}\leq1$ for all $1<k\leq n$, we have that
    \begin{align*}
        \log\left[1+\left(e^{\frac{it}{\sqrt{npq}}}-1\right)\frac{p}{1-q^k}\right]&=\log\left[1+\left(\frac{it}{\sqrt{npq}}-\frac{t^2}{2npq}+O\left(\frac{1}{(npq)^\frac{3}{2}}\right)\right)\frac{p}{1-q^k}\right]\\
        &=\left(\frac{it}{\sqrt{npq}}-\frac{t^2}{2npq}+O\left(\frac{1}{(npq)^\frac{3}{2}}\right)\right)\frac{p}{1-q^k}\\
        &\hspace{0.5cm}-\frac{1}{2}\left[\left(\frac{it}{\sqrt{npq}}-\frac{t^2}{2npq}+O\left(\frac{1}{(npq)^\frac{3}{2}}\right)\right)\frac{p}{1-q^k}\right]^2\\
        &\hspace{0.5cm}+O\left(\left[\left(\frac{it}{\sqrt{npq}}-\frac{t^2}{2npq}+O\left(\frac{1}{(npq)^\frac{3}{2}}\right)\right)\frac{p}{1-q^k}\right]^3\right)\,;
    \end{align*}
    The second equality following from the standard expansion for the logarithm: $\log(1+z)=z-\frac{1}{2}z^2+O(z^3)$ when $|z|\rightarrow0$. Now simplify the last equation by absorbing lower order terms inside the $O\Big(\frac{1}{(npq)^\frac{3}{2}}\Big)$ to obtain
    \begin{align*}
        \log\left[1+\left(e^{\frac{it}{\sqrt{npq}}}-1\right)\frac{p}{1-q^k}\right]&=\left(\frac{it}{\sqrt{npq}}-\frac{t^2}{2npq}+O\left(\frac{1}{(npq)^\frac{3}{2}}\right)\right)\frac{p}{1-q^k}\notag\\
        &\hspace{0.5cm}+\left(\frac{t^2}{2npq}+O\left(\frac{1}{(npq)^\frac{3}{2}}\right)\right)\left[\frac{p}{1-q^k}\right]^2\notag\\
        &\hspace{0.5cm}+O\left(\frac{1}{(npq)^\frac{3}{2}}\right)\left[\frac{p}{1-q^k}\right]^3\notag\,,
    \end{align*}
    where the implied constants in the big-$O$ terms are uniform in $k$. Take the sum of the previous terms over all $1<k\leq n$, to obtain
    \begin{align}
        &\sum_{1<k\leq n}\log\left[1+\left(e^{\frac{it}{\sqrt{npq}}}-1\right)\frac{p}{1-q^k}\right]\label{eq:prop232}\\
        &\hspace{1cm}=\sum_{1<k\leq n}\left(\frac{it}{\sqrt{npq}}-\frac{t^2}{2npq}+O\left(\frac{1}{(npq)^\frac{3}{2}}\right)\right)\frac{p}{1-q^k}&\left(=:\textsc{CLT}^{(1)}\right)&\notag\\
        &\hspace{1.5cm}+\sum_{1<k\leq n}\left(\frac{t^2}{2npq}+O\left(\frac{1}{(npq)^\frac{3}{2}}\right)\right)\left[\frac{p}{1-q^k}\right]^2&\left(=:\textsc{CLT}^{(2)}\right)&\notag\\
        &\hspace{1.5cm}+\sum_{1<k\leq n}O\left(\frac{1}{(npq)^\frac{3}{2}}\right)\left[\frac{p}{1-q^k}\right]^3\,.&\left(=:\textsc{CLT}^{(3)}\right)&\notag
    \end{align}
    
    For the first term, this gives us
    \begin{align*}
        \textsc{CLT}^{(1)}&=\left(\frac{it}{\sqrt{npq}}-\frac{t^2}{2npq}+O\left(\frac{1}{(npq)^\frac{3}{2}}\right)\right)\sum_{1<k\leq n}\frac{p}{1-q^k}\\
        &=\left(\frac{it}{\sqrt{npq}}-\frac{t^2}{2npq}+O\left(\frac{1}{(npq)^\frac{3}{2}}\right)\right)\mu_1(n,q)\,.
    \end{align*}
    Applying Proposition~\ref{prop:ConvOfMu} and using that $p^{-1}\leq n$, which by assumption holds for $n$ large enough, we know that $\mu_1(n,q)=np+\log(p^{-1})+O\big(\sqrt{|\log p|}\big)$. This gives us that
    \begin{align*}
        \textsc{CLT}^{(1)}&=\left(\frac{it}{\sqrt{npq}}-\frac{t^2}{2npq}+O\left(\frac{1}{(npq)^\frac{3}{2}}\right)\right)\Big(np+\log(p^{-1})+O\big(\sqrt{|\log p|}\big)\Big)\\
        &=it\frac{np}{\sqrt{npq}}-\frac{t^2}{2q}+it\frac{\log(p^{-1})}{\sqrt{npq}}+O\left(\frac{np}{(npq)^\frac{3}{2}}\right)+O\left(\frac{\log p}{npq}\right)+O\left(\sqrt{\frac{|\log p|}{npq}}\right)\,;
    \end{align*}
    Three of the nine terms which are obtained by formally expanding the product have been absorbed in the big-$O$ terms above as they are of smaller order. Now recall that $np=\omega(\log n)$ and $nq^3=\omega(1)$, which implies that $np=o\big((npq)^\frac{3}{2}\big)$ and that $\frac{\log p}{npq}=o(1)$; if follows that $\sqrt{\frac{|\log p|}{npq}}=o(1)$ as well. Combining all these bounds, we obtain
    \begin{align}
        \textsc{CLT}^{(1)}&=it\frac{np}{\sqrt{npq}}-\frac{t^2}{2q}+it\frac{\log(p^{-1})}{\sqrt{npq}}+o(1)\label{eq:prop23A}\,.
    \end{align}
    For the second term, since Proposition~\ref{prop:ConvOfMuAlpha} tells us that $\mu_2(n,q)=np^2+O(1)$, we have
    \begin{align*}
        \textsc{CLT}^{(2)}&=\left(\frac{t^2}{2npq}+O\left(\frac{1}{(npq)^\frac{3}{2}}\right)\right)\mu_2(n,q)\\
        &=\left(\frac{t^2}{2npq}+O\left(\frac{1}{(npq)^\frac{3}{2}}\right)\right)\Big(np^2+O(1)\Big)\\
        &=\frac{pt^2}{2q}+O\left(\frac{np^2}{(npq)^\frac{3}{2}}\right)+O\left(\frac{1}{npq}\right)\,.
    \end{align*}
    We know that $p\leq1$, and as we previously explained, $np=o\big((npq)^\frac{3}{2}\big)$. This implies that $np^2=o\big((npq)^\frac{3}{2}\big)$. Combining all these bounds with the fact that $npq=\omega(1)$, we obtain that
    \begin{align}
        \textsc{CLT}^{(2)}&=\frac{pt^2}{2q}+o(1)\label{eq:prop23B}\,.
    \end{align}
    Finally, using that $\mu_3(n,q)=np^3+O(1)$ from Proposition~\ref{prop:ConvOfMuAlpha}, and similar arguments to the ones used for the previous two terms, we obtain 
    \begin{align}
        \textsc{CLT}^{(3)}&=O\left(\frac{1}{(npq)^\frac{3}{2}}\right)\mu_3(n,p)=O\left(\frac{np^3}{(npq)^\frac{3}{2}}\right)+O\left(\frac{1}{(npq)^\frac{3}{2}}\right)=o(1)\label{eq:prop23C}\,.
    \end{align}
    
    Putting the bounds for $\textsc{CLT}^{(1)}$, $\textsc{CLT}^{(2)}$ and $\textsc{CLT}^{(3)}$ obtained in (\ref{eq:prop23A}), (\ref{eq:prop23B}) and (\ref{eq:prop23C}) back into (\ref{eq:prop232}), we obtain
    \begin{align*}
        \sum_{1<k\leq n}\log\left[1+\left(e^{\frac{it}{\sqrt{npq}}}-1\right)\frac{p}{1-q^k}\right]&=it\frac{np}{\sqrt{npq}}-\frac{t^2}{2q}+it\frac{\log(p^{-1})}{\sqrt{npq}}+\frac{pt^2}{2q}+o(1)\\
        &=it\frac{np}{\sqrt{npq}}-\frac{t^2}{2}+it\frac{\log\big(p^{-1}\big)}{\sqrt{npq}}+o(1)\,.
    \end{align*}
    Once plugged back into (\ref{eq:prop231}), this gives us
    \begin{align*}
        g_n(t)&=\exp\left(-it\frac{np+\log(p^{-1})}{\sqrt{npq}}+it\frac{np}{\sqrt{npq}}-\frac{t^2}{2}+it\frac{\log(p^{-1})}{\sqrt{npq}}+o(1)\right)\\
        &=\exp\left(-\frac{t^2}{2}+o(1)\right)\,,
    \end{align*}
    which is the desired result.
    
    Assume now that $nq^3=O(1)$ and $nq=\omega(1)$. Note that this implies that $nq^4=o(1)$ and that $npq=\big(1+o(1)\big)nq\rightarrow\infty$. From the second identity, it follows that $\log(p^{-1})=o\big(\sqrt{npq}\big)$, which implies that
    \begin{align*}
        g_n(t)=\mathbb{E}\left[\exp\left(it\frac{R^B_n-np}{\sqrt{npq}}\right)\right]+o(1)\,.
    \end{align*}
    Using the characteristic function for $R^B_n$ again, we have that
    \begin{align}
        g_n(t)&=\exp\left(-it\frac{np}{\sqrt{npq}}\right)\prod_{1<k\leq n}\left(1+\left(e^{\frac{it}{\sqrt{npq}}}-1\right)\frac{p}{1-q^k}\right)+o(1)\,.\label{eq:gntSecondCase}
    \end{align}
    Note that, for any $1<k\leq n$, we have
    \begin{align*}
        \left(e^{\frac{it}{\sqrt{npq}}}-1\right)\frac{p}{1-q^k}=o(1)\,.
    \end{align*}
    Moreover, for $k\geq4$, since $q\rightarrow0$ and $p\leq1$, we have
    \begin{align*}
        p\leq\frac{p}{1-q^k}\leq\frac{p}{1-q^4}=p+O(q^4)\,,
    \end{align*}
    which implies that $\frac{p}{1-q^k}=p+O(q^4)$ where the implied bound in the big-$O$ term is independent of $k$. Using the two previous results into (\ref{eq:gntSecondCase}) and dividing the product according to whether $k<4$ or $k\geq4$, we obtain
    \begin{align*}
        g_n(t)&=\big(1+o(1)\big)\exp\left(-it\frac{np}{\sqrt{npq}}\right)\prod_{4\leq k\leq n}\bigg(1+\left(e^{\frac{it}{\sqrt{npq}}}-1\right)\big(p+O(q^4)\big)\bigg)+o(1)\\
        &=\big(1+o(1)\big)\exp\left(-it\frac{np}{\sqrt{npq}}\right)\bigg(1+\left(e^{\frac{it}{\sqrt{npq}}}-1\right)\big(p+O(q^4)\big)\bigg)^{n-3}+o(1)\,.
    \end{align*}
    Now, since $npq\rightarrow\infty$, we have
    \begin{align*}
        \left(e^{\frac{it}{\sqrt{npq}}}-1\right)\big(p+O(q^4)\big)&=\left(\frac{it}{\sqrt{npq}}-\big(1+o(1)\big)\frac{t^2}{2npq}\right)\big(p+O(q^4)\big)\\
        &=\frac{itp}{\sqrt{npq}}-\big(1+o(1)\big)\frac{t^2p}{2npq}+O\left(\frac{q^4}{\sqrt{npq}}\right)\,,
    \end{align*}
    from which it follows that
    \begin{align*}
        &\bigg(1+\left(e^{\frac{it}{\sqrt{npq}}}-1\right)\big(p+O(q^4)\big)\bigg)^{n-3}\\
        &\hspace{1cm}=\exp\left((n-3)\log\left(1+\frac{itp}{\sqrt{npq}}-\big(1+o(1)\big)\frac{t^2p}{2npq}+O\left(\frac{q^4}{\sqrt{npq}}\right)\right)\right)\\
        &\hspace{1cm}=\exp\left((n-3)\left[\frac{itp}{\sqrt{npq}}-\big(1+o(1)\big)\frac{t^2p}{2npq}+\big(1+o(1)\big)\frac{1}{2}\frac{t^2p^2}{npq}+O\left(\frac{q^4}{\sqrt{npq}}\right)\right]\right)\\
        &\hspace{1cm}=\exp\left(it\frac{np}{\sqrt{npq}}-\frac{t^2}{2}+o(1)\right)\,;
    \end{align*}
    the last equality holds since $\sqrt{npq}\rightarrow\infty$ and $nq^4\rightarrow0$. This proves that
    \begin{align*}
        g_n(t)=\big(1+o(1)\big)\exp\left(-\frac{t^2}{2}+o(1)\right)+o(1)=e^{-\frac{t^2}{2}}+o(1)\,,
    \end{align*}
    which concludes the proof of the proposition.
\end{proof}

\subsection{Central limit theorem for the height of Mallows trees}\label{sec:CLTHeight}

Before proving Theorem~\ref{thm:CLT}, we use results from previous sections to prove Proposition~\ref{prop:convRenSubtree}.

\begin{proof}[Proof of Proposition~\ref{prop:convRenSubtree}]
    Let $(q_n)_{n\geq0}$ and $m=m(n)$ be defined as in the statement of the proposition. We want to prove that the sequence of random variables
    \begin{align*}
        \left(\frac{h\Big(T^B_n\big(\overline{1}^{R^B_m+1}\big)\Big)-c^*\log n}{\sqrt{\log n}}\right)_{n\geq 2}
    \end{align*}
    is tight. In order to do so, we will prove that, for any sequence $(\gamma_n)_{n\geq0}$ such that $\gamma_n=\omega\left(\sqrt{\log n}\right)$, we have
    \begin{align}
        \mathbb{P}\bigg(\left|h\Big(T^B_n\big(\overline{1}^{R^B_m+1}\big)\Big)-c^*\log n\right|\geq\gamma_n\bigg)\longrightarrow0\,.\label{eq:TB1DisTight}
    \end{align}
    Let $(\gamma_n)_{n\geq0}$ be a sequence such that $\gamma_n=\omega\left(\sqrt{\log n}\right)$, and assume without loss of generality that $\gamma_n=o(\log n)$.
    
    Consider a sequence $(\beta_n)_{n\geq0}$ such that $\beta_n=\omega\left(\sqrt{\log n}\right)$, and that $\beta_n=o(\gamma_n)$; in other words, $\sqrt{\log n}\ll\beta_n\ll\gamma_n\ll\log n$. By applying Proposition~\ref{prop:BoundsOnN}, we know that
    \begin{align*}
        \mathbb{P}\Big(e^{-\beta_n}\leq\Big|T^B_n\big(\overline{1}^{R^B_m+1}\big)\Big|(1-q_n)\leq\beta_n\Big)\longrightarrow1\,.
    \end{align*}
    This implies that
    \begin{align*}
        &\mathbb{P}\bigg(\left|h\Big(T^B_n\big(\overline{1}^{R^B_m+1}\big)\Big)-c^*\log n\right|\geq\gamma_n\bigg)\\
        &\hspace{1cm}=\mathbb{P}\bigg(\left|h\Big(T^B_n\big(\overline{1}^{R^B_m+1}\big)\Big)-c^*\log n\right|\geq\gamma_n\,\,\bigg|\,\,e^{-\beta_n}\leq\big|T^B_n\big(\overline{1}^{R^B_m+1}\big)\big|(1-q_n)\leq\beta_n\bigg)+o(1)\,.
    \end{align*}
    
    Recall that $\log\big(n(1-q_n)\big)=O\left(\sqrt{\log n}\right)$. Since $\sqrt{\log n}\ll\beta_n\ll\gamma_n\ll\log n$, this implies that, for any sequence $(s_n)_{n\geq0}$ such that $e^{-\beta_n}\leq s_n(1-q_n)\leq\beta_n$, we have $\log s_n=\log n+O(\beta_n)\sim\log n$. It follows that
    \begin{align*}
        s_n(1-q_n)\leq\beta_n=o(\log s_n)
    \end{align*}
    and that
    \begin{align*}
        \frac{\gamma_n}{s_n(1-q_n)\vee\sqrt{\log n}}\geq\frac{\gamma_n}{\beta_n\vee\sqrt{\log n}}=\omega(1)\,.
    \end{align*}
    This corresponds to the assumptions of Proposition~\ref{prop:convProbSmall}, and we henceforth know that
    \begin{align*}
        \mathbb{P}\Big(\Big|h(T_{s_n,q_n})-c^*\log s_n\Big|\geq\gamma_n\,\Big)\longrightarrow0\,.
    \end{align*}
    Moreover, since $\log s_n=\log n+O(\beta_n)=\log n+o(\gamma_n)$, it follows that
    \begin{align*}
        \mathbb{P}\Big(\Big|h(T_{s_n,q_n})-c^*\log n\Big|\geq\gamma_n\,\Big)\longrightarrow0\,.
    \end{align*}
    Since, conditioned on having size $s_n$, $T^B_n\big(\overline{1}^{R^B_m+1}\big)$ is distributed as $T_{s_n,q_n}$, this implies that
    \begin{align*}
        \mathbb{P}\bigg(\left|h\Big(T^B_n\big(\overline{1}^{R^B_m+1}\big)\Big)-c^*\log n\right|\geq\gamma_n\,\,\bigg|\,\,e^{-\beta_n}\leq\big|T^B_n\big(\overline{1}^{R^B_m+1}\big)\big|(1-q_n)\leq\beta_n\bigg)\longrightarrow0\,,
    \end{align*}
    which proves that (\ref{eq:TB1DisTight}) holds, and concludes the proof of the proposition.
\end{proof}

With the previous results, we can now prove Theorem~\ref{thm:CLT}.

\begin{proof}[Proof of Theorem~\ref{thm:CLT}]
    We will prove that, for all $t\in\mathbb{R}$, we have
    \begin{align}
        \lim_{n\rightarrow\infty}\mathbb{P}\left(\frac{h(T^B_n)-n(1-q_n)-c^*\log\big((1-q_n)^{-1}\big)}{\sqrt{n(1-q_n)q_n}}\leq t\right)=\Phi(t)\,,\label{eq:CLT}
    \end{align}
    where $\Phi$ is the cumulative density function of the $\textsc{Normal}(0,1)$ distribution. By considering subsequences if necessary, we can assume either that $n(1-q_n)=\omega\big(\log^2n\big)$, or that $n(1-q_n)=O\big(\log^2n\big)$. We now fix $t\in\mathbb{R}$ and prove that (\ref{eq:CLT}) holds by dividing the proof into the two previous cases.
    
    Assume first that $n(1-q_n)=\omega(\log^2n)$. Since $nq_n=\omega(1)$, this implies that
    \begin{align*}
        \frac{\sqrt{n(1-q_n)q_n}}{\log\big((1-q_n)^{-1}\big)}\longrightarrow\infty\,.
    \end{align*}
    Let $(\gamma_n)_{n\geq0}$ be any sequence that converges to infinity such that $\gamma_n=\omega\big(\log\big((1-q_n)^{-1}\big)\big)$ and $\gamma_n=o\big(\sqrt{n(1-q_n)q_n)}\big)$, and define
    \begin{align*}
        E_n=\Big\{h(T^B_n)-R^B_n\geq\gamma_n\Big\}\,.
    \end{align*}
    Recall the upper bound from (\ref{eq:htnq_upper}):
    \begin{align*}
        h(T^B_n)-R^B_n&\leq1+\max_{k\geq0}\Big\{h\Big(T^B(\overline{1}^k\overline{0})\Big)-k\Big\}\,.
    \end{align*}
    Using this bound, we have
    \begin{align*}
        \mathbb{P}(E_n)&\leq\mathbb{P}\left(1+\max_{k\geq0}\Big\{h\Big(T^B(\overline{1}^k\overline{0})\Big)-k\Big\}\geq\gamma_n\right)\,,
    \end{align*}
    and the right hand side converges to $0$ when $n\rightarrow\infty$, thanks to Proposition~\ref{prop:boundsLeftSubtree} applied with $\xi_n=\gamma_n-1-c^*\log\big((1-q_n)^{-1}\big)-M\sqrt{\log(1-q_n)^{-1}}$; this tends to infinite since $\gamma_n=\omega\big(\log\big((1-q_n)^{-1}\big)\big)$. It follows that
    \begin{align}
        \mathbb{P}\left(\frac{h(T^B_n)-n(1-q_n)}{\sqrt{n(1-q_n)q_n}}\leq t\right)&=\mathbb{P}\left(\frac{h(T^B_n)-n(1-q_n)}{\sqrt{n(1-q_n)q_n}}\leq t,E_n^c\right)+o(1)\,.\label{eq:CLTFirstCase}
    \end{align}
    To bound the right hand side, on one hand, using that $R^B_n\leq h(T^B_n)$, we have
    \begin{align*}
        \mathbb{P}\left(\frac{h(T^B_n)-n(1-q_n)}{\sqrt{n(1-q_n)q_n}}\leq t,E_n^c\right)\leq\mathbb{P}\left(\frac{R^B_n-n(1-q_n)}{\sqrt{n(1-q_n)q_n}}\leq t\right)\,,
    \end{align*}
    and by applying Proposition~\ref{prop:CLTRD}, the upper bound converges to $\Phi(t)$; we are using here that $\sqrt{n(1-q_n)q_n}=\omega\big(\log\big((1-q_n)^{-1}\big)\big)$, so the $\log\big((1-q_n)^{-1}\big)$ term in the numerator of Proposition~\ref{prop:CLTRD} is asympotically negligible and can be dropped. On the other hand, by the definition of $E_n$, we have $h(T^B_n)<R^B_n+\gamma_n$ on $E^c_n$, and it follows that
    \begin{align*}
        \mathbb{P}\left(\frac{h(T^B_n)-n(1-q_n)}{\sqrt{n(1-q_n)q_n}}\leq t,E_n^c\right)&\geq\mathbb{P}\left(\frac{R^B_n+\gamma_n-n(1-q_n)}{\sqrt{n(1-q_n)q_n}}\leq t,E_n^c\right)\\
        &\geq\mathbb{P}\left(\frac{R^B_n-n(1-q_n)}{\sqrt{n(1-q_n)q_n}}\leq t-\frac{\gamma_n}{\sqrt{n(1-q_n)q_n}}\right)-\mathbb{P}(E_n)
    \end{align*}
    Since $\frac{\gamma_n}{\sqrt{n(1-q_n)q_n}}$ and $\mathbb{P}(E_n)$ converge to $0$, this lower bound also converges to $\Phi(t)$, again thanks to Proposition~\ref{prop:CLTRD}. Combing the last two results with (\ref{eq:CLTFirstCase}) and again using that $\log\big((1-q_n)^{-1}\big)=o\big(\sqrt{n(1-q_n)q_n}\big)$, it follows that
    \begin{align*}
        \mathbb{P}\left(\frac{h(T^B_n)-n(1-q_n)-c^*\log\big((1-q_n)^{-1}\big)}{\sqrt{n(1-q_n)q_n}}\leq t,E_n^c\right)&=\mathbb{P}\left(\frac{h(T^B_n)-n(1-q_n)}{\sqrt{n(1-q_n)q_n}}\leq t,E_n^c\right)+o(1)\\
        &=\Phi(t)+o(1)\,,
    \end{align*}
    which concludes the proof of Theorem~\ref{thm:CLT} in the case where $n(1-q_n)=\omega\big(\log^2n\big)$.
    
    Assume now that $n(1-q_n)=O(\log^2n)$ and that $n(1-q_n)=\omega(\log n)$. Note that this implies that $\log\big(n(1-q_n)\big)=O(\log\log n)=O\big(\sqrt{\log n}\big)$ and then $\log n=\log\big((1-q_n)^{-1}\big)+O\big(\sqrt{\log n}\big)$. Let $m=m(n)=\min\big\{\ell\geq0:\ell(1-q_n)+\log\ell\geq n(1-q_n)\big\}$ and $(\gamma_n)_{n\geq0}$ be a sequence such that $\gamma_n=\omega\left(\sqrt{\log n}\right)$ and $\gamma_n=o\left(\sqrt{n(1-q_n)q_n}\right)$, which is possible since $n(1-q_n)q_n=\omega(\log n)$. Define the event
    \begin{align*}
        F_n=\Big\{\big|h(T^B_n)-R^B_m-1-c^*\log\big((1-q_n)^{-1}\big)\big|\geq\gamma_n\Big\}\,.
    \end{align*}
    Using both bounds of (\ref{eq:third_height_bd}), we have that
    \begin{align*}
        \mathbb{P}(F_n)&=\mathbb{P}\Big(h(T^B_n)-R^B_m-1\geq c^*\log\big((1-q_n)^{-1}\big)+\gamma_n\Big)+\mathbb{P}\Big(h(T^B_n)-R^B_m-1\leq c^*\log\big((1-q_n)^{-1}\big)-\gamma_n\Big)\\
        &\leq\mathbb{P}\left(\max\left\{\sup_{k\geq0}\Big\{h\Big(T^B\big(\overline{1}^k\overline{0}\big)\Big)-k\Big\},h\Big(T^B_n\big(\overline{1}^{R^B_m+1}\big)\Big)\right\}\geq c^*\log\big((1-q_n)^{-1}\big)+\gamma_n\right)\\
        &\hspace{0.5cm}+\mathbb{P}\left(h\Big(T^B_n\big(\overline{1}^{R^B_m+1}\big)\Big)\leq c^*\log\big((1-q_n)^{-1}\big)-\gamma_n\right)\,.
    \end{align*}
    By applying Proposition~\ref{prop:convRenSubtree}, which states that $h\big(T^B_n\big(\overline{1}^{R^B_m+1}\big)\big)=c^*\log n+O_\mathbb{P}\left(\sqrt{\log n}\right)$, and Proposition~\ref{prop:boundsLeftSubtree}, which states that $\sup_{k\geq0}\big\{h\big(T^B\big(\overline{1}^k\overline{0}\big)\big)-k\big\}\leq c^*\log n+O_\mathbb{P}\left(\sqrt{\log n}\right)$, we obtain that $\mathbb{P}(F_n)\rightarrow0$.
    
    Separating (\ref{eq:CLT}) according to $F_n$ and $F_n^c$ as previously, we now obtain that
    \begin{align*}
        &\mathbb{P}\left(\frac{h(T^B_n)-n(1-q_n)-c^*\log\big((1-q_n)^{-1}\big)}{\sqrt{n(1-q_n)q_n}}\leq t\right)\\
        &\hspace{1cm}=\mathbb{P}\left(\frac{h(T^B_n)-n(1-q_n)-c^*\log\big((1-q_n)^{-1}\big)}{\sqrt{n(1-q_n)q_n}}\leq t,F_n^c\right)+o(1)\,.
    \end{align*}
    Note that, by definition of $m$, we have $m(1-q_n)+\log m=n(1-q_n)+O(1)$. Since $n(1-q_n)=\omega(\log n)$, it follows that $m\sim n$ and so $\log m=\log n+O(1)$. This implies that $m(1-q_n)=\omega(\log m)$ and that $mq_n=\omega(1)$, and by Proposition~\ref{prop:CLTRD}, we obtain
    \begin{align*}
        \mathbb{P}\left(\frac{R^B_m-m(1-q_n)-\log\big((1-q_n)^{-1}\big)}{\sqrt{m(1-q_n)q_n}}\leq t\right)\longrightarrow\Phi(t)\,.
    \end{align*}
    The previous identities also imply that $m(1-q_n)q_n\sim n(1-q_n)q_n$ and, since $\log\big(n(1-q_n)\big)=O(\log\log n)$, that
    \begin{align*}
        m(1-q_n)+\log\big((1-q_n)^{-1}\big)=n(1-q_n)+O(\log\log n)=n(1-q_n)+o\big(\sqrt{n(1-q_n)q_n}\big)\,.
    \end{align*}
    It follows that
    \begin{align*}
        \mathbb{P}\left(\frac{R^B_m-n(1-q_n)}{\sqrt{n(1-q_n)q_n}}\leq t\right)\longrightarrow\Phi(t)\,.
    \end{align*}
    
    Now, use the definition of $F_n$ and the previous asymptotic result to obtain that, on one hand
    \begin{align*}
        &\mathbb{P}\left(\frac{h(T^B_n)-n(1-q_n)-c^*\log\big((1-q_n)^{-1}\big)}{\sqrt{n(1-q_n)q_n}}\leq t,F_n^c\right)\\
        &\hspace{1cm}\leq\mathbb{P}\left(\frac{R^B_n+1-n(1-q_n)-\gamma_n}{\sqrt{n(1-q_n)q_n}}\leq t,F_n^c\right)\\
        &\hspace{1cm}\leq\mathbb{P}\left(\frac{R^B_n-n(1-q_n)}{\sqrt{n(1-q_n)q_n}}\leq t+\frac{\gamma_n-1}{\sqrt{n(1-q_n)q_n}}\right)\\
        &\hspace{1cm}=\Phi(t)+o(1)\,,
    \end{align*}
    and on the other hand
    \begin{align*}
        &\mathbb{P}\left(\frac{h(T^B_n)-n(1-q_n)-c^*\log\big((1-q_n)^{-1}\big)}{\sqrt{n(1-q_n)q_n}}\leq t,F_n^c\right)\\
        &\hspace{1cm}\geq\mathbb{P}\left(\frac{R^B_n+1-n(1-q_n)+\gamma_n}{\sqrt{n(1-q_n)q_n}}\leq t,F_n^c\right)\\
        &\hspace{1cm}\geq\mathbb{P}\left(\frac{R^B_n-n(1-q_n)}{\sqrt{n(1-q_n)q_n}}\leq t-\frac{\gamma_n+1}{\sqrt{n(1-q_n)q_n}}\right)-\mathbb{P}(F_n)\\
        &\hspace{1cm}=\Phi(t)+o(1)\,.
    \end{align*}
    This proves that
    \begin{align*}
        \mathbb{P}\left(\frac{h(T^B_n)-n(1-q_n)-c^*\log\big((1-q_n)^{-1}\big)}{\sqrt{n(1-q_n)q_n}}\leq t,F_n^c\right)&=\Phi(t)+o(1)\,,
    \end{align*}
    which concludes the proof of Theorem~\ref{thm:CLT} in the second and last case.
\end{proof}

\subsection{Poisson fluctuations for the height}\label{sec:Poisson}

We conclude this section with the proof of Theorem~\ref{thm:Poisson}.

\begin{proof}[Proof of Theorem~\ref{thm:Poisson}]
    We will prove that, when $nq_n\rightarrow\lambda\in[0,\infty)$, then
    \begin{align*}
        n-1-h(T^B_n)\overset{d}{\longrightarrow}\textsc{Poisson}(\lambda)\,.
    \end{align*}
    Start by considering the event
    \begin{align*}
        E_n=\big\{h(T^B_n)>R^B_n\big\}\,.
    \end{align*}
    Since $R^B_n\leq h(T^B_n)$, this means that $E_n^c=\big\{R^B_n=h(T^B_n)\big\}$. Now, for the height of the whole tree to be larger than the right depth, there must be a non-empty left subtree $T^B_n(\overline{1}^k\overline{0})$ for some $0\leq k\leq R^B_n$. Moreover, if this non-empty left subtree is not $T^B_n(\overline{1}^{R^B_n}\overline{0})$ or $T^B_n(\overline{1}^{R^B_n-1}\overline{0})$, then its size has to be larger than $2$. This implies that
    \begin{align*}
        E_n\subseteq\Big\{\big|T^B_n(\overline{1}^{R^B_n}\overline{0})\big|\geq1\Big\}\cup\Big\{\big|T^B_n(\overline{1}^{R^B_n-1}\overline{0})\big|\geq1\Big\}\cup\bigcup_{0\leq k\leq R^B_n-2}\Big\{\big|T^B_n(\overline{1}^k\overline{0})\big|\geq2\Big\}\,.
    \end{align*}
    Recall from Lemma~\ref{lem:LeftSubtreesDistribution} that the trees $\big(\big|T^B(\overline{1}^k\overline{0})\big|\big)_{k\geq0}$ are all independent, $\textsc{Geometric}(1-q_n)$ distributed random variable. Using that $R^B_n\leq n-1$ and that $T^B_n(\overline{1}^k\overline{0})\subseteq T^B(\overline{1}^k\overline{0})$, this implies that
    \begin{align*}
        \mathbb{P}(E_n)\leq2\mathbb{P}\Big(\big|T^B(\overline{0})\big|\geq1\Big)+(n-2)\mathbb{P}\Big(\big|T^B(\overline{0})\big|\geq2\Big)=2q_n+(n-2)q_n^2=o(1)\,.
    \end{align*}
    This proves that $\mathbb{P}\big(h(T^B_n)=R^B_n\big)=1-o(1)$, when $nq_n=O(1)$. We will now prove that
    \begin{align*}
        n-1-R^B_n\overset{d}{\longrightarrow}\textsc{Poisson}(\lambda)\,,
    \end{align*}
    by showing that the characteristic function of $n-1-R^B_n$ converges to that of a $\textsc{Poisson}(\lambda)$ random variable.
    
    Since $q_n=o(1)$, we have that $\frac{1-q_n}{1-q_n^k}=1-q_n+o(q_n)$ for all $k\geq2$, where the small-$o$ term can be chosen to be independent of $k$. Consider now the characteristic function for $R^B_n$ from Proposition~\ref{prop:BivariateGenFun} to obtain
    \begin{align*}
        \mathbb{E}\left[e^{it(n-1-R^B_n)}\right]&=e^{it(n-1)}\prod_{1<k\leq n}\left(1+(e^{-it}-1)\frac{1-q_n}{1-q_n^k}\right)\\
        &=e^{it(n-1)}\Big(1+(e^{-it}-1)\big(1-q_n+o(q_n)\big)\Big)^{n-1}\\
        &=\Big(1+\big(e^{it}-1+o(1)\big)q_n\Big)^{n-1}\,.
    \end{align*}
    Since $nq_n\rightarrow\lambda$, it follows that
    \begin{align*}
        \mathbb{E}\left[e^{it(n-1-R^B_n)}\right]\longrightarrow e^{\lambda(e^{it}-1)}\,;
    \end{align*}
    this concludes the proof of the theorem.
\end{proof}

\section{Further questions}

This paper studies Mallows trees and proved some of the properties of its height. However, several related questions remain open. We discuss some of the possible further studies below. 

\begin{itemize}
    \item Between Theorem~\ref{thm:CLT} and Theorem~\ref{thm:Poisson}, we have a good understand of the distributional limit of $h(T_{n,q_n})$ when $n(1-q_n)/\log n\rightarrow\infty$. Moreover, we know from the results of~\cite{drmota2003analytic,reed2003height} that when $q_n\equiv 1$, the central limit theorem does not hold anymore, and the variance of the height is  $\Theta(1)$. This means that there exists a transition between the regime $n(1-q_n)/\log n\rightarrow\infty$ and $q_n=1$ where the central limit theorem of Theorem~\ref{thm:CLT} stops holding and moves to a more concentrated process with finite variance. It is natural to ask where this transition occurs; it is not clear to us whether the condition $n(1-q_n)/\log n\to \infty$ is necessary in order for a Gaussian central limit theorem to hold, or what other distributional limits are possible for sequences $(q_n)_{n \ge 0}$ with $\limsup_{n \to \infty} n(1-q_n)/\log n<\infty$.
    \item In this paper, we studied the height of $T_{n,q_n}$ in large part by relating it to the length of the rightmost path in $T_{n,q_n}$; 
    we did so by bounding from above the height of the left subtrees $T_{n,q_n}(\overline{1}^k\overline{0})$, for $k \ge 0$. 
    The intrinsic properties of the left subtrees both for finite $n$ and in the $n \to \infty$ limit, deserve further exploration in our view, and we next list a couple of specific questions of interest.
    
We know that for any fixed $q \in [0,1]$ and $k \in \N$, the trees $(T_{n,q}(\overline{1}^k\overline{0}))_{n \ge 0}$ are stochastically increasing in $n$. Moreover, working in the infinite $b$-model, we have 
    $T^{B(q)}_{n}(\overline{1}^k\overline{0})=T^{B(q)}(\overline{1}^k\overline{0})$ for all $n$ sufficiently large (recall that, by definition, $T^{B(q)}=\lim_{n \to \infty} T^{B(q)}_n$). It would be interesting to understand this filling process, i.e., to study the behaviour of 
    \begin{align*}
        D^{B(q)}_{n}=\max\Big\{0\leq k\leq R^{B(q)}_n:T^{B(q)}_{n}(\overline{1}^k\overline{0})=T^{B(q)}(\overline{1}^k\overline{0})\Big\}\,,
    \end{align*}
    which corresponds to the depth until which all trees are filled, as both $n$ and $q$ vary, and of
    \begin{align*}
        S^{B(q)}_{n,k}=\frac{\big|T^{B(q)}_n(\overline{1}^k\overline{0})\big|}{\big|T^{B(q)}(\overline{1}^{k}\overline{0})\big|}\,,
    \end{align*}
    which corresponds to the proportion of the subtree $T^{B(q)}(\overline{1}^{k}\overline{0})$ already present at time $n$; here both $q$ and $k$ may depend on $n$.
    \item Another direction of studies regarding the left subtrees is to consider the structure of the tree $T^{B(q)}(\overline{0})$, especially as $q\rightarrow1$. With the results from this paper, it is fairly straightforward to verify that, in the case when $q\rightarrow1$, the height of $T^{B(q)}(\overline{0})$ is $\big(c^*+o_\mathbb{P}(1)\big)\log(1/(1-q))$. It would be interesting to understand the lower order corrections to this height. We expect the height of $T^{B(q)}(\overline{0})$ to have bounded variance as $q\rightarrow1$, and to converge in distribution after recentering, at least along subsequences. It could also be interesting to characterize the filling levels (as in \cite{devroye1986note}) or the total path length of this tree.
    \item Corollary~\ref{cor:TnqIncreasing} says that $T_{n,q}$ is stochastically increasing in $n$ when $q$ is fixed. Computations for small values of $n$ suggest that $T_{n,q}$ is also stochastically decreasing in $q$. This would be interesting if true and would also provide a useful comparison tool, which would simplify some of the arguments of the current work (in particular Proposition~\ref{prop:couplingMallowsRBST}, in which we could simply chose $m=n$).
\end{itemize}

\addtocontents{toc}{\SkipTocEntry}
\section*{Acknowledgements}
During the preparation of this research, LAB was supported by an NSERC Discovery Grant and an FRQNT Team Grant, and BC was supported by an ISM Graduate Scholarship. BC also wishes to thank Ms.~Legrand for supporting and encouraging his interest in mathematics.

\appendix

\section{Moments of the right depth}\label{app:MomentsRD}

In this appendix, we prove Proposition~\ref{prop:MomentsOfRB} and deduce Fact~\ref{fact:ExpRnmu} from it. Recall the definition of $\mu_\alpha$ from Section~\ref{sec:usefulFunction}:
\begin{align*}
    \mu_\alpha(n,q)=\sum_{1\leq k\leq n}\left(\frac{1-q}{1-q^k}\right)^\alpha\,.
\end{align*}
For any integer $1\leq\beta<n$, we define
\begin{align*}
    \nu_\beta(n,q)=\sum_{\{1<k_1\neq\cdots\neq k_\beta\leq n\}}\prod_{\{1\leq i\leq\beta\}}\frac{1-q}{1-q^{k_i}}\,.
\end{align*}
This function is useful in computing the moments of $R^B_n$ as stated in the following lemma.

\begin{lemma}\label{lem:momentRBandNu}
    Let $n\geq1$, $q\in[0,1)$, and $B=(B_{i,j})_{i,j\geq1}$ have independent $\textsc{Bernoulli}(1-q)$ entries. Then, for all integer $\alpha\geq1$ we have
    \begin{align*}
        \mathbb{E}\left[\big(R^B_n\big)^\alpha\right]=\sum_{1\leq\beta\leq\alpha\wedge(n-1)}\genfrac{\{}{\}}{0pt}{0}{\alpha}{\beta}\times\nu_\beta(n,q)\,,
    \end{align*}
    where $\big(\genfrac{\{}{\}}{0pt}{1}{\alpha}{\beta}\big)_{\alpha,\beta\geq1}$ are Stirling numbers of the second kind.
\end{lemma}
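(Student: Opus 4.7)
The core idea is to show that $R^B_n$ can be written as a sum of independent Bernoulli random variables, after which the lemma reduces to an elementary combinatorial identity about moments of such sums.

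Setting $y=1$ in the bivariate generating function of Proposition~\ref{prop:BivariateGenFun}, I would first establish the identity
\begin{align*}
    \mathbb{E}\bigl[x^{R^B_n}\bigr] \;=\; \prod_{2\le k\le n}\left(1-p_k + p_k\, x\right),
    \qquad p_k \;:=\; \frac{1-q}{1-q^k}.
\end{align*}
The manipulation is straightforward: from Proposition~\ref{prop:BivariateGenFun} one has $\mathbb{E}[x^{R^B_n+1}] = \prod_{k=1}^{n}(q+(1-q)x-q^k)/(1-q^k)$; factoring out $x$ from the $k=1$ term and writing $q+(1-q)x-q^k = (1-q^k) + (1-q)(x-1)$ for $k\ge 2$ yields the claim. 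This exhibits $R^B_n$ as distributionally equal to $\sum_{k=2}^n \xi_k$, where $(\xi_k)_{2\le k\le n}$ are independent with $\xi_k \sim \textsc{Bernoulli}(p_k)$.

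With this representation in hand, expand
\begin{align*}
    (R^B_n)^\alpha \;=\; \sum_{(i_1,\ldots,i_\alpha)\in\{2,\ldots,n\}^\alpha} \xi_{i_1}\cdots\xi_{i_\alpha}.
\end{align*}
Because each $\xi_k$ is $\{0,1\}$-valued, for any tuple whose underlying set of distinct indices is $\{j_1,\ldots,j_\beta\}$ the product collapses to $\xi_{j_1}\cdots\xi_{j_\beta}$. Grouping tuples according to this set, the number of tuples in $[2,n]^\alpha$ whose image is a fixed set of size $\beta$ equals the number of surjections $[\alpha]\twoheadrightarrow[\beta]$, namely $\beta!\genfrac{\{}{\}}{0pt}{0}{\alpha}{\beta}$. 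Therefore
\begin{align*}
    (R^B_n)^\alpha \;=\; \sum_{\beta=1}^{\alpha\wedge(n-1)} \beta!\genfrac{\{}{\}}{0pt}{0}{\alpha}{\beta} \sum_{\{j_1,\ldots,j_\beta\}\subseteq\{2,\ldots,n\}} \xi_{j_1}\cdots\xi_{j_\beta}.
\end{align*}

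Taking expectations and using independence turns each inner product into $p_{j_1}\cdots p_{j_\beta}$. Finally, comparing unordered sums to ordered ones, one has
\begin{align*}
    \beta!\sum_{\{j_1,\ldots,j_\beta\}\subseteq\{2,\ldots,n\}} p_{j_1}\cdots p_{j_\beta} \;=\; \sum_{1<k_1\neq\cdots\neq k_\beta\le n}\prod_{i=1}^\beta p_{k_i} \;=\; \nu_\beta(n,q),
\end{align*}
which plugged in gives exactly the stated formula. The only even mildly delicate step is the algebraic simplification in the first paragraph that produces the Bernoulli product form; everything else is a bookkeeping exercise with Stirling's identity for surjections.
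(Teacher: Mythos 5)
Your proposal is correct, and it takes a genuinely different route from the paper. The paper proves the lemma by induction on $\alpha$: it differentiates the moment generating function from Proposition~\ref{prop:BivariateGenFun} repeatedly in $t$, carrying along the more general identity for $\mathbb{E}\big[(R^B_n)^\alpha e^{tR^B_n}\big]$ as the induction hypothesis, and closes the induction via the Stirling recursion $\beta\genfrac{\{}{\}}{0pt}{1}{\alpha}{\beta}+\genfrac{\{}{\}}{0pt}{1}{\alpha}{\beta-1}=\genfrac{\{}{\}}{0pt}{1}{\alpha+1}{\beta}$, with a separate case analysis depending on whether $\alpha<n-1$ or $\alpha\geq n-1$ (where the trailing product becomes empty). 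You instead read off from the same generating function, after the factorization $q+(1-q)x-q^k=(1-q^k)+(1-q)(x-1)$ (which is correct, and gives $x$ for $k=1$), that $R^B_n$ is equal in distribution to a sum of independent $\textsc{Bernoulli}\big(\tfrac{1-q}{1-q^k}\big)$ variables, $2\leq k\leq n$; the lemma then follows from the standard expansion of $\big(\sum_k\xi_k\big)^\alpha$ for $\{0,1\}$-valued variables and the surjection count $\beta!\genfrac{\{}{\}}{0pt}{1}{\alpha}{\beta}$, together with the identification of the ordered distinct-index sum with $\nu_\beta(n,q)$ (note the paper's condition $k_1\neq\cdots\neq k_\beta$ means pairwise distinct, as you assumed). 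Both arguments hinge on the same key input, the product form of the generating function, but yours is more conceptual and avoids both the derivative bookkeeping and the case split, at the cost of not producing the exponentially tilted identity the paper proves along the way (which, however, is not used elsewhere); your Bernoulli representation is also consistent with how the paper implicitly treats $R^B_n$ in Fact~\ref{fact:ExpRnmu} and Lemma~\ref{lem:ConcBoundRn}.
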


\begin{proof}
    For $n=1$, $R^B_n=0$ and the sum on the right is empty. Assume now that $n\geq2$. We will prove by induction on $\alpha\geq1$ the following slightly more general result: for all $t\in\mathbb{R}$, we have
    \begin{align}
        &\mathbb{E}\left[\big(R^B_n\big)^\alpha e^{tR^B_n}\right]\notag\\
        &\hspace{1cm}=\sum_{1\leq\beta\leq\alpha\wedge(n-1)}\genfrac{\{}{\}}{0pt}{0}{\alpha}{\beta}\sum_{1<k_1\neq\cdots\neq k_\beta\leq n}\prod_{1\leq i\leq\beta}\frac{(1-q)e^t}{1-q^{k_i}}\prod_{k\neq k_1,\ldots,k_\beta}\left(1+(e^t-1)\frac{1-q}{1-q^k}\right)\label{eq:formulaMomentRB}\,.
    \end{align}
    The formula of the lemma follows by taking $t=0$.
    
    For $\alpha=1$, by considering the derivative of the moment generating function of $R^B_n$ from Proposition~\ref{prop:BivariateGenFun} obtained by taking $x=e^t$ and $y=1$, we have
    \begin{align*}
        \mathbb{E}\left[R^B_ne^{tR^B_n}\right]=\sum_{1<k\leq n}\frac{(1-q)e^t}{1-q^k}\prod_{1<\ell\leq n:\ell\neq k}\left(1+(e^t-1)\frac{1-q}{1-q^\ell}\right)\,,
    \end{align*}
    which exactly corresponds to the right hand side of (\ref{eq:formulaMomentRB}) when $\alpha=1$.
    
    Assume now that (\ref{eq:formulaMomentRB}) holds for some $\alpha\geq1$. We take the derivative in $t$ on both sides of (\ref{eq:formulaMomentRB}); the resulting analysis then depends on whether $\alpha<n-1$ or $\alpha\geq n-1$.
    
    \begin{description}
        \item[First case] $\alpha<n-1$. In this case, the last product on the right hand side is never empty and the derivative is
        \begin{align*}
            &\mathbb{E}\left[\big(R^B_n\big)^{\alpha+1}e^{tR^B_n}\right]\\
            &\hspace{0.5cm}=\sum_{1\leq\beta\leq\alpha}\genfrac{\{}{\}}{0pt}{0}{\alpha}{\beta}\sum_{1<k_1\neq\cdots\neq k_\beta\leq n}\beta\prod_{1\leq i\leq\beta}\frac{(1-q)e^t}{1-q^{k_i}}\prod_{k\neq k_1,\ldots,k_\beta}\left(1+(e^t-1)\frac{1-q}{1-q^k}\right)\\
            &\hspace{1cm}+\sum_{1\leq\beta\leq\alpha}\genfrac{\{}{\}}{0pt}{0}{\alpha}{\beta}\sum_{1<k_1\neq\cdots\neq k_{\beta+1}\leq n}\prod_{1\leq i\leq\beta+1}\frac{(1-q)e^t}{1-q^{k_i}}\prod_{k\neq k_1,\ldots,k_{\beta+1}}\left(1+(e^t-1)\frac{1-q}{1-q^k}\right)\,.
        \end{align*}
        Performing the change of variables $\beta\mapsto\beta-1$ in the second sum, then regrouping the two sums over the terms $2\leq\beta\leq\alpha$, we obtain
        \begin{align*}
            &\mathbb{E}\left[\big(R^B_n\big)^{\alpha+1}e^{tR^B_n}\right]\\
            &\hspace{0.5cm}=\sum_{2\leq\beta\leq\alpha}\left(\beta\genfrac{\{}{\}}{0pt}{0}{\alpha}{\beta}+\genfrac{\{}{\}}{0pt}{0}{\alpha}{\beta-1}\right)\sum_{1<k_1\neq\cdots\neq k_\beta\leq n}\prod_{1\leq i\leq\beta}\frac{(1-q)e^t}{1-q^{k_i}}\prod_{k\neq k_1,\ldots,k_\beta}\left(1+(e^t-1)\frac{1-q}{1-q^k}\right)\\
            &\hspace{1cm}+\genfrac{\{}{\}}{0pt}{0}{\alpha}{1}\sum_{1<k_1\leq n}\frac{(1-q)e^t}{1-q^{k_1}}\prod_{k\neq k_1}\left(1+(e^t-1)\frac{1-q}{1-q^k}\right)\\
            &\hspace{1cm}+\genfrac{\{}{\}}{0pt}{0}{\alpha}{\alpha}\sum_{1<k_1\neq\cdots\neq k_{\alpha+1}\leq n}\prod_{1\leq i\leq\alpha+1}\frac{(1-q)e^t}{1-q^{k_i}}\prod_{k\neq k_1,\ldots,k_{\alpha+1}}\left(1+(e^t-1)\frac{1-q}{1-q^k}\right)\,,
        \end{align*}
        which proves the desired formula since, by definition, we have $\beta\genfrac{\{}{\}}{0pt}{1}{\alpha}{\beta}+\genfrac{\{}{\}}{0pt}{1}{\alpha}{\beta-1}=\genfrac{\{}{\}}{0pt}{1}{\alpha+1}{\beta}$, as well as $\genfrac{\{}{\}}{0pt}{1}{\alpha}{1}=\genfrac{\{}{\}}{0pt}{1}{\alpha+1}{1}=1$ and $\genfrac{\{}{\}}{0pt}{1}{\alpha}{\alpha}=\genfrac{\{}{\}}{0pt}{1}{\alpha+1}{\alpha+1}$.
        \item[Second case] $\alpha\geq n-1$. In this case, the last product is empty when $\beta=n-1$. From this observation, the derivative becomes
        \begin{align*}
            &\mathbb{E}\left[\big(R^B_n\big)^{\alpha+1}e^{tR^B_n}\right]\\
            &\hspace{0.5cm}=\sum_{1\leq\beta\leq n-1}\genfrac{\{}{\}}{0pt}{0}{\alpha}{\beta}\sum_{1<k_1\neq\cdots\neq k_\beta\leq n}\beta\prod_{1\leq i\leq\beta}\frac{(1-q)e^t}{1-q^{k_i}}\prod_{k\neq k_1,\ldots,k_\beta}\left(1+(e^t-1)\frac{1-q}{1-q^k}\right)\\
            &\hspace{1cm}+\sum_{1\leq\beta\leq n-2}\genfrac{\{}{\}}{0pt}{0}{\alpha}{\beta}\sum_{1<k_1\neq\cdots\neq k_{\beta+1}\leq n}\prod_{1\leq i\leq\beta+1}\frac{(1-q)e^t}{1-q^{k_i}}\prod_{k\neq k_1,\ldots,k_{\beta+1}}\left(1+(e^t-1)\frac{1-q}{1-q^k}\right)\\
            &\hspace{0.5cm}=\sum_{2\leq\beta\leq n-1}\left(\beta\genfrac{\{}{\}}{0pt}{0}{\alpha}{\beta}+\genfrac{\{}{\}}{0pt}{0}{\alpha}{\beta-1}\right)\sum_{1<k_1\neq\cdots\neq k_\beta\leq n}\prod_{1\leq i\leq\beta}\frac{(1-q)e^t}{1-q^{k_i}}\prod_{k\neq k_1,\ldots,k_\beta}\left(1+(e^t-1)\frac{1-q}{1-q^k}\right)\\
            &\hspace{1cm}+\genfrac{\{}{\}}{0pt}{0}{\alpha}{1}\sum_{1<k_1\leq n}\frac{(1-q)e^t}{1-q^{k_1}}\prod_{k\neq k_1}\left(1+(e^t-1)\frac{1-q}{1-q^k}\right)\,,
        \end{align*}
        where the second equality follows by the same argument as in the first case. Applying the same identities for $\genfrac{\{}{\}}{0pt}{1}{\alpha}{\beta}$ as before, the desired formula holds for $\alpha+1$; this completes the induction.
    \end{description}
\end{proof}

The previous lemma gives the relation between the moments of $R^B_n$ and the functions $\nu_\beta$. Before proving the relation between $\nu_\beta$ and $\mu_\alpha$, we state and prove a useful formula.

\begin{lemma}\label{lem:ForumlaX}
    Fix two positive integers $r,n\geq1$, and for each $1\leq k\leq r$, choose $x^{(k)}=\big(x^{(k)}_i\big)_{1\leq i\leq n}\in\R^n$. For $1\leq k\leq r$, write $P^r_k$ for the set of partitions of $[r]$ into $k$ non-empty subsets. Then
    \begin{align*}
        \sum_{i_1\neq\cdots\neq i_r}\prod_{1\leq k\leq r}x^{(k)}_{i_k}=\sum_{1\leq k\leq r}\sum_{(A_1,\ldots,A_k)\in P^r_k}(-1)^{r+k}\prod_{1\leq j\leq k}\left(\big(|A_j|-1\big)!\sum_{i\in[n]}\prod_{a\in A_j}x^{(a)}_i\right)\,.
    \end{align*}
\end{lemma}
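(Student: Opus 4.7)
The plan is to derive the identity by Möbius inversion on the partition lattice $\Pi_r$ of $[r]$. For a set partition $\pi=\{A_1,\ldots,A_k\}$ of $[r]$, let
\[
S_\pi:=\sum_{\substack{(i_1,\ldots,i_r)\in[n]^r\\i_a=i_b\,\Leftrightarrow\,a\sim_\pi b}}\prod_{1\le k\le r}x^{(k)}_{i_k},\qquad T_\pi:=\sum_{\substack{(i_1,\ldots,i_r)\in[n]^r\\a\sim_\pi b\,\Rightarrow\,i_a=i_b}}\prod_{1\le k\le r}x^{(k)}_{i_k}.
\]
The LHS of the lemma is precisely $S_{\hat 0}$, where $\hat 0$ is the partition into singletons. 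On the other hand, choosing for each block $A_j$ of $\pi$ independently the common value $i\in[n]$ shared by all indices in that block shows that $T_\pi=\prod_{j=1}^k\sum_{i\in[n]}\prod_{a\in A_j}x^{(a)}_i$, which is exactly the product appearing on the RHS.

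Grouping tuples in $[n]^r$ according to their exact equality pattern gives $T_\pi=\sum_\sigma S_\sigma$, where the sum is over all set partitions $\sigma$ of $[r]$ that are coarser than $\pi$ (including $\pi$ itself). Möbius inversion on $\Pi_r$ then yields
\[
S_{\hat 0}=\sum_\pi\mu(\hat 0,\pi)\,T_\pi,
\]
where $\mu(\hat 0,\pi)=\prod_{A\in\pi}(-1)^{|A|-1}(|A|-1)!$ is the classical Möbius function of the partition lattice. Since $\sum_{A\in\pi}(|A|-1)=r-k$ whenever $\pi$ has $k$ blocks, this Möbius coefficient equals $(-1)^{r-k}\prod_A(|A|-1)!=(-1)^{r+k}\prod_A(|A|-1)!$. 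Rewriting the resulting sum over $\pi$ as a double sum over $k$ and over $P^r_k$ reproduces the RHS exactly.

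The only nontrivial input is the formula for $\mu(\hat 0,\pi)$. If one prefers to keep the proof fully self-contained, this can be obtained either by a short induction on $r$, or directly by expanding $\prod_{1\le a<b\le r}(1-\mathbbm{1}\{i_a=i_b\})$ inside $\sum_{(i_1,\ldots,i_r)}\prod_k x^{(k)}_{i_k}$: collecting the expansion according to the connected components of the graph of enforced equalities, one reduces to the classical identity $\sum_{G\text{ connected on }[t]}(-1)^{|E(G)|}=(-1)^{t-1}(t-1)!$, which produces the factor $(-1)^{|A|-1}(|A|-1)!$ for each block $A$. This is the only place requiring any care; everything else is bookkeeping.
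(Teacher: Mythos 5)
Your proof is correct, but it follows a genuinely different route from the paper. You set up the two families of sums $S_\pi$ (exact equality pattern $\pi$) and $T_\pi$ (at least the equalities forced by $\pi$), observe the factorization $T_\pi=\prod_j\sum_{i\in[n]}\prod_{a\in A_j}x^{(a)}_i$ and the relation $T_\pi=\sum_{\sigma\geq\pi}S_\sigma$, and then invert on the partition lattice $\Pi_r$, using the classical formula $\mu(\hat 0,\pi)=\prod_{A\in\pi}(-1)^{|A|-1}(|A|-1)!$; the sign bookkeeping $(-1)^{r-k}=(-1)^{r+k}$ and the reading of $P^r_k$ as unordered set partitions (which is the intended reading, consistent with the paper's later Stirling-number computation) are handled correctly. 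The paper instead proceeds by induction on $r$: it peels off the last index, writing the sum over $i_1\neq\cdots\neq i_{r+1}$ as a full sum over $i_{r+1}$ minus the terms with $i_{r+1}=i_m$ for some $m\leq r$, applies the induction hypothesis to both pieces, and recombines, which amounts to re-deriving the Möbius coefficients through their recursion rather than quoting them. Your argument is shorter and identifies the coefficients conceptually as $\mu(\hat 0,\pi)$, at the price of importing the Möbius function of $\Pi_r$ (or the equivalent connected-graph identity $\sum_{G\ \mathrm{connected\ on}\ [t]}(-1)^{|E(G)|}=(-1)^{t-1}(t-1)!$, which you correctly indicate as the way to keep things self-contained); the paper's induction is more elementary and self-contained but requires the somewhat heavier two-case bookkeeping in the inductive step. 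Either proof is acceptable.
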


\begin{proof}
    We prove the result by induction on $r\geq1$, the case $r=1$ being obvious. Assuming it is true for some $r\geq1$, to prove that the formula holds for $r+1$, rewrite the sum over $i_1\neq\cdots\neq i_{r+1}$ by adding and subtracting the sum over $i_{r+1}$ such that $i_{r+1}=i_m$ for some $1\leq m\leq r$ to obtain
    \begin{align}
        \sum_{i_1\neq\cdots\neq i_{r+1}}\prod_{1\leq k\leq r+1}x^{(k)}_{i_k}&=\left(\sum_{i_1\neq\cdots\neq i_r}\prod_{1\leq k\leq r}x^{(k)}_{i_k}\right)\sum_{i\in[n]}x^{(r+1)}_i-\sum_{1\leq m\leq r}\sum_{i_1\neq\cdots\neq i_r}x^{(r+1)}_{i_m}\prod_{1\leq k\leq r}x^{(k)}_{i_k}\,.\label{eq:formulaX}
    \end{align}
    For the first term on the right, using the induction hypothesis, we know that
    \begin{align*}
        \sum_{i_1\neq\cdots\neq i_n}\prod_{1\leq k\leq r}x^{(k)}_{i_k}=\sum_{1\leq k\leq r}\sum_{(A_1,\ldots,A_k)\in P^r_k}(-1)^{r+k}\prod_{1\leq j\leq k}\left(\big(|A_j|-1\big)!\sum_{i\in[n]}\prod_{a\in A_j}x^{(a)}_i\right)\,,
    \end{align*}
    and hence
    \begin{align}
        &\left(\sum_{i_1\neq\cdots\neq i_n}\prod_{1\leq k\leq r}x^{(k)}_{i_k}\right)\sum_{i\in[n]}x^{(r+1)}_i\notag\\
        &\hspace{1cm}=\left(\sum_{1\leq k\leq r}\sum_{(A_1,\ldots,A_k)\in P^r_k}(-1)^{r+k}\prod_{1\leq j\leq k}\left(\big(|A_j|-1\big)!\sum_{i\in[n]}\prod_{a\in A_j}x^{(a)}_i\right)\right)\sum_{i\in[n]}x^{(r+1)}_i\notag\\
        &\hspace{1cm}=\sum_{1\leq k\leq r+1}\sum_{\{(A_1,\ldots,A_k)\in P^{r+1}_k:A_k=\{r+1\}\}}(-1)^{(r+1)+k}\prod_{1\leq j\leq k}\left(\big(|A_j|-1\big)!\sum_{i\in[n]}\prod_{a\in A_j}x^{(a)}_i\right)\,.\label{eq:formulaX1}
    \end{align}
    On the other hand, for all $1\leq m\leq r$, by writing $y^{(k,m)}_i=x^{(k)}_i$ if $k\neq m$ and $y^{(k,m)}_i=x^{(k)}_ix^{(r+1)}_i$ if $k=m$, we have
    \begin{align*}
        \sum_{i_1\neq\cdots\neq i_r}x^{(r+1)}_{i_m}\prod_{1\leq k\leq r}x^{(k)}_{i_k}&=\sum_{i_1\neq\cdots\neq i_r}\prod_{1\leq k\leq r}y^{(k,m)}_{i_k}\\
        &=\sum_{1\leq k\leq r}\sum_{(A_1,\ldots,A_k)\in P^r_k}(-1)^{r+k}\prod_{1\leq j\leq k}\left(\big(|A_j|-1\big)!\sum_{i\in[n]}\prod_{a\in A_j}y^{(a,m)}_i\right)\,,
    \end{align*}
    where the second line follows from the induction hypothesis. Now, note that
    \begin{align*}
        &\sum_{1\leq m\leq r}\prod_{1\leq j\leq k}\left(\big(|A_j|-1\big)!\sum_{i\in[n]}\prod_{a\in A_j}y^{(a,m)}_i\right)\\
        &\hspace{1cm}=\sum_{1\leq m\leq r}\prod_{1\leq j\leq k}\left(\big(|A_j|-1\big)!\sum_{i\in[n]}\big(x^{(r+1)}_i\big)^{\mathbbm{1}_{m\in A_j}}\prod_{a\in A_j}x^{(a)}_i\right)\\
        &\hspace{1cm}=\sum_{1\leq\ell\leq k}|A_\ell|\prod_{1\leq j\leq k}\left(\big(|A_j|-1\big)!\sum_{i\in[n]}\big(x^{(r+1)}_i\big)^{\mathbbm{1}_{j=\ell}}\prod_{a\in A_j}x^{(a)}_i\right)\,,
    \end{align*}
    which implies that
    \begin{align}
        &\sum_{1\leq m\leq r}\sum_{i_1\neq\cdots\neq i_r}x^{(r+1)}_{i_m}\prod_{1\leq k\leq r}x^{(k)}_{i_k}\notag\\
        &\hspace{1cm}=\sum_{1\leq k\leq r}\sum_{(A_1,\ldots,A_k)\in P^r_k}(-1)^{r+k}\sum_{1\leq m\leq r}\prod_{1\leq j\leq k}\left(\big(|A_j|-1\big)!\sum_{i\in[n]}\prod_{a\in A_j}y^{(a,m)}_i\right)\notag\\
        &\hspace{1cm}=-\sum_{1\leq k\leq r+1}\sum_{1\leq\ell\leq k}\sum_{\underset{r+1\in A_\ell,|A_\ell|>1}{(A_1,\ldots,A_k)\in P^{r+1}_k}}(-1)^{(r+1)+k}\prod_{1\leq j\leq k}\left(\big(|A_j|-1\big)!\sum_{i\in[n]}\prod_{a\in A_j}x^{(a)}_i\right)\,,\label{eq:formulaX2}
    \end{align}
    the last equality holding since, if $k=r+1$, then for any partition in $P^{r+1}_k$ all parts have size $1$, so the inner sum is empty.
    
    Substituting (\ref{eq:formulaX1}) and (\ref{eq:formulaX2}) into the right-hand side of (\ref{eq:formulaX}) and combining them, the inner sum becomes over all $(A_j)\in P^{r+1}_k$. This completes the inductive step.
\end{proof}

We now apply the previous lemma to establish the relation between $\nu_\beta$ and $\mu_\alpha$.

\begin{lemma}\label{lem:RelationMuNu}
    Let $n\geq0$, $q\in[0,1)$, and $\beta\geq1$. Write $S_\beta=\{(s_1,\ldots,s_\beta):s_1+2s_2+\cdots+\beta s_\beta=\beta\}$ and for $s=(s_1,\ldots,s_\beta)\in S$, let $|s|=s_1+\cdots+s_\beta$. Then, we have
    \begin{align*}
        \nu_\beta(n,q)=\beta!\sum_{s\in S_\beta}(-1)^{\beta+|s|}\prod_{1\leq i\leq\beta}\frac{\mu_i(n,q)^{s_i}}{i^{s_i}s_i!}
    \end{align*}
\end{lemma}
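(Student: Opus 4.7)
The plan is to derive this identity as a direct consequence of Lemma~\ref{lem:ForumlaX}, by taking all $r=\beta$ vectors $x^{(k)}$ to be equal to a single vector $y\in\R^n$ defined by $y_i=\frac{1-q}{1-q^i}$ for $1<i\leq n$ and $y_1=0$. With this choice,
\begin{align*}
    \nu_\beta(n,q)=\sum_{1<k_1\neq\cdots\neq k_\beta\leq n}\prod_{1\leq i\leq\beta}\frac{1-q}{1-q^{k_i}}=\sum_{i_1\neq\cdots\neq i_\beta}\prod_{1\leq k\leq\beta}y_{i_k}\,,
\end{align*}
since the contribution from any tuple containing $i_k=1$ vanishes. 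Moreover, because all vectors are the same, $\sum_{i\in[n]}\prod_{a\in A_j}x^{(a)}_i$ collapses to $\sum_{i\in[n]}y_i^{|A_j|}=\mu_{|A_j|}(n,q)$, using the definition of $\mu_\alpha$ and the fact that $y_1=0$. Lemma~\ref{lem:ForumlaX} therefore yields
\begin{align*}
    \nu_\beta(n,q)=\sum_{1\leq k\leq\beta}(-1)^{\beta+k}\sum_{(A_1,\ldots,A_k)\in P^\beta_k}\prod_{1\leq j\leq k}\big(|A_j|-1\big)!\,\mu_{|A_j|}(n,q)\,.
\end{align*}

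The remaining step is purely combinatorial: reindex the inner sum by the profile $s=(s_1,\ldots,s_\beta)\in S_\beta$ of block sizes, where $s_i$ is the number of parts of size $i$ in the partition $(A_1,\ldots,A_k)$. For a given $s$ with $|s|=k$, the number of unordered partitions of $[\beta]$ with this profile is
\begin{align*}
    \frac{\beta!}{\prod_{i\geq1}(i!)^{s_i}\prod_{i\geq1}s_i!}\,,
\end{align*}
and each such partition contributes $\prod_{i\geq1}\bigl[(i-1)!\,\mu_i(n,q)\bigr]^{s_i}$. Substituting and using $(i-1)!/i!=1/i$ gives
\begin{align*}
    \sum_{(A_1,\ldots,A_k)\in P^\beta_k}\prod_{1\leq j\leq k}\bigl(|A_j|-1\bigr)!\,\mu_{|A_j|}(n,q)=\beta!\sum_{\substack{s\in S_\beta\\|s|=k}}\prod_{1\leq i\leq\beta}\frac{\mu_i(n,q)^{s_i}}{i^{s_i}s_i!}\,.
\end{align*}
Summing over $k=|s|$ from $1$ to $\beta$ and noting that $(-1)^{\beta+k}=(-1)^{\beta+|s|}$ then gives the stated identity.

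No step here is delicate: the substantive content is isolated in Lemma~\ref{lem:ForumlaX}, and the rest is a standard translation between unordered set partitions and integer compositions of part-size multiplicities. The only point requiring mild care is ensuring the lower bound $k_i>1$ in the definition of $\nu_\beta$ is respected, which the trick of setting $y_1=0$ handles automatically.
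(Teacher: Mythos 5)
Your proof is correct and follows essentially the same route as the paper: apply Lemma~\ref{lem:ForumlaX} with all $\beta$ vectors equal to $\big(\tfrac{1-q}{1-q^i}\big)_i$ and then regroup the partitions of $[\beta]$ by their block-size profile $s$. Your device of setting $y_1=0$ cleanly enforces the restriction $k_i>1$ in the definition of $\nu_\beta$ (a point the paper passes over silently), and your direct multinomial count of partitions with a given profile is equivalent to the paper's count via permutations with prescribed cycle type, both yielding the factor $\beta!/\prod_i i^{s_i}s_i!$.
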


\begin{proof}
    Using the notation of Lemma~\ref{lem:ForumlaX}, let $r=\beta$ and $x^{(k)}_i=\frac{1-q}{1-q^i}$. Then the formula can be rewriten as
    \begin{align*}
        \sum_{i_1\neq\cdots\neq i_\beta}\frac{1-q}{1-q^{i_1}}\cdots\frac{1-q}{1-q^{i_\beta}}=\sum_{1\leq k\leq\beta}\sum_{(A_j)\in P^\beta_k}(-1)^{\beta+k}\prod_{1\leq j\leq k}\left(\big(|A_j|-1\big)!\sum_{i\in[n]}\prod_{a\in A_j}\frac{1-q}{1-q^i}\right)\,,
    \end{align*}
    which implies that
    \begin{align*}
        \nu_\beta(n,q)&=\sum_{1\leq k\leq\beta}\sum_{(A_j)\in P^\beta_k}(-1)^{\beta+k}\prod_{1\leq j\leq k}\Big(\big(|A_j|-1\big)!\mu_{|A_j|}(n,q)\Big)\,.
    \end{align*}
    
    For a partition $A=(A_j)\in P^\beta_k$, write $s_i(A)=|\{j\in[k]:|A_j|=i\}|$. Then $s(A)=\big(s_1(A),...,s_\beta(A)\big)\in S_\beta$ and $|s(A)|=s_1(A)+\cdots+s_\beta(A)=k$. Regrouping the sum over $(A_j)$, we then obtain
    \begin{align*}
        \nu_\beta(n,q)&=\sum_{s\in S_\beta}\sum_{\left\{A\in P^\beta_{|s|}:s(A)=s\right\}}(-1)^{\beta+|s|}\prod_{1\leq j\leq |s|}\Big(\big(|A_j|-1\big)!\mu_{|A_j|}(n,q)\Big)\\
        &=\sum_{s\in S_\beta}(-1)^{\beta+|s|}\prod_{1\leq i\leq\beta}\mu_i(n,q)^{s_i}\left(\sum_{\left\{A\in P^\beta_{|s|}:s(A)=s\right\}}\prod_{1\leq j\leq |s|}\big(|A_j|-1\big)!\right)\,.
    \end{align*}
    To conclude the proof, note that, for a given $s\in S_\beta$, we have
    \begin{align*}
        \sum_{\left\{A\in P^\beta_{|s|}:s(A)=s\right\}}\prod_{1\leq j\leq|s|}\big(|A_j|-1\big)!&=\Big|\Big\{\sigma\in\mathcal{S}_\beta:\textrm{$\sigma$ has $s_i$ cycles of size $i$}\Big\}\Big|\\
        &=\frac{\beta!}{\prod_{1\leq i\leq\beta}i^{s_i}s_i!}\,.
    \end{align*}
    This proves the desired formula.
\end{proof}

The proof of Proposition~\ref{prop:MomentsOfRB} is now a direct consequence of Lemma~\ref{lem:momentRBandNu} combined with Lemma~\ref{lem:RelationMuNu}. From this proposition, or by direct computation using the moment generating function of $R^B_n$ from Proposition~\ref{prop:BivariateGenFun}, we obtain that
\begin{align*}
    \mathbb{E}\left[R^B_n\right]=\mu_1(n,q)
\end{align*}
and
\begin{align*}
    \mathbb{E}\left[(R^B_n)^2\right]=\mu_1(n,q)+\mu_1(n,q)^2-\mu_2(n,q)\,,
\end{align*}
which proves Fact~\ref{fact:ExpRnmu}.

\section{Asymptotics of $\mu_\alpha$}\label{app:asymptoticMu}

In this appendix, we prove Proposition~\ref{prop:ConvOfMu} and \ref{prop:ConvOfMuAlpha}. Both these proofs will be based on the following bounds for $\mu_\alpha$.

\begin{prop}\label{prop:BoundsOnMu}
    Let $\alpha\geq1$, $n\geq1$, and $q\in[0,1)$. Then, for all $m\in[n]$, we have
    \begin{align*}
        \mu_\alpha(n,q)&\geq(n-m)(1-q)^\alpha+\sum_{1<k\leq m}\frac{1}{k^\alpha}
    \end{align*}
    and
    \begin{align*}
        \mu_\alpha(n,q)&\leq n(1-q)^\alpha+\alpha m(1-q)^\alpha q^{m}\frac{1-q^n}{(1-q^{m})^{\alpha+1}}+\left(\frac{m(1-q)}{1-q^m}\right)^\alpha\sum_{1<k\leq m}\frac{1}{k^\alpha}\,.
    \end{align*}
\end{prop}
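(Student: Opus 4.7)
The plan for both inequalities is to partition the sum
\[
\mu_\alpha(n,q)\;=\;\sum_{1<k\leq n}\phi(k),\qquad \phi(k):=\left(\frac{1-q}{1-q^k}\right)^{\!\alpha},
\]
at $k=m$ and estimate the two pieces separately. Two elementary facts drive everything. The first, which I will use repeatedly, is that the map $k\mapsto \frac{1-q^k}{k(1-q)}=\frac{1}{k}\sum_{j=0}^{k-1}q^j$ is nonincreasing on $k\geq 1$: it is the running arithmetic mean of the decreasing sequence $(q^j)_{j\geq 0}$, and so passing from $k$ to $k+1$ replaces the current minimum by an even smaller value. Consequently, for $1\leq k\leq m$,
\[
\frac{1}{k}\;\leq\;\frac{1-q}{1-q^k}\;\leq\;\frac{m(1-q)}{k(1-q^m)},
\]
where the lower bound is the trivial $\sum_{j=0}^{k-1}q^j\leq k$ and the upper bound expresses the monotonicity. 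The second fact is Bernoulli's inequality $(1-x)^\alpha\geq 1-\alpha x$ for $\alpha\geq 1$ and $x\in[0,1]$, which rearranges to $1-(1-q^k)^\alpha\leq \alpha q^k$, hence
\[
\phi(k)-(1-q)^\alpha \;=\;(1-q)^\alpha\cdot\frac{1-(1-q^k)^\alpha}{(1-q^k)^\alpha}\;\leq\;\frac{\alpha(1-q)^\alpha q^k}{(1-q^k)^\alpha}.
\]

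For the lower bound I would simply apply the inequality $\phi(k)\geq k^{-\alpha}$ (raising the lower bound in the display above to the power $\alpha$) for $1<k\leq m$, and the trivial estimate $\phi(k)\geq (1-q)^\alpha$ (which follows from $1-q^k\leq 1$) for $m<k\leq n$, and sum: this produces the two pieces of the claimed lower bound directly with no further work.

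For the upper bound I would handle the sum over $1<k\leq m$ using the right-hand inequality in the display above, giving $\phi(k)\leq \bigl(\tfrac{m(1-q)}{1-q^m}\bigr)^\alpha k^{-\alpha}$; summing over $1<k\leq m$ produces exactly the last term of the claim. For the sum over $m<k\leq n$, I would write $\phi(k)=(1-q)^\alpha+[\phi(k)-(1-q)^\alpha]$ and bound the correction using Bernoulli's inequality as in the display above, then further estimate using $(1-q^k)^\alpha\geq(1-q^m)^\alpha$ (valid for $k\geq m$) to pull the $k$-dependent denominator out of the sum, and finally sum the resulting geometric series $\sum_{m<k\leq n}q^k=\frac{q^{m+1}(1-q^{n-m})}{1-q}$.

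The main obstacle I anticipate is the algebraic reorganization required to cast the resulting bound into the exact form of the claim, in particular to produce the middle term $\alpha m(1-q)^\alpha q^m\frac{1-q^n}{(1-q^m)^{\alpha+1}}$ with its specific factors of $m$, $(1-q^n)$, and $(1-q^m)^{\alpha+1}$ (rather than, say, $q(1-q^{n-m})/(1-q)$ and $(1-q^m)^\alpha$ that would come out of the most naive application of the two facts). This will likely require sharpening the pointwise estimate by keeping an extra power $1-q^k$ in the denominator when applying Bernoulli (so that the eventual bound sees $(1-q^m)^{\alpha+1}$, not $(1-q^m)^\alpha$), together with an elementary inequality of the shape $q(1-q^{n-m})\leq m(1-q)(1-q^n)$ or an analogous rearrangement, to convert between the natural geometric-series form and the claimed form. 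Once those manipulations are carried out, recombining with the bound on the $k\leq m$ piece yields the stated inequality.
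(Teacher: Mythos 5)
Your lower bound and your treatment of the range $1<k\leq m$ match the paper's proof: the paper derives the same inequality $\frac{1-q}{1-q^k}\leq\frac{m(1-q)}{k(1-q^m)}$ from the monotonicity of $x\mapsto x/(1-e^{-x})$, while you get it from the monotonicity of the running mean $\frac{1}{k}\sum_{j=0}^{k-1}q^j$; these are equivalent and both fine. Where you genuinely diverge is the range $m<k\leq n$. The paper groups these indices into blocks of length $m$, bounds each block by $m$ times the summand at the left endpoint $\ell m$, and then sums a geometric series with ratio $q^m$; this is what produces the factors $m$, $1-q^n$ and $(1-q^m)^{\alpha+1}$ directly. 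You instead keep individual indices, apply Bernoulli to get $\phi(k)-(1-q)^\alpha\leq\alpha(1-q)^\alpha q^k/(1-q^m)^\alpha$, and sum the geometric series in $q$, obtaining $\alpha(1-q)^{\alpha-1}q^{m+1}(1-q^{n-m})/(1-q^m)^\alpha$ for the correction term; this is in fact a \emph{sharper} bound than the claimed middle term, so your route works and is arguably more elementary than the paper's blocking argument.

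One small repair is needed in your final step, and one anticipated complication can be dropped. The auxiliary inequality you guessed, $q(1-q^{n-m})\leq m(1-q)(1-q^n)$, is false in general (take $m=1$ and $q$ close to $1$), and no ``sharpening of Bernoulli'' to force an extra power of $1-q^k$ is required. The comparison you actually need is $q(1-q^{n-m})(1-q^m)\leq m(1-q)(1-q^n)$, which is immediate from two facts already in your toolkit: $1-q^m\leq m(1-q)$ and $q(1-q^{n-m})\leq 1-q^n$, the latter because $1-q^n-q(1-q^{n-m})=(1-q)+q^{n-m+1}(1-q^{m-1})\geq0$. With that substitution, your bound for the tail is at most $ (n-m)(1-q)^\alpha+\alpha m(1-q)^\alpha q^m\frac{1-q^n}{(1-q^m)^{\alpha+1}}$, and adding the $1<k\leq m$ piece gives exactly the stated upper bound.
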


\begin{proof}
    First write
    \begin{align}
        \mu_\alpha(n,q)&=\sum_{1<k\leq m}\left(\frac{1-q}{1-q^k}\right)^\alpha+\sum_{m<k\leq n}\left(\frac{1-q}{1-q^k}\right)^\alpha\,.\label{eq:divisionMu}
    \end{align}
    
    For the lower bound use in the first sum that $1-q^k\leq k(1-q)$ and in the second sum that $1-q^k\leq1$ to obtain
    \begin{align*}
        \mu_\alpha(n,q)&\geq\sum_{1<k\leq m}\frac{1}{k^\alpha}+\sum_{m<k\leq n}(1-q)^\alpha=\sum_{1<k\leq m}\frac{1}{k^\alpha}+(n-m)(1-q)^\alpha\,,
    \end{align*}
    which is the desired bound.
    
    For the upper bound, we start with the first term in (\ref{eq:divisionMu}). Define the function $\phi(x)=\frac{x}{1-e^{-x}}$, so that
    \begin{align*}
        \sum_{1<k\leq m}\left(\frac{1-q}{1-q^k}\right)^\alpha&=\sum_{1<k\leq m}\left(\frac{1-q}{1-e^{-k|\log q|}}\right)^\alpha=\sum_{1<k\leq m}\left(\frac{1-q}{k|\log q|}\phi\big(k|\log q|\big)\right)^\alpha\,.
    \end{align*}
    Note that $\phi$ is increasing, from which we deduce the following bound:
    \begin{align*}
        \sum_{1<k\leq m}\left(\frac{1-q}{1-q^k}\right)^\alpha&\leq\sum_{1<k\leq m}\left(\frac{1-q}{k|\log q|}\phi\big(m|\log q|\big)\right)^\alpha\\
        &=\left(\frac{1-q}{|\log q|}\phi\big(m|\log q|\big)\right)^\alpha\sum_{1<k\leq m}\frac{1}{k^\alpha}\\
        &=\left(\frac{m(1-q)}{1-q^m}\right)^\alpha\sum_{1<k\leq m}\frac{1}{k^\alpha}\,.
    \end{align*}
    which is the last term in the desired upper bound.
    
    Consider now the second term of (\ref{eq:divisionMu}). Since $k\mapsto\frac{1-q}{1-q^k}$ is decreasing in $k$, we have $\frac{1-q}{1-q^k}\leq\frac{1-q}{1-q^{\ell m}}$ for $\ell m\leq k\leq(\ell+1)m$, so
    \begin{align*}
        \sum_{m<k\leq n}\left(\frac{1-q}{1-q^k}\right)^\alpha&\leq m\sum_{1\leq\ell\leq\frac{n}{m}}\left(\frac{1-q}{1-q^{\ell m}}\right)^\alpha
    \end{align*}
    Now, rewrite and bound the last sum as follows
    \begin{align*}
        \sum_{1\leq\ell\leq\frac{n}{m}}\left(\frac{1-q}{1-q^{\ell m}}\right)^\alpha&=(1-q)^\alpha\left\lfloor\frac{n}{m}\right\rfloor+\sum_{1\leq\ell\leq\frac{n}{m}}\left[\left(\frac{1-q}{1-q^{\ell m}}\right)^\alpha-(1-q)^\alpha\right]\\
        &\leq\frac{n(1-q)^\alpha}{m}+(1-q)^\alpha\sum_{1\leq\ell\leq\frac{n}{m}}\frac{1-\big(1-q^{\ell m}\big)^\alpha}{\big(1-q^{\ell m}\big)^\alpha}\,,
    \end{align*}
    to obtain that
    \begin{align*}
        \sum_{m<k\leq n}\left(\frac{1-q}{1-q^k}\right)^\alpha&\leq m\left[\frac{n(1-q)^\alpha}{m}+(1-q)^\alpha\sum_{1\leq\ell\leq\frac{n}{m}}\frac{1-\big(1-q^{\ell m}\big)^\alpha}{\big(1-q^{\ell m}\big)^\alpha}\right]\\
        &=n(1-q)^\alpha+m(1-q)^\alpha\sum_{1\leq\ell\leq\frac{n}{m}}\frac{1-\big(1-q^{\ell m}\big)^\alpha}{\big(1-q^{\ell m}\big)^\alpha}\,.
    \end{align*}
    Simplifying the sum on the right using that $\frac{1}{1-q^{\ell m}}\leq\frac{1}{1-q^{m}}$ when $\ell\geq1$, and that $1-\big(1-q^{\ell m}\big)^\alpha\leq\alpha\big(1-\big(1-q^{\ell m}\big)\big)=\alpha q^{\ell m}$, we have
    \begin{align*}
        \sum_{m<k\leq n}\left(\frac{1-q}{1-q^k}\right)^\alpha&\leq n(1-q)^\alpha+m(1-q)^\alpha\sum_{1\leq\ell\leq\frac{n}{m}}\frac{\alpha q^{\ell m}}{\big(1-q^{m}\big)^\alpha}\\
        &\leq n(1-q)^\alpha+\alpha m(1-q)^\alpha\frac{q^{m}}{(1-q^{m})^\alpha}\frac{1-q^n}{1-q^m}.
    \end{align*}
    This corresponds to the first two terms in the desired upper bound and the proposition follows.
\end{proof}

In order to apply the two bounds from this proposition, we now choose the right sequence $(m_n)_{n\geq0}$ corresponding to $(q_n)_{n\geq0}$, so that the first terms in the asymptotic behaviour of $\mu_\alpha(n,q_n)$ correspond to $n(1-q_n)^\alpha$ and $\sum_{1<k\leq m_n}\frac{1}{k^\alpha}$.

\begin{proof}[Proof of Proposition~\ref{prop:ConvOfMu}]
    We want to prove that
    \begin{align*}
        \mu_1(n,q_n)=n(1-q_n)+\log\left(n\wedge\frac{1}{1-q_n}\right)+O\left(\sqrt{\log\left(n\wedge\frac{1}{1-q_n}\right)}\right)\,.
    \end{align*}
    First, in the case where $(q_n)_{n\geq0}$ converges to $0$, since $\frac{1}{1-q_n^k}=1+O(q_n^k)$ uniformly over $k$, it follows from the definition of $\mu_1(n,q_n)$ that
    \begin{align*}
        \mu_1(n,q_n)&=\sum_{1<k\leq n}(1-q_n)\big(1+O(q_n^k)\big)=n(1-q_n)+O(q_n^2)\,.
    \end{align*}
    Since $\log\left(n\wedge\frac{1}{1-q_n}\right)=-\log(1-q_n)\sim q_n$, the desired asymptotic behaviour follows.
    
    We assume now that $(q_n)_{n\geq0}$ is bounded away from $0$. In this case, we prove the lower and the upper bounds separately.

    For the lower bound, define $m_n=\left\lfloor n\wedge\frac{1}{1-q_n}\right\rfloor$. Using the lower bound in Proposition~\ref{prop:BoundsOnMu} and since $m_n(1-q_n)\leq1$, we have
    \begin{align*}
        \mu_1(n,q_n)&\geq(n-m_n)(1-q_n)+\sum_{1<k\leq m_n}\frac{1}{k}=n(1-q_n)+\log m_n+O(1)\,.
    \end{align*}
    This result is actually stronger than what we aim to prove, nevertheless the desired lower bound follows.
    
    For the upper bound, define $m_n=\left\lfloor\frac{n\wedge\frac{1}{1-q_n}}{\sqrt{\log\left(n\wedge\frac{1}{1-q_n}\right)}}\right\rfloor$ and note that, since $q_n$ is bounded away from $0$, we have $m_n(1-q_n)=O(1)$. Using the upper bound of Proposition~\ref{prop:BoundsOnMu}, we have
    \begin{align*}
        \mu_1(n,q_n)\leq n(1-q_n)+m_n\frac{(1-q_n)q_n^{m_n}}{1-q_n^{m_n}}\frac{1-q_n^n}{1-q_n^{m_n}}+\frac{(1-q_n)m_n}{1-q_n^{m_n}}\sum_{1<k\leq m_n}\frac{1}{k}\,.
    \end{align*}
    By studying the variations of the function $\phi:x\mapsto\frac{(1-x)x^m}{1-x^m}$, one can see that it is increasing and smaller than $\frac{1}{m}$ for $x\in[0,1)$, and it follows that
    \begin{align*}
        m_n\frac{(1-q_n)q_n^{m_n}}{1-q_n^{m_n}}&\leq1\,.
    \end{align*}
    Moreover, since $m_n(1-q_n)=O(1)$, we have
    \begin{align*}
        q_n^{m_n}=e^{m_n\log q_n}=e^{-m_n(1-q_n)+O(m_n(1-q_n)^2)}=1-m_n(1-q_n)+O\big((m_n(1-q_n))^2\big)\,,
    \end{align*}
    and so
    \begin{align*}
        \frac{(1-q_n)m_n}{1-q_n^{m_n}}&=\frac{m_n(1-q_n)}{m_n(1-q_n)+O\big((m_n(1-q_n))^2\big)}=1+O\big(m_n(1-q_n)\big)\,.
    \end{align*}
    Combining the last results with the fact that $\sum_{1<k\leq m_n}\frac{1}{k}=\log m_n+O(1)$ and that $1-q_n^n\leq1\wedge\big(n(1-q_n)\big)$, we obtain
    \begin{align*}
        \mu_1(n,q_n)&\leq n(1-q_n)+O\left(\frac{1\wedge\big(n(1-q_n)\big)}{m_n(1-q_n)}\right)+\Big(1+O\big(m_n(1-q_n)\big)\Big)\Big(\log m_n+O(1)\Big)\\
        &=n(1-q_n)+\log m_n+O\left(\frac{1\wedge\big(n(1-q_n)\big)}{m_n(1-q_n)}\right)+O\big(m_n(1-q_n)\log m_n\big)+O(1)\,.
    \end{align*}
    The desired upper bound follows from this formula since $m_n=\left\lfloor\frac{n\wedge\frac{1}{1-q_n}}{\sqrt{\log\left(n\wedge\frac{1}{1-q_n}\right)}}\right\rfloor$, which implies that
    \begin{align*}
        \frac{1\wedge\big(n(1-q_n)\big)}{m_n(1-q_n)}=\Theta\left(\sqrt{\log\left(n\wedge\frac{1}{1-q_n}\right)}\right)\,,
    \end{align*}
    that
    \begin{align*}
        \log m_n=\log\left(n\wedge\frac{1}{1-q_n}\right)+O\left(\log\log\left(n\wedge\frac{1}{1-q_n}\right)\right)\,,
    \end{align*}
    and that
    \begin{align*}
        m_n(1-q_n)\log m_n&=O\left(\sqrt{\log\left(n\wedge\frac{1}{1-q_n}\right)}\right)\,.\qedhere
    \end{align*}
\end{proof}

\begin{proof}[Proof of Proposition~\ref{prop:ConvOfMuAlpha}]
    We use a similar technique, by proving an upper and a lower bound separately, to show that
    \begin{align*}
        \mu_\alpha(n,q_n)=n(1-q_n)^\alpha+\zeta(\alpha)-1+O\left(\left((1-q_n)\vee\frac{1}{n}\right)^\frac{\alpha-1}{\alpha+1}\right)\,
    \end{align*}
    holds for all $\alpha>1$.
    
    Let $m_n=\left\lfloor n\wedge\frac{1}{1-q_n}\right\rfloor$. Using the lower bound of Proposition~\ref{prop:BoundsOnMu}, we have
    \begin{align*}
        \mu_\alpha(n,q_n)&\geq(n-m_n)(1-q_n)^\alpha+\sum_{1<k\leq m_n}\frac{1}{k^\alpha}\,.
    \end{align*}
    Since $\sum_{1<k\leq m_n}\frac{1}{k^\alpha}=\zeta(\alpha)-1+O\left(\frac{1}{(m_n)^{\alpha-1}}\right)$, we obtain
    \begin{align*}
         \mu_\alpha(n,q_n)&\geq n(1-q_n)^\alpha+\zeta(\alpha)-1+O\Big(m_n(1-q_n)^\alpha\Big)+O\left(\frac{1}{(m_n)^{\alpha-1}}\right)
    \end{align*}
    and the lower bound now follows from the fact that $m_n(1-q_n)^\alpha\leq\frac{1}{(m_n)^{\alpha-1}}\leq\left(\frac{1}{m_n}\right)^\frac{\alpha-1}{\alpha+1}$.
    
    For the upper bound, let
    \begin{align*}
        m_n=\left\lfloor\left(\frac{1}{(1-q_n)^2}\wedge\frac{n}{1-q_n}\wedge n^{\alpha+1}\right)^\frac{1}{\alpha+1}\right\rfloor\,.
    \end{align*}
    This definition just encodes that there are three cases to study: $\frac{1}{1-q_n}\leq n$, $n<\frac{1}{1-q_n}\leq n^\alpha$, and $n^\alpha<\frac{1}{1-q_n}$. In all case, we have $1\leq m_n\leq n$. Applying the upper bound of Proposition~\ref{prop:BoundsOnMu}, it follows that
    \begin{align}
        \mu_\alpha(n,q_n)&\leq n(1-q_n)^\alpha+\alpha m_n\frac{(1-q_n)^\alpha q_n^{m_n}}{\big(1-q_n^{m_n}\big)^\alpha}\frac{1-q_n^n}{1-q_n^{m_n}}+\left(\frac{m_n(1-q_n)}{1-q_n^{m_n}}\right)^\alpha\sum_{1<k\leq m_n}\frac{1}{k^\alpha}\,.\label{eq:UBMu}
    \end{align}
    On one hand, by the definition of $m_n$, we have $m_n(1-q_n)\leq\frac{1-q_n}{(1-q_n)^\frac{2}{\alpha+1}}=(1-q_n)^{\frac{\alpha-1}{\alpha+1}}\leq1$. It follows that $1-q_n^{m_n}=\Omega\big(m_n(1-q_n)\big)$, and since $1-q_n^n\leq1\wedge n(1-q_n)$, we have
    \begin{align*}
        \alpha m_n\frac{(1-q_n)^\alpha q_n^{m_n}}{\big(1-q_n^{m_n}\big)^\alpha}\frac{1-q_n^n}{1-q_n^{m_n}}&=\alpha m_n(1-q_n)^\alpha q_n^{m_n}\big(1-q_n^n\big)\cdot O\left(\frac{1}{\big(m_n(1-q_n)\big)^{\alpha+1}}\right)\\
        &=O\left(\frac{1\wedge(n(1-q_n))}{m_n^\alpha(1-q_n)}\right)\,.
    \end{align*}
    On the other hand, 
    we claim that 
    \[
            \left(\frac{m_n(1-q_n)}{1-q_n^{m_n}}\right)^\alpha =1+O\big(m_n(1-q_n)\big). 
    \]
    This holds since if $q_n < 1-\sqrt{2}$ then $m_n=1$, and the claim simply asserts that $1=1+O(1-q_n)$; and if $q_n$ is bounded away from zero then, using the fact that $m_n(1-q_n)\leq1$ again, we have
    \begin{align*}
        \left(\frac{m_n(1-q_n)}{1-q_n^{m_n}}\right)^\alpha=\left(\frac{m_n(1-q_n)}{m_n(1-q_n)+O\big((m_n(1-q_n)^2)\big)}\right)^\alpha=1+O\big(m_n(1-q_n)\big)\,.
    \end{align*}
    
    We also know that $\sum_{1<k\leq m_n}\frac{1}{k^\alpha}=\zeta(\alpha)-1+O\left(\frac{1}{m_n^{\alpha-1}}\right)$, which yields the following bound:
    \begin{align*}
        \left(\frac{m_n(1-q_n)}{1-q_n^{m_n}}\right)^\alpha\sum_{1<k\leq m_n}\frac{1}{k^\alpha}&=\Big(1+O\big(m_n(1-q_n)\big)\Big)\left(\zeta(\alpha)-1+O\left(\frac{1}{m_n^{\alpha-1}}\right)\right)\\
        &=\zeta(\alpha)-1+O\left(\frac{1}{m_n^{\alpha-1}}\right)+O\big(m_n(1-q_n)\big)\,.
    \end{align*}
    Plugging these results back into (\ref{eq:UBMu}), we obtain
    \begin{align*}
        \mu_\alpha(n,q_n)\leq n(1-q_n)^\alpha+\zeta(\alpha)-1+O\left(\frac{1\wedge(n(1-q_n))}{m_n^\alpha(1-q_n)}\right)+O\left(\frac{1}{m_n^{\alpha-1}}\right)+O\big(m_n(1-q_n)\big)\,.
    \end{align*}
    It thus suffices to show that
    \begin{align}
        O\left(\frac{1\wedge(n(1-q_n))}{m_n^\alpha(1-q_n)}\right)+O\left(\frac{1}{m_n^{\alpha-1}}\right)+O\big(m_n(1-q_n)\big)=O\left(\left((1-q_n)\vee\frac{1}{n}\right)^\frac{\alpha-1}{\alpha+1}\right)\,.\label{eq:firstToSecondBound}
    \end{align}
    We divide the proof into three cases according to the value of $q_n$.
    \begin{description}
        \item[First case] $\frac{1}{1-q_n}\leq n$. In this case, we have
        \begin{align*}
            m_n=\left\lfloor\frac{1}{(1-q_n)^\frac{2}{\alpha+1}}\right\rfloor=\Theta\left(\frac{1}{(1-q_n)^\frac{2}{\alpha+1}}\right)\,,
        \end{align*}
        and (\ref{eq:firstToSecondBound}) holds since
        \begin{align*}
            \frac{1\wedge(n(1-q_n))}{(m_n)^\alpha(1-q_n)}=\Theta\left(\frac{(1-q_n)^\frac{2\alpha}{\alpha+1}}{(1-q_n)}\right)=\Theta\Big((1-q_n)^\frac{\alpha-1}{\alpha+1}\Big)\,,
        \end{align*}
        and
        \begin{align*}
            \frac{1}{(m_n)^{\alpha-1}}=\Theta\Big((1-q_n)^\frac{2(\alpha-1)}{\alpha+1}\Big)=O\Big((1-q_n)^\frac{\alpha-1}{\alpha+1}\Big)\,,
        \end{align*}
        and
        \begin{align*}
            m_n(1-q_n)=\Theta\left(\frac{1-q_n}{(1-q_n)^\frac{2}{\alpha+1}}\right)=\Theta\Big((1-q_n)^\frac{\alpha-1}{\alpha+1}\Big)\,.
        \end{align*}
        \item[Second case] $n<\frac{1}{1-q_n}\leq n^\alpha$. In this case, note that $(1-q_n)\leq\frac{1}{n}$. We also have
        \begin{align*}
            m_n=\left\lfloor\left(\frac{n}{1-q_n}\right)^\frac{1}{\alpha+1}\right\rfloor=\Theta\left(\left(\frac{n}{1-q_n}\right)^\frac{1}{\alpha+1}\right)\,,
        \end{align*}
        and (\ref{eq:firstToSecondBound}) holds since
        \begin{align*}
            \frac{1\wedge(n(1-q_n))}{(m_n)^\alpha(1-q_n)}=\Theta\left(n\left(\frac{1-q_n}{n}\right)^\frac{\alpha}{\alpha+1}\right)=\Theta\Big(n^\frac{1}{\alpha+1}(1-q_n)^\frac{\alpha}{\alpha+1}\Big)=O\left(\frac{1}{n^\frac{\alpha-1}{\alpha+1}}\right)\,,
        \end{align*}
        and
        \begin{align*}
            \frac{1}{(m_n)^{\alpha-1}}=\Theta\left(\left(\frac{(1-q_n)}{n}\right)^\frac{\alpha-1}{\alpha+1}\right)=O\left(\frac{1}{n^\frac{2(\alpha-1)}{\alpha+1}}\right)=O\left(\frac{1}{n^\frac{\alpha-1}{\alpha+1}}\right)\,,
        \end{align*}
        and
        \begin{align*}
            m_n(1-q_n)=\Theta\left(\left(\frac{n}{1-q_n}\right)^\frac{1}{\alpha+1}(1-q_n)\right)=\Theta\Big(n^\frac{1}{\alpha+1}(1-q_n)^\frac{\alpha}{\alpha+1}\Big)=O\left(\frac{1}{n^\frac{\alpha-1}{\alpha+1}}\right)\,.
        \end{align*}
        \item[Third case] $\frac{1}{1-q_n}>n^\alpha$. In this case, we have $m_n=n$ and (\ref{eq:firstToSecondBound}) holds since
        \begin{align*}
            \frac{1\wedge(n(1-q_n))}{(m_n)^\alpha(1-q_n)}=\frac{n}{n^\alpha}=O\left(\frac{1}{n^\frac{\alpha-1}{\alpha+1}}\right)\,,
        \end{align*}
        and
        \begin{align*}
            \frac{1}{(m_n)^{\alpha-1}}=\frac{1}{n^{\alpha-1}}=O\left(\frac{1}{n^\frac{\alpha-1}{\alpha+1}}\right)\,,
        \end{align*}
        and
        \begin{align*}
            m_n(1-q_n)=n(1-q_n)<n\cdot\frac{1}{n^\alpha}=O\left(\frac{1}{n^\frac{\alpha-1}{\alpha+1}}\right)\,.
        \end{align*}
    \end{description}
    This concludes the proof of the upper bound and of the proposition.
\end{proof}

\medskip
\bibliographystyle{siam}
\bibliography{2020-07-24_mallows}

\begin{thebibliography}{10}

\bibitem{addario2009minima}
{\sc L.~Addario-Berry, B.~Reed, et~al.}, {\em Minima in branching random
  walks}, The Annals of Probability, 37 (2009), pp.~1044--1079.

\bibitem{aidekon2013convergence}
{\sc E.~A{\"\i}d{\'e}kon et~al.}, {\em Convergence in law of the minimum of a
  branching random walk}, The Annals of Probability, 41 (2013), pp.~1362--1426.

\bibitem{angel2018mallows}
{\sc O.~Angel, A.~E. Holroyd, T.~Hutchcroft, and A.~Levy}, {\em Mallows
  permutations as stable matchings}, arXiv preprint arXiv:1802.07142,  (2018).

\bibitem{basu2017limit}
{\sc R.~Basu, N.~Bhatnagar, et~al.}, {\em Limit theorems for longest monotone
  subsequences in random Mallows permutations}, in Annales de l'Institut Henri
  Poincar{\'e}, Probabilit{\'e}s et Statistiques, vol.~53, Institut Henri
  Poincar{\'e}, 2017, pp.~1934--1951.

\bibitem{benjamini2005mixing}
{\sc I.~Benjamini, N.~Berger, C.~Hoffman, and E.~Mossel}, {\em Mixing times of
  the biased card shuffling and the asymmetric exclusion process}, Transactions
  of the American Mathematical Society, 357 (2005), pp.~3013--3029.

\bibitem{bhatnagar2015lengths}
{\sc N.~Bhatnagar and R.~Peled}, {\em Lengths of monotone subsequences in a
  Mallows permutation}, Probability Theory and Related Fields, 161 (2015),
  pp.~719--780.

\bibitem{biggins1976first}
{\sc J.~D. Biggins}, {\em The first-and last-birth problems for a multitype
  age-dependent branching process}, Advances in Applied Probability, 8 (1976),
  pp.~446--459.

\bibitem{broutin2008height}
{\sc N.~Broutin, L.~Devroye, E.~McLeish, and M.~de~la Salle}, {\em The height
  of increasing trees}, Random Structures \& Algorithms, 32 (2008),
  pp.~494--518.

\bibitem{chauvin2005martingales}
{\sc B.~Chauvin, T.~Klein, J.-F. Marckert, A.~Rouault, et~al.}, {\em
  Martingales and profile of binary search trees}, Electronic Journal of
  Probability, 10 (2005), pp.~420--435.

\bibitem{crane2018probability}
{\sc H.~Crane, S.~DeSalvo, and S.~Elizalde}, {\em The probability of avoiding
  consecutive patterns in the Mallows distribution}, Random Structures \&
  Algorithms, 53 (2018), pp.~417--447.

\bibitem{devroye1986note}
{\sc L.~Devroye}, {\em A note on the height of binary search trees}, Journal of
  the ACM (JACM), 33 (1986), pp.~489--498.

\bibitem{diaconis2004analysis}
{\sc P.~Diaconis and A.~Ram}, {\em Analysis of systematic scan metropolis
  algorithms using Iwahori-Hecke algebra techniques}, arXiv preprint
  math/0401318,  (2004).

\bibitem{drmota2003analytic}
{\sc M.~Drmota}, {\em An analytic approach to the height of binary search trees
  ii}, Journal of the ACM (JACM), 50 (2003), pp.~333--374.

\bibitem{drmota2009height}
\leavevmode\vrule height 2pt depth -1.6pt width 23pt, {\em The height of
  increasing trees}, Annals of Combinatorics, 12 (2009), pp.~373--402.

\bibitem{evans2012trickle}
{\sc S.~Evans, R.~Gr{\"u}bel, A.~Wakolbinger, et~al.}, {\em Trickle-down
  processes and their boundaries}, Electronic Journal of Probability, 17
  (2012).

\bibitem{gladkich2018cycle}
{\sc A.~Gladkich, R.~Peled, et~al.}, {\em On the cycle structure of Mallows
  permutations}, The Annals of Probability, 46 (2018), pp.~1114--1169.

\bibitem{gnedin2012two}
{\sc A.~Gnedin and G.~Olshanski}, {\em The two-sided infinite extension of the
  Mallows model for random permutations}, Advances in Applied Mathematics, 48
  (2012), pp.~615--639.

\bibitem{gnedin2010q}
{\sc A.~Gnedin, G.~Olshanski, et~al.}, {\em q-exchangeability via
  quasi-invariance}, The Annals of Probability, 38 (2010), pp.~2103--2135.

\bibitem{hammersley1974postulates}
{\sc J.~M. Hammersley}, {\em Postulates for subadditive processes}, The Annals
  of Probability,  (1974), pp.~652--680.

\bibitem{he2020central}
{\sc J.~He}, {\em A central limit theorem for descents of a Mallows permutation
  and its inverse}, arXiv preprint arXiv:2005.09802,  (2020).

\bibitem{kingman1975first}
{\sc J.~F.~C. Kingman et~al.}, {\em The first birth problem for an
  age-dependent branching process}, The Annals of Probability, 3 (1975),
  pp.~790--801.

\bibitem{mallows1957non}
{\sc C.~L. Mallows}, {\em Non-null ranking models. i}, Biometrika, 44 (1957),
  pp.~114--130.

\bibitem{mueller2013length}
{\sc C.~Mueller and S.~Starr}, {\em The length of the longest increasing
  subsequence of a random Mallows permutation}, Journal of Theoretical
  Probability, 26 (2013), pp.~514--540.

\bibitem{mukherjee2016fixed}
{\sc S.~Mukherjee et~al.}, {\em Fixed points and cycle structure of random
  permutations}, Electronic Journal of Probability, 21 (2016).

\bibitem{pinsky2019permutations}
{\sc R.~Pinsky}, {\em Permutations avoiding a pattern of length three under
  Mallows distributions}, preprint,  (2019).

\bibitem{pittel1984growing}
{\sc B.~Pittel}, {\em On growing random binary trees}, Journal of Mathematical
  Analysis and Applications, 103 (1984), pp.~461--480.

\bibitem{reed2003height}
{\sc B.~Reed}, {\em The height of a random binary search tree}, Journal of the
  ACM (JACM), 50 (2003), pp.~306--332.

\bibitem{shi2015branching}
{\sc Z.~Shi}, {\em Branching random walks}, Springer, 2015.

\bibitem{starr2009thermodynamic}
{\sc S.~Starr}, {\em Thermodynamic limit for the Mallows model on $S_n$}, Journal
  of mathematical physics, 50 (2009), p.~095208.

\bibitem{starr2018phase}
{\sc S.~Starr and M.~Walters}, {\em Phase uniqueness for the Mallows measure on
  permutations}, Journal of Mathematical Physics, 59 (2018), p.~063301.

\end{thebibliography}

\end{document}